\let\oldtocsection=\tocsection
\let\oldtocsubsection=\tocsubsection
\let\oldtocsubsubsection=\tocsubsubsection
\renewcommand{\tocsection}[2]{\hspace{0em}\oldtocsection{#1}{#2}}
\renewcommand{\tocsubsection}[2]{\hspace{1em}\oldtocsubsection{#1}{#2}}
\renewcommand{\tocsubsubsection}[2]{\hspace{2em}\oldtocsubsubsection{#1}{#2}}
\newtheorem{theorem}{Theorem}[section]
\newtheorem{lemma}[theorem]{Lemma}
\newtheorem{corollary}[theorem]{Corollary}
\newtheorem{proposition}[theorem]{Proposition}
\theoremstyle{definition}
\newtheorem{definition}[theorem]{Definition}
\newtheorem{example}[theorem]{Example}
\newtheorem{construction}[theorem]{Construction}
\newtheorem{remark}[theorem]{Remark}
\newtheorem{notation}[theorem]{Notation}
\numberwithin{equation}{section}
\newcommand{\Z}{\mathbb{Z}}
\newcommand{\C}{\mathcal{C}}
\newcommand{\ffi}{\varphi}
\newcommand{\eps}{\varepsilon}
\newcommand{\St}{\operatorname{St}}
\newcommand{\Hom}{\operatorname{Hom}}
\newcommand{\coker}{\operatorname{coker}}
\newcommand{\SL}{\operatorname{SL}}
\newcommand{\Tens}{\operatorname{Tens}}
\newcommand{\Sp}{\operatorname{Sp}}
\newcommand{\GL}{\operatorname{GL}}
\newcommand{\Skew}{\operatorname{Skew}}
\newcommand{\Pf}{\operatorname{Pf}}
\newcommand{\car}{\operatorname{char}}
\newcommand{\Tot}{\operatorname{Tot}}
\title[Homology of symplectic groups]{The third homology of symplectic groups
	 and algebraic K-theory}
 \author{Marco Schlichting and Husney Parvez Sarwar}
 \address{Marco Schlichting, Mathematics Institute,
Zeeman Building,
University of Warwick,
Coventry CV4 7AL, UK} 
\thanks{Schlichting acknowledges support from EPSRC grant EP/M001113/1 and the Leverhulme Trust}
\email{m.schlichting@warwick.ac.uk}
\address{Husney Parvez Sarwar, 
Department of Mathematics,
Indian Institute of Technology Kharagpur,
Kharagpur - 721302, West Bengal, India} 
\email{mathparvez@gmail.com}
\thanks{Sarwar acknowledges support from the Science and Engineering Research Board (S.E.R.B), India}
\subjclass{20J05 (19B14, 19D45, 19G38)}
\keywords{Homology of symplectic groups, Homology stability, symplectic $K$-theory, Milnor-Witt $K$-theory}
\begin{document}
\bibliographystyle{alpha}

\begin{abstract}
We improve the homology stability range for the 3rd integral homology of symplectic groups over commutative local rings with infinite residue field.
As an application, we show that for local commutative rings containing an infinite field of characteristic not $2$ the symbol map from Milnor-Witt $K$-theory to higher Grothendieck-Witt groups is an isomorphism in degrees $\leq 3$.
 \end{abstract}

\maketitle

\tableofcontents

\section{Introduction}
In this paper we improve homology stability ranges for the symplectic groups over commutative local rings with infinite residue field and use this to give explicit presentations of some algebraic $K$-groups.

Recall that for an integer $n\geq 0$ and a commutative ring $R$, the symplectic group $\Sp_{2n}(R)$ is the group of $R$-linear automorphisms of $R^{2n}$ that preserve the standard symplectic inner product 
$\langle x, y \rangle =  \sum_{i=1}^n (x_{2i+1}y_{2i+2} - x_{2i+2}y_{2i+1}),$
 $x,y\in R^{2n}.$
We consider $\Sp_{2n}(R)$ as a subgroup of $\Sp_{2n+2}(R)$ via the embedding $A \mapsto \left(\begin{smallmatrix} 1_{R^2} & 0 \\ 0 & A\end{smallmatrix}\right)$.
The following is part of Theorem \ref{them:H3stability} in the text.
Unless otherwise stated, all homology groups $H_n(G)$ of a group $G$ are taken with integer coefficients.

\begin{theorem}
\label{thmIntro:H3stab}
Let $R$ be a local ring with infinite residue field.
Then inclusion of groups induces a surjection $H_3\Sp_2(R) \twoheadrightarrow H_3\Sp_4(R)$ and for $k\geq 2$ isomorphisms
$H_3\Sp_{2k}(R) \cong H_3\Sp_{2k+2}(R)$:
$$H_3(\Sp_2(R)) \twoheadrightarrow H_3(\Sp_4(R)) \stackrel{\cong}{\longrightarrow} H_3(\Sp_6(R)) \stackrel{\cong}{\longrightarrow} H_3(\Sp_8(R)) \stackrel{\cong}{\longrightarrow} \cdots $$
\end{theorem}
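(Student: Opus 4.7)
The plan is to analyse the action of $\Sp_{2n}(R)$ on a suitable highly connected simplicial complex of isotropic unimodular vectors and to extract the stability assertions from the associated equivariant homology spectral sequence in total degree $3$. Concretely, I would work with the complex $\mathcal{U}_n$ whose $p$-simplices are ordered $(p+1)$-tuples of unimodular vectors in $R^{2n}$ spanning a totally isotropic subspace for the standard symplectic form. A transversality argument, using that the residue field of $R$ is infinite and sharpening the earlier Mirzaii--van der Kallen connectivity estimates, should show that $\mathcal{U}_n$ is highly connected --- sharp enough so that a stability argument at $H_3$ already goes through for $n\ge 3$. This sharp connectivity is where the improvement over the previously known stability range originates.

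The group $\Sp_{2n}(R)$ acts transitively on the $p$-simplices of $\mathcal{U}_n$ with stabilizer a semidirect product $\Sp_{2(n-p-1)}(R)\ltimes P_p$, where $P_p$ is a unipotent Heisenberg-type group. The associated equivariant spectral sequence takes the form
$$E^1_{p,q} \;=\; H_q\bigl(\Sp_{2(n-p-1)}(R)\ltimes P_p\bigr) \;\Longrightarrow\; H_{p+q}\Sp_{2n}(R)$$
in a stable range dictated by the connectivity of $\mathcal{U}_n$. A Hochschild--Serre analysis of each stabilizer, combined with a standard centre-kills argument --- the central element $-1\in\Sp_{2(n-p-1)}(R)$ acts trivially on $H_*\Sp$ but non-trivially on the positive-weight part of $H_*P_p$ --- identifies $E^1_{p,q}$ in the relevant range with shifted copies of $H_q\Sp_{2(n-p-1)}(R)$. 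Comparing with the analogous spectral sequence for $\Sp_{2n+2}(R)$ via the stabilization map then reduces Theorem~\ref{thmIntro:H3stab} to the vanishing of certain differentials landing in the $p=0$ column in total degree $3$; these follow inductively from the classical facts $H_1\Sp_{2n}(R)=0$ and from $H_2$-stability of the symplectic groups established earlier in the paper.

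The surjection $H_3\Sp_2(R)\twoheadrightarrow H_3\Sp_4(R)$ falls out as the edge-map of the spectral sequence for $n=2$, and the isomorphisms $H_3\Sp_{2k}\cong H_3\Sp_{2k+2}$ for $k\ge 3$ are essentially formal from the spectral sequence once the connectivity bound comfortably exceeds $3$. The main obstacle will be the borderline case $k=2$, i.e.\ the isomorphism $H_3\Sp_4(R)\stackrel{\cong}{\longrightarrow}H_3\Sp_6(R)$: here one is working at the very edge of the stability range and must eliminate a ``diagonal'' differential of the form $d^2:E^2_{2,2}\to E^2_{0,3}$. To show this differential is zero typically requires a refined connectivity estimate for a \emph{pair} of auxiliary complexes --- for instance $\mathcal{U}_n$ relative to the subcomplex of tuples whose first vector pairs hyperbolically with a fixed one --- together with an explicit identification of the low-weight part of $H_3\Sp_2(R)$ in terms of Milnor--Witt $K$-theory $K^{MW}_3(R)$, so that the differential can be evaluated on known generators. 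This refined input is precisely what enables the improvement over the previously known Mirzaii stability range.
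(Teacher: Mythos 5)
Your proposal takes a genuinely different route from the paper, and that route has a critical gap at its foundation. You propose to run the stability argument on the classical complex $\mathcal{U}_n$ of tuples of unimodular vectors spanning \emph{totally isotropic} subspaces, and to obtain the improved range from a sharper connectivity estimate for $\mathcal{U}_n$. However, no such sharpening is established (or even sketched) in your argument, and it is precisely the connectivity of this isotropic complex that produced the weaker ranges of Mirzaii, Essert and Sprehn--Wahl; asserting that a "transversality argument should show" enough connectivity to shift stability by two full steps is exactly the hard part, and there is no reason to expect it to be true for the isotropic complex. The paper does not sharpen the connectivity of $\mathcal{U}_n$ at all: it changes the complex. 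It works with the complex $U_*(R^{2n})$ of \emph{non-degenerate} unimodular sequences (every subsequence of length $\leq 2n$ spans a subspace on which the symplectic form restricts non-degenerately). That complex is acyclic in the full range (Corollary \ref{cor:C_*acyclic}), its quotient by $\Sp_{2n}(R)$ is identified via the Gram matrix with the acyclic complex $\Z[\Skew_*^+(R)]$ (Lemmas \ref{lem:GammaBij}, \ref{lem:SkewExact}), and --- crucially --- its simplex stabilizers are not Heisenberg extensions of symplectic groups but simply the \emph{odd-rank} symplectic groups $\Sp_{2n-q}(R)$. Those three features are the actual source of the improvement, and none of them are available in your setup.

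Two further points. First, the mechanism you invoke for identifying $E^1$ --- a centre-kills argument using $-1\in\Sp_{2(n-p-1)}(R)$ --- requires $-1\neq 1$ and therefore does not apply when $2=0$ in $R$, whereas Theorem \ref{thmIntro:H3stab} is stated for arbitrary local rings with infinite residue field; the paper's substitute is the $R^*$-action by conjugation with the matrices $T_b$ and localization at the element $s_m\in\Z[R^*]$ (Proposition \ref{prop:HSpLocalization}), which works in all characteristics. Second, you correctly flag a borderline differential to be killed at the edge of the range, but you place it at $d^2\colon E^2_{2,2}\to E^2_{0,3}$ and propose to evaluate it via $K^{MW}_3(R)$-generators. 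In the paper the delicate computation is the \emph{surjectivity} of $d^2_{0,5}\colon E^2_{0,5}(4)\to E^1_{1,3}(4)$ in the spectral sequence for $\Sp_4(R)$, established by the explicit Pfaffian formula of Proposition \ref{prop:dAFormula} together with the long sequence of relations culminating in Proposition \ref{prop:d2Surjects}; $K^{MW}_3(R)$ enters only afterwards as a consequence (Theorem \ref{lem:apl}), not as an input. So the proposal would, at best, reproduce the known ranges, and as written it does not prove the theorem.
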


Theorem \ref{thmIntro:H3stab} is optimal in the sense that the first map $H_3\Sp_2(R) \twoheadrightarrow H_3\Sp_4(R)$ is not injective, in general; see Remark \ref{rmk:NotInjective}.
Theorem \ref{thmIntro:H3stab} answers a question raised by Hutchinson and Wendt in \cite[Remark 9.6]{HutchinsonWendt}.
An important consequence of Theorem \ref{thmIntro:H3stab} is the following relative homology stability result proved in Theorem \ref{lem:apl}.

\begin{theorem}
\label{lem:Intro:apl}
	Let $R$ be a local ring with infinite residue field. Then inclusion of groups induces isomorphisms of relative integral homology groups for $i\leq 3$
$$H_i(\SL_3(R),\Sp_2(R)) \stackrel{\cong}{\to}  H_i(\SL_4,\Sp_4)  \stackrel{\cong}{\to} H_i(\SL_6,\Sp_6) \stackrel{\cong}{\to} \cdots  \stackrel{\cong}{\to}  H_i(\SL(R), \Sp(R)).$$
\end{theorem}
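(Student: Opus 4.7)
The plan is to apply the five lemma to the ladder of long exact integer homology sequences of the pairs $(\SL_n(R),\Sp_{2k}(R))$ appearing in the statement, where for $k\geq 2$ we take $n=2k$ with $\Sp_{2k}\hookrightarrow\SL_{2k}$ the standard inclusion, and for the initial pair $(\SL_3,\Sp_2)$ we use $\Sp_2=\SL_2\hookrightarrow\SL_3$. The horizontal maps are the standard block-diagonal stabilizations on both the symplectic and linear sides, and commutativity of each square follows directly from the formulas recalled in the introduction.

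The two inputs are: (a) Theorem~\ref{thmIntro:H3stab} above, together with the well-known symplectic stability results in degrees $\leq 2$, which together provide $H_j\Sp_{2k}(R)\xrightarrow{\cong}H_j\Sp_{2k+2}(R)$ for all $j\leq 3$ and $k\geq 2$, and a surjection at the boundary case $(j,k)=(3,1)$; and (b) Suslin--Nesterenko--Suslin homology stability for $\SL_n$ over a local ring with infinite residue field, which gives $H_j\SL_n(R)\xrightarrow{\cong}H_j\SL_{n+1}(R)$ for $j\leq 3$ as soon as $n\geq 3$.

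For $k\geq 2$ and $i\leq 3$ the map $H_i(\SL_{2k},\Sp_{2k})\to H_i(\SL_{2k+2},\Sp_{2k+2})$ is then an isomorphism by a direct application of the five lemma to the five consecutive terms $H_i\Sp_{2k}\to H_i\SL_{2k}\to H_i(\SL_{2k},\Sp_{2k})\to H_{i-1}\Sp_{2k}\to H_{i-1}\SL_{2k}$, since all four outer vertical maps are isomorphisms. The genuinely delicate case is the first map $H_i(\SL_3,\Sp_2)\to H_i(\SL_4,\Sp_4)$ in degree $i=3$: there the leftmost outer vertical $H_3\Sp_2(R)\to H_3\Sp_4(R)$ is only a surjection (this is Theorem~\ref{thmIntro:H3stab}), but the two middle outer verticals $H_3\SL_3(R)\to H_3\SL_4(R)$ and $H_2\Sp_2(R)\to H_2\Sp_4(R)$ are isomorphisms and the rightmost $H_2\SL_3(R)\to H_2\SL_4(R)$ is injective (in fact an isomorphism), so the refined five lemma still produces an isomorphism in the middle. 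Crucially, no information about $H_4$ of any group is ever required. The cases $i\leq 2$ of the initial map are handled identically and with more room to spare.

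Finally, the comparison with $H_i(\SL(R),\Sp(R))$ follows from the fact that group homology commutes with filtered colimits: $H_i(\SL(R),\Sp(R))=\colim_k H_i(\SL_{2k}(R),\Sp_{2k}(R))$, which stabilizes by the above for $i\leq 3$. The main obstacle in the whole argument is really the initial step, whose correctness rests precisely on the sharp surjective form of Theorem~\ref{thmIntro:H3stab}; the ladder is calibrated so that surjectivity (rather than an isomorphism) at that one spot is exactly enough.
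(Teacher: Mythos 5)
Your proposal is correct and follows essentially the same path as the paper: the Five Lemma applied to the ladder of long exact homology sequences of pairs, with $H_3\Sp_2\twoheadrightarrow H_3\Sp_4$ from Theorem~\ref{thmIntro:H3stab} feeding in as a surjection on the far left and $\SL$-stability (which the paper cites as \cite[Theorem 5.37]{myEuler} rather than Suslin--Nesterenko directly) and lower-degree $\Sp$-stability supplying isomorphisms on the remaining outer terms. The paper leaves the colimit comparison with $H_i(\SL,\Sp)$ implicit but otherwise the argument, including the observation that only surjectivity is needed at the single sensitive spot, is the same.
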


The importance of Theorem \ref{lem:Intro:apl} lies in the fact that in degree $3$, the left group $H_3(\SL_3(R),\Sp_2(R))$ was identified in \cite{myEuler} with the third Milnor-Witt $K$-group $K^{MW}_3(R)$ of $R$ and the right group $H_3(\SL(R), \Sp(R))$ is the first non-vanishing homotopy group of the fibre from symplectic $K$-theory to algebraic $K$-theory.
\vspace{1ex}

We generalise Theorem \ref{thmIntro:H3stab} to homology degree $n\geq 3$ by showing the following in Theorem \ref{them:Hkstability} though the stability range here is probably not optimal when $n\geq 4$.

\begin{theorem}
Let $R$ be a local ring with infinite residue field, and let $n\geq 3$ be an integer.
Then inclusion of groups induces a surjection $H_n\Sp_{2n-4}(R) \twoheadrightarrow H_n\Sp_{2n-2}(R)$ and for $k\geq 0$ isomorphisms
$H_n\Sp_{2n+2k-2}(R) \cong H_n\Sp_{2n+2k}(R)$:
$$H_{n}(\Sp_{2n-4}(R)) \twoheadrightarrow H_{n}(\Sp_{2n-2}(R)) \stackrel{\cong}{\longrightarrow}  H_{n}(\Sp_{2n}(R))  \stackrel{\cong}{\longrightarrow} H_{n}(\Sp_{2n+2}(R))  \stackrel{\cong}{\longrightarrow} \cdots
$$
\end{theorem}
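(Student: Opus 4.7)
The plan is to prove this by induction on $n$, with the base case $n=3$ being Theorem \ref{thmIntro:H3stab}. For the inductive step I would exploit the action of $\Sp_{2m}(R)$ on a highly connected simplicial complex of isotropic vectors, as is standard in the homology stability literature for classical groups (Charney, van der Kallen, Mirzaii, Hutchinson-Wendt). A natural choice is the complex $\I_\bullet(R^{2m})$ whose $p$-simplices are ordered $(p+1)$-tuples of isotropic unimodular vectors in $R^{2m}$ that pairwise pair to zero and span a rank-$(p+1)$ totally isotropic direct summand. Over a local ring $R$ with infinite residue field, the standard unimodular-row / general-position arguments give that $\I_\bullet(R^{2m})$ is roughly $(m-1)$-connected, which accounts for the slope-$2$ shape of the stability range.

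From this action one obtains a first-quadrant equivariant spectral sequence
$$E^1_{p,q} = H_q(P_p;\Z) \;\Longrightarrow\; H_{p+q}(\Sp_{2m}(R))$$
(abutting in total degrees below the connectivity of $\I_\bullet(R^{2m})$), where $P_p$ is the $\Sp_{2m}(R)$-stabilizer of a standard $p$-simplex. Each $P_p$ is a parabolic subgroup with Levi quotient $\GL_{p+1}(R) \times \Sp_{2m-2p-2}(R)$ and a Heisenberg-type unipotent radical built on $R^{p+1}\otimes R^{2m-2p-2}$ together with a symmetric-square piece. A Hochschild-Serre argument, using that the unipotent radical has free $R$-module layers and invoking $\AA^1$-invariance of group homology over local rings (in the spirit of Nesterenko-Suslin), identifies $H_*(P_p)$ with $H_*$ of the Levi. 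The differential $d^1$ then becomes an alternating sum of stability maps, so a map of spectral sequences comparing $\Sp_{2m}$ with $\Sp_{2m+2}$ reduces the claim in total degree $n$ to homology stability for $\GL_{p+1}(R)$ in the relevant range (van der Kallen, slope~$1$) together with the inductive hypothesis for $H_q$ of smaller symplectic groups in degrees $q<n$.

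The main obstacle is the edge column $p=0$: the slope-$2$ range forces one into the critical bidegree $(0,n)$, where one must compare $H_n$ of the stabilizer of a single isotropic vector with $H_n(\Sp_{2m-2}(R))$. Here the Hochschild-Serre contributions coming from the unipotent radical are no longer negligible on the convergence diagonal, and one must instead appeal to the relative $(\SL,\Sp)$-isomorphisms of Theorem \ref{lem:Intro:apl} and the $H_3$-stability of Theorem \ref{thmIntro:H3stab} to control the error terms. Once the isomorphism is established for $m \geq n-1$, the surjectivity at $m = n-2$ follows from the same spectral sequence by a standard last-term argument, consistent with the asymmetry between surjectivity at the first step and isomorphisms thereafter.
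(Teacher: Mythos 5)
Your proposal does not follow the paper's argument and, more importantly, would not establish the stated stability range.

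The complex you propose — ordered tuples of pairwise-orthogonal isotropic unimodular vectors spanning a totally isotropic direct summand — is precisely the complex used by Charney, van der Kallen, Mirzaii, Essert and Sprehn–Wahl. Its dimension over $R^{2m}$ is only $m-1$ and its connectivity is bounded accordingly; consequently the associated equivariant spectral sequence abuts to zero only in total degrees roughly $\leq m-1$. This machinery is exactly what gives Essert's range
$H_n(\Sp_{2n}) \twoheadrightarrow H_n(\Sp_{2n+2}) \cong H_n(\Sp_{2n+4}) \cong \cdots$,
which the theorem you are asked to prove is explicitly claimed to \emph{improve} by two full steps: the surjection starts at $H_n(\Sp_{2n-4})$ and the isomorphisms already at $H_n(\Sp_{2n-2}) \cong H_n(\Sp_{2n})$. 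No amount of care at the $p=0$ edge, nor appeals to Theorem \ref{lem:Intro:apl} or the $H_3$-stability of Theorem \ref{thmIntro:H3stab}, will conjure the missing degrees of connectivity out of a complex of dimension $m-1$: you would be using the theorem's own consequences to paper over a quantitative deficit in the input.

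The paper's improvement is driven by a genuinely different complex, the complex of \emph{non-degenerate unimodular sequences} in $R^{2n}$ introduced in Section \ref{sec:NonDegUniMSeq}. A $q$-tuple there is not required to be isotropic; rather every subsequence of length $\leq \min(q,2n)$ must span a non-degenerate subspace. This complex is shown to be \emph{acyclic} in all degrees (Corollary \ref{cor:C_*acyclic}), not merely $(m-1)$-connected, and its truncation to degrees $\leq 2n+1$ yields a spectral sequence (\ref{SpecSeq}) converging to zero in total degree $\leq 2n$ (Lemma \ref{lem:HSpC}). The price is that the stabilizers are the extended chain $\Sp_{2n-q}(R)$ including the \emph{odd-rank} symplectic groups $\Sp_{2r+1}(R)$, which your Levi decomposition does not produce. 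These odd groups are compared to even ones by the $\Z[R^*]$-localisation trick of Proposition \ref{prop:HSpLocalization}, which is sharper than the bare $\AA^1$-invariance of group homology you invoke. Finally, the base case $k=0$ (Theorem \ref{them:H3stability}) itself requires the explicit Pfaffian computation and surjectivity of $d^2_{0,5}$ (Propositions \ref{prop:dAFormula} and \ref{prop:d2Surjects}), which has no analogue in the isotropic-complex picture. The actual inductive step of Theorem \ref{them:Hkstability} is then an induction on the homology degree in which one checks that the relevant incoming $d^r$-differentials vanish because the corresponding $E^2$-terms are the homology of $H_*(\Sp_{\bullet})\otimes \Z[\Skew^+_\bullet]$, exact by Lemma \ref{lem:SkewExact} and the inductive hypothesis. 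In short: the missing idea is the acyclic complex of non-degenerate unimodular sequences and the attendant odd symplectic groups; without it, your plan lands on the Essert/Sprehn–Wahl range, not the one claimed.
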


This improves on the homology stability ranges for infinite fields $F$ due to Essert \cite{Essert} who proves for $n\geq 0$ the following surjection and isomorphisms
$$H_{n}(\Sp_{2n}(F)) \twoheadrightarrow H_{n}(\Sp_{2n+2}(F)) \stackrel{\cong}{\longrightarrow}  H_{n}(\Sp_{2n+4}(F))  \stackrel{\cong}{\longrightarrow} H_{n}(\Sp_{2n+6}(F))  \stackrel{\cong}{\longrightarrow} \cdots
$$
See \cite[Theorem 3.9]{Essert} and \cite[Theorem A]{SprehnWahl} which improve on \cite{MirzaiiII}.
\vspace{1ex}

As application of Theorems \ref{thmIntro:H3stab} and \ref{lem:Intro:apl} we generalise some results of Asok-Fasel \cite{AsokFasel:KODegree} from fields to local rings replacing the use of $\mathbb{A}^1$-homotopy theory with group homology computations.

We consider the $KO$-degree map \cite{AsokFasel:KODegree}
\begin{equation}
\label{eqn:KMW:SymbolMap}
K_n^{MW}(R) \to GW^{[n]}_n(R)
\end{equation}
from the Milnor-Witt $K$-groups of Hopkins-Morel \cite{Morel:book}, \cite{myEuler}, \cite{GilleEtAl}
to the higher Grothendieck-Witt-groups of \cite{Karoubi:batelle} in the form of \cite{myJPAA}.
The following is a combination of Theorems \ref{thm:GW22} and \ref{thm:GW33} in the text.

\begin{theorem}
\label{thm:Degree3iso}
Let $R$ be a local ring containing an infinite field of characteristic not $2$.
Then the $KO$-degree map (\ref{eqn:KMW:SymbolMap}) is an isomorphism for $n=2,3$.
\end{theorem}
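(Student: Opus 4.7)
The strategy is to identify both sides of the $KO$-degree map with a common group-homological invariant, using the stability theorems of this paper on the Milnor-Witt side and the Karoubi-type fundamental fibration on the Grothendieck-Witt side.

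For $n=3$, the plan proceeds as follows. By Theorem \ref{lem:Intro:apl} combined with the identification $H_3(\SL_3(R),\Sp_2(R)) \cong K_3^{MW}(R)$ of \cite{myEuler}, we obtain a natural isomorphism
\[
K_3^{MW}(R) \stackrel{\cong}{\longrightarrow} H_3(\SL(R),\Sp(R)).
\]
On the other side, the fibration relating algebraic $K$-theory and higher Grothendieck-Witt theory (see \cite{myJPAA}) identifies the homotopy fibre of $B\Sp(R)^+ \to B\SL(R)^+$, in positive degrees, with a shift of $GW^{[3]}(R)$; in particular $\pi_3(B\SL(R)^+, B\Sp(R)^+) \cong GW^{[3]}_3(R)$. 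Since the relative homology $H_i(\SL(R),\Sp(R))$ vanishes for $i\leq 2$, the relative Hurewicz theorem applied to the simple spaces $B\Sp(R)^+$ and $B\SL(R)^+$ then gives $H_3(\SL(R),\Sp(R)) \cong \pi_3(B\SL(R)^+, B\Sp(R)^+)$, closing the loop.

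For $n=2$, an analogous but lower-degree argument works. One identifies $GW^{[2]}_2(R) \cong H_2(\Sp(R))$ via Hurewicz applied to $B\Sp(R)^+$ (using that $\Sp(R)$ is perfect for local rings with infinite residue field), and $K_2^{MW}(R) \cong H_2(\Sp_2(R))$ by a local-ring version of the Hutchinson-Tao theorem; homology stability for $H_2(\Sp_{2k}(R))$ (already available from \cite{MirzaiiII,Essert}) then matches the two. The $n=2$ case is therefore comparatively soft, relying only on known stability and low-degree homology computations.

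The main obstacle, in both cases, is to verify that the chain of isomorphisms above actually realises the $KO$-degree map of \cite{AsokFasel:KODegree}, rather than merely \emph{some} isomorphism between the two groups. This naturality check requires matching the symbol-style definition of the $KO$-degree map with both the connecting map of the Karoubi fibration and the explicit cocycle presentation of $K^{MW}_n$ from \cite{myEuler}, evaluated on symbol generators of the form $[u_1]\cdots[u_n]$. Carrying out this compatibility at the level of explicit generators, with the comparison running through the stable homology of $\Sp$, is the technical heart of the argument.
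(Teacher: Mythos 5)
Your proposal correctly identifies the two essential ingredients the paper uses—the Hurewicz isomorphisms $GW^{[2]}_2(R)\cong H_2(\Sp(R))$ and $GW^{[3]}_3(R)\cong H_3(\SL(R),\Sp(R))$ (Theorems \ref{thm:HurewiczGW22} and \ref{thm:HurewiczGW33}, which rest on the new homology stability results), together with the identification $H_3(\SL_3,\Sp_2)\cong K_3^{MW}$ from \cite{myEuler}. You also correctly locate the crux: showing that the resulting chain of isomorphisms actually computes the $KO$-degree map. However, you leave that crux unresolved—"carrying out this compatibility at the level of explicit generators\ldots is the technical heart of the argument" is a description of the remaining work, not a proof. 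This is a genuine gap: producing an abstract isomorphism $K_n^{MW}(R)\cong GW^{[n]}_n(R)$ does not establish that the specific symbol map of Asok--Fasel is an isomorphism.

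The paper closes this gap via a route your proposal does not mention and that is worth contrasting. Rather than verifying on symbol generators that the Hurewicz chain agrees with the degree map, the paper builds the commutative diagram (\ref{eqn:refinedHurewicz}) in which the degree map $K_3^{MW}\to GW^{[3]}_3$ appears as a single edge, and proves the \emph{composite} $\phi\colon K^{MW}_3\to K^{MW}_3$ across that row is an isomorphism; since the other two maps in the row are isomorphisms, the degree map must be one too. The two inputs that make this work—and which are absent from your sketch—are (i) the Gille--Scully--Zhong cartesian square (\ref{eqnLGilleEtAl}), which identifies $K^{MW}_n(R)$ with the fibre product $K^M_n(R)\times_{k_n}I^n(R)$ and yields the exact sequence $K_n^M \to K_n^{MW}\stackrel{\eta}{\to} K_{n-1}^{MW}\stackrel{f}{\to} K_{n-1}^M\to 0$ feeding the Five Lemma; and (ii) Lemma \ref{KMtoKMWisH}, which uses the Suslin--Nesterenko theorem to show that the composite $K^M_3\to K^{MW}_3$ across the top row of (\ref{eqn:refinedHurewicz}) is precisely the hyperbolic map $h$. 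This is also why the $n=2$ case is handled differently in the paper: Theorem \ref{thm:GW22} proves injectivity directly by composing with $(f,\eta)\colon GW^{[2]}_2\to K_2\times GW^{[1]}_1$ and invoking the cartesian-square description, and proves surjectivity via the Bott exact sequence—it does not run through $H_2(\Sp)$ as you suggest. In short: right ingredients, right diagnosis of the obstacle, but the decisive Five Lemma mechanism and the two supporting inputs are missing.
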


The restriction to rings that contain an infinite field of characteristic not $2$ in Theorem \ref{thm:Degree3iso} comes from our use of \cite{GilleEtAl}.
This is probably unnecessary in light of Theorems \ref{thm:HurewiczGW22} and \ref{thm:HurewiczGW33}.
 \vspace{2ex}
 
 Finally, we obtain an interpretation of the indecomposable part of $K_3$ 
 in terms of orthogonal $K$-theory.
 Recall that when $\frac{1}{2}\in R$, the orthogonal $K$-group $KO_3(R)$ is the higher Grothendieck-Witt group $GW^{[4]}_3(R) = \pi_3BO(R)^+$ where $O(R)=\bigcup_{n\geq 0}O_{2n}(R)$ is the infinite orthogonal group of $R$, that is, the union of the groups of $R$-linear automorphisms preserving the standard hyperbolic quadratic form of rank $2n$.
 The following is Corollary \ref{cor:Kind} in the text.
  
  \begin{theorem}
  \label{thm:K3ind}
  	Let $R$ be a commutative local ring containing an infinite field of characteristic not $2$. Then
  	$$K_3^{ind}(R) \cong KO_3(R).$$
  \end{theorem}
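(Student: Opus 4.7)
The plan is to combine Theorem \ref{thm:Degree3iso} with Karoubi's fundamental fibre sequence relating algebraic and Hermitian $K$-theory spectra. When $\tfrac{1}{2}\in R$, that theorem supplies a homotopy fibre sequence
$$GW^{[3]}(R) \xrightarrow{F} K(R) \xrightarrow{H} GW^{[4]}(R)\simeq KO(R)$$
(forgetful followed by hyperbolic, using $4$-periodicity of $GW^{[\bullet]}$ when $\tfrac{1}{2}\in R$), whose long exact sequence of homotopy groups reads, around degree $3$,
$$GW_3^{[3]}(R) \xrightarrow{F_3} K_3(R) \xrightarrow{H_3} KO_3(R) \xrightarrow{\partial} GW_2^{[3]}(R) \xrightarrow{F_2} K_2(R).$$

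The first task is to identify $\im(F_3)$ with the Milnor $K$-group $K_3^M(R)\subset K_3(R)$. By Theorem \ref{thm:Degree3iso} the $KO$-degree map is an isomorphism $K_3^{MW}(R) \stackrel{\cong}{\to} GW_3^{[3]}(R)$, and by naturality of the symbol maps the composition $K_3^{MW}(R)\cong GW_3^{[3]}(R)\xrightarrow{F_3} K_3(R)$ factors as the standard $K_3^{MW}(R)\twoheadrightarrow K_3^M(R)\hookrightarrow K_3(R)$. The second arrow is injective by the theorem of Nesterenko--Suslin--Totaro (extended to local rings with infinite residue field), so $\im(F_3) = K_3^M(R)$. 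Exactness at $K_3(R)$ then furnishes an injection
$$K_3^{ind}(R) = K_3(R)/K_3^M(R) \hookrightarrow KO_3(R).$$

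To finish, I need to show that $H_3$ is also surjective, equivalently that $F_2\colon GW_2^{[3]}(R) \to K_2(R)$ is injective; this is the main technical step. The approach is to run the analogous argument one degree lower. The companion Karoubi fibre sequence $GW^{[2]}(R) \to K(R) \to GW^{[3]}(R)$ brings $GW_2^{[2]}(R)$ into play, and Theorem \ref{thm:Degree3iso} for $n=2$ yields $GW_2^{[2]}(R)\cong K_2^{MW}(R)$. Using the Matsumoto--van der Kallen injectivity of $K_2^M(R)\hookrightarrow K_2(R)$ together with a diagram chase comparing the two Karoubi sequences, one controls the $2$-dimensional Grothendieck--Witt groups finely enough to force $\partial\colon KO_3(R)\to GW_2^{[3]}(R)$ to vanish. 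The delicate point is to align periodicities and the directions of $F$ and $H$ so that the $n=2$ case of Theorem \ref{thm:Degree3iso} actually governs $GW_2^{[3]}(R)$ rather than a shifted neighbour; this is the step I expect to be the main obstacle.
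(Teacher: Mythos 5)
Your overall strategy---using the Bott exact sequence
$GW_3^{[3]}(R)\to K_3(R)\to GW_3^{[4]}(R)\to GW_2^{[3]}(R)\to K_2(R)$
and identifying the image of $GW_3^{[3]}(R)\to K_3(R)$ with the image of $K_3^M(R)\to K_3(R)$ via Theorem \ref{thm:GW33}---is exactly the paper's strategy. Your first half is therefore sound, modulo a small simplification: you do not need Nesterenko--Suslin--Totaro injectivity of $K_3^M(R)\to K_3(R)$, since the paper's definition $K_3^{ind}(R)=\coker(K_3^M(R)\to K_3(R))$ only needs the \emph{image} of that map, not that it is a subgroup.

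The genuine gap is the second half, and you correctly flag it yourself. You need the connecting map $\partial\colon KO_3(R)\to GW_2^{[3]}(R)$ to vanish, and your plan of ``running the argument one degree lower with a diagram chase'' is never made concrete. The fact you are missing is Lemma \ref{lem:gwvanish}: $GW_2^{[3]}(R)=0$, which trivially kills $\partial$. The paper proves this by a route quite different from the one you envision: by construction $GW_2^{[3]}(R)=\pi_2\bigl(B\SL(R)^+,B\Sp(R)^+\bigr)$, so the relative Hurewicz theorem identifies it with $H_2(\SL(R),\Sp(R))$, and the relative homology stability Theorem \ref{lem:apl} (itself resting on the $H_3$-stability Theorem \ref{them:H3stability}) identifies this with $H_2(\SL_3(R),\SL_2(R))$, which vanishes by \cite[Theorem 5.37]{myEuler}. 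Nothing about comparing the two Karoubi periodicity sequences or ``aligning periodicities'' appears; the input is group homology, not a K-theoretic diagram chase. Your envisioned alternative route could in principle be pushed through (observe $GW_1^{[2]}(R)=K_1\Sp(R)=H_1\Sp(R)=0$ by perfectness and that the forgetful $GW_2^{[2]}(R)\to K_2(R)$ is surjective, then read off $GW_2^{[3]}(R)=0$ from the $r=2$ Bott sequence), but you have not supplied that argument, and the critical surjectivity would still require Theorem \ref{thm:GW22} together with the identification of the forgetful map. As written, the proposal leaves the decisive step unproved.
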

  
Using   $\mathbb{A}^1$-homotopy theory, Theorems \ref{thm:Degree3iso} and \ref{thm:K3ind} were proven for infinite fields of characteristic not $2$ by Asok--Fasel in \cite{AsokFasel:KODegree}.
\vspace{2ex}

As is the case for most papers on homology stability of groups, we construct a highly connected complex on which our groups act, and we analyse the resulting spectral sequence. 
Our innovation is the use of the complex of non-degenerate unimodular sequences, a subcomplex of the complex of unimodular sequences used in \cite{SuslinNesterenko} and \cite{HutchinsonTao}, \cite{myEuler} to prove optimal homology stability for general linear and special linear groups.
This leads to the introduction of odd rank symplectic groups $\Sp_{2n-1}(R)$, and we prove a homology stability result in Theorem \ref{them:H3stability}
for the extended sequence of groups
$$\Sp_{-1}(R) = \Sp_0(R) \subset \Sp_1(R) \subset \Sp_2(R) \subset \dots \subset \Sp_n(R) \subset \Sp_{n+1}(R)\subset \dots $$
Finally, we use the localisation techniques introduced by the first author in \cite[\S2]{myEuler} and \cite[Appendix D]{myForm1}  to relate the homology of $\Sp_{2n+1}(R)$ to that of $\Sp_{2n}(R)$ in Proposition \ref{prop:HSpLocalization}.
\vspace{1ex}

For most of the paper, we fix a commutative local ring $R$ with infinite residue field and often suppress $R$ from the notation. 
For instance, $\Sp_n$, $U_q$, $\Skew_q^+$ will mean $\Sp_n(R)$, $U_q(R)$, $\Skew_q^+(R)$.
  
\section{The complex of non-degenerate unimodular sequences}
\label{sec:NonDegUniMSeq}

Throughout this paper, ring means commutative ring. Let $R$ be a ring. 
Let $R^*$ denote the group of units of $R$ under multiplication and $\GL_{n}(R)$  the
group of all invertible $n\times n$ matrices with entries in $R$.

Let $\psi_{2n}=\psi_2 \oplus \cdots \oplus \psi_2$ be the standard hyperbolic symplectic form of rank $2n$
$$\psi_{2n} = \left(\begin{smallmatrix}\psi_2 &&&\\ & \psi_2 && \\ && \ddots & \\ &&& \psi_2\end{smallmatrix}\right) = \bigoplus_1^n\psi_2,\hspace{3ex} \psi_2 = \left(\begin{smallmatrix}0 & 1 \\ -1 & 0 \end{smallmatrix}\right).$$

For a ring $R$, the {\em symplectic group} $\Sp_{2n}(R) \subset \GL_{2n}(R)$, is the subgroup 
$$\Sp_{2n}(R) =\{ A \in \GL_{2n}(R)|\ {^t\!A}\, \psi_{2n}\, A = \psi_{2n}\}$$
of $R$-linear automorphisms preserving the form $\psi_{2n}$ 
where $^t\!A$ denotes the transpose matrix of $A$.
We will always consider $\Sp_{2n}(R)$ as a subgroup of $\Sp_{2n+2}(R)$ via the embedding
\begin{equation}
\label{eqn:GSpIncls}
\Sp_{2n}(R) \subset \Sp_{2n+2}(R): A \mapsto \left(\begin{smallmatrix}1 & 0 & 0 \\ 0 & 1 & 0 \\ 0 & 0 & A\end{smallmatrix}\right).
\end{equation}

For $n\geq 0$, the {\em symplectic group of rank $2n+1$} is the subgroup
$$\Sp_{2n+1}(R) = \{A \in \Sp_{2n+2}(R)|\ Ae_1 = e_1\}$$
of $\Sp_{2n+2}(R)$ fixing the first standard basis vector $e_1$.
This is the group of matrices under multiplication
\begin{equation}
\label{eqn:GSpodd}
\left(\begin{smallmatrix}1& c & {^t\!u}\psi M  \\ 0 & 1 & 0 \\  0 & u & M \end{smallmatrix}\right)
\end{equation}
where $\psi = \psi_{2n}$, $M\in \Sp_{2n}(R)$, $u\in R^{2n}$, $c\in R$.
We let $\Sp_{-1}(R)=\{1\}$ be the trivial group. 
The inclusions (\ref{eqn:GSpIncls}) refine to the sequence of inclusions of groups
\begin{equation}
\label{eqn:SpInclusions}
\Sp_{-1}(R) = \Sp_0(R) \subset \Sp_1(R) \subset \Sp_2(R) \subset \dots \subset \Sp_n(R) \subset \Sp_{n+1}(R)\subset \dots
\end{equation}
where
\begin{equation}
\Sp_{2n}(R) \subset \Sp_{2n+1}(R): M \mapsto \left(\begin{smallmatrix}1 & 0 & 0 \\ 0 & 1 & 0 \\ 0& 0 & M \end{smallmatrix}\right), \hspace{3ex} \Sp_{2n-1} (R)\subset \Sp_{2n}(R): M \mapsto M.
\end{equation}
We shall denote the inclusions $\Sp_{r}(R) \subset \Sp_s(R)$ by $\eps^s_r$, or simply by $\eps$ if source and target group are understood, $r\leq s$.
Small rank symplectic groups are as follows
$$\Sp_{-1}(R)=\Sp_0(R) = \{1\},\hspace{2ex} \Sp_1(R) =\left \{\left(\begin{smallmatrix}1 & c \\ 0 & 1\end{smallmatrix}\right)|\  c\in R\right\},\hspace{2ex}\Sp_2(R) = \SL_2(R).$$
We will study homology stability for the sequence of groups (\ref{eqn:SpInclusions}).
\vspace{2ex}

A {\em space} over a ring $R$ is a projective $R$-module of finite rank.
A map of spaces is a map of $R$-modules between spaces.
A submodule $M\subset V$ of a space $V$ is called {\em subspace} if it is a direct factor.
A map of spaces $V \to W$ is called {\em split} if its image is a subspace.
An element $x\in V$ is called {\em unimodular} if it generates a subspace $Rx \subset V$.

A {\em bilinear space} $V$ over a commutative ring $R$ is a space over $R$ equipped with an $R$-bilinear form $V\times V \to R:(x,y)\mapsto \langle x,y\rangle$.
It is called {\em symplectic} if $\langle x,x\rangle =0$ for all $x\in V$.
Note that symplectic forms satisfy $\langle x,y\rangle =- \langle y,x\rangle$.
A symplectic space $V$ is called {\em non-degenerate} or {\em regular} if the adjoint map $V \to V^* = \Hom_R(V,R):x \mapsto \langle x,\phantom{y}\rangle$ is split with kernel of minimal possible rank, i.e.,
if $V$ has even rank then the kernel is required to be  $0$ (in which case the adjoint map is an isomorphism), and if $V$ has odd rank, then the kernel is required to have rank $1$.

From now on, let $R$ be a commutative local ring.
A basis $v=(v_1,...,v_q)$ of a space $V$ over $R$ of rank $q$ determines the dual basis $v^{\vee}=(v^{\vee}_1,...,v^{\vee}_q)$ of $V^*$ by the property $v^{\vee}_i(v_j) = \delta_{i,j}$.
If $V$ is equipped with a symplectic form then the adjoint map $V \to V^*$ in the basis $v$ and $v^{\vee}$ is the {\em Gram matrix} 
$$\Gamma(v) = (\langle v_i,v_j\rangle)_{i,j=1}^q$$
 of $v$.
In particular, the form is non-degenerate if and only if $\Gamma(v)$ is split with kernel of rank $0$ for $q$ even and of rank $1$ for $q$ odd.
A {\em symplectic basis} of a non-degenerate symplectic space $V$ of rank $2n$ is an ordered basis $v_1,v_2,....,v_{2n-1},v_{2n}$ of $V$ such that $\langle  v_{2r-1},v_{2r}\rangle =1$ and $\langle v_i,v_j\rangle =0$ for $r=1,...,n$, $1 \leq i<j\leq 2n$, $(i,j)\neq (2r-1,2r)$.
That is, a symplectic basis is an ordered basis whose Gram matrix is $\psi_{2n}$.
Recall \cite[Corollary 3.5]{MilnorHusemoller} that every non-degenerate symplectic space of even rank over a commutative local ring has a symplectic basis.

\begin{remark}
	\label{rmk:OddNonDeg}
	Let $v=(v_1,...,v_q)$ be a basis of a symplectic space $(V,\langle\phantom{x},\phantom{y}\rangle)$ of odd rank $q$.
	If $(v_1,...,v_{q-1})$ generates a non-degenerate subspace $W \subset V$ then $V$ is non-degenerate.
	This is because $V = W \perp W^{\perp}$ as $W$ is non-degenerate, and the orthogonal complement $W^{\perp} = \{x\in V|\ \langle x,y\rangle =0\ \forall y\in V\}$ of $W$ in $V$ is $W^{\perp}=Rx$ for some unimodular $x\in V$.
	Now, the Gram matrix in the basis $(v_1,...,v_{q-1},x)$ is $\left(\begin{smallmatrix}\Gamma(v_1,...,v_{q-1}) & 0 \\ 0 & 0\end{smallmatrix}\right)$ which is split of rank $q-1$.
\end{remark}


\begin{lemma}
	\label{lem:BasicsOddV}
	Let $R$ be a local ring and  ($R^{2n},\langle\phantom{x},\phantom{y}\rangle $)  a non-degenerate symplectic space.
	Let $V \subset R^{2n}$ be a non-degenerate subspace of rank $2r+1$. Then
	\begin{enumerate}
		\item
		$V\cap V^{\perp} = Rx$ for some unimodular $x\in R^{2n}$.
		\item
		$V$ contains a non-degenerate subspace of rank $2r$.
		\item
		\label{lem:BasicsOddV:item3}
		If $V_0 \subset V$ is a non-degenerate subspace of rank $2r$, then 
		$V = V_0 \perp Rx$ with $Rx=V\cap V^{\perp}$, $x$ unimodular, and there is $y\in R^{2n}$ such that $y \in V_0^{\perp}$ and $\langle x,y\rangle =1$.
	\end{enumerate}
\end{lemma}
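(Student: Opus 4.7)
The plan is to prove (1)--(3) in order, exploiting at each step that over the local ring $R$ a projective module of finite rank is free, so non-degeneracy and unimodularity can be detected after reduction modulo the maximal ideal $\mathfrak m$.

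For (1), I would observe that $V \cap V^{\perp}$ is precisely the kernel of the adjoint map $V \to V^{*}$ of the restricted form. By the definition of non-degeneracy in odd rank, this kernel is a direct summand of $V$ of rank $1$. Any such rank-$1$ summand is generated by a unimodular element $x$, and since $V$ is itself a direct summand of $R^{2n}$, the element $x$ is unimodular in $R^{2n}$.

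For (2), I extend $x$ to a basis $(v_1,\dots,v_{2r},x)$ of $V$ and set $V_0 = \Span(v_1,\dots,v_{2r})$. Because $x \in V^{\perp}$, the Gram matrix in this basis is block-diagonal with blocks $\Gamma(v_1,\dots,v_{2r})$ and $0$. Non-degeneracy of $V$ forces the full Gram matrix to have rank $2r$ modulo $\mathfrak m$, which is possible only if the $2r\times 2r$ upper block is invertible over the residue field, hence over $R$; so $V_0$ is non-degenerate. For (3), non-degeneracy of $V_0\subset V$ gives $V = V_0 \perp (V \cap V_0^{\perp})$ with complement of rank $1$, and $V^{\perp}\subseteq V_0^{\perp}$ yields $Rx = V\cap V^{\perp} \subseteq V\cap V_0^{\perp}$; comparing ranks mod $\mathfrak m$ shows equality, so $V = V_0 \perp Rx$. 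For the existence of $y$, non-degeneracy of $V_0$ inside $R^{2n}$ gives $R^{2n} = V_0 \perp V_0^{\perp}$ with $V_0^{\perp}$ a non-degenerate symplectic space of even rank. The element $x$ lies in the direct summand $V_0^{\perp}$ and is unimodular there, so the linear form $\langle x,-\rangle \in (V_0^{\perp})^{*}$ is unimodular, and the adjoint isomorphism $V_0^{\perp}\cong (V_0^{\perp})^{*}$ supplies the required $y \in V_0^{\perp}$ with $\langle x,y\rangle = 1$.

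The main obstacle is bookkeeping: keeping straight the three orthogonal complements $V\cap V^{\perp}$, $V\cap V_0^{\perp}$, and $V_0^{\perp}\subseteq R^{2n}$, and verifying at each transition that unimodularity is preserved under passage to a direct summand and that ``split with minimal kernel'' really does give invertibility of the relevant Gram block. All of these verifications reduce, via the local ring hypothesis, to elementary rank computations over the residue field.
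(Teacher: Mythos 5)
Your proof is correct and follows essentially the same strategy as the paper: part (1) identifies $V\cap V^\perp$ as the rank-one kernel of the split adjoint map, part (2) takes a complement of $Rx$ in $V$, and part (3) uses the orthogonal decomposition of $V$ along $V_0$ and the non-degeneracy of $V_0^\perp$ in $R^{2n}$ to produce $y$. The only cosmetic difference is that in parts (2) and (3) the paper phrases things in terms of the induced form on the quotient $V/(V\cap V^\perp)$ and the isometry $V_0\cong V/(V\cap V^\perp)$, whereas you work with an explicit basis, a block Gram matrix, and a rank count mod $\mathfrak m$; the content is the same.
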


\begin{proof}
	\begin{enumerate}
		\item
The $R$-linear map $\rho: V \to V^*: v \mapsto \langle v,\ \ \rangle$ has kernel $\ker(\rho)=V\cap V^{\perp}$ which is a rank $1$ subspace as $V$ is non-degenerate of odd rank.
		\item
		The form on $V$ induces a unique non-degenerate form on $V/(V\cap V^{\perp})$ such that the quotient map $V \to V/(V\cap V^{\perp})$ preserves forms.
		Any $V_0 \subset V$ mapping isomorphically onto $V/(V\cap V^{\perp})$ is a non-degenerate subspace of rank $2r$.
		\item
		Any non-degenerate $V_0\subset V$ of rank $2r$ induces a map $V_0 \to V/(V\cap V^{\perp})$ preserving forms. Since $V_0$ is non-degenerate and both have the same rank, this map is an isometry, and $V = V_0 \perp (V\cap V^{\perp})$.
		We have $V\cap V^{\perp}=Rx$ for some unimodular $x$.
Note that $x\in V_0^{\perp}$.
Since the symplectic form on $R^{2n}$ is non-degenerate
its restriction to $V_0^{\perp}$ is non-degenerate, too.
In particular, there is $y\in V_0^{\perp}$ such that $\langle x,y\rangle=1$.
	\end{enumerate}
\end{proof}

The following is well-known when the subspaces have even rank.

\begin{corollary}[Witt's Theorem]
	\label{cor:WittThm}
	Let $R$ be a local ring and  ($R^{2n},<,>$) a non-degenerate symplectic space.
	Let $V,W \subset R^{2n}$ be non-degenerate subspaces. 
	If $V$ and $W$ have the same rank  then there is an isometry $V \cong W$.
	Moreover, any isometry $g_0:V \cong W$ extends to an isometry $g:R^{2n} \to R^{2n}$ such that $g_{|V}=g_0$.
\end{corollary}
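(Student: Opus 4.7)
The plan is to reduce the odd-rank case to the even-rank case, which is classical: for non-degenerate symplectic spaces of even rank over a local ring, the existence of a symplectic basis (cited already from \cite{MilnorHusemoller}) produces an isometry between any two of the same rank, and an isometry $g_0\colon V\to W$ between non-degenerate even-rank subspaces extends because the orthogonal decompositions $R^{2n}=V\perp V^\perp=W\perp W^\perp$ have non-degenerate symplectic complements of the same (even) rank $2n-\rk V$, to which the even-rank existence statement applies again.

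For the odd-rank case, suppose $V,W\subset R^{2n}$ are non-degenerate of rank $2r+1$. First I would use Lemma \ref{lem:BasicsOddV}(2) to pick non-degenerate subspaces $V_0\subset V$ and $W_0\subset W$ of even rank $2r$, and then Lemma \ref{lem:BasicsOddV}(3) to write $V=V_0\perp Rx$ and $W=W_0\perp R\bar x$ where $Rx=V\cap V^\perp$, $R\bar x=W\cap W^\perp$, together with vectors $y\in V_0^\perp$, $\bar y\in W_0^\perp$ satisfying $\langle x,y\rangle=1=\langle\bar x,\bar y\rangle$. Then $H=Rx\oplus Ry\subset V_0^\perp$ is a hyperbolic plane, so $V_0\perp H$ is a non-degenerate \emph{even-rank} subspace of $R^{2n}$ containing $V$, and similarly $W_0\perp H'$ contains $W$ with $H'=R\bar x\oplus R\bar y$. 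The existence of an isometry $V\cong W$ now follows by building it from the even-rank isometry $V_0\cong W_0$ and sending $x\mapsto\bar x$: this is automatically a form-preserving map because $x$ lies in the radical of $V$ (so $\langle x,V\rangle=0$) and similarly for $\bar x$.

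For the extension of a given isometry $g_0\colon V\to W$, I would first note that $g_0$ must send the radical $V\cap V^\perp=Rx$ isometrically onto $W\cap W^\perp$, and the image $g_0(V_0)\subset W$ is non-degenerate of rank $2r$; replacing $W_0$ by $g_0(V_0)$ and $\bar x$ by $g_0(x)$, we may arrange that $g_0$ restricts to an isometry $V_0\cong W_0$ and sends $x$ to $\bar x$. Then I would extend $g_0$ to a map $g_0'\colon V_0\perp H\to W_0\perp H'$ by the additional assignment $y\mapsto\bar y$, and verify it is an isometry by checking the pairings $\langle y,y\rangle=0=\langle\bar y,\bar y\rangle$ (trivial for symplectic forms), $\langle y,V_0\rangle=0=\langle\bar y,W_0\rangle$ (by choice of $y,\bar y$), and $\langle x,y\rangle=1=\langle\bar x,\bar y\rangle$. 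Finally I would invoke the already-established even-rank case of the theorem to extend the isometry $g_0'$ between non-degenerate even-rank subspaces to an isometry $g$ of $R^{2n}$, which then extends $g_0$ as desired.

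I do not expect any serious obstacle: once Lemma \ref{lem:BasicsOddV} is in hand, the odd-rank case is engineered to reduce to the even-rank case by enlarging $V$ and $W$ to $V\perp Ry$ and $W\perp R\bar y$ respectively. The only mildly subtle point is that the freedom to choose $y$ and $\bar y$ independently lets us force compatibility with $g_0$ in the extension step, so the even-rank theorem can be applied without modification.
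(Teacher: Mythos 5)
Your proof takes essentially the same approach as the paper: reduce the odd-rank case to the even-rank case by using Lemma~\ref{lem:BasicsOddV} to write $V=V_0\perp Rx$, enlarge $V$ and $W$ to non-degenerate even-rank subspaces $V\perp Ry$ and $W\perp R\bar y$, and then invoke the even-rank case. You are in fact slightly more careful than the paper at the extension step, where you explicitly re-choose $W_0=g_0(V_0)$ and $\bar x=g_0(x)$ before adjoining $\bar y$; the paper leaves this compatibility of the decomposition of $W$ with $g_0$ implicit, but it is needed for the extended map $x'\mapsto y'$ to be an isometry.
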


\begin{proof}
If $V$ and $W$ have even rank, they and their orthogonals $V^{\perp}$ and $W^{\perp}$ all have symplectic basis.
Hence there are isometries $V \cong W$ and $V^{\perp} \cong W^{\perp}$.
Any isometry $g_0: V \cong W$ extends to an isometry $g=g_0\perp g_1$ of $R^{2n}=V\perp V^{\perp} = W \perp W^{\perp}$ by choosing an isometry $g_1:V^{\perp} \cong W^{\perp}$.
	
Now assume that $V$ and $W$ have odd rank $2r+1$.
By Lemma \ref{lem:BasicsOddV}$(1)$, we have $V \cap V^{\perp} = Rx$, $W \cap W^{\perp} = Ry$ for 
some unimodular $x,y\in R^{2n}$. By Lemma \ref{lem:BasicsOddV}$(2)$, we can choose non-degenerate subspaces
$V_0 \subset V$ and $W_0\subset W$ of rank $2r$.
Note that the linear map sending a symplectic basis of $V_0$ to a symplectic basis of $W_0$ and $x$ to $y$ is an isometry $V \cong W$.
Now, let $g_0:V \cong W$ be any isometry.
	By Lemma \ref{lem:BasicsOddV}$(3)$, we can choose $x',y'\in R^{2n}$ such that $x'\in V_0^{\perp}$ and $y'\in W_0^{\perp}$ and such that $\langle x,x'\rangle = \langle y,y'\rangle =1$.
	Then the isometry $g_0:V\cong W$ extends to an isometry $V\perp Rx' \cong W \perp Ry'$ by sending $x'$ to $y'$.
	Since the latter two spaces are non-degenerate of even rank, the last isometry extends to an isometry $g$ of $R^{2n}$.
\end{proof}

Let $q\geq 0$ be an integer.
A {\em unimodular sequence} of length $q$ in $R^{2n}$ is a
sequence $(v_1,...,v_q)$ of $q$ vectors $v_1,...,v_q$ in $R^{2n}$ such that each subsequence of length $r \leq \min(q,2n)$ generates a subspace of rank $r$.
Unimodular sequences were used in \cite{SuslinNesterenko} and \cite{HutchinsonTao}, \cite{myEuler} to prove optimal homology stability for general linear and special linear groups.
 
From now on consider $R^{2n}$ equipped with the standard symplectic form where the standard basis $e_1,...,e_{2n}$ has Gram matrix $\Gamma(e_1,...,e_{2n}) = \psi_{2n}$.
A unimodular sequence $(v_1,...,v_q)$ of length $q$ in $R^{2n}$  is called {\em non-degenerate} if
any subsequence of length $r \leq \min(q,2n)$ is a basis of a non-degenerate subspace of the symplectic space $R^{2n}$.
For an integer $q\geq 0$, let 
$$U_q(R^{2n}) = \{v =(v_1,...,v_q)|\ v\ \text{non-degenerate unimodular in } R^{2n}\}$$
 be the set of non-degenerate unimodular sequences 
 of length $q$ in $R^{2n}$.
The set $U_0(R^{2n})$ is the singleton set consisting of the empty sequence, and the set $U_q(R^0)$ is the singleton set with unique element the sequence $(0,0,...,0)$ of length $q$.
For a $q$-tuple $v=(v_1,...,v_q)$ of vectors $v_i\in R^{2n}$ and an ordered subset $I=\{i_1 < \ldots <i_r\} \subset \{1,...,q\}$, we write $v_{I}=(v_{i_1},\ldots,v_{i_r})$.
In view of Remark \ref{rmk:OddNonDeg}, a sequence $v =(v_1,...,v_q)$ of vectors in $R^{2n}$ is in $U_q(R^{2n})$ if and only if it is unimodular and for any subset $I\subset \{1,...,q\}$ of even cardinality $\leq \min(q,2n)$, the Gram matrix $\Gamma(v_I)$ is invertible.

We define a chain complex $(C_*(R^{2n}),d)$ where
$C_q(R^{2n})=\Z[U_q(R^{2n})]$ is the free abelian group generated by the set $U_q(R^{2n})$ and the $\Z$-linear differential is defined on the basis by
\begin{equation}
\label{CompxDiff}
d_q(v_1,...,v_q) = \sum_{i=1}^q(-1)^{i+1}(v_1,...,\hat{v}_i,...,v_q).
\end{equation}

Let $v =(v_1,...,v_q) \in U_q(R^{2n})$. 
A vector $x\in R^{2n}$ is said to be {\em in good position with respect to $v$} if $(v,x)\in U_{q+1}(R^{2n})$.

\begin{remark}
\label{rmk:goodPisitionk}
	Let $(R, m)$ be a local ring.
	If $k=R/m$ denotes the residue field of $R$, then $x$ is in good position with respect to $v$ if and only if its class $\bar{x}\in k^{2n}$ modulo $m$ is in good position with respect to $\bar{v}=(\bar{v}_1,...,\bar{v}_q) \in U_q(k^{2n})$ over $k$.
\end{remark}

In this paper, a matrix $A=(a_{ij}) \in M_{q}(R)$ is called {\em skew symmetric} if $a_{ij}=-a_{ji}$ and $a_{ii}=0$ for all $1\leq i,j \leq q$.
For $q\geq 0$, let $\Skew_q(R) \subset M_q(R)$ be the set of skew symmetric $q\times q$-matrices with entries in $R$. 
For a skew symmetric matrix $A=(a_{ij}) \in M_{2n}(R)$, 
we denote by
$\Pf(A)$ its {\em Pfaffian}.
It can be recursively defined by the formula
\begin{equation}\label{eqn:pafdef}
\Pf(A) = \sum_{i=1}^{2n-1}(-1)^{i+1}a_{i,2n}\Pf(A_{\widehat {i,2n}})
\end{equation}
where $A_{\widehat {i j}}$ denotes the matrix $A$
with both the $i$-th  and $j$-th rows and columns removed,
and the Pfaffian of the unique $0\times 0$ matrix is $1$. 
For instance,
$$\Pf\begin{pmatrix}0 & a\\ -a & 0 \end{pmatrix}=a,\hspace{4ex}
\Pf\left(\begin{smallmatrix} 0 & a & b & c \\ -a & 0 & d & e \\ -b & -d & 0 & f\\ -c & -e & -f & 0\end{smallmatrix}\right) = af - be +cd.$$
The Pfaffian satisfies $\Pf(A)^2=\det(A)$, $\Pf({^t\!U}AU) = \det(U)\Pf(A)$, $\Pf(cA) = c^n\Pf(A)$ and $\Pf(\psi_{2n})=1$ for all $A\in \Skew_{2n}(R)$ and $c\in R$.

\begin{lemma}
	\label{lem:goodPosition}
	Let $k$ be a  field, $q\geq 0$, $n\geq 1$ integers, and let $v =(v_1,...,v_q) \in U_q(k^{2n})$ be a non-degenerate unimodular sequence of length $q$ in $k^{2n}$ equipped with the standard symplectic form $\langle\phantom{x},\phantom{y}\rangle$. 
	Then there is a finite set of subspaces $V_1,...,V_s \subset k^{2n}$ of rank $<2n$ such that every $x \in k^{2n} - \bigcup_{j=1}^sV_j$ is in good position with respect to $v$.
\end{lemma}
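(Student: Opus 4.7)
The plan is to enumerate precisely the conditions on $x$ needed for $(v,x) = (v_1,\ldots,v_q,x)$ to lie in $U_{q+1}(k^{2n})$, and to show each failure mode carves out either a proper subspace or a hyperplane of $k^{2n}$. Since $v$ is already non-degenerate unimodular, only subsequences that actually involve $x$ impose conditions. These are $(v_I,x)$ for subsets $I \subset \{1,\ldots,q\}$ with $|I|+1 \leq \min(q+1,2n)$, i.e.\ $|I| \leq \min(q,2n-1)$.

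I split these conditions into two families. For the unimodularity part, the subsequence $(v_I,x)$ has rank $|I|+1$ iff $x \notin \mathrm{span}(v_I)$; each such span is a proper subspace of rank $|I| < 2n$, giving finitely many bad subspaces. For the non-degeneracy part I use the parity of $|I|$. If $|I|$ is even, then $\Gamma(v_I)$ is already invertible (since $v_I$ is a non-degenerate even-rank subsequence of $v$), so the skew-symmetric $(|I|+1)\times(|I|+1)$ matrix $\Gamma(v_I,x)$ has rank exactly $|I|$, the correct rank for a non-degenerate subspace of odd rank $|I|+1$, and no condition on $x$ arises. The nontrivial case is $|I|$ odd, where $\Gamma(v_I,x)$ must be invertible, i.e.\ $\Pf(\Gamma(v_I,x)) \neq 0$.

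The main calculation is to show the Pfaffian condition is itself a hyperplane condition. Writing $I = \{i_1 < \cdots < i_m\}$ with $m$ odd and expanding along the last row/column via the recursion (\ref{eqn:pafdef}),
\begin{equation*}
\Pf(\Gamma(v_I,x)) = \sum_{j=1}^{m}(-1)^{j+1}\langle v_{i_j},x\rangle\,\Pf(\Gamma(v_{I\setminus\{i_j\}})).
\end{equation*}
Each $v_{I\setminus\{i_j\}}$ is a subsequence of $v$ of even length $m-1 \leq 2n$, so by non-degeneracy of $v$ the Gram matrix $\Gamma(v_{I\setminus\{i_j\}})$ is invertible and its Pfaffian is nonzero. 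By bilinearity of the form, $\Pf(\Gamma(v_I,x)) = \langle w_I, x\rangle$ where $w_I = \sum_j (-1)^{j+1}\Pf(\Gamma(v_{I\setminus\{i_j\}}))\, v_{i_j}$. Since the $v_{i_j}$ are linearly independent and every scalar coefficient is nonzero, $w_I \neq 0$. As the symplectic form on $k^{2n}$ is non-degenerate, $x \mapsto \langle w_I,x\rangle$ is a nonzero linear form whose zero locus is a hyperplane of rank $2n-1$.

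Taking the finite union of the spans $\mathrm{span}(v_I)$ from the unimodularity family together with the hyperplanes $\{x : \langle w_I,x\rangle = 0\}$ from the Pfaffian family yields the desired $V_1,\ldots,V_s$, all of rank $< 2n$. The one step that needs care is the Pfaffian expansion above, which collapses an a priori high-degree polynomial invertibility condition on $x$ to a single linear condition, and makes essential use of the non-degeneracy of all even-length subsequences of $v$.
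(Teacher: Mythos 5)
Your proof is correct and takes essentially the same route as the paper: for each subset $I$ with $|I|<2n$, the only nontrivial condition (the case $|I|$ odd) reduces via the Pfaffian expansion (\ref{eqn:pafdef}) to the single linear condition $\langle w_I,x\rangle\neq 0$ with $w_I$ nonzero because all the coefficients $\Pf(\Gamma(v_{I\setminus\{i_j\}}))$ are nonzero and the $v_{i_j}$ are independent. The only (harmless) difference is organizational: you treat unimodularity of $(v_I,x)$ as a separate family of conditions $x\notin\mathrm{span}(v_I)$ for all $I$, whereas the paper folds it in — when $|I|$ is odd, $\Pf(\Gamma(v_I,x))\neq 0$ already forces $(v_I,x)$ to be linearly independent, and when $|I|$ is even the span condition alone gives both unimodularity and non-degeneracy (as in Remark~\ref{rmk:OddNonDeg}), so the paper only needs one excluded subspace $V_I$ per subset $I$. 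Your extra subspaces are redundant but introduce no gap.
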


\begin{proof}
Let $I\subset \{1,...,q\}$ be a subset of cardinality $<2n$.
If $I$ has even cardinality, let $V_I \subset k^{2n}$  be the subspace generated by $v_I$.
Then for any $x \in k^{2n}\setminus V_I$, the vectors $v_i$, $i\in I$ and $x$ generate a non-degenerate subspace of $k^{2n}$, and $\dim V_I <2n$.
If $I$ has odd cardinality, consider the $k$-linear map $k^{2n} \to k:$
$$x \mapsto \Pf(\Gamma(v_I,x)) = \langle u,x\rangle$$
where $u =  \sum_{i\in I}\eps_i  \Pf(\Gamma(v_{I-\{i\}}))v_i$ and $\eps_i$ is $1$ or $-1$;
see (\ref{eqn:pafdef}).
Let $V_I=u^{\perp}$ be the kernel of that map.
Since each $ \Pf(\Gamma(v_{I-\{i\}}))$ is a unit ($|I|$ being odd), the vector $u$ is unimodular and $\dim(u^{\perp}) < 2n$.
Then for any $x \in k^{2n}\setminus V_I$, the vectors $v_i$, $i\in I$ and $x$ generate a non-degenerate subspace of $k^{2n}$, and $\dim V_I <2n$.
Now any $x\in k^{2n} \setminus \bigcup_{I \subset \{1,...,q\},\ |I|<2n} V_I$ is in good position with respect to $v$.
\end{proof}

\begin{corollary}
	\label{cor:C_*acyclic}
	Let $(R,m)$ be a local ring with infinite residue field.
	Then the chain complex $C_*(R^{2n})$ is acyclic, that is, for all $q\geq 0$, we have
	$$H_q(C_*(R^{2n})) = 0.$$
\end{corollary}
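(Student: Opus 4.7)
The plan is to construct a chain null-homotopy on $C_*(R^{2n})$, so that every cycle becomes a boundary. Given a cycle $z = \sum_{i=1}^N n_i v^{(i)} \in C_q(R^{2n})$ with $d_q z = 0$ and $q \geq 1$, I will look for a single vector $x \in R^{2n}$ that is in good position with respect to every basis element $v^{(i)}$ simultaneously. Once $x$ is fixed, set $s(z) := \sum_i n_i (v^{(i)}, x) \in C_{q+1}(R^{2n})$; a direct expansion of the differential \eqref{CompxDiff} gives
\begin{equation*}
d_{q+1}(v, x) \;=\; s(d_q v) + (-1)^q v
\end{equation*}
for any $v \in U_q(R^{2n})$ with which $x$ is in good position, where on the right $s$ is applied summand-wise to $d_q v$, which is legitimate because each summand is a sub-sequence of $v$ and hence inherits good position with $x$. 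Since $d_q z = 0$, this identity becomes $z = (-1)^q d_{q+1}(s(z))$, exhibiting $z$ as a boundary.

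The remaining task is to produce such an $x$. By Remark \ref{rmk:goodPisitionk}, good position only depends on the reduction modulo the maximal ideal $m$, and since $R$ is local any lift of a nonzero vector of $k^{2n} = R^{2n}/mR^{2n}$ is automatically unimodular in $R^{2n}$. So it suffices to find $\bar{x} \in k^{2n}$ in good position with respect to each reduced sequence $\bar{v}^{(i)} \in U_q(k^{2n})$. Applying Lemma \ref{lem:goodPosition} to each $\bar{v}^{(i)}$ provides a finite collection of proper subspaces $V^{(i)}_j \subsetneq k^{2n}$ (of rank $<2n$) whose union is the locus of bad positions for $\bar{v}^{(i)}$. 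Because $k$ is infinite, $k^{2n}$ is not covered by the finitely many proper linear subspaces $\{V^{(i)}_j\}_{i,j}$, so a vector $\bar{x}$ outside this union exists and does the job.

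The degree-zero case is handled immediately: for $n \geq 1$ the element $(e_1) \in U_1(R^{2n})$ satisfies $d_1(e_1) = 1 \in \Z = C_0(R^{2n})$, so $d_1$ is surjective and $H_0 = 0$ (this is also the $q = 0$ instance of the homotopy argument with $x = e_1$). The only mildly delicate points are the sign computation and the verification that $s$ is well-defined on all basis elements appearing in $d_q z$, but both are routine. The substantive content is already contained in Lemma \ref{lem:goodPosition}; the infinite-residue-field hypothesis enters exactly once, to guarantee that a finite union of proper subspaces does not exhaust $k^{2n}$.
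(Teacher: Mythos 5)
Your proof is correct and follows essentially the same strategy as the paper: reduce modulo $m$, apply Lemma \ref{lem:goodPosition} and Remark \ref{rmk:goodPisitionk} to find a single vector $x$ in good position with respect to every unimodular sequence appearing in the cycle, and use adjunction of $x$ to produce a chain contraction. The only cosmetic difference is that you append $x$ (yielding the sign $(-1)^q$) whereas the paper prepends $x$ (yielding $d(x,\xi)=\xi-(x,d\xi)$); both work identically.
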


\begin{proof}
	Let $\xi=\sum_{i=1}^rn_iv_i\in C_q(R^{2n})$ be a cycle where $n_i\in \Z$ and $v_i\in U_q(R^{2n})$.
	Since $R/m$ is infinite, there is $x\in R^{2n}$ which is in good position with respect to all $v_i$, $i=1,...,r$, by Lemma \ref{lem:goodPosition} and Remark \ref{rmk:goodPisitionk}.
	Then $(x,\xi) \in C_{q+1}$ and $d (x,\xi) = \xi - (x,d\xi) = \xi$, that is, $\xi$ is a boundary.
\end{proof}

Let $\Skew_q^+(R)\subset \Skew_q(R)$
 be the subset of those skew symmetric matrices $A$ such that for all $\emptyset \neq I \subset \{1,...,q\}$ of even cardinality, the submatrix $A_I$ with columns and rows in $I$ is invertible.
For $q=0,...,2n+1$, the map 
$$\Gamma: U_q(R^{2n})/\Sp_{2n}(R) \rightarrow \Skew_q(R): 
 v=(v_1,...,v_q) \mapsto \Gamma(v) =  (\langle v_i,v_j\rangle)_{i,j}$$
sending a non-degenerate unimodular sequence to its Gram matrix  has image in $\Skew^+_q(R)$.
\vspace{1ex}

For integers $q \geq j \geq 1$, a sequence $v$ of length $q$ and $A \in \Skew_q$ a skew-symmetric matrix, denote by $v_{\hat{j}}$ the sequence of length $q-1$ obtained from $v$ by removing the $j$-th entry, and denote by $A_{\widehat{j}}\in \Skew_{q-1}$ the skew-symmetric matrix obtained from $A$ by removing the $j$-th row and column.

\begin{construction}
\label{const:GammaSections}
For integers $0 \leq i \leq 2n+1$ there are unique sections
$$V_{i,2n} : \Skew_{i}^+(R) \to U_i(R^{2n})$$
of $\Gamma$ such that
\vspace{1ex}

\begin{enumerate}
\item
\label{const:GammaSections:item1}
$V_{0,2n}$ is the empty sequence, and $V_{1,0}=(0)$.
\vspace{1ex}

\item
\label{const:GammaSections:item2}
$V_{i,2n}(A) = V_{i,2m}(A)$ for $i\leq 2n \leq 2m$ under the standard embedding $R^{2n} \subset R^{2m}: e_i \mapsto e_i$.
\vspace{1ex}

\item
\label{const:GammaSections:item3}
$V_{i+1,2n}(A)_{\widehat{i+1}} = V_{i,2n}(A_{\widehat{i+1}})$,  $A\in \Skew_{i+1}^+(R)$, $0\leq i \leq 2n$.
\vspace{1ex}

\item
\label{const:GammaSections:item4}
For $A \in \Skew^+_{2n+1}(R)$ we have
$$V_{2n+1,2n}(A) = (v_1,...,v_{2n},w_{2n+1}),\hspace{3ex} V_{2n+1,2n+2}(A) = (v_1,...,v_{2n},v_{2n+1})$$
where 
$(v_1,...,v_{2n}) = V_{2n,2n}(A_{\widehat{2n+1}})$, 
$w_{2n+1} \in R^{2n}$ is the unique element such that $\langle v_i,w_{2n+1}\rangle = A_{i,2n+1}$ for all $0 \leq i \leq 2n$, 
and 
$$v_{2n+1}=w_{2n+1}+ e_{2n+1}.$$

\item
\label{const:GammaSections:item5}
For $A \in \Skew^+_{2n+2}(R)$, 
write $V_{2n+1,2n+2}(A_{\widehat{2n+2}}) = (v_1,...,v_{2n},v_{2n+1})$ and 
$V_{2n+1,2n}(A_{\widehat{2n+1}}) = (v_1,...,v_{2n},w_{2n+2})$. 
Then $V_{2n+2,2n+2}(A) = (v_1,...,v_{2n+2})$ with
$$
v_{2n+2} = w_{2n+2}
+  \left(A_{2n+1,2n+2}-\langle v_{2n+1},w_{2n+2}\rangle\right) e_{2n+2}.
$$
\end{enumerate}
From the recursive nature of the construction satisfying (\ref{const:GammaSections:item1}) - (\ref{const:GammaSections:item5}) it is clear that the sections exist and are unique. 
\end{construction}

\begin{lemma}
	\label{lem:GammaBij}
For $0 \leq q \leq 2n+1$ the following map is bijective
	$$\Gamma:U_q(R^{2n})/\Sp_{2n}(R) \to \Skew_q^+(R).$$
\end{lemma}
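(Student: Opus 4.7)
The plan is to split the claim into surjectivity and injectivity. Surjectivity is already effectively built into Construction \ref{const:GammaSections}: the maps $V_{q,2n}$ are defined by recursion to be sections of $\Gamma$, so for every $A\in\Skew_q^+(R)$ the sequence $V_{q,2n}(A)$ maps to $A$. One still has to check that the output actually lies in $U_q(R^{2n})$, i.e.\ is non-degenerate unimodular, but this is a consequence of the defining property $A \in \Skew_q^+(R)$: for any even subset $I \subset \{1,\dots,q\}$ with $|I| \leq \min(q,2n)$, invertibility of $A_I$ forces $\Gamma(V_{q,2n}(A)_I)$ to be invertible, hence $V_{q,2n}(A)_I$ spans a rank-$|I|$ non-degenerate subspace (the existence of a non-zero linear relation would make the Gram submatrix singular), and odd-length subsequences of length $r<\min(q,2n)$ are then handled by extending to length $r+1$ and applying Remark \ref{rmk:OddNonDeg}.

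For injectivity, I would use Witt's theorem (Corollary \ref{cor:WittThm}) as the main tool. Suppose $v,w \in U_q(R^{2n})$ satisfy $\Gamma(v)=\Gamma(w)=A$. I handle the two cases separately. When $q \leq 2n$, the sequences $v$ and $w$ generate non-degenerate subspaces $V,W \subset R^{2n}$ of rank $q$ by definition of $U_q$. The $R$-linear isomorphism $\varphi: V \to W$ determined by $v_i \mapsto w_i$ preserves the symplectic form because the Gram matrices agree, so $\varphi$ is an isometry. By Corollary \ref{cor:WittThm}, $\varphi$ extends to $g \in \Sp_{2n}(R)$, and by construction $g\cdot v = w$.

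When $q=2n+1$, any subsequence of length $2n$ is a basis of a non-degenerate rank-$2n$ subspace of $R^{2n}$, hence of all of $R^{2n}$. Apply the previous case to the truncations $(v_1,\dots,v_{2n})$ and $(w_1,\dots,w_{2n})$ to obtain $g\in\Sp_{2n}(R)$ with $g v_i=w_i$ for $i=1,\dots,2n$. It remains to show $gv_{2n+1}=w_{2n+1}$. Since $(v_1,\dots,v_{2n})$ is a basis and the form on $R^{2n}$ is non-degenerate, the map $R^{2n}\to R^{2n}:\ x\mapsto(\langle v_1,x\rangle,\dots,\langle v_{2n},x\rangle)$ is an isomorphism, so $v_{2n+1}$ is uniquely determined by the values $A_{i,2n+1}=\langle v_i,v_{2n+1}\rangle$; likewise $w_{2n+1}$ is uniquely determined by $A_{i,2n+1}=\langle w_i,w_{2n+1}\rangle$. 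The identities $\langle w_i,gv_{2n+1}\rangle=\langle g v_i,gv_{2n+1}\rangle=\langle v_i,v_{2n+1}\rangle=A_{i,2n+1}=\langle w_i,w_{2n+1}\rangle$ then force $gv_{2n+1}=w_{2n+1}$.

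The only genuine subtlety is packaging Witt's theorem for the odd-length overflow case $q=2n+1$, where the sequence has more entries than $\dim R^{2n}$; once one notices that the extra vector is pinned down by the (non-degenerate) pairing against the first $2n$ entries, the statement reduces cleanly to the even-rank application of Corollary \ref{cor:WittThm}. Verifying that the inductively constructed sections $V_{q,2n}(A)$ land in $U_q(R^{2n})$ is essentially bookkeeping on the Pfaffian/Gram data and should not require any new idea.
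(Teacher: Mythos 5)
Your proof is correct and follows essentially the same route as the paper: surjectivity via the sections of Construction \ref{const:GammaSections}, injectivity for $q\le 2n$ via Witt's theorem (Corollary \ref{cor:WittThm}), and for $q=2n+1$ by first matching the length-$2n$ truncations and then pinning down the last vector via the bijective adjoint map $R^{2n}\to R^{2n}$, $x\mapsto(\langle v_i,x\rangle)_{i=1}^{2n}$. The only cosmetic difference is that the paper reduces WLOG to $(v_1,\dots,v_{2n})=(w_1,\dots,w_{2n})$ while you keep the isometry $g$ explicit; otherwise it is the same argument.
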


\begin{proof}
Surjectivity for $0 \leq q \leq 2n+1$ follows from the existence of the sections in Construction \ref{const:GammaSections}.

For injectivity, let $v=(v_1,...,v_q)$ and $w=(w_1,...,w_q)$ be in $U_q(R^{2n})$ and assume that $\Gamma(v)=\Gamma(w)$.
	First assume $q\leq 2n$.
	Then $v$ and $w$ span non-degenerate subspaces $V$ and $W$ of $R^{2n}$.
	Since $\Gamma(v)=\Gamma(w)$, the linear map sending $v_i$ to $w_i$ is an isometry $V\cong W$.
	By Witt's theorem (Corollary \ref{cor:WittThm}), this isometry extends to an isometry of $R^{2n}$.
	In particular, $[v] = [w]\in U_q(R^{2n})/\Sp_{2n}(R)$.
	
	Now assume $q = 2n+1$. There is a unique $g\in \Sp_{2n}(R)$ such that $(v_1,...,v_{2n}) = g(w_1,...,w_{2n})$. 
	So, we can assume $(v_1,...,v_{2n}) = (w_1,...,w_{2n})$.
	Now the bijectivity of the map
	\begin{equation}
	\label{eqn:GammaSurj}
	R^{2n} \stackrel{\cong}{\longrightarrow} R^{2n}: v\mapsto (\langle v_i,v\rangle)_{i=1}^{2n}.
	\end{equation}
shows that we also have $v_{2n+1}=w_{2n+1}$. 
\end{proof}

For $i=1,...,q$, we  define the maps 
$$\Skew_q^+(R) \to \Skew_{q-1}^+(R): A \mapsto A^{\wedge}_i$$ omitting the $i$-th row and column.
We make the graded abelian group $\Z[\Skew_*^+(R)]$ into a chain complex with the differentials
\begin{equation}\label{SkewDiff}
d_q:\Z[\Skew_q^+(R)] \to \Z[\Skew_{q-1}^+](R): [A] \mapsto \sum_{i=1}^q (-1)^{i+1}[A^{\wedge}_i].
\end{equation}
It is easy to check that $d_qd_{q+1}=0.$

\begin{lemma}
	\label{lem:SkewExact}
	Let $R$ be a local ring with infinite residue field.
	Then the chain  complex $(\Z[\Skew_*^+(R)],d_*)$ is acyclic.
	 That is, for all $p\geq 0$ we have
	$$H_p(\Z[\Skew_*^+(R)])=0.$$
\end{lemma}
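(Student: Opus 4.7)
The plan is to adapt the argument used for Corollary~\ref{cor:C_*acyclic}. Given a cycle $\eta = \sum_j n_j[A_j] \in \Z[\Skew_q^+(R)]$ (supported on finitely many matrices), I would construct a bounding chain by augmenting each $A_j$ with a common new first row and column.

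The first step is a Pfaffian analog of Lemma~\ref{lem:goodPosition}: for any finite subset $\mathcal{F}\subset \Skew_q^+(R)$ there exists $c\in R^q$ such that for every $A \in \mathcal{F}$ the augmented skew matrix $\tilde A = \left(\begin{smallmatrix} 0 & c^{T} \\ -c & A\end{smallmatrix}\right)$ lies in $\Skew_{q+1}^+(R)$. Expanding the relevant Pfaffians via the recursive formula~\eqref{eqn:pafdef} reduces the condition to the non-vanishing of a finite family of non-trivial linear forms on $R^q$, each with unit coefficients (Pfaffians of even-sized principal minors of $A$). Modulo the maximal ideal of $R$, each form cuts out a proper $k$-linear hyperplane in $k^q$, and the infinite residue field hypothesis guarantees a common $c$ avoiding them all.

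Setting $\zeta = \sum_j n_j[\tilde A_j]$, a direct application of~\eqref{SkewDiff} yields $d\zeta = \eta - \sum_{k=1}^q(-1)^{k+1}\,\tau_k(\xi_k)$, where $\xi_k = \sum_j n_j[(A_j)^{\wedge}_k]$ and $\tau_k$ denotes the operator ``augment by $c_{\hat k}$'' with $c_{\hat k}$ the vector $c$ with the $k$-th entry removed. The cycle identity $\sum_k(-1)^{k+1}\xi_k = d\eta = 0$ would force the correction to vanish at once if $\tau_k$ were independent of $k$; the fact that $c_{\hat k}$ varies with $k$ is the main obstacle.

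I plan to resolve this by passing through the bijection $\Gamma\colon U_q(R^{2n})/\Sp_{2n}(R) \stackrel{\cong}{\to}\Skew_q^+(R)$ of Lemma~\ref{lem:GammaBij} for $n$ sufficiently large. Lifting each $A_j$ to a representative $v^{(j)} \in U_q(R^{2n})$ with $\Gamma(v^{(j)}) = A_j$ and choosing $x\in R^{2n}$ in good position with every $v^{(j)}$ via Lemma~\ref{lem:goodPosition} combined with Remark~\ref{rmk:goodPisitionk}, I recognize $[\tilde A_j]$ as the image under the chain map $\Gamma_*\colon C_*(R^{2n}) \to \Z[\Skew_*^+(R)]$ of $(x, v^{(j)}) \in U_{q+1}(R^{2n})$. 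The universal identity $d(x, v^{(j)}) = v^{(j)} - (x, dv^{(j)})$ appearing in the proof of Corollary~\ref{cor:C_*acyclic} then pushes forward to the desired bounding identity in $\Z[\Skew_{q+1}^+(R)]$, provided the projected correction $\Gamma_*((x, d\xi))$ with $\xi = \sum_j n_j v^{(j)}$ vanishes. Since $d\xi$ already lies in the $\Sp_{2n}(R)$-augmentation ideal, I expect to arrange this by a careful choice of lifts using the sections $V_{q,2n}$ of Construction~\ref{const:GammaSections} together with Witt's Theorem (Corollary~\ref{cor:WittThm}) to reconcile representatives across $\Sp_{2n}(R)$-orbits, so that the cycle condition on $\eta$ lifts to $d\xi = 0$ in $C_*(R^{2n})$ and the acyclicity of $C_*(R^{2n})$ closes the argument.
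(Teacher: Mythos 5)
Your first step --- choosing a common vector $c$ and augmenting every $A_j$ so that the enlarged matrices land in $\Skew_{q+1}^+(R)$ --- is exactly the paper's own argument (the paper appends the new row and column at the end, writing $A\ast v$, rather than at the front, but this is immaterial). Your worry about the correction term is genuine: the identity
$d(\xi\ast v)=(-1)^q\xi+\sum_j n_j\sum_i(-1)^{i+1}[(A_j)^{\wedge}_i\ast v_{\hat i}]$
holds, but the cycle condition $d\xi=0$ is an identity in $\Z[\Skew_{q-1}^+(R)]$ where the index $i$ of the removed row is forgotten, while $v_{\hat i}$ remembers $i$; hence the correction need not vanish. (The paper denotes the correction $(d\xi)\ast v$, but it is not a function of $d\xi$ alone, and the deduction ``if $\xi$ is a cycle then it is a boundary'' is not automatic for $q\geq 3$. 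Your instinct to address this is well founded.)

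However, the repair you propose cannot succeed. Lifting $\eta=\sum_j n_j[A_j]$ to a chain $\xi=\sum_j n_j v^{(j)}\in C_q(R^{2n})$ with $\Gamma(v^{(j)})=A_j$ will always satisfy $\pi(d\xi)=d\eta=0$, but \emph{no} choice of lifts --- not via Construction \ref{const:GammaSections}, not via Witt's theorem --- can make $d\xi=0$ in $C_{q-1}(R^{2n})$, because it is structurally impossible. Take $a,b,c\in R^*$ with $a\not\equiv c\pmod{m}$, and set $A=\left[\begin{smallmatrix}a&b\\&c\end{smallmatrix}\right]$, $A'=\left[\begin{smallmatrix}c&b\\&a\end{smallmatrix}\right]\in\Skew_3^+(R)$. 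Then $\eta=[A]-[A']$ is a cycle, since $A^{\wedge}_1=(A')^{\wedge}_3$, $A^{\wedge}_2=(A')^{\wedge}_2$, $A^{\wedge}_3=(A')^{\wedge}_1$. For any lifts $u=(u_1,u_2,u_3)$ and $u'=(u'_1,u'_2,u'_3)$ in $U_3(R^{2n})$, matching the six generators in $du-du'=0$ by their Gram matrices (which are $[a]$, $[b]$, $[c]$, pairwise distinct mod $m$) forces $(u_2,u_3)=(u'_1,u'_2)$, $(u_1,u_3)=(u'_1,u'_3)$ and $(u_1,u_2)=(u'_2,u'_3)$, whence $u_1=u_2=u_3$, contradicting unimodularity. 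So the lifted chain is never a cycle in $C_*(R^{2n})$, and the acyclicity of $C_*(R^{2n})$ gives you no control over $\eta$. Put differently, the quotient map $C_*(R^{2n})\to\Z[U_*/\Sp_{2n}]\cong\Z[\Skew_*^+]$ is a surjection of complexes, not a quasi-isomorphism, and you cannot pull cycles back along it; the subtlety you identified has to be confronted head on rather than pushed back to $C_*(R^{2n})$.
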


For $A\in \Skew^+_q(R)$ and $v\in R^q$, denote by $A\ast v$ the matrix
$$A\ast v = \left(\begin{smallmatrix}A & v \\ -{^t\!v} & 0 \end{smallmatrix}\right) \in \Skew_{q+1}(R).$$
Note that $A\ast v\in \Skew^+_{q+1}(R)$ if and only if for all $I = \{i_1< \cdots <i_r\} \subset \{1,...,q\} $ of odd cardinality $r$, the Pfaffian $\Pf(A_I\ast v_I)$ of $A_I\ast v_I$ is a unit in $R$, or equivalently non zero in the residue field $k$ of $R$.

\begin{proof}[Proof of Lemma \ref{lem:SkewExact}]
Let $\xi = \sum_{j=1}^mn_j[A_j]\in \Z[\Skew_q^+(R)]$.
We need to show that if $\xi$ is a cycle then it is a boundary.
Since $\Skew_1^+(R)=\Skew_0^+(R)$ is the singleton set, this is clear for $q=0$, and we can assume $q\geq 1$.

Let $k$ be the residue field of $R$ and let $I = \{i_1< \cdots <i_r\} \subset \{1,...,q\}$ have odd cardinality.
The recursive formula for the Pfaffian (\ref{eqn:pafdef}) shows that the map $k^{q} \to k:$
$$\bar{v} \mapsto \Pf(A_I\ast \bar{v}_I) = (\pm\Pf(A_{I-i_1}),...,\pm\Pf(A_{I-i_r}))\cdot \bar{v}_I$$
is $k$-linear. 
Since $A\in \Skew_q^+(R)$, all entries $\pm\Pf(A_{I-i_s})$ are units and the $k$-linear map is non-zero.
Thus, its kernel $V_{A,I}$ has codimension $1$ in $k^{q}$.

Since the residue field $k$ of $R$ is infinite, the set
$$k^{q}  \setminus \hspace{2ex} \bigcup_{j=1,..., m, I \subset \{1,...,q\},\ |I|\ \text{odd}}V_{A_j,I}$$
is non-empty. 
In particular, there is $v\in R^{q}$ which is mapped into that set under the map $R \to k$.
By the discussion above, $A_j\ast v\in \Skew^+_{q+1}(R)$ for all $j=1,...,m$.
Then $\xi\ast v =  \sum_{j=1}^mn_j[A_j\ast v] \in \Z[\Skew_{q+1}^+(R)]$ and
$d(\xi\ast v) = (-1)^q \xi + (d\xi)\ast v$.
In particular, if $\xi$ is a cycle then it is a boundary.
\end{proof}

\begin{corollary}
	Let $R$ be a local ring with infinite residue field.
	Then the complex $\Z[U_*(R^{2n})/\Sp_{2n}(R)]$ is acyclic in degrees $\leq 2n$, that is,
	$$H_p(\Z[U_*(R^{2n})/\Sp_{2n}(R)]) = 0,\hspace{3ex} 0 \leq p \leq 2n.$$
\end{corollary}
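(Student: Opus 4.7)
The plan is to compare the quotient complex $\Z[U_*(R^{2n})/\Sp_{2n}(R)]$ with the complex $\Z[\Skew^+_*(R)]$ via the Gram map $\Gamma$, exploiting the fact that $\Gamma$ is a bijection in low degrees and a chain map in all degrees.

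First, I would observe that the $\Sp_{2n}(R)$-action on $U_q(R^{2n})$ commutes with the face maps $v \mapsto v_{\hat{i}}$, so the differential (\ref{CompxDiff}) descends to a well-defined differential on $\Z[U_*(R^{2n})/\Sp_{2n}(R)]$, making it into a chain complex. Next, I would verify that $\Gamma$ is a chain map from $\Z[U_*(R^{2n})/\Sp_{2n}(R)]$ to $(\Z[\Skew^+_*(R)],d_*)$ of (\ref{SkewDiff}). This is a direct verification: for $v=(v_1,\ldots,v_q)\in U_q(R^{2n})$ and any $1\leq i\leq q$, the Gram matrix of $v_{\hat{i}}$ is obtained from $\Gamma(v)$ by deleting the $i$-th row and column, i.e.\ $\Gamma(v_{\hat{i}})=\Gamma(v)^\wedge_i$, and the signs agree with the alternating sum.

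By Lemma \ref{lem:GammaBij}, the map $\Gamma:U_q(R^{2n})/\Sp_{2n}(R)\to\Skew^+_q(R)$ is a bijection for all $0\leq q\leq 2n+1$, so the induced chain map
$$\Gamma_*:\Z[U_*(R^{2n})/\Sp_{2n}(R)]\longrightarrow\Z[\Skew^+_*(R)]$$
is an isomorphism of abelian groups in each degree $q\leq 2n+1$. Since computing the homology in degree $p$ involves only terms in degrees $p-1,p,p+1$, the induced map on homology
$$H_p(\Z[U_*(R^{2n})/\Sp_{2n}(R)])\longrightarrow H_p(\Z[\Skew^+_*(R)])$$
is an isomorphism for all $0\leq p\leq 2n$. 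By Lemma \ref{lem:SkewExact}, the target vanishes, so the source vanishes too, proving the corollary.

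The only step that requires care is the chain-map verification, but there is no real obstacle since both complexes have face-map differentials and $\Gamma$ is essentially the restriction of the symmetric bilinear form to the chosen $q$ vectors, which is manifestly functorial under deletion of an index.
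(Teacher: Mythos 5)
Your proof is correct and follows the paper's approach (the paper simply says ``Combine Lemmas \ref{lem:GammaBij} and \ref{lem:SkewExact}''); you are just filling in the details of how the two lemmas combine, namely that $\Gamma$ is a chain map that is a levelwise isomorphism through degree $2n+1$, hence induces isomorphisms on $H_p$ for $p\leq 2n$.
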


\begin{proof}
	Combine Lemmas \ref{lem:GammaBij} and \ref{lem:SkewExact}.
\end{proof}

\section{The Homology spectral sequence and its $E^1$-page}

For a group $G$, let $\Z[G]$ denote its integral group ring. 
Most of our computations are concerned with the homology \cite{Br82}
$$H_*(G;M)=Tor_*^{\Z[G]}(\Z,M) = H_*\left( \Z[EG]\otimes_G M\right)$$
 of $G$ with coefficient in a (bounded below) complex of left $G$-modules $M$.
Here $\Z[EG]$ is the complex of right $G$-modules which in degree $n$ is the free $\Z$-module on the right $G$-set $E_nG =G^{n+1}$ and differential $d_n$ defined on basis elements by 
 $d_n(g_0,...,g_n) = \sum_{i=0}^n(-1)^i(g_0,...,\hat{g}_i,..,g_n)$.
The complex $\Z[EG]$ is a resolution of the trivial $G$-module $\Z$ by the free right $\Z[G]$-modules $\Z[E_nG]$ with basis $B_nG=G^n$.
 
\begin{example}
If $M=\Z[U]$ is the free group on a left $G$-set $U$, the low degree homology groups $H_i(G,\Z[U])$, $i=0,1$, are the homology groups of the complex
$$\Z[G^2\times U]  \stackrel{d_2}{\longrightarrow} \Z[G\times U]  \stackrel{d_1}{\longrightarrow} \Z[U]  \stackrel{d_0}{\longrightarrow} 0$$
where $d_2[g,h,u] = [h,u]-[gh,u]+[g,hu]$ and $d_1[g,u] = [u] - [gu]$, $g,h\in G$, $u\in U$.
In particular, $H_0(G,\Z[U]) = \Z[G\backslash U]$ and $H_1(G,\Z[U])$ is the group of elements $\sum_{i=1}^nm_i[g_i,u_i] \in \Z[G\times U]$ satisfying $\sum_{i=1}^nm_i[g_iu_i] = \sum_{i=1}^nm_i[u_i]  \in \Z[U]$ modulo the relation $[gh,u]=[h,u]+[g,hu]$.
\end{example}

\begin{example}
\label{ex:H1G}
Continuing the previous example, if $U = \ast$ is the one-element set, then we have the isomorphism
$$H_1(G,\Z) \stackrel{\cong}{\longrightarrow} G^{ab}: \sum_{i=1}^nm_i[g_i] \mapsto \prod_{i=1}^ng_i^{m_i}.$$
\end{example}

Consider the category $\mathcal{C}$ whose objects are the pairs $(G,M)$, where $G$ is a group
and $M$ is a bounded below complex of left $G$-modules. 
The arrows $(G,M) \to (G',M')$ in $\mathcal{C}$ are pairs $(\alpha, f)$ where $\alpha: G \rightarrow G'$ is a group homomorphism and
$f: M\rightarrow M'$ is a chain map such that $f(a\cdot x) = \alpha(a)\cdot f(x)$ for $a\in G$ and $x\in M$.
Composition in $\C$ is composition of the $\alpha$'s and $f$'s. 
An arrow  $(\alpha, f):(G,M) \rightarrow (G',M')$ defines a map on homology 
$$(\alpha,f)_*: H_*(G;M)\rightarrow  H_*(G';M')$$ 
induced by the chain map 
$$\Z[EG]\otimes_GM \to \Z[EG']\otimes_{G'}M': (a_0,...,a_n)\otimes x \mapsto (\alpha(a_0),...,\alpha(a_n)) \otimes f(x).$$
Let $(\alpha_0,f_0),(\alpha_1, f_1):(G,M) \rightarrow (G',M')$ be two arrows in $\C$. 
Assume for simplicity that $M$ and $M'$ are $G$ and $G'$-modules, respectively.
If there is an element
$h\in G'$ such that $\alpha_1(a)=h\alpha_0(a)h^{-1}$ and $f_1(x)=hf_0(x)$ for all $a\in G$ and
 $x\in M$, then the induced maps on homology agree:
 \begin{equation}
 \label{eqn:HinducedEqual}
 (\alpha_0,f_0)_*=(\alpha_1,f_1)_*: H_*(G;M)\rightarrow H_*(G';M').
 \end{equation}

 For a complex $M$ of left $G$-modules with $M_i=0$ for $i<0$, the stupid filtration $M_{\leq 0} \subset M_{\leq 1} \subset M_{\leq 2} \subset \dots \subset M$ of $M$ defined by 
\begin{equation}
\label{eqn:TruncCx}
 (M_{\leq r})_i = \left\{\begin{array}{cl} M_i & i\leq r\\ 0 & i>r \end{array}\right.
 \end{equation}
 has quotients $M_{\leq q}/M_{\leq q-1}$ the $G$-module $M_q$ placed in homological degree $q$. 
This defines the spectral sequence
 \begin{equation}
 \label{eqn:GpHomSpSeq}
 E^1_{p,q}=H_p(G;M_q) \Rightarrow H_{p+q}(G;M)
 \end{equation}
 with differential $d^r$ of bidegree $(r-1,-r)$.
The spectral sequence is functorial for maps in $\C$.
\vspace{1ex}

Let us recall the chain complex $(C_*(R^{2n}), d)$ of left $\Sp_{2n}(R)$-modules from Section \ref{sec:NonDegUniMSeq} and its truncation (\ref{eqn:TruncCx}), the subcomplex $C_{\leq 2n+1}(R^{2n})$, which is the free abelian group $C_q(R^{2n})=\Z[U_q(R^{2n})]$  on the set of non-degenerate unimodular sequences of length $q$ in $R^{2n}$ for $0 \leq q \leq 2n+1$ and which is $0$ otherwise. 
The differential was defined in (\ref{CompxDiff}).

\begin{lemma}
\label{lem:HSpC}
Let $R$ be a local ring with infinite residue field.
Then for all integers $n,r$ with $0 \leq r \leq 2n$ we have
$$H_r(\Sp_{2n}(R);C_{\leq 2n+1}(R^{2n}))= 0.$$
\end{lemma}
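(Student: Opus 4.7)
The plan is to exploit Corollary \ref{cor:C_*acyclic}, which asserts that the full complex $C_*(R^{2n})$ is acyclic, by comparing it to its truncation $C_{\leq 2n+1}(R^{2n})$ via the short exact sequence of bounded-below complexes of $\Sp_{2n}(R)$-modules
$$0 \to C_{\leq 2n+1}(R^{2n}) \to C_*(R^{2n}) \to Q \to 0,$$
where $Q$ is the quotient, concentrated in homological degrees $\geq 2n+2$. Setting $G = \Sp_{2n}(R)$ for brevity, this produces a long exact sequence in group hyperhomology
$$\cdots \to H_{r+1}(G; Q) \to H_r(G; C_{\leq 2n+1}(R^{2n})) \to H_r(G; C_*(R^{2n})) \to H_r(G; Q) \to \cdots$$
so the goal reduces to the vanishing of the two flanking terms for $r \leq 2n$.

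For $H_r(G; C_*(R^{2n}))$, I would compute via the double complex $\Z[EG]\otimes_G C_*(R^{2n})$: the spectral sequence obtained by first taking homology in the $C_*$-direction has $E^1_{p,q} = \Z[E_pG]\otimes_G H_q(C_*(R^{2n}))$, since each $\Z[E_pG]$ is a free right $\Z[G]$-module and so $\Z[E_pG]\otimes_G(-)$ is exact. By Corollary \ref{cor:C_*acyclic}, $H_q(C_*(R^{2n}))=0$ for all $q$, so the $E^1$-page vanishes identically and hence $H_r(G; C_*(R^{2n}))=0$ for every $r$. For $H_r(G; Q)$, apply the hyperhomology spectral sequence (\ref{eqn:GpHomSpSeq}) to the bounded-below complex $Q$: its $E^1_{p,q}=H_p(G;Q_q)$ vanishes for $q\leq 2n+1$ by construction of $Q$, forcing the abutment to vanish in total degrees $\leq 2n+1$. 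Feeding these two vanishings into the long exact sequence yields $H_r(G; C_{\leq 2n+1}(R^{2n}))=0$ for $r\leq 2n$.

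I do not anticipate a significant obstacle: the argument is a formal consequence of Corollary \ref{cor:C_*acyclic} together with the long exact sequence of hyperhomology. Notably, Lemma \ref{lem:SkewExact} and the orbit description via Lemma \ref{lem:GammaBij} do not seem to be needed for this particular statement; those tools presumably enter the subsequent analysis of the spectral sequence $E^1_{p,q}=H_p(\Sp_{2n}(R);C_q(R^{2n}))\Rightarrow H_{p+q}(\Sp_{2n}(R);C_{\leq 2n+1}(R^{2n}))$, where Shapiro's lemma identifies the $p=0$ row with $\Z[\Skew_*^+(R)]$ and Lemma \ref{lem:SkewExact} controls its homology.
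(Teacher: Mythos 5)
Your proof is correct and rests on the same ingredients as the paper's: the spectral sequence arising from the stupid filtration of $\Z[EG]$, exactness of $\Z[E_qG]\otimes_G(-)$ as a functor of $G$-modules, and Corollary \ref{cor:C_*acyclic}. The paper reaches the conclusion slightly more directly by applying this spectral sequence once with $M=C_{\leq 2n+1}(R^{2n})$ and observing that $H_p(M)=H_p(C_*(R^{2n}))=0$ for $p\leq 2n$ (because the truncation agrees with the full complex through degree $2n+1$), whereas you route through the short exact sequence $0\to C_{\leq 2n+1}\to C_*\to Q\to 0$ and the resulting long exact sequence in hyperhomology.
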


\begin{proof}
For a group $G$ and a bounded below complex of left $G$-modules $M$, 
the stupid filtration $\Z[EG]_{\leq 0} \subset \Z[EG]_{\leq 1} \subset \Z[EG]_{\leq 2} \subset ... $ of $\Z[EG]$ defined by 
$$(\Z[EG]_{\leq q})_r = \left\{\begin{array}{ll} \Z[E_rG], &  r\leq q\\ 0, & r>q \end{array}\right.$$
induces a spectral sequence
$$E^1_{p,q} = H_p(\Z[E_qG]\otimes_G M) = \Z[B_qG]\otimes H_p(M) \Rightarrow H_{p+q}(G;M)$$
where $B_qG=G^q$.
For $G=\Sp_{2n}(R)$ and $M = C_{\leq 2n+1}(R^{2n})$, we have $E^1_{p,q}=0$ for $p\leq 2n$ in view of Corollary \ref{cor:C_*acyclic}.
The spectral sequence then implies that $H_r(\Sp_{2n}(R);C_{\leq 2n+1} (R^{2n}))=0$ for all $r\in \Z$ with $r \leq 2n$.
 \end{proof}

For $G=\Sp_{2n}(R)$ and $M=C_{\leq 2n+1}(R^{2n})$, the spectral sequence (\ref{eqn:GpHomSpSeq}) has the form
 \begin{equation}
 \label{SpecSeq}
E^1_{p,q}({2n})   \Rightarrow H_{p+q}(\Sp_{2n}(R),C_{\leq 2n+1}(R^{2n}))
\end{equation}
 where the differential $d^r$ is of bidegree $(r-1,-r)$ and
 $$E^1_{p,q}({2n})  = 
 \left\{
 \renewcommand\arraystretch{1.5}
 \begin{array}{rl}
 H_p(\Sp_{2n}(R), \Z[U_q(R^{2n})]) & q \leq 2n+1\\
 0 & q>2n+1.
 \end{array}\right.
 $$
 The spectral sequence converges to $0$ for $p+q\leq 2n$ in view of Lemma \ref{lem:HSpC}.
We need to determine explicitly the $d^1$-differentials
$d^1_{p,q}: E^1_{p,q}({2n})\rightarrow E^1_{p,q-1}({2n})$ which, for $q\leq 2n+1$, are 
the maps
$$d^1_{p,q} = (1,d)_*: H_p(\Sp_{2n}(R),\Z[U_q(R^{2n})]) \to H_p(\Sp_{2n}(R),\Z[U_{q-1}(R^{2n})])$$
where $d:\Z[U_{q}(R^{2n})] \to \Z[U_{q-1}(R^{2n})]$ is the differential of the complex $C_*(R^{2n})$.
In order to do so, we recall Shapiro's Lemma.
Let $G$ be a group acting on a set $S$ from the left.
Shapiro's Lemma gives an isomorphism
$$\bigoplus (i_x, x)_*: \bigoplus_{[x]\in G\backslash S} H_*(G_x;\Z) \stackrel{\cong}{\longrightarrow} H_*(G;\Z[S])$$
of homology groups
where the direct sum is over a set of representatives $x \in S$ of equivalences classes $[x] \in G\backslash S$, the group $G_x = \{a\in G|\ ax=x\}$ is the stabiliser of $G$ at $x\in S$, the homomorphism $i_x:G_x \subset G$ is the inclusion, and $x$ also denotes the homomorphism of abelian groups $\Z \to \Z[S]: 1 \mapsto x$.

We will apply Shapiro's Lemma to $G=\Sp_{2n}(R)$ and $S=U_q(R^{2n})$.
To ease notation we will write $\St_{2n}(v)$ for the stabiliser of $\Sp_{2n}(R)$ at a sequence $v = (v_1,...,v_q)$ of vectors $v_i$ in $R^{2n}$.
For the sections of $\Gamma$ defined in Construction \ref{const:GammaSections} we have $\St_{2n}(V_{q,2n}(A)) = \St_{2n}(e_1,...,e_q) = \Sp_{2n-q}(R)$ for $q=0,...,2n$ since $V_{q,2n}(A) = (v_{1},...,v_{q})$ and $(e_{1},...,e_{q})$ span the same subspace of $R^{2n}$.\
Moreover, $\St_{2n}(V_{2n+1,2n}(A)) = \St_{2n}(e_1,...,e_{2n}) = \{1\}= \Sp_{-1}(R)$ is the trivial group.

Recall from Lemma \ref{lem:GammaBij} that the Gram matrix defines a bijection of sets
$$\Gamma: \Sp_{2n}(R)\backslash U_q(R^{2n}) \stackrel{\cong}{\longrightarrow} \Skew_q^+(R): [v]\mapsto \Gamma(v),\hspace{2ex}0\leq q \leq 2n+1.$$

In the following we shall denote the inclusions $\Sp_{r}(R) \subset \Sp_s(R)$ generically by $\eps$ for $r\leq s$.
By Shapiro's lemma, the following map is an isomorphism
\begin{equation}
\label{eqn:E1ShapiroIdentn}
 H_*(Sp_{2n-q}; \Z)\otimes_{\Z}\Z[\Skew_q^+]  \stackrel{\cong}{\longrightarrow} H_*(Sp_{2n}; \Z[U_q(R^{2n})]), \hspace{2ex} 0 \leq q \leq 2n+1
\end{equation}
where $\alpha \otimes A$ is sent to $(\eps,V_{q,2n}(A))_*(\alpha)$.

\begin{lemma}
\label{lem:commdiag}
	For $0 \leq q \leq 2n$ the following diagram is commutative
\begin{equation}\label{eqn:commdaig}
\xymatrix{
	H_*(\Sp_{2n-q-1}; \Z) \otimes_{\Z} \Z[\Skew_{q+1}^+] \ar[rr]^{\hspace{5ex}\text{(\ref{eqn:E1ShapiroIdentn})}}_{\hspace{5ex}\cong}
	\ar@{->}[d]_{\eps_*\otimes d}
	&& H_*(\Sp_{2n}; \Z[U_{q+1}(R^{2n})])
	\ar@{->}[d]^{(1,d)_*}
	\\
	H_*(\Sp_{2n-q}; \Z)\otimes_{\Z} \Z[\Skew_q^+]   \ar[rr]^{\hspace{7ex}\text{(\ref{eqn:E1ShapiroIdentn})}}_{\hspace{7ex}\cong}
	&&H_*(\Sp_{2n}; \Z[U_q(R^{2n})]).
}
\end{equation}

\end{lemma}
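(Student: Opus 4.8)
The plan is to check commutativity of (\ref{eqn:commdaig}) on generators, tracking the difference between the two differentials: the simplicial differential $d$ on $\Z[\Skew_*^+]$ alternates signs over all $q+1$ positions, whereas the differential on $\Z[U_*(R^{2n})]$ alternates over the same positions and, crucially, Shapiro's identification (\ref{eqn:E1ShapiroIdentn}) is set up using the \emph{standard} sections $V_{q,2n}$ of $\Gamma$ from Construction \ref{const:GammaSections}. So the real content is: for $A\in\Skew_{q+1}^+$, compare $d(V_{q+1,2n}(A))=\sum_{i=1}^{q+1}(-1)^{i+1}V_{q+1,2n}(A)_{\hat i}\in \Z[U_q(R^{2n})]$ term-by-term with the image of $\eps_*\otimes d$, whose $i$-th term is (the pushforward along $(\eps,V_{q,2n}(A^\wedge_i))$ of a class in $H_*(\Sp_{2n-q}))$.

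\emph{Key steps.} First I would reduce to checking, for each fixed $i\in\{1,\dots,q+1\}$, that the two composites carrying $\alpha\otimes A$ to the $i$-th summand in $H_*(\Sp_{2n};\Z[U_q(R^{2n})])$ agree; by $\Z$-linearity and the $\eps_*\otimes d$ formula this is the whole statement. For the top-right composite the $i$-th term is $(1,d)_*(\eps,V_{q+1,2n}(A))_*(\alpha) = (\eps, V_{q+1,2n}(A)_{\hat i})_*(\alpha)$ up to the sign $(-1)^{i+1}$, using that $d$ applied to a single basis sequence is the alternating sum of face maps and that $(1,\cdot)_*$ is additive. For the bottom-left composite the $i$-th term is $(-1)^{i+1}(\eps,V_{q,2n}(A^\wedge_i))_*(\eps_*\alpha)$. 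So I must produce, for each $i$, an element $h\in\Sp_{2n}(R)$ conjugating the inclusion $\Sp_{2n-q-1}\hookrightarrow\Sp_{2n}$ to $\Sp_{2n-q}\hookrightarrow\Sp_{2n}$ (composed appropriately) and carrying $V_{q+1,2n}(A)_{\hat i}$ to $V_{q,2n}(A^\wedge_i)$; then (\ref{eqn:HinducedEqual}) gives equality of the induced maps on homology. The point is that $V_{q+1,2n}(A)_{\hat i}$ and $V_{q,2n}(A^\wedge_i)$ are two non-degenerate unimodular sequences in $U_q(R^{2n})$ with the \emph{same Gram matrix} $A^\wedge_i$, hence by Lemma \ref{lem:GammaBij} they lie in the same $\Sp_{2n}(R)$-orbit, so such an $h$ exists; for $i=q+1$ one has $V_{q+1,2n}(A)_{\widehat{q+1}} = V_{q,2n}(A^\wedge_{q+1})$ on the nose by Construction \ref{const:GammaSections}(\ref{const:GammaSections:item3}), so $h=1$ works there. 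One then checks that this $h$ also intertwines the group inclusions up to conjugation: both $\St_{2n}(V_{q+1,2n}(A)_{\hat i})$ and $\St_{2n}(V_{q,2n}(A^\wedge_i))$ equal (a conjugate of) the standard $\Sp_{2n-q}(R)\subset\Sp_{2n}(R)$ by the stabiliser computation recalled before Lemma \ref{lem:commdiag}, and $h$ conjugates one to the other; the compatibility $f_1(x)=hf_0(x)$ in the hypothesis of (\ref{eqn:HinducedEqual}) is exactly $h\cdot V_{q,2n}(A^\wedge_i) = V_{q+1,2n}(A)_{\hat i}$.

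\emph{Main obstacle.} The delicate point is bookkeeping the signs and the Shapiro identifications coherently across all $i$ simultaneously, and more substantively verifying that a \emph{single} $h=h_i$ works for \emph{both} the coefficient map (sending one section's $i$-th face to the other's) and the group homomorphism (conjugating the stabilisers compatibly) — i.e.\ that the element of $\Sp_{2n}(R)$ provided by Witt's theorem (Corollary \ref{cor:WittThm}) extending the isometry between the two spanned subspaces can be chosen to restrict to the identity, or to a controlled conjugation, on the relevant orthogonal complement. Here the explicit recursive structure of Construction \ref{const:GammaSections} — in particular that $V_{q,2n}(A^\wedge_i)$ and $V_{q+1,2n}(A)_{\hat i}$ span the \emph{same} subspace $\Span(e_1,\dots,e_q)$ when $i\le q$, differing only by an element of $\Sp_{2n-q}(R)$ acting on that span — should make $h_i$ literally an element of $\St_{2n}(e_{q+1})\cap\dots$, reducing the conjugation compatibility to a direct matrix check rather than an appeal to Witt. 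I expect this is where the bulk of the (routine but fiddly) verification lies; once $h_i$ is pinned down, (\ref{eqn:HinducedEqual}) closes the square immediately.
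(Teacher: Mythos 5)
Your overall strategy matches the paper: check commutativity term by term on generators, then for each $1\le i\le q+1$ produce an element of $\Sp_{2n}(R)$ that carries $V_{q+1,2n}(A)_{\hat i}$ to $V_{q,2n}(A^{\wedge}_i)$ while commuting (or suitably conjugating) against the stabiliser inclusion, and invoke (\ref{eqn:HinducedEqual}). You also correctly observe that $i=q+1$ with $q$ even is trivial by Construction \ref{const:GammaSections}(\ref{const:GammaSections:item3}), and that everything hinges on whether this isometry can be chosen compatibly with the group homomorphisms.

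However, the central claim you use to make that compatibility ``routine'' is false: for $i\le q$, the two sequences $V_{q,2n}(A^{\wedge}_i)$ and $V_{q+1,2n}(A)_{\hat i}$ do \emph{not} span the same subspace $\Span(e_1,\dots,e_q)$. Indeed $V_{q,2n}(A^{\wedge}_i)$ spans $\Span(e_1,\dots,e_q)$, but $V_{q+1,2n}(A)_{\hat i}$ still contains the vector $v_{q+1}$, and by Construction \ref{const:GammaSections}(\ref{const:GammaSections:item4})--(\ref{const:GammaSections:item5}) this vector has a nonzero $e_{q+1}$-component (for $q<2n$), so the span of $V_{q+1,2n}(A)_{\hat i}$ is a rank-$q$ subspace of $\Span(e_1,\dots,e_{q+1})$ that sticks out of $\Span(e_1,\dots,e_q)$. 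Consequently the element $h_i$ you need cannot be taken inside the stabiliser of $e_{q+1}$ by span-matching alone, and the commutation with $\Sp_{2n-q-1}(R)$ is not automatic. This is exactly where the paper's proof has real content: it treats $q$ odd by extending a Witt isometry of $R^{q+1}$ by the identity on $e_{q+2},\dots,e_{2n}$; treats $q=2n$ separately (trivial stabiliser); and in the hard case $q$ even, $2\le q\le 2n-2$, $i\le q$, it augments both sequences by $e_{q+1}$, chooses the Witt extension $B$ so that $Be_{q+1}=e_{q+1}$, and then verifies the required commutation $BC=CB$ for $C\in\Sp_{2n-q-1}(R)$ by an explicit block-matrix computation that is not implied by any span equality. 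Your proposal flags the right obstacle but gives an incorrect reason why it should be easy, so the key step is missing.
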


\begin{proof}
Recall that $d = \sum_{i=1}^{q+1} (-1)^{i+1}d_i$ where $d_i$ omits the $i$-th entry.
Write $V_q$ for the section $V_{q,2n}$ defined in Construction \ref{const:GammaSections}.
We will show that for all $A \in \Skew_{q+1}^+(R)$ and all $1 \leq i \leq q+1$, the two maps
$$(\eps,d_i V_{q+1}(A))_*, \ (\eps, V_q(d_iA))_*:  H_*(Sp_{2n-q-1}; \Z) \longrightarrow H_*(\Sp_{2n}; \Z[U_q(R^{2n})])$$
are equal.
For $A \in \Skew_{q+1}^+(R)$ we will construct $B \in \Sp_{2n}(R)$ such that 
$$B(d_iV_{q+1}(A)) = V_q(d_iA)\text{ and }BC=CB\text{ for all } C\in \Sp_{2n-q-1}(R)\subset \Sp_{2n}(R).$$
The existence of such $B$ implies $(\eps,d_i V_{q+1}(A))_*=(\eps, V_q(d_iA))_*$; see (\ref{eqn:HinducedEqual}).

Assume $q$ is odd.
All entries of $V_q(d_iA)$ and $V_{q+1}(A)$, and hence of $d_iV_{q+1}(A)$, are vectors in the non-degenerate subspace $R^{q+1}\subset R^{2n}$ of even rank $q+1$.
The unimodular sequences $d_iV_{q+1}(A)$ and $V_q(d_iA)$ are basis of non-degenerate subspaces $W_1$ and $W_2$ of $R^{q+1}$ of rank $q$.
Since both sequences have the same Gram matrix, the linear isomorphism $W_1 \cong W_2$ sending $d_iV_{q+1}(A)$ to $V_q(d_iA)$ is an isometry.
By Corollary \ref{cor:WittThm} this isometry extends to an isometry $B:R^{q+1} \to R^{q+1}$ which we extend to all of $R^{2n}$ 
by the requirement $Be_i=e_i$ for $i=q+2,...,2n$.
Then $B\in \Sp_{2n}(R)$ satisfies $B(d_iV_{q+1}(A)) = V_q(d_iA)$ and commutes with 
any $C\in \Sp_{2n-q-1} (R)\subset \Sp_{2n}(R)$. 

Assume $q$ even.
If $q=0$ then $dV_{q+1}(A)=V_{q}(dA)$ is the empty sequence and we are done. 
If $q=2n$, then $d_iV_{q+1}(A)$ and $V_{q}(d_iA)$ are both basis of $R^{2n}$.
Since $\Gamma(dV_{q+1}(A))=\Gamma(V_{q}(dA))$, the linear isomorphism $B:R^{2n} \to R^{2n}$ sending 
$d_iV_{q+1}(A)$ to $V_{q}(d_iA)$ is an isometry which commutes with every element of $\Sp_{-1}(R)=\{1\}$.

Assume $q$ is even, $2 \leq q\leq 2n-2$ and $i=q+1$.
Then $d_iV_{q+1}(A)=V_{q}(d_iA)$ and we can choose $B$ to be the identity matrix.

Finally, assume $q=2r$ is even, $2 \leq q\leq 2n-2$ and $1 \leq i \leq q$.
Recall that $A \in \Skew_{2r+1}^+$. 
Writing $V_{q+1}(A) = (v_1, v_{2}, ..., v_{2r+1})$, we have $d_iV_{q+1}(A)= (v_{1}, ..., \hat{v}_i..., v_{2r+1})$ where $v_{2r+1} = w_{2r+1} + e_{2r+1}$
as in Construction \ref{const:GammaSections} (\ref{const:GammaSections:item4}) with 
$v_1,...,v_{2r},w_{2r+1} \in R^{2r}$.
Since $e_{2r+1}$ is perpendicular to $R^{2r}$, the two sequences $(v_{1}, ..., \hat{v}_i..., v_{2r+1})$ and $(v_{1}, ..., \hat{v}_i..., w_{2r+1})$ have the same (invertible) Gram matrix.
In particular, the latter sequence defines a basis of $R^{2r}$.
It follows that $v_+=(v_{1}, ..., \hat{v}_i..., v_{2r+1}, e_{2r+1})$ is a basis of $R^{2r+1}$.
Write $V_q(d_iA)=(u_{1}, .., u_{2r})$.
By Construction  \ref{const:GammaSections}, this is a basis of $R^{2r}$.
It follows that $v_+$ and $u_+=(u_{1}, .., u_{2r},e_{2r+1})$ both define a basis of $R^{2r+1}$.
Since $\Gamma(v_+) = (d_iA) \ast 0 = \Gamma(u_+)$, the linear endomorphism of $R^{2r+1}$ that sends $v_+$ to $u_+$ is an isometry. 
By Corollary  \ref{cor:WittThm}, this isometry extends to an isometry of 
$R^{2r+2}$ which we can extend to an isometry $B: R^{2n} \to R^{2n}$ such that $Be_i=e_i$, $i=2r+3,...,2n$.
Note that $B(d_iV_{q+1}(A)) = V_{q}(d_iA)$ and $Be_{2r+1}=e_{2r+1}$.
Now, this $B\in \Sp_{2n}(R)$ and any $C\in Sp_{2n-2r-1}(R) \subset \Sp_{2n}(R)$ have matrix representations
$$B = \left(\begin{matrix} M & 0 & u & 0 \\ ^t\!v & 1 & b & 0 \\ 0 & 0 & 1 & 0\\ 0 & 0 & 0 & 1_{2n-2r-2}\end{matrix}\right), \hspace{2ex}
C = \left(\begin{matrix} 1_{2r} & 0 & 0 & 0 \\ 0 & 1 & c & ^t\!y \\ 0 & 0 & 1 & 0 \\ 0 & 0 & x & N\end{matrix}\right)\  \in \Sp_{2n}(R)
$$
where
$M\in \Sp_{2r}(R)$, $N\in \Sp_{2n-2r-2}(R)$, $x,y \in R^{2n-2r-2}$, $u,v\in R^{2r}$ and $b,c\in R$.
Any two such matrices commute because
$$\left(\begin{smallmatrix} M & 0 & u & 0 \\ ^t\!v & 1 & b & 0 \\ 0 & 0 & 1 & 0\\ 0 & 0 & 0 & 1\end{smallmatrix}\right)
\left(\begin{smallmatrix} 1 & 0 & 0 & 0 \\ 0 & 1 & c & ^t\!y \\ 0 & 0 & 1 & 0 \\ 0 & 0 & x & N\end{smallmatrix}\right)
=
\left(\begin{smallmatrix} M & 0 & u & 0 \\ ^t\!v & 1 & b+c & ^t\!y \\ 0 & 0 & 1 & 0\\ 0 & 0 & x & N\end{smallmatrix}\right)
=
\left(\begin{smallmatrix} 1 & 0 & 0 & 0 \\ 0 & 1 & c & ^t\!y \\ 0 & 0 & 1 & 0 \\ 0 & 0 & x & N\end{smallmatrix}\right)
\left(\begin{smallmatrix} M & 0 & u & 0 \\ ^t\!v & 1 & b & 0 \\ 0 & 0 & 1 & 0\\ 0 & 0 & 0 & 1\end{smallmatrix}\right)
$$
This finishes the proof.
\end{proof}

\begin{corollary}
\label{cor:dp20}
For $n\geq1 $ and $p\geq 0$, the following differential is trivial:
$$0 = d^1_{p,2}:E^1_{p,2}(R^{2n}) \to E^1_{p,1}(R^{2n}).$$
\end{corollary}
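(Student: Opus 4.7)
The plan is to reduce the differential $d^1_{p,2}$ to the combinatorial differential on $\Z[\Skew_*^+]$ via Lemma \ref{lem:commdiag} and then observe that the latter vanishes in this bidegree for purely formal reasons.

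First, I invoke the commutative square in Lemma \ref{lem:commdiag} with $q=1$: the Shapiro isomorphism \eqref{eqn:E1ShapiroIdentn} identifies $d^1_{p,2}$ with the map
\[
\eps_*\otimes d\colon H_p(\Sp_{2n-2};\Z)\otimes\Z[\Skew_2^+]\longrightarrow H_p(\Sp_{2n-1};\Z)\otimes\Z[\Skew_1^+],
\]
where $d$ is the combinatorial differential from \eqref{SkewDiff}. Hence it suffices to check that $d\colon \Z[\Skew_2^+]\to\Z[\Skew_1^+]$ is the zero map.

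Next, I compute $\Skew_1^+(R)$ explicitly. By definition, a skew-symmetric $1\times 1$ matrix over $R$ has zero diagonal, so $\Skew_1(R)=\{0\}$; since there is no nonempty subset $I\subset\{1\}$ of even cardinality, the defining condition for $\Skew_1^+$ is vacuous and $\Skew_1^+(R)=\{0\}$ is a one-point set. For any $A=\left(\begin{smallmatrix}0 & a\\ -a & 0\end{smallmatrix}\right)\in \Skew_2^+(R)$, both deletions $A^{\wedge}_1$ and $A^{\wedge}_2$ are the $1\times 1$ zero matrix, so
\[
d[A]=(-1)^{1+1}[A^{\wedge}_1]+(-1)^{2+1}[A^{\wedge}_2]=[0]-[0]=0.
\]
Therefore $\eps_*\otimes d=0$, and transporting back along the Shapiro isomorphism gives $d^1_{p,2}=0$.

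There is no real obstacle here: the content of the corollary is entirely encoded in Lemma \ref{lem:commdiag} together with the triviality of $\Skew_1^+(R)$, which forces the alternating sum of face maps from $\Skew_2^+$ to cancel.
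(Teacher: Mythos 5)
Your proof is correct and takes the same route as the paper: reduce via Lemma \ref{lem:commdiag} to the combinatorial differential $d\colon\Z[\Skew_2^+]\to\Z[\Skew_1^+]$ and observe that it vanishes. You simply spell out the (easy) reason for the vanishing, namely that $\Skew_1^+(R)=\{0\}$ so the two faces cancel, whereas the paper states it without comment.
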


\begin{proof}
By Lemma \ref{lem:commdiag}, this differential is the map
$$ \eps_*\otimes d: H_*(\Sp_{2n-2}(R)) \otimes_{\Z} \Z[\Skew_{2}^+(R)] \longrightarrow H_*(\Sp_{2n-1}(R)) \otimes_{\Z} \Z[\Skew_{1}^+(R)]$$
But $d: \Z[\Skew_{2}^+(R)] \to  \Z[\Skew_{1}^+(R)]$ is the zero map.
\end{proof}

\section{Triviality of $d^r_{p,q}$ for $q$ even}

The goal in this section is to show that the differentials $d_{p,q}^r$ of the spectral sequence (\ref{SpecSeq}) vanish for $r\geq 2$ and $q<2n$ even.
Since $d\circ d=0$ in the complex $C_*(R^{2n})$, the diagram
$$\xymatrix{
 0 \ar[rr] \ar[d] && \Z  [U_{2r+1}(R^{2r})] \ar[d]^{d} \ar[r] & 0\ar[d] \\
 \Z[U_{2r+1}(R^{2n})]  \ar[rr]_d &&  \Z[U_{2r}(R^{2n})] \ar[r]_d  & \Z[U_{2r-1}(R^{2n})]}$$
commutes and defines a map 
of complexes $\ffi: \Z[U_{2r+1}(R^{2r})][-2r] \to C_{\leq 2n+1}(R^{2n})$ of $\Sp_{2n-2r}(R)$-modules 
where $\Sp_{2n-2r}(R)$ acts trivially on the source complex $Z  [U_{2r+1}(R^{2r})][-2r]$ and via its inclusion $\eps:\Sp_{2n-2r}(R) \subset \Sp_{2n}(R)$ on the target complex.
The pair 
$$(\eps,\ffi):(\Sp_{2n-2r}(R),  \Z[U_{2r+1}(R^{2r})][-2r]) \longrightarrow (\Sp_{2n}(R), C_{\leq 2n+1}(R^{2n}))$$
defines a map of associated group homology spectral sequences 
\begin{equation}
\label{eqn:SpSeqMapp}
E_{p,q}^s({2n};r) \longrightarrow E_{p,q}^s({2n})
\end{equation}
resulting from the stupid filtrations of the coefficient complexes; see (\ref{eqn:GpHomSpSeq}).
By definition, we have
$$E_{p,q}^s({2n};r)=
\left\{ 
\renewcommand\arraystretch{1.5}
\begin{array}{ll}
0,&  q\neq 2r\\
\Z[U_{2r+1}(R^{2r})], & q=2r.
\end{array}\right.$$

\begin{proposition}
\label{prop:Esurjective}
For integers $0 \leq r < n$, $s=2$, and all $0 \leq p$, the map (\ref{eqn:SpSeqMapp}) is surjective in bidegree $(p,2r)$:
$$E_{p,2r}^2({2n};r) \twoheadrightarrow E_{p,2r}^2({2n}).$$
\end{proposition}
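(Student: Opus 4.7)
The plan is to compute the $E^2$-pages on both sides and the induced map explicitly, and then to exploit a group-theoretic splitting of $\Sp_{2m}(R) \hookrightarrow \Sp_{2m+1}(R)$ to deduce surjectivity.

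First, the source spectral sequence has $\Z[U_{2r+1}(R^{2r})]$ concentrated in degree $2r$ with trivial $\Sp_{2n-2r}(R)$-action, so all its differentials vanish and $E^2_{p,2r}(2n;r) = H_p(\Sp_{2n-2r}) \otimes \Z[U_{2r+1}(R^{2r})]$. On the target, the Shapiro identification (\ref{eqn:E1ShapiroIdentn}) together with Lemma \ref{lem:commdiag} identifies the relevant $d^1$-differentials with $\eps_* \otimes d$, where $d$ is the differential of the acyclic complex $\Z[\Skew_*^+(R)]$.

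Next, I identify the map $\Phi : E^1_{p,2r}(2n;r) \to E^1_{p,2r}(2n)$. For $v \in U_{2r+1}(R^{2r})$ with Gram matrix $A = \Gamma(v) \in \Skew_{2r+1}^+$, the chain $\ffi(v) = dv$ is the alternating sum of the sequences $v_{\widehat i}$, each a basis of $R^{2r} \subset R^{2n}$ with Gram matrix $A^{\wedge}_i$. Since $v_{\widehat i}$ and the standard section $V_{2r,2n}(A^{\wedge}_i)$ both lie in $R^{2r}$ with the same Gram matrix, they are related by an element of $\Sp_{2r}(R) \subset \Sp_{2n}(R)$ which commutes with $\eps(\Sp_{2n-2r}(R))$, because the two subgroups act on complementary subspaces. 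The conjugation invariance (\ref{eqn:HinducedEqual}) therefore gives $(\eps, v_{\widehat i})_* = (\eps, V_{2r,2n}(A^{\wedge}_i))_*$, and under Shapiro's identification we obtain $\Phi(\alpha \otimes v) = \alpha \otimes d\Gamma(v)$. Combining the surjectivity of $\Gamma$ (Lemma \ref{lem:GammaBij}) with the acyclicity of $\Z[\Skew_*^+]$ (Lemma \ref{lem:SkewExact}) yields
$$\im(\Phi) = H_p(\Sp_{2n-2r}) \otimes \im\bigl(d : \Z[\Skew_{2r+1}^+] \to \Z[\Skew_{2r}^+]\bigr) = H_p(\Sp_{2n-2r}) \otimes \ker(d).$$

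The decisive input is a group-theoretic splitting. The matrix description (\ref{eqn:GSpodd}) exhibits $\Sp_{2m+1}(R)$ as a semidirect product $N \rtimes \Sp_{2m}(R)$, with retraction $\Sp_{2m+1}(R) \to \Sp_{2m}(R)$ given by projection onto the bottom-right $2m \times 2m$ block. Applied with $m = n-r$, this shows that $\eps_* : H_p(\Sp_{2n-2r}) \to H_p(\Sp_{2n-2r+1})$ is split injective for every $p \geq 0$. Since $\Z[\Skew_{2r-1}^+]$ is free abelian, injectivity of $\eps_*$ forces $\ker(\eps_* \otimes d) = \ker(1 \otimes d) = H_p(\Sp_{2n-2r}) \otimes \ker(d) = \im(\Phi)$; in other words $\ker(d^1_{p,2r}) = \im(\Phi)$. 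As $\im(d^1_{p,2r+1}) \subset \im(\Phi)$, the induced map $E^2_{p,2r}(2n;r) \to E^2_{p,2r}(2n)$ is surjective. The only delicate step is the computation of $\Phi$ via conjugation invariance; once that is in place, the rest is formal given the splitting.
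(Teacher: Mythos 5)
Your proof is correct and follows essentially the same route as the paper: both identify the map $E^1_{p,2r}(2n;r)\to E^1_{p,2r}(2n)$ via the conjugation-invariance trick, both use the split injectivity of $\eps_*\colon H_p(\Sp_{2n-2r})\to H_p(\Sp_{2n-2r+1})$ together with acyclicity of $\Z[\Skew_*^+]$ to show that the image of this map equals $\ker(d^1_{p,2r})$, and both then deduce surjectivity on $E^2$. The paper phrases the key step as exactness of the three-term complex $H_p(\Sp_{2n-2r})\otimes\Z[U_{2r+1}]\to H_p(\Sp_{2n-2r})\otimes\Z[\Skew^+_{2r}]\to H_p(\Sp_{2n-2r+1})\otimes\Z[\Skew^+_{2r-1}]$, whereas you unpack it into an explicit computation of $\im(\Phi)$ and $\ker(\eps_*\otimes d)$, but the content is the same.
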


\begin{proof}
The map $E^1_{p,2r}({2n};r) \to  E^1_{p,2r}({2n})$ is the diagonal arrow in the diagram
$$\xymatrix{
H_p(\Sp_{2n-2r}) \otimes \Z[U_{2r+1}(R^{2r})] \ar[drr] \ar[rr]^{1\otimes d\circ \Gamma} & & H_p(\Sp_{2n-2r}) \otimes  \Z[\Skew_{2r}^+] \ar[d]^{\text{ (\ref{eqn:E1ShapiroIdentn})}} \\
&& H_p(\Sp_{2n},U_{2r}(R^{2n}))
}
$$
sending $ \alpha\otimes v$ to  $(\eps,dv)_*(\alpha)$.
This diagram commutes because 
$d_iv$ and $V_{2r,2n}(\Gamma d_iv)$ are two basis of the subspace $R^{2r} \subset R^{2n}$ that have the same Gram matrix.
In particular, the automorphism $B:R^{2r} \to R^{2r}$ sending $d_iv$ to $V_{2r,2n}(\Gamma d_iv)$ is an isometry of $R^{2r}$ which we extend to $B \in \Sp_{2n}(R)$ by requiring $Be_i = e_i$ for $i=2r+1,...,2n$.
Since $B$ commutes with every element in $\Sp_{2n-2r}(R) \subset \Sp_{2n}(R)$ the diagram commutes.
It follows that under the isomorphism (\ref{eqn:E1ShapiroIdentn}), 
the map $E^1_{p,2r}({2n};r) \to  E^1_{p,2r}({2n})$ is the first map in the complex
$$\xymatrix{
H_p(\Sp_{2n-2r})  \otimes \Z[U_{2r+1}]  \ar[r]^{1\otimes d\circ \Gamma} & H_p(\Sp_{2n-2r}) \otimes  \Z[\Skew_{2r}^+] \ar[d]^{\eps_*\otimes d}\\
&   H_p(\Sp_{2n-2r+1})\otimes Z[\Skew_{2r-1}^+].
}$$
In view of Lemma \ref{lem:commdiag}, the second map in that complex is $d^1_{p,2r}: E^1_{p,2r}({2n}) \to E^1_{p,2r-1}({2n})$.
Since $\eps_*:H_p(\Sp_{2n-2r}) \to H_p(\Sp_{2n-2r+1})$ is (split) injective,
Lemmas \ref{lem:GammaBij} and \ref{lem:SkewExact} imply that this complex is exact.
It follows that $E^1_{p,2r}({2n};r)$ surjects onto the kernel of the right vertical map which surjects onto $E^2_{p,2r}({2n})$.
In particular, $E^2_{p,2r}({2n};r)$ surjects onto $E^2_{p,2r}({2n})$.
\end{proof}

\begin{corollary}
\label{cor:d1SurjIso}
Let $R$ be a local ring with infinite residue field.
Then for $q<2n$ even and $s\geq 2$, the spectral sequence (\ref{SpecSeq})  satisfies $d_{p,q}^s=0$.
\end{corollary}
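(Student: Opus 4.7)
The plan is to propagate the $E^2$-level surjection from Proposition \ref{prop:Esurjective} through every higher page of the target spectral sequence by naturality. Fix $q = 2r$ with $0 \leq r < n$.

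The key structural observation is that the source spectral sequence $E^s_{*,*}(2n;r)$ is concentrated on the single row $q = 2r$, because the coefficient complex $\Z[U_{2r+1}(R^{2r})][-2r]$ lives in a single homological degree. Consequently, for every $s \geq 1$, both the outgoing differential $d^s\colon E^s_{p,2r}(2n;r) \to E^s_{p+s-1,2r-s}(2n;r)$ and the incoming differential $d^s\colon E^s_{p-s+1,2r+s}(2n;r) \to E^s_{p,2r}(2n;r)$ land in (respectively, come from) rows that are identically zero. Hence $E^s_{p,2r}(2n;r) = E^2_{p,2r}(2n;r)$ for all $s \geq 2$, and all differentials touching this bidegree in the source vanish.

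I would then prove by induction on $s \geq 2$ the combined statement
\begin{itemize}
\item[(i)] $d^s_{p,2r} = 0$ in the target spectral sequence $E^s_{*,*}(2n)$;
\item[(ii)] the map $E^s_{p,2r}(2n;r) \twoheadrightarrow E^s_{p,2r}(2n)$ induced by (\ref{eqn:SpSeqMapp}) is surjective.
\end{itemize}
The base case $s=2$ is exactly Proposition \ref{prop:Esurjective}; statement (i) at $s=2$ follows immediately because any $\alpha \in E^2_{p,2r}(2n)$ lifts to some $\tilde\alpha$ in the source, and $d^2(\alpha)$ is the image of $d^2(\tilde\alpha) = 0$ by naturality of $d^2$ under maps of spectral sequences. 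For the induction step, (i) at page $s$ means that $E^{s+1}_{p,2r}(2n)$ is a quotient of $E^s_{p,2r}(2n)$ by the image of the incoming $d^s$. Composing the surjection from (ii) at page $s$ with this quotient, and using that the map $E^s_{p,2r}(2n;r) \to E^{s+1}_{p,2r}(2n;r)$ is the identity (since the source is stable in this bidegree), yields (ii) at page $s+1$. Then (i) at page $s+1$ follows from the same naturality argument as in the base case.

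I do not expect a real obstacle: the content is entirely in Proposition \ref{prop:Esurjective}, and the remainder is a routine diagram-chase exploiting the fact that the auxiliary spectral sequence is supported on a single row, which makes all relevant differentials in the source vanish for free and lets the surjection persist page-by-page.
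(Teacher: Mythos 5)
Your proof is correct and takes essentially the same approach as the paper: induct on the page $s$, starting from Proposition \ref{prop:Esurjective} at $s=2$, and propagate the surjection $E^s_{p,2r}(2n;r)\twoheadrightarrow E^s_{p,2r}(2n)$ page by page using that the source spectral sequence is concentrated in row $2r$ (hence has all differentials zero), which forces the target differential out of bidegree $(p,2r)$ to vanish by naturality. The paper's proof is the same argument stated more tersely.
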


\begin{proof}
The spectral sequence $E({2n};r)$ has all differentials $d^{s}_{p,q}=0$, by definition.
Thus, if $E_{p,q}^s({2n};r) \to E^s_{p,q}({2n})$ is surjective, then the differential $d^s_{p,q}$ of $E({2n})$ vanishes and the map $E_{p,q}^{s+1}({2n};r) \to E^{s+1}_{p,q+2r}({2n})$ is surjective. 
Therefore, Proposition \ref{prop:Esurjective} implies that all differentials leaving $E^s_{p,q}({2n})$ vanish for $s\geq 2$ and $q<2n$ even.
\end{proof}

\section{A formula for $d^2_{0,2n+1}$}

Our aim is to show that the differential 
$$d^2_{0,2n+1}:E_{0,2n+1}^2(2n)\to E_{1,2n-1}^2(2n) = H_1(\Sp_{2n},\Z[U_{2n-1}])$$
in the spectral sequence (\ref{SpecSeq}) is surjective at least when $2n=4$.
This will be achieved in Proposition \ref{prop:d2Surjects}.
In this section we will give an explicit formula for this differential in Proposition \ref{prop:dAFormula}.
Note that $E_{1,2n-1}^2(2n) \subset E_{1,2n-1}^1(2n)$ since $E_{1,2n}^1(2n)=0$.

Recall from Lemma \ref{lem:SkewExact} the surjection 
$$d: \Z[\Skew_{2n+2}^+] \twoheadrightarrow E_{0,2n+1}^2(2n) = \ker\left( \Z [\Skew_{2n+1}^+] \stackrel{d}{\longrightarrow} \Z [\Skew_{2n}^{+}]\right).$$
For every matrix $A \in \Skew_{2n-1}^+(R)$ we chose  
$$v(A)=(v_1,...,v_{2n-1}) \in U_{2n-1}(R^{2n-1})\subset GL_{2n-1}(R)$$
 with $\Gamma(v(A)) = A$, $\det v(A)=1$ and $v_i \in R^i$ for $i=1,...,2n-1$ where $R^i$ is considered a subspace of $R^{2n-1}$ via $e_j \mapsto e_j$, $j=1,...,i$.
 This is possible, for if we write the section $V_{2n-1,2n-2}(A)$ of Construction \ref{const:GammaSections} as
 $V_{2n-1,2n-2}(A)=v'(A)=(v_1,...,v_{2n-2},v_{2n-1}')$ then $\Gamma(v'(A))=A$ and 
 $v(A)=(v_1,...,v_{2n-2},v_{2n-1})$ has the required properties where $v_{2n-1}=v'_{2n-1}+\det_{R^{2n-2}}^{-1}(v_1,...,v_{2n-2})\cdot e_{2n-1}$.
 
Using the identification $H_1\Sp_1(R) \cong \Sp_1(R) \cong R: \left(\begin{smallmatrix}1& x \\ 0 & 1\end{smallmatrix}\right)\mapsto x$ of Example \ref{ex:H1G},  Shapiro's Lemma yields the isomorphism
\begin{equation}
\label{eqn:H1GUident}
\renewcommand\arraystretch{1.5}
\begin{array}{rcl}
R\ [\Skew_{2n-1}^+(R)] & \stackrel{\cong}{\longrightarrow} & H_1\left(\Sp_{2n}(R),\Z [U_{2n-1}(R^{2n})]\right)\\
a \cdot [A] & \mapsto &\left[
e_{2n-1,2n}(a), v(A) \right]
\end{array}
\end{equation}
where $e_{ij}(a)$ denotes the standard elementary matrix with $1$'s on the diagonal, $a$ at the $(i,j)$-spot and zero elsewhere, $i\neq j$.

For $A\in \Skew^+_{2n+2}(A)$ and subset $I\subset \{1,2,...,2n+2\}$, we denote by $A^{\wedge}_I$ the skew-symmetric matrix obtained from $A$ by removing all rows and columns in $I$.
Now we can state the explicit formula for the differential under consideration.

\begin{proposition}
\label{prop:dAFormula}
Under the isomorphism (\ref{eqn:H1GUident}),  the composition 
{\small
$$\gamma: \Z[\Skew_{2n+2}^+(R)] \stackrel{d}{\twoheadrightarrow} E_{0,2n+1}^2(2n) \stackrel{d^2_{0,2n+1}}{\longrightarrow} E_{1,2n-1}^2(2n) \subset E_{1,2n-1}^1(2n) = R \ [\Skew_{2n-1}^+(R)]$$ 
}
sends a generator $A \in \Skew_{2n+2}^+(R)$ to 
$$\gamma(A) = \sum_{1\leq i<j<k\leq 2n+2}(-1)^{i+j+k} \ \  \frac{\Pf(A)}{\Pf(A^{\wedge}_{ij})\Pf(A^{\wedge}_{ik})\Pf(A^{\wedge}_{jk})} \cdot  \left[A_{\widehat{ijk}}\right].$$
\end{proposition}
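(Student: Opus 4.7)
The strategy is to unwind the standard recipe for the $d^2$-differential of the hyperhomology spectral sequence (\ref{SpecSeq}) on the canonical lifts provided by Construction \ref{const:GammaSections}, and then to pass the result through Shapiro's isomorphism (\ref{eqn:H1GUident}).

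Given $A \in \Skew_{2n+2}^+(R)$, a representative in $M_{2n+1}=\Z[U_{2n+1}(R^{2n})]$ of $dA \in E_{0,2n+1}^2(2n)$ is
$$\tilde\alpha \ := \ \sum_{i=1}^{2n+2}(-1)^{i+1} V_{2n+1,2n}(A^{\wedge}_i).$$
The recipe for $d^2$ in a bicomplex spectral sequence tells us to write $d\tilde\alpha = \partial c$ in the bar complex, with $c = \sum_{\ell}[g_\ell] \otimes m_\ell \in \Z[\Sp_{2n}(R)] \otimes \Z[U_{2n}(R^{2n})]$ and bar differential $\partial([g]\otimes m)=gm-m$; then $d^2[dA]$ is represented by $(1 \otimes d)(c) \in H_1(\Sp_{2n}; \Z[U_{2n-1}(R^{2n})])$. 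For each pair $i<j$ in $\{1,\dots,2n+2\}$, the two length-$2n$ sequences $f^{(i,j)}$ and $f^{(j,i)}$ obtained from $V_{2n+1,2n}(A^{\wedge}_i)$ and $V_{2n+1,2n}(A^{\wedge}_j)$ by deleting the entry of original index $j$, respectively $i$, share the Gram matrix $A^{\wedge}_{ij}$, so by Lemma \ref{lem:GammaBij} there is a unique $g_{ij} \in \Sp_{2n}(R)$ with $g_{ij}f^{(j,i)}=f^{(i,j)}$. Regrouping the double sum $d\tilde\alpha$ by unordered pairs and tracking signs yields $d\tilde\alpha = \sum_{i<j}(-1)^{i+j+1}(g_{ij}-1)f^{(j,i)}$, so one may take $c = \sum_{i<j}(-1)^{i+j+1}[g_{ij}]\otimes f^{(j,i)}$.

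Applying $d$ once more and regrouping by the third removed original index $k \notin \{i,j\}$ expresses $d^2[dA]$ as a signed sum of tensors $[g_{ij}] \otimes (f^{(j,i)})_{\widehat k}$ whose second factor has Gram matrix $A^{\wedge}_{ijk}$. To pass through (\ref{eqn:H1GUident}), one rewrites such a tensor in normal form by replacing the basis $(f^{(j,i)})_{\widehat k}$ by the preferred section $v(A^{\wedge}_{ijk})$ (same Gram matrix), correcting $g_{ij}$ via (\ref{eqn:HinducedEqual}) so that the remaining group element lies in the stabiliser $\Sp_1 \subset \Sp_{2n}$ of $v(A^{\wedge}_{ijk})$; the isomorphism $\Sp_1(R)\cong R:e_{2n-1,2n}(a)\mapsto a$ then reads off a scalar $a\in R$. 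This scalar is computed from the Cramer's rule formula (\ref{eqn:GammaSurj}) underlying the recursive definition of $V_{q,2n}$: each passage from an even-rank to an odd-rank section introduces a Pfaffian denominator, and the three removals corresponding to $i$, $j$, $k$ together contribute the product $\Pf(A^{\wedge}_{ij})\Pf(A^{\wedge}_{ik})\Pf(A^{\wedge}_{jk})$ in the denominator while the overall non-degeneracy of the length-$(2n+2)$ arrangement produces $\Pf(A)$ in the numerator. The three contributions per unordered triple $\{i,j,k\}$ (one for each choice of which pair plays the role of $\{i,j\}$) finally combine into the single symmetric term $(-1)^{i+j+k}\Pf(A)/[\Pf(A^{\wedge}_{ij})\Pf(A^{\wedge}_{ik})\Pf(A^{\wedge}_{jk})] \cdot [A_{\widehat{ijk}}]$.

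The principal obstacle is the combinatorial bookkeeping needed to pin down the explicit Pfaffian coefficient and signs: each tensor $[g_{ij}]\otimes (f^{(j,i)})_{\widehat k}$ involves the composition of several recursive sections, and one must also add the three contributions per triple. I would handle this by first verifying the base case $n=1$ (so $A\in \Skew_4^+(R)$, the sum has four terms indexed by $\{i,j,k\}\subset\{1,2,3,4\}$, and each $A_{\widehat{ijk}}$ is the $1\times 1$ zero matrix, making both sides explicit and directly comparable), and then either inducting on $n$ using compatibility of the spectral sequence with the stabilisation $\Sp_{2n}\hookrightarrow\Sp_{2n+2}$, or working universally over a suitable localisation of the polynomial ring $\Z[a_{ij}]$ in the entries of a generic skew-symmetric matrix, where both sides become rational functions determined by their Pfaffian denominators and the base case.
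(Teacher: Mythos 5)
Your overall strategy is the right one and matches the paper's: lift $dA$ to $\alpha \in \Z[U_{2n+1}(R^{2n})]$, find $\beta \in \Z[\Sp_{2n}\times U_{2n}]$ with $\partial\beta = d\alpha$, apply $d$ once more, and read the answer through Shapiro's isomorphism. Your choice of lift via the canonical sections and your identification of the elements $g_{ij}$ are also exactly what the paper does (up to notation; the paper allows arbitrary $u_i\in U_{2n+2}(R^{2n})$ with $\Gamma(u_i)^\wedge_i=A^\wedge_i$ and proves independence of choices in Lemma \ref{lem:CycleIsPf}). However, there is a genuine gap in the step that converts $d^v(\beta)$ into the Shapiro-normal form, and a second gap in how the Pfaffian coefficient is actually computed.

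On the first gap: you propose to ``rewrite such a tensor $[g_{ij}]\otimes(f^{(j,i)})_{\widehat k}$ in normal form \ldots correcting $g_{ij}$ via (\ref{eqn:HinducedEqual}) so that the remaining group element lies in the stabiliser $\Sp_1$.'' This cannot be done term by term: $g_{ij}$ carries $(f^{(j,i)})_{\widehat k}$ to $(f^{(i,j)})_{\widehat k}$, which has the same Gram matrix but is a \emph{different} basis, so $g_{ij}$ is not in the stabiliser of $(f^{(j,i)})_{\widehat k}$, and conjugating by any $h$ merely replaces $(g_{ij},(f^{(j,i)})_{\widehat k})$ with $(hg_{ij}h^{-1},h(f^{(j,i)})_{\widehat k})$, again with the group element outside the stabiliser. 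What makes the argument work in the paper is that the three summands of $d^v(\beta)$ attached to an unordered triple $\{i,j,k\}$ are first combined, using the bar-complex relations $[gh,u]=[h,u]+[g,hu]$ and $[g,u]-[h,u]=[h^{-1}g,u]$ (valid when $gu=hu$), into a single term $[g_{ik}g_{kj}g_{ji},(u_i)_{\widehat{ijk}}]$; it is only the cyclic product $g_{ik}g_{kj}g_{ji}$, not any single $g_{ij}$, that fixes $(u_i)_{\widehat{ijk}}$ and hence reads off as a scalar in $\Sp_1(R)\cong R$ through (\ref{eqn:H1GUident}). Your proposal mentions the combination only at the very end, after the (impossible) per-term normalisation; the order of operations must be reversed, and this reversal is the crux of the computation.

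On the second gap: the coefficient $\Pf(A)/\bigl(\Pf(A^\wedge_{ij})\Pf(A^\wedge_{ik})\Pf(A^\wedge_{jk})\bigr)$ does not follow from Cramer's rule alone. Once the scalars $c_{rs}$ with $g_{rs}=e_{2n-1,2n}(c_{rs})$ are extracted (this part is indeed a Cramer-type bookkeeping with the sections of Construction \ref{const:GammaSections}), the sum $c_{ik}+c_{kj}+c_{ji}$ is a sum of three fractions with Pfaffian numerators, and collapsing it to the stated single fraction requires a nontrivial Pfaffian identity --- the paper cites \cite[Theorem 1]{DressWenzel} --- which is invisible in your sketch. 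Your two fallbacks (base case $n=1$ plus induction, or a universal computation over a localisation of $\Z[a_{ij}]$) are not worked out; the induction would need a precise compatibility of the spectral sequence and the sections under stabilisation $\Sp_{2n}\hookrightarrow\Sp_{2n+2}$, and the universal argument would need a justification that a map of the form $\Z[\Skew^+_{2n+2}]\to R[\Skew^+_{2n-1}]$ is determined by the generic case, which is not automatic since the target is not a domain and the formula is only defined after inverting Pfaffian minors. Until the order of operations in the Shapiro step is corrected and the Dress--Wenzel identity (or an equivalent) is brought in, the proposal does not amount to a proof.
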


\begin{proof}
The spectral sequence $E_{p,q}(2n)$ of (\ref{SpecSeq}) is the spectral sequence 
$$E_{p,q}^2 = H_q(H_p(C,d^h),d^v) \Rightarrow H_{p+q}(\Tot C)$$
 associated with the double complex $C_{p,q}=\Z[G^p\times U_q]$ where $G=\Sp_{2n}(R)$ and $U_q=U_q(R^{2n})$, $q \leq 2n+1$.
Horizontal and vertical differentials $d^h$ and $d^v$ are induced by the differential in the Bar complex $\Z[G^*\times U_q]$ and the complex $\Z[U_*]$, respectively.
For a spectral sequence arising from a double complex as above, the differential $d^2_{p,q}:E^2_{p,q} \to E^2_{p+1,q-2}$ is defined as follows.
An element $[x]\in E^2_{p,q}$ is represented by an element $x\in C_{p,q}$ such that $d^hx=0 \in C_{p-1,q}$ and there is $y\in C_{p+1,q-1}$ such that $d^hy=d^vx\in C_{p,q-1}$.
Then $d^2_{p,q}[x] = [d^vy]\in E^{2}_{p+1,q-2}$.

In our case, let $A\in \Skew_{2n+2}^+(R)$.
Then $d(A)=\sum_{i=1}^{2n+2}(-1)^{i+1}[A^{\wedge}_{i}] \in E^2_{0,2n+1}$ is represented by 
$$\alpha = \sum_{i=1}^{2n+2}(-1)^{i+1}[(u_{i})^{\wedge}_i ] \in \Z[U_{2n+1}(R^{2n})] \in C_{0,2n+1}$$
 where $u_i\in U_{2n+2}(R^{2n})$ is such that $\Gamma(u_i)^{\wedge}_i = A^{\wedge}_{i}$.
To find such $u_i$, use Lemma \ref{lem:GammaBij} to find $(u_i)^{\wedge}_{i}\in U_{2n+1}(R^{2n})$ and then Lemma \ref{lem:goodPosition} to extend it to $u_i \in U_{2n+2}(R^{2n})$.
For $1 \leq j \leq 2n+2$, $j\neq i$, the unimodular sequences $(u_i)^{\wedge}_{ij} \in U_{2n}(R^{2n})\subset GL_{2n}(R)$ are invertible matrices and we set 
$$g_{ji}=(u_j)^{\wedge}_{ij}\circ \left((u_i)^{\wedge}_{ij}\right)^{-1}.$$
Since $\Gamma \left((u_i)^{\wedge}_{ij} \right)= A^{\wedge}_{i,j} = \Gamma (\left(u_j)^{\wedge}_{ij}\right)$, the change of basis matrix $g_{ji}$ is an isometry.
In particular, $g_{ji}\in \Sp_{2n}(R)$, and we set
$$\beta = \sum_{1 \leq i < j \leq {2n+2}} (-1)^{i+j+1}\  [g_{ji},(u_i)^{\wedge}_{ij}] \in \Z[\Sp_{2n}\times U_{2n}] = C_{1,2n}.$$
Using the equality
$$d^h\ [g_{ji}, (u_i)^{\wedge}_{ij}] = (u_i)^{\wedge}_{ij} - g_{ji}\cdot (u_i)^{\wedge}_{ij} = (u_i)^{\wedge}_{ij} - (u_j)^{\wedge}_{ij},$$
we check that $d^h(\beta) = d^v(\alpha)\in C_{0,2n} $:
$$
\renewcommand\arraystretch{3.5}
\begin{array}{rcl}
d^h(\beta) 
& =  & 
\displaystyle\sum_{1 \leq i < j \leq 2n+2} (-1)^{i+j+1}\ (u_i)^{\wedge}_{ij} + \sum_{1 \leq i < j \leq 2n+2} (-1)^{i+j}\ (u_j)^{\wedge}_{ij}\\
& = & 
\displaystyle \sum_{i=1}^{2n+2} \sum_{j=i+1}^{2n+2} (-1)^{i+j+1}\ d_{j-1}\left((u_i)^{\wedge}_i\right)+ \sum_{1 \leq j < i \leq {2n+2}} (-1)^{i+j}\ (u_i)^{\wedge}_{ij}\\
& = & 
 \displaystyle\sum_{i=1}^{2n+2}\left(\sum_{j=i}^{2n+1}(-1)^{i+j}\ d_{j}\left((u_i)^{\wedge}_i\right)+ \sum_{j=1}^{i-1} (-1)^{i+j}\ d_{j}\left((u_i)^{\wedge}_i\right)\right)\\
&=& 
\displaystyle\sum_{j=1}^{2n+1} (-1)^{j+1}\  \sum_{i=1}^{2n+2} (-1)^{i+1}\ d_j \left((u_i)^{\wedge}_i\right)\\
&=&   d^v(\alpha).
\end{array}
$$
Thus, we have 
$$
\renewcommand\arraystretch{3}
\begin{array}{rcl}
\gamma(A)& =&  d^v(\beta) \\\
& =  & 
\displaystyle\sum_{k=1}^{2n}  \ \   \sum_{1 \leq i < j \leq {2n+2}}\ (-1)^{i+j+k}\  [g_{ji},d_k(u_i)^{\wedge}_{ij}] \\
& = & 
\displaystyle\sum_{1 \leq k < i < j \leq {2n+2}}(-1)^{i+j+k}\  [g_{ji},(u_i)_{\widehat{ijk}}] \\
&  & 
- \displaystyle\sum_{1 \leq i < k < j \leq {2n+2}}(-1)^{i+j+k}\  [g_{ji},(u_i)_{\widehat{ijk}}] \\
&  & 
+ \displaystyle\sum_{1 \leq i < j < k \leq {2n+2}}(-1)^{i+j+k}\  [g_{ji},(u_i)_{\widehat{ijk}}] \\
& = & 
\displaystyle\sum_{1 \leq i < j < k \leq {2n+2}}(-1)^{i+j+k}\ 
\left( [g_{kj},(u_j)_{\widehat{ijk}}]  - [g_{ki},(u_i)_{\widehat{ijk}}] + [g_{ji},(u_i)_{\widehat{ijk}}] \right).
\end{array}
$$
Note that $ [g_{kj},(u_j)_{\widehat{ijk}}]  - [g_{ki},(u_i)_{\widehat{ijk}}] + [g_{ji},(u_i)_{\widehat{ijk}}] $ is a cycle for the horizontal differential and thus represents an element of $H_1(\Sp_{2n},U_{2n-1})$.
Using the identity $[gh,u]=[h,u]+[g,hu]$ in $H_1(G,U)$ together with $(u_j)_{\widehat{ijk}} = g_{ji} \cdot (u_i)_{\widehat{ijk}}$, this cycle is
$$
\renewcommand\arraystretch{2}
\begin{array}{rcl}
&&
 [g_{kj},g_{ji} \cdot (u_i)_{\widehat{ijk}}]  - [g_{ki},(u_i)_{\widehat{ijk}}] + [g_{ji},(u_i)_{\widehat{ijk}}] \\
 &=&
 [g_{kj}g_{ji}, (u_i)_{\widehat{ijk}}] - [g_{ji}, (u_i)_{\widehat{ijk}}] - [g_{ki},(u_i)_{\widehat{ijk}}] + [g_{ji},(u_i)_{\widehat{ijk}}]\\
  &=&
 [g_{kj}g_{ji}, (u_i)_{\widehat{ijk}}]  - [g_{ki},(u_i)_{\widehat{ijk}}] \\
  &=&
 [g_{ik}g_{kj}g_{ji}, (u_i)_{\widehat{ijk}}]
 \end{array}
$$
where we also used $[g,u]-[h,u]=[h^{-1}g,u]$ if $gu=hu$, and $g_{ik} = g_{ki}^{-1}$.
Hence, 
$$\gamma(A) = \displaystyle\sum_{1 \leq i < j < k \leq {2n+2}}(-1)^{i+j+k}\ 
 [g_{ik}g_{kj}g_{ji}, (u_i)_{\widehat{ijk}}].
$$
The proposition now follows from Lemma \ref{lem:CycleIsPf} below.
\end{proof}

\begin{lemma}
\label{lem:CycleIsPf}
Let $A\in \Skew^+_{2n+2}(R)$ and $1 \leq i<j<k \leq 2n+2$.
Then the element 
$$[g_{ik}g_{kj}g_{ji}, (u_i)_{\widehat{ijk}}] \in H_1(\Sp_{2n}(R),U_{2n-1}(R^{2n}))$$
is independent of the choice of $u_i, u_j, u_k \in U_{2n+2}(R^{2n})$ as long as
$\Gamma(u_r)^{\wedge}_r = A^{\wedge}_{r}$
and $g_{rs}=(u_r)^{\wedge}_{rs}\circ \left((u_s)^{\wedge}_{rs}\right)^{-1}$ for $r,s \in \{i,j,k\}$.
Under the isomorphism (\ref{eqn:H1GUident}), we have
\begin{equation}
\label{eqn:lem:CycleIsPf}
[g_{ik}g_{kj}g_{ji}, (u_i)_{\widehat{ijk}}] = {\tiny \frac{\Pf(A)}{\Pf(A^{\wedge}_{ij})\Pf(A^{\wedge}_{ik})\Pf(A^{\wedge}_{jk})} }\cdot  \left[A_{\widehat{ijk}}\right].
 \end{equation}
\end{lemma}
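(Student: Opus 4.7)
My plan splits the lemma into two parts: independence of $[g_{ik}g_{kj}g_{ji}, (u_i)_{\widehat{ijk}}]$ from the choice of $u_i, u_j, u_k$, and the explicit Pfaffian identification.

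For independence, suppose $u'_i, u'_j, u'_k$ is another valid choice with $\Gamma((u'_r)^{\wedge}_r) = A^{\wedge}_r$. Applying Witt's theorem (Corollary \ref{cor:WittThm}) to the sequences $(u_r)^{\wedge}_r$ and $(u'_r)^{\wedge}_r$, which share the Gram matrix $A^{\wedge}_r$, I obtain $h_r \in \Sp_{2n}(R)$ with $h_r \cdot (u_r)^{\wedge}_r = (u'_r)^{\wedge}_r$ for $r \in \{i,j,k\}$. Then $g'_{rs} = h_r g_{rs} h_s^{-1}$, so $g'_{ik} g'_{kj} g'_{ji} = h_i (g_{ik} g_{kj} g_{ji}) h_i^{-1}$, while $(u'_i)_{\widehat{ijk}} = h_i (u_i)_{\widehat{ijk}}$. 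The standard identity $[g, s] = [hgh^{-1}, hs]$ in $H_1(G, \Z[S])$ (a consequence of the well-definedness of Shapiro's isomorphism, verifiable by a short bar-complex boundary computation) then gives the desired equality.

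To identify the class, I make a convenient choice. Let $v := v(A_{\widehat{ijk}}) = (v_1, \ldots, v_{2n-1})$ be the canonical representative spanning $R^{2n-1} \subset R^{2n}$, and construct $u_i, u_j, u_k \in U_{2n+2}(R^{2n})$ so that at every position $m \notin \{i,j,k\}$ all three share the common entry $v_{\ell(m)}$ (with $\ell(m)$ the rank of $m$ in $\{1,\ldots,2n+2\}\setminus\{i,j,k\}$), while at positions $s \in \{i,j,k\}\setminus\{r\}$, $u_r$ has an entry $\alpha_r^s$ satisfying the Gram conditions $\langle v_{\ell(m)}, \alpha_r^s \rangle = A_{m, s}$ and $\langle \alpha_r^s, \alpha_r^{s'}\rangle = A_{s s'}$; the $r$-th entry of $u_r$ is chosen in good position via Lemma \ref{lem:goodPosition}. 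Because $(u_r)^{\wedge}_{rs}$ and $(u_s)^{\wedge}_{rs}$ share the sub-sequence $v$, each $g_{rs}$ fixes $V := \Span(v) = R^{2n-1}$ pointwise, hence so does $g := g_{ik} g_{kj} g_{ji}$. This places $g$ in $\St_{2n}(v) = \St_{2n}(e_1, \ldots, e_{2n-1}) = \{e_{2n-1, 2n}(a) : a \in R\}$, so $g = e_{2n-1, 2n}(\lambda)$ for some $\lambda \in R$, and (\ref{eqn:H1GUident}) yields $[g, v] = \lambda \cdot [A_{\widehat{ijk}}]$.

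To compute $\lambda$, I take $x = e_{2n-1}$ generating $V \cap V^{\perp}$ and $y = e_{2n}$ with $\langle x, y \rangle = 1$, and parameterize the stabilizer by $g \leftrightarrow \mu$ via $g(y) = y + \mu x$; one checks $e_{2n-1,2n}(a) \leftrightarrow a$, so by multiplicativity $\lambda = \mu_{ik} + \mu_{kj} + \mu_{ji}$. Expanding $\alpha_r^s = w_s + \zeta_r^s x + \delta^s y$ in the basis $(v_1, \ldots, v_{2n-1}, y)$, the solvability condition $\langle v_{\ell}, \alpha_r^s\rangle = A_{m, s}$ forces $\delta^s$ to be independent of $r$, and a Pfaffian expansion using the explicit kernel generator $\beta_{\ell} = (-1)^{\ell+1}\Pf((A_{\widehat{ijk}})_{\widehat{\ell}})$ of $A_{\widehat{ijk}}$ identifies $\delta^s = \pm \Pf(A^{\wedge}_{\{i,j,k\}\setminus\{s\}})$. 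One then reads off $\mu_{rs} = (\zeta_r^t - \zeta_s^t)/\delta^t$ for the remaining index $t$, giving $\lambda \cdot \delta^i \delta^j \delta^k = (\zeta_j^k - \zeta_i^k)\delta^i \delta^j + \text{cyc}$; the three Gram relations $\langle \alpha_r^s, \alpha_r^{s'}\rangle = A_{s s'}$ translate to $\zeta_r^s \delta^{s'} - \zeta_r^{s'} \delta^s = A_{s s'} - \omega_{s s'} - \delta^{s'}\tau_s + \delta^s \tau_{s'}$ with $\omega_{s s'} := \langle w_s, w_{s'}\rangle$ and $\tau_s := \langle w_s, y\rangle$, and the $\tau$-terms are seen to cancel cyclically. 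The main obstacle is then the Pfaffian identity $\delta^i(A_{jk} - \omega_{jk}) + \text{cyc} = \pm \Pf(A)$: its leading part rewrites via the $4 \times 4$ Pfaffian expansion $\Pf(A_{\{\ell', i, j, k\}}) = A_{\ell', i} A_{jk} + A_{\ell', j} A_{ki} + A_{\ell', k} A_{ij}$ as $\sum_{\ell' \notin \{i,j,k\}} \beta_{\ell'}\Pf(A_{\{\ell', i, j, k\}})$, and the $\omega$-contribution reduces to an analogous expansion in $A_{\widehat{ijk}}$; this is directly verified for $n = 1$ (where $\omega = 0$ and the identity becomes $-cd + be - af = -\Pf(A)$) and propagates to general $n$ by the Pfaffian Jacobi / Plücker-type identity, producing the claimed ratio.
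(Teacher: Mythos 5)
Your overall strategy mirrors the paper's: establish independence via a conjugation argument, then choose the three sequences $u_i,u_j,u_k$ to share a common sub-sequence $v(A_{\widehat{ijk}})$ so that $g=g_{ik}g_{kj}g_{ji}$ fixes $R^{2n-1}$ pointwise, lands in $\St_{2n}(e_1,\dots,e_{2n-1})=\Sp_1(R)$, and can be read off as $e_{2n-1,2n}(\lambda)$. Two remarks on the independence step: it requires Lemma \ref{lem:GammaBij} (not Corollary \ref{cor:WittThm} directly), since $(u_r)^{\wedge}_r$ has length $2n+1$, which exceeds $2n$ and so cannot span a subspace to which Witt's theorem is applied; Lemma \ref{lem:GammaBij} is exactly the statement that handles this odd-length case.

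The substantive gap is in the last paragraph. You correctly identify the "main obstacle" as a Pfaffian identity, but then resolve it by checking $n=1$ and asserting it "propagates to general $n$ by the Pfaffian Jacobi / Plücker-type identity." There is no such propagation mechanism on offer: the $n=1$ case has $\omega=0$ and is trivial, whereas for general $n$ the identity is a genuine fact that must be proved or cited. The paper, after a reduction to $(i,j,k)=(2n,2n+1,2n+2)$ that you skip, arrives at the precise statement $\Pf(A_{\widehat{2n-1,i}})\Pf(A^{\wedge}_{jk})-\Pf(A_{\widehat{2n-1,j}})\Pf(A^{\wedge}_{ik})+\Pf(A_{\widehat{2n-1,k}})\Pf(A^{\wedge}_{ij})=\Pf(A)\Pf(A_{\widehat{2n-1,I}})$, which is the content of a theorem of Dress and Wenzel \cite{DressWenzel}; your argument needs to either invoke that theorem (or an equivalent) explicitly or prove the identity you actually use. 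Moreover, by skipping the reduction to $(i,j,k)=(2n,2n+1,2n+2)$ you pay a price: the decomposition $\alpha^s_r=w_s+\zeta^s_r x+\delta^s y$ is not unambiguous as stated (it is unclear whether $w_s$ lies in $\Span(v_1,\dots,v_{2n-2})$ or in $R^{2n-2}$, and these choices affect whether $\langle w_s,x\rangle$ and $\langle w_s,y\rangle$ vanish), and the asserted cyclic cancellation of the $\tau$-terms is an unverified claim. The reduction to $(i,j,k)=(2n,2n+1,2n+2)$ in the paper is not cosmetic: it puts $w_s$ in $R^{2n-2}$, which makes $\langle w_s,x\rangle=\langle w_s,y\rangle=0$ automatically and eliminates the $\tau$-terms, and it lets $v(A^{\wedge}_I)$ be realized as an explicit upper-triangular determinant-one matrix, which is what turns the identity $\Pf(A^{\wedge}_{rt})=\det(u^r)^{\wedge}_{rt}$ into an explicit coordinate of $u^r_s$. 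Without that setup, the route from the Gram relations to the final formula is not secured by what you have written.
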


\begin{proof}
Let $\tilde{u}_i, \tilde{u}_j, \tilde{u}_k \in U_{2n+2}(R^{2n})$ be another set of unimodular sequences with 
$\Gamma(\tilde{u}_r)^{\wedge}_r = A^{\wedge}_{r}$ and 
set $\tilde{g}_{rs}=(\tilde{u}_r)^{\wedge}_{rs}\circ \left((\tilde{u}_s)^{\wedge}_{rs}\right)^{-1}$ for $r,s \in \{i,j,k\}$.
Since $\Gamma: \Sp_{2n}\backslash U_{2n+1}(R^{2n}) \cong \Skew_{2n+1}^+(R)$ is a bijection, there is $h_r\in \Sp_{2n}(R)$ such that $(\tilde{u}_r)^{\wedge}_r = h_r (u_r)^{\wedge}_r$ for $r\in \{i,j,k\}$.
Then $\tilde{g}_{rs} = h_r\cdot g_{rs}\cdot h_s^{-1}$ and
$$[\tilde{g}_{ik}\tilde{g}_{kj}\tilde{g}_{ji}, (\tilde{u}_i)_{\widehat{ijk}}]   
= [h_i(g_{ik}g_{kj}g_{ji})h_i^{-1}, h_i(u_i)_{\widehat{ijk}}]
=
[g_{ik}g_{kj}g_{ji}, (u_i)_{\widehat{ijk}}]$$
using $[hgh^{-1},hu] = [g,u] \in H_1(G,U)$ for all $h,g\in G$ and $u\in U$ with $gu=u$.
This proves independence of choices.

Next we want to reduce checking equation (\ref{eqn:lem:CycleIsPf}) for all indices $i<j<k$ to checking it just for $(i,j,k)=(2n, 2n+1, 2n+2)$.
To that end, if $k<2n+2$, let $B \in \Skew_{2n+2}^+(R)$ be obtained from $A$ by exchanging $k$-th row and column with the $k+1$-st row and column, that is, let $\sigma$ be the permutation of $\{1,...2n+2\}$ that exchanges $k$ and $k+1$ and fixes everything else, then the entries of $B$ satisfy
$B_{r,s} = A_{\sigma(r),\sigma(s)}$.
Note that $A^{\wedge}_{i,j,k}=B^{\wedge}_{i,j,k+1}$.
Similarly, let $v_i, v_j,v_{k+1}$ be obtained from $u_i, u_j,u_{k}$ by exchanging the $k$-th and $k+1$-st columns.
That is, $v_r=(u_{\sigma(r)})\circ \sigma$ where $\sigma$ also denotes the $2n+2\times 2n+2$ permutation matrix corresponding to the permutation $\sigma$ above, $r \in \{i,j,k+1\}$.
Then $\Gamma(v_r)^{\wedge}_r=B^{\wedge}_{r}$ for $r\in \{i,j,k+1\}$.
The right hand side of (\ref{eqn:lem:CycleIsPf}) doesn't change if we replace $A$ with $B$ and $i,j,k$ with $i,j,k+1$ since $\Pf(A)=-\Pf(B)$, $\Pf(A^{\wedge}_{i,j})=-\Pf(B_{i,j})$, $\Pf(A^{\wedge}_{i,k})=\Pf(B_{i,k+1})$ and $\Pf(A^{\wedge}_{j,k})=\Pf(B_{j,k+1})$.
For the left hand side, write $f_{r,s}=(v_r)^{\wedge}_{r,s}\circ ((v_s)^{\wedge}_{r,s})^{-1}$ where $r,s \in \{i,j,k+1\}$. 
Then $f_{r,s}=g_{\sigma(r),\sigma(s)}$ for $r\neq s \in \{i,j,k+1\}$.
In particular,
$$[f_{i,k+1}f_{k+1,j}f_{ji}, (v_i)_{\widehat{ijk+1}}] =[g_{ik}g_{kj}g_{ji}, (u_i)_{\widehat{ijk}}],$$
and the left hand side has not changed.
Similarly, if $k=2n+2$ and $j<2n+1$ one shows that both sides of (\ref{eqn:lem:CycleIsPf}) remain unchanged if we replace $A$ with the matrix $B$ obtained from $A$ by exchanging the $j$-the and $j+1$st row and column.
Finally, if $(j,k)=(2n+1,2n+2)$ and $i<2n$ we can exchange the $i$-th and $i+1$st row and column in $A$ without changing the sides of the equation (\ref{eqn:lem:CycleIsPf}).

Now we are reduced to checking equation (\ref{eqn:lem:CycleIsPf}) for $(i,j,k)=(2n, 2n+1, 2n+2)$.
So, let $(i,j,k)=(2n, 2n+1, 2n+2)$ and set $I=\{i,j,k\}$.
Recall that $v(A^{\wedge}_{I} )\in GL_{2n-1}(R)$ is an upper triangular matrix of determinant $1$ and thus of the form
$$v(A^{\wedge}_{I}) = \left(\begin{smallmatrix}w & w_{2n-1} \\ 0 & \det^{-1}w\\ 0 & 0\end{smallmatrix}\right) \in M_{2n,2n-1}(R)$$
when the columns are considered as lying in $R^{2n}$.
Note that $w\in GL_{2n-2}(R)$.

For $r\in I$, construct $u_r\in U_{2n+2}(R^{2n})$ as follows. 
Set $(u_r)^{\wedge}_{I}=v(A^{\wedge}_{I})$, extend it first to $(u_r)^{\wedge}_r \in U_{2n+1}(R^{2n})$ with $\Gamma((u_r)^{\wedge}_r)=A^{\wedge}_r$ as in Construction \ref{const:GammaSections} and then extend it to $u_r \in U_{2n+2}(R^{2n})$ using Lemma \ref{lem:goodPosition} and Remark \ref{rmk:goodPisitionk}.
Then $g_{rs}$ fixes $(u_r)^{\wedge}_{I}=v(A^{\wedge}_I)=(u_s)^{\wedge}_I$.
Since the columns of $(u_r)^{\wedge}_{I}=v(A^{\wedge}_I)$ generate $R^{2n-1}$, the map $g_{rs}$ fixes every vector in $R^{2n-1}$.
Therefore, $g_{rs}=e_{2n-1,2n}(c_{rs}) \in \Sp_1(R)\subset \Sp_{2n}(R)$ for unique $c_{rs}\in R$ and all $r,s\in I$.
Note that $c_{rs} = - c_{sr}$ since $g_{rs} =  g_{sr}^{-1}$.
Under the identification (\ref{eqn:H1GUident}) we then have
$$[g_{ik}g_{kj}g_{ji},(u_i)^{\wedge}_{I}] = \left(c_{ik}+c_{kj}+c_{ji}\right)\cdot [A^{\wedge}_I].$$

Let $r,s,t\in I$ be such that $\{r,s,t\} = I$, and let
$u^{r}_s$ denote the $s$-th vector in the unimodular sequence $u^r:=u_r$ and $u^{r}_{q,s}$ its $q$-th row entry.
Then 
$$u^r_s = \left( \begin{smallmatrix} w_s \\ d^r_s \\ \Pf(A^{\wedge}_{rt})\end{smallmatrix}\right)$$
where $d^r_s\in R$, and $w_s\in R^{2n-2}$ is the unique solution to 
$$^{t}w \circ \psi \circ w_s = (A_{q,s})_{1\leq q \leq 2n-2}$$
which expresses part of the equality $\Gamma(u^r)^{\wedge}_r = A^{\wedge}_r$.
The last entry of $u^r_s$ follows from the identity
$$ \Pf(A^{\wedge}_{rt}) = \det (u^r)^{\wedge}_{rt}=  u^r_{2n,s} \cdot \det v(A^{\wedge}_I)=u^r_{2n,s}$$  since the matrix  $(u^r)^{\wedge}_{rt}=(v(A^{\wedge}_I),u^r_s)$ is upper triangular and $\det v(A^{\wedge}_I) =1$.

For $s<t$ we have 
$$(u^r)_{\widehat{2n-1,r}} = \left(\begin{smallmatrix} w & w_s & w_ t \\ 0 & d^r_s & d^r_t \\ 0 & \Pf(A^{\wedge}_{rt}) & \Pf(A^{\wedge}_{rs})\end{smallmatrix}\right),$$
 and therefore,
\begin{equation}
\label{eqn:Pfaffdrs}
\renewcommand\arraystretch{2}
\begin{array}{rcl}
\Pf(A_{\widehat{2n-1,r}}) & = & \det (u^r)_{\widehat{2n-1,r}}  \\
&=& \det w \cdot \left( d^r_s \Pf(A^{\wedge}_{rs}) - d^r_t\Pf(A^{\wedge}_{rt})\right)\\
&=& \Pf(A_{\widehat{2n-1,I}}) \cdot \left( d^r_s \Pf(A^{\wedge}_{rs}) - d^r_t\Pf(A^{\wedge}_{rt})\right),\hspace{4ex} s<t.
\end{array}
\end{equation}

For all $r,s,t$ with $\{r,s,t\}=I$, the equation $u^r_t = g_{rs} (u^s_t)$ yields
$$d^r_t = d^s_t + c_{rs} \Pf(A^{\wedge}_{rs}).$$
Therefore, 
$$\renewcommand\arraystretch{3}
\begin{array}{rcl}
&& c_{ik}+c_{kj}+c_{ji} \\
&=& \frac{d^i_j-d^k_j}{\Pf(A^{\wedge}_{ik})} + \frac{d^k_i-d^j_i}{\Pf(A^{\wedge}_{jk})}  + \frac{d^j_k-d^i_k}{\Pf(A^{\wedge}_{ij})} \\
&=& 
 \frac{d^i_j\Pf(A^{\wedge}_{ij}) -d^i_k\Pf(A^{\wedge}_{ik})}{\Pf(A^{\wedge}_{ij})\Pf(A^{\wedge}_{ik})} 
 +
  \frac{d^k_i\Pf(A^{\wedge}_{ik}) -d^k_j\Pf(A^{\wedge}_{jk})}{\Pf(A^{\wedge}_{ik})\Pf(A^{\wedge}_{jk})} 
+
 \frac{d^j_k\Pf(A^{\wedge}_{jk}) -d^j_i\Pf(A^{\wedge}_{ij})}{\Pf(A^{\wedge}_{jk})\Pf(A^{\wedge}_{ij})} \\
 &\stackrel{\text{(\ref{eqn:Pfaffdrs})}}{=}&
  \frac{\Pf(A_{\widehat{2n-1,i}})}{\Pf(A^{\wedge}_{ij})\Pf(A^{\wedge}_{ik})\Pf(A_{\widehat{2n-1,I}})} 
  +
    \frac{\Pf(A_{\widehat{2n-1,k}})}{\Pf(A^{\wedge}_{ik})\Pf(A^{\wedge}_{jk})\Pf(A_{\widehat{2n-1,I}})} 
    -
      \frac{\Pf(A_{\widehat{2n-1,j}})}{\Pf(A^{\wedge}_{ij})\Pf(A^{\wedge}_{jk})\Pf(A_{\widehat{2n-1,I}})} \\
      &=&
   \frac{\Pf(A_{\widehat{2n-1,i}})\Pf(A^{\wedge}_{j,k})- \Pf(A_{\widehat{2n-1,j}})\Pf(A^{\wedge}_{i,k}) + \Pf(A_{\widehat{2n-1,k}})\Pf(A^{\wedge}_{i,j})}{\Pf(A^{\wedge}_{ij})\Pf(A^{\wedge}_{ik})\Pf(A^{\wedge}_{j,k})\Pf(A_{\widehat{2n-1,I}})} \\
   &=&
   \frac{\Pf(A)\Pf(A_{\widehat{2n-1,I}})}{\Pf(A^{\wedge}_{ij})\Pf(A^{\wedge}_{ik})\Pf(A^{\wedge}_{j,k})\Pf(A_{\widehat{2n-1,I}})}
\end{array}
$$
where the last equation follows from \cite[Theorem 1]{DressWenzel} (in the notation of that paper, choose $I_1 = \{1,2,...,2n-2,2n,2n+1,2n+2\}$ and $I_2=\{1,2,...,2n-1\}$).
\end{proof}

\section{Surjectivity of $d^2_{0,5}$}

The goal of this section is to show in Proposition \ref{prop:d2Surjects} below that the map
$\gamma:\Z[\Skew_6^+(R)] \to R\ [\Skew_3^+(R)]$ of Proposition \ref{prop:dAFormula} is surjective. 
Recall that, unless stated otherwise, $(R,m)$ is a local ring with infinite residue field $R/m$.

\begin{notation}
A skew-symmetric matrix $A=(a_{ij})$ with entries in $R$ will be specified by giving its upper triangular part, the lower triangular part being determined by the requirement $a_{ij}=-a_{ji}$.
For instance, 
$$\left(\begin{smallmatrix}0 & a & b\\ & 0 & c \\ && 0 \end{smallmatrix}\right) = \left(\begin{smallmatrix}0 & a & b\\ -a & 0 & c \\ -b & -c & 0 \end{smallmatrix}\right).$$
For $a,b,c \in R^*$, we set
$$\left[ \begin{smallmatrix}a & b \\ &  c \end{smallmatrix}\right] = \left( \begin{smallmatrix}0 & a & b\\ & 0 & c \\ && 0 \end{smallmatrix}\right)  \hspace{3ex}\text{and}\hspace{3ex} [a]= \left[ \begin{smallmatrix}a & a \\ &  a \end{smallmatrix}\right]$$
These are elements of $\Skew_3^+(R)$.
For units $a,b,c \in R^*$ with $a^{-1}-b^{-1}+c^{-1}\in R^*$ and $x\in R$
we write 
$$x\ \left\{\begin{smallmatrix} a&b\\   &c\end{smallmatrix}\right\}  =\frac{x}{(a^{-1}-b^{-1}+c^{-1})^2}  \ \left[\begin{smallmatrix} a^{-1}&b^{-1}\\   &c^{-1}\end{smallmatrix}\right]$$
and for $a\in R^*$, $x\in R$ we set
$$x\ \{a\} =x\  \left\{\begin{smallmatrix} a&a\\   &a\end{smallmatrix}\right\} = a^2x\ [a^{-1}]$$
considered as elements of $R\ [\Skew_3^+(R)]$.
Note that 
$$x\ \left\{\begin{smallmatrix} a&a\\   &c\end{smallmatrix}\right\}  =xc^2  \ \left[\begin{smallmatrix} a^{-1}&a^{-1}\\   &c^{-1}\end{smallmatrix}\right]$$
for all $a,c\in R^*$.
Finally, for $\xi,\zeta\in R\ [\Skew_3^+(R)]$, we write 
$$\xi \equiv \zeta$$
 to mean $\xi = \zeta$ in $\coker(\gamma)$ where $\gamma:\Z[\Skew_6^+(R)] \to R\ [\Skew_3^+(R)]$ is the map in Proposition \ref{prop:dAFormula} for $n=2$.
 Our goal is to show $\xi \equiv 0$ for all $\xi\in R\ [\Skew_3^+(R)]$.
\end{notation}

\begin{lemma}
\label{lem:dAexample1aRewrite}
For all $a,b,c,d,e\in R^*$ we have in $R\ [\Skew_3^+(R)]$ the relation
\begin{equation}
\label{eqn:dAexample1aRewrite}
\renewcommand\arraystretch{3}
\begin{array}{rcl}
0 & \equiv &
  \frac{a^2}{b^2} \frac{(d^2-c^2)}{e} \left\{\begin{smallmatrix} a&a\\   &b\end{smallmatrix}\right\}
- \frac{a^2d^2}{c^2e} \left\{\begin{smallmatrix} a&a\\   &c\end{smallmatrix}\right\}
-  \frac{a^2}{c}\left\{\begin{smallmatrix} a&a\\   &e\end{smallmatrix}\right\}
+  \frac{b^2d^2}{c^2e}\left\{\begin{smallmatrix} b&b\\   &c\end{smallmatrix}\right\}  \\
&&
+\  \frac{b^2}{c}  \left\{\begin{smallmatrix} b&b\\   &e\end{smallmatrix}\right\} 
+   b\left\{\begin{smallmatrix} d&d\\   &e\end{smallmatrix}\right\}
- b\left\{\begin{smallmatrix} c&c\\   &e\end{smallmatrix}\right\}.
\end{array}
\end{equation}
\end{lemma}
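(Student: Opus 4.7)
The plan is to realise the right-hand side of (\ref{eqn:dAexample1aRewrite}) as $\gamma(A)$ for a specific matrix $A = A(a,b,c,d,e) \in \Skew_6^+(R)$; since $\gamma(A) = 0$ in $\coker(\gamma)$, this will give the stated relation.

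To locate $A$, I would first read off from the right-hand side the seven $3\times 3$ matrices that must appear as $A_{\widehat{ijk}}$ for some triple $\{i,j,k\} \subset \{1,\ldots,6\}$. Unfolding the $\{\cdot\}$-notation via
\[
x \left\{\begin{smallmatrix} p&q\\ &r\end{smallmatrix}\right\} = \frac{x}{(p^{-1}-q^{-1}+r^{-1})^2} \left[\begin{smallmatrix} p^{-1}&q^{-1}\\ &r^{-1}\end{smallmatrix}\right],
\]
each of the seven summands becomes a rational multiple of a $3\times 3$ skew matrix whose upper-triangular entries are drawn from $\{a^{-1}, b^{-1}, c^{-1}, d^{-1}, e^{-1}\}$. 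Since $e^{-1}$ appears in the last slot of five of those seven matrices, the natural guess is that one distinguished row/column of $A$ is controlled by $e$ and is omitted from the complement in precisely those five triples. The remaining entries of $A$ are then pinned down by matching the other shapes and by arranging that the thirteen triples which do not contribute to the seven target shapes give coefficients that sum to zero on each of the additional $3\times 3$ output matrices.

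With a candidate $A$ in hand, the verification proceeds in three steps. First, check $A \in \Skew_6^+(R)$ by verifying that the $2\times 2$ and $4\times 4$ principal Pfaffians, as well as $\Pf(A)$, are units; since $R$ is local this reduces to the non-vanishing in the residue field of finitely many polynomial expressions in $a,b,c,d,e$. Second, compute $\Pf(A)$ and the fifteen Pfaffians $\Pf(A^\wedge_{ij})$, and for each of the $\binom{6}{3}=20$ triples $(i,j,k)$ compute the signed coefficient
\[
(-1)^{i+j+k}\frac{\Pf(A)}{\Pf(A^\wedge_{ij})\Pf(A^\wedge_{ik})\Pf(A^\wedge_{jk})}
\]
appearing in Proposition \ref{prop:dAFormula}. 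Third, collect the twenty terms by their $3\times 3$ matrix type, repackage each aggregated coefficient via the $\{\cdot\}$-notation, and read off exactly the right-hand side of (\ref{eqn:dAexample1aRewrite}).

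The main obstacle is the reverse-engineering of $A$. The coefficients in (\ref{eqn:dAexample1aRewrite}) are not symmetric in the parameters, and the factor $d^2 - c^2$ in the leading summand together with the mixed prefactors such as $a^2 d^2/(c^2 e)$ and $b^2/c$ indicate that some entries of $A$ must be polynomial combinations of several of $a,b,c,d,e$ (for instance $d-c$, or sums of products) rather than individual parameters, and that genuine cancellations among several of the twenty terms in $\gamma(A)$ are required to produce the seven-term output. Once $A$ is correctly identified, the remainder of the argument is a mechanical—if lengthy—Pfaffian computation.
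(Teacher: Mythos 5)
Your strategy—realize the right-hand side as $\gamma(A)$ for a suitable $A\in\Skew_6^+(R)$ and conclude from $\gamma(A)\equiv 0$—is exactly the paper's. But the proposal has a genuine gap: you never exhibit $A$, and the heuristic you describe for reverse-engineering it would lead you away from the correct matrix. You conjecture that the factor $d^2-c^2$ and the mixed prefactors force some entries of $A$ to be polynomial combinations such as $d-c$, with ``genuine cancellations'' needed to cut twenty terms down to seven. In fact, the paper's $A$ has the simplest possible staircase form: its upper-triangular entry in position $(i,j)$, $i<j$, is just the $i$-th parameter, so row $1$ is all $a$'s, row $2$ all $b$'s, and so on up to the single entry $e$ in position $(5,6)$. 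The factor $d^2-c^2$ emerges not from a polynomial entry but from \emph{collisions among the triples}: several of the twenty triples $(i,j,k)$ yield the same $3\times 3$ shape $A_{\widehat{ijk}}$, and summing the corresponding signed Pfaffian coefficients produces differences such as $\tfrac{e}{a^2 d^2}-\tfrac{e}{a^2 c^2}$.

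There is a second idea you would also have needed and do not mention. The paper computes $\gamma(A)$ in the $[\cdot]$-notation with the original parameters $a,\dots,e$ as entries of $A$, and only at the very end passes to the $\{\cdot\}$-notation of the statement by substituting $a\mapsto a^{-1},\dots,e\mapsto e^{-1}$, exploiting that $x\left\{\begin{smallmatrix}p&q\\&r\end{smallmatrix}\right\}$ is defined through $\left[\begin{smallmatrix}p^{-1}&q^{-1}\\&r^{-1}\end{smallmatrix}\right]$. Trying to match the $\{\cdot\}$-form directly, as your unfolding step suggests, forces $a^{-1},\dots,e^{-1}$ into the entries of $A$, obscuring its simple structure. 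In short, the high-level method is the right one, but without the explicit $A$ and the inversion trick the proof is incomplete, and the path you sketch for finding $A$ is based on an incorrect guess about its shape.
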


\begin{proof}
The $6\times 6$ skew-symmetric matrix
$$A = \left( \begin{array}{ccc|ccc}
0 & a&  a& a& a& a \\
   & 0 & b  & b  & b  & b\\
    &  & 0 & c & c& c\\
    \hline
     & &    & 0 & d & d\\
     & &    &    & 0 & e\\
     &&&&& 0
     \end{array}\right)$$
has Pfaffian $\Pf(A)=ace$ and is in $\Skew_6^+(R)$ if and only if all its entries $a,...,e$ are units.
We compute
\begin{equation}
\label{eqn:dAexample1a}
\renewcommand\arraystretch{2}
\begin{array}{rcl}
\gamma(A) & = &\left( \frac{e}{a^2d^2} - \frac{e}{a^2c^2}\right)\  \left[\begin{smallmatrix} a&a\\   &b\end{smallmatrix}\right] \\
&&- \frac{e}{a^2d^2}\ \left[\begin{smallmatrix} a&a\\   &c\end{smallmatrix}\right] 
-\frac{c}{a^2e^2}  \left[\begin{smallmatrix} a&a\\   &e\end{smallmatrix}\right] 
+\frac{e}{b^2d^2}\  \left[\begin{smallmatrix} b&b\\   &c\end{smallmatrix}\right] \\
&&
+ \frac{c}{b^2e^2}\  \left[\begin{smallmatrix} b&b\\   &e\end{smallmatrix}\right]
+  \frac{1}{be^2}\left[\begin{smallmatrix} d&d\\   &e\end{smallmatrix}\right] 
-   \frac{1}{be^2} \left[\begin{smallmatrix} c&c\\   &e\end{smallmatrix}\right].
\end{array}
\end{equation}
See Appendix \ref{App:lem:dAexample1aRewrite} for more details. The result follows by replacing $a,...,e$ with their inverses $a^{-1},...,e^{-1}$.
\end{proof}

\begin{lemma}
\label{lem:dAexample1b}
For all $a,c,d,e\in R^*$ we have in $R\ [\Skew_3^+(R)]$ the relation
 \begin{equation}
\label{eqn:dAexample1b2}
\renewcommand\arraystretch{2}
\begin{array}{l}
 \frac{d^2-c^2}{e}\  \{a\}  \equiv
a\  \left(\left\{\begin{smallmatrix} c&c\\   &e\end{smallmatrix}\right\} - \left\{\begin{smallmatrix} d&d\\   &e\end{smallmatrix}\right\}\right).
 \end{array}
 \end{equation}
\end{lemma}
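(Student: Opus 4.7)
My plan is to deduce this relation directly from Lemma \ref{lem:dAexample1aRewrite} by the single substitution $b = a$.

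\medskip

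First, I would verify that the substitution is legitimate: the five terms $\left\{\begin{smallmatrix}a&a\\&b\end{smallmatrix}\right\}$, $\left\{\begin{smallmatrix}a&a\\&c\end{smallmatrix}\right\}$, $\left\{\begin{smallmatrix}a&a\\&e\end{smallmatrix}\right\}$, $\left\{\begin{smallmatrix}b&b\\&c\end{smallmatrix}\right\}$, $\left\{\begin{smallmatrix}b&b\\&e\end{smallmatrix}\right\}$ appearing in (\ref{eqn:dAexample1aRewrite}) are all well-defined for arbitrary units, since in each case the relevant quantity $x^{-1}-y^{-1}+z^{-1}$ collapses to $c^{-1}$ or $e^{-1}$ or $b^{-1}$, which are units. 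In particular, specializing $b = a$ is harmless, and $\left\{\begin{smallmatrix}a&a\\&a\end{smallmatrix}\right\} = \{a\}$ by definition.

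\medskip

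Next, I would inspect how the seven terms on the right-hand side of (\ref{eqn:dAexample1aRewrite}) behave under $b=a$:
\begin{itemize}
\item[(i)] the first term becomes $\frac{d^2-c^2}{e}\{a\}$;
\item[(ii)] the pair $-\frac{a^2d^2}{c^2e}\left\{\begin{smallmatrix} a&a\\ &c\end{smallmatrix}\right\}$ and $+\frac{b^2d^2}{c^2e}\left\{\begin{smallmatrix} b&b\\ &c\end{smallmatrix}\right\}$ cancel;
\item[(iii)] the pair $-\frac{a^2}{c}\left\{\begin{smallmatrix} a&a\\ &e\end{smallmatrix}\right\}$ and $+\frac{b^2}{c}\left\{\begin{smallmatrix} b&b\\ &e\end{smallmatrix}\right\}$ cancel;
\item[(iv)] the remaining pair $b\left\{\begin{smallmatrix} d&d\\ &e\end{smallmatrix}\right\} - b\left\{\begin{smallmatrix} c&c\\ &e\end{smallmatrix}\right\}$ becomes $a\left\{\begin{smallmatrix} d&d\\ &e\end{smallmatrix}\right\} - a\left\{\begin{smallmatrix} c&c\\ &e\end{smallmatrix}\right\}$.
\end{itemize}
Collecting these, the identity (\ref{eqn:dAexample1aRewrite}) reduces, after $b=a$, to
\[
0 \;\equiv\; \frac{d^2-c^2}{e}\,\{a\} \;+\; a\left\{\begin{smallmatrix} d&d\\ &e\end{smallmatrix}\right\} \;-\; a\left\{\begin{smallmatrix} c&c\\ &e\end{smallmatrix}\right\},
\]
and rearranging gives (\ref{eqn:dAexample1b2}).

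\medskip

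I do not expect any serious obstacle here: the content of the lemma is essentially bookkeeping. The only point worth flagging is that the relation coming from Lemma \ref{lem:dAexample1aRewrite} was derived from a specific $6\times 6$ matrix $A$ in which the diagonal blocks carried distinct parameters $a,b$, so one must make sure the specialisation $b=a$ still produces a matrix in $\Skew_6^+(R)$; but as computed in the proof of Lemma \ref{lem:dAexample1aRewrite}, all relevant sub-Pfaffians are monomials in $a,b,c,d,e$, and setting $b=a$ keeps them units. Hence the substitution is valid and the lemma follows.
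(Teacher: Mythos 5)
Your proposal is correct and matches the paper's proof exactly: the paper just says "Put $a=b$ in Lemma \ref{lem:dAexample1aRewrite}." You have carried out the bookkeeping (the two cancellations, the collapse of the first coefficient to $\frac{d^2-c^2}{e}$, and the well-definedness of the specialized $\{\cdot\}$-symbols) that the paper leaves implicit.
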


\begin{proof}
Put $a=b$ in Lemma \ref{lem:dAexample1aRewrite}.
\end{proof}

\begin{lemma}
\label{lem:LinearAsLongAsUnits}
The function  $R \times R^* \to \coker(\gamma): (x,a) \mapsto x\ \{a\}$
is linear in both variables $x$ and $a$ (as long as the second variable only involves units), that is,
$$(x+y)\ \{a\} \equiv x\ \{a\} + y\ \{a\} \hspace{2ex}\text{and}\hspace{2ex}x\ \{a+b\} \equiv x\ \{a\} + x\ \{b\} $$
for all $a,b,a+b\in R^*$, $x,y\in R$.
\end{lemma}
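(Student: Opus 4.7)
The first assertion, $(x+y)\{a\} \equiv x\{a\} + y\{a\}$, is immediate from the definition. Unpacking the notation, $x\{a\} = xa^2\left[\begin{smallmatrix}a^{-1}&a^{-1}\\ &a^{-1}\end{smallmatrix}\right]$ is manifestly $R$-linear in $x$ as an element of the free $R$-module $R\,[\Skew_3^+(R)]$, so the identity already holds on the nose before passing to $\coker\gamma$.

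The content of the lemma is therefore the second identity $x\{a+b\} \equiv x\{a\} + x\{b\}$, and the plan is to derive it from Lemma \ref{lem:dAexample1b} by a system of substitutions. The crucial substitution is $e = a+b$, because with this choice $\left\{\begin{smallmatrix}a+b&a+b\\ &e\end{smallmatrix}\right\}$ collapses to $\{a+b\}$. Applying Lemma \ref{lem:dAexample1b} with $e = a+b$ and with the pair $(c,d)$ running through $\{(a,a+b),(b,a+b),(a,b)\}$, and with the outer unit successively equal to $a$, $b$ and $a+b$, yields a system of congruences in $\coker\gamma$ involving the symbols $\{a\},\{b\},\{a+b\}$ together with the auxiliary symbols $\left\{\begin{smallmatrix}a&a\\ &a+b\end{smallmatrix}\right\}$ and $\left\{\begin{smallmatrix}b&b\\ &a+b\end{smallmatrix}\right\}$. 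Eliminating the two auxiliary symbols by appropriate $R$-linear combinations of these relations should collapse the system to a single relation of the form $f(a,b)\,\{a+b\} \equiv g(a,b)\,\{a\} + h(a,b)\,\{b\}$ for explicit $f,g,h\in R$. To reach precisely the claimed relation for an arbitrary scalar $x\in R$, one exploits the fact that the coefficient $(d^2-c^2)/e$ in Lemma \ref{lem:dAexample1b} can be made to equal $x$ (or a useful multiple thereof) by choosing $c,d,e\in R^\ast$ suitably; the infinite residue field hypothesis ensures enough room to do this.

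The main obstacle is twofold. First, $\coker\gamma$ is only an abelian group, not an $R$-module: since $\gamma$ is $\mathbb{Z}$-linear on $\mathbb{Z}[\Skew_6^+(R)]$, the subgroup $\im\gamma$ need not be stable under $R$-scaling. In particular, one cannot simply prove the case $x=1$ and then multiply by $x$; the scalar $x$ must be built into the parameters of Lemma \ref{lem:dAexample1b} from the outset (via the coefficient $(d^2-c^2)/e$), so that the relation already appears with the required factor $x$ in $R\,[\Skew_3^+(R)]$ before taking the quotient. Second, expressing a general $x\in R$ as $(d^2-c^2)/e$ with $c,d,e\in R^\ast$ is not automatic when $2$ is not invertible in $R$; handling this point may require combining several applications of Lemma \ref{lem:dAexample1b}, and possibly of the more general Lemma \ref{lem:dAexample1aRewrite}, rather than a single one, so that the $x$-factor is produced by a sum of difference-of-squares expressions rather than a single one.
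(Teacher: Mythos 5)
Your treatment of the first identity is correct and matches the paper: $x\{a\}=xa^2[a^{-1}]$ is $R$-linear in $x$ already in the free $R$-module $R\,[\Skew_3^+(R)]$, so the relation holds before passing to $\coker\gamma$.

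For the second identity your plan diverges from the paper's and contains a genuine gap. The paper does \emph{not} substitute $e=a+b$ nor set up a system of congruences. It makes the much simpler observation that the right-hand side of (\ref{eqn:dAexample1b2}), namely $a\bigl(\{\begin{smallmatrix}c&c\\&e\end{smallmatrix}\}-\{\begin{smallmatrix}d&d\\&e\end{smallmatrix}\}\bigr)$, is \emph{already additive in $a$} by the first identity (the outer scalar in the brace notation is $R$-linear on the nose). Since Lemma~\ref{lem:dAexample1b} identifies the two sides in $\coker\gamma$ for every $a\in R^\ast$, the left-hand side $\frac{d^2-c^2}{e}\{a\}$ must also be additive in $a$, with $c,d,e$ held fixed. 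This is the key step, and your proposal does not contain it. Instead you propose to ``eliminate the two auxiliary symbols by appropriate $R$-linear combinations of these relations'' — but, as you yourself note a paragraph later, $\coker\gamma$ is only an abelian group and $\im\gamma$ is not an $R$-submodule, so $R$-linear elimination in the quotient is not available. This is an internal inconsistency in the proposal, not just a technical wrinkle.

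Your analysis of the second obstacle is also slightly misdirected. The constraint on writing $x=(d^2-c^2)/e$ with $c,d,e\in R^\ast$ is not about $2$ being invertible; it is that any such expression is automatically a unit, so this device only reaches units $x$. (Over an infinite residue field one can always choose $c,d$ with $d-c$ and $d+c$ both units, whatever the characteristic.) The paper's resolution is not ``combining several difference-of-squares expressions'' but the standard reduction: every $x\in R$ is a sum of two units $u+v$ (local ring, infinite residue field), the first identity gives $x\{a\}=u\{a\}+v\{a\}$ exactly, and the unit case applies to each summand. Concretely: choose $c,d$ with $d^2-c^2\in R^\ast$, set $e=(d^2-c^2)u^{-1}$ to see that $u\{-\}$ is additive on $R^\ast$; do the same for $v$; then add. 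Your proposal gestures at the right tool (tuning $(d^2-c^2)/e$) but misses both the linearity-of-the-RHS observation that makes a single application of Lemma~\ref{lem:dAexample1b} suffice, and the sum-of-two-units reduction that handles non-unit $x$.
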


\begin{proof}
The first relation is actually an equality in $R\ [\Skew_3^+(R)]$:
$$(x+y)\ \{a\} = (x+y) a^2[a^{-1}] = xa^2\ [a^{-1}] + ya^2\ [a^{-1}] = x\ \{a\} + y\ \{a\}.$$
For the second relation, for fixed $c,d,e \in R^*$, the right hand side of (\ref{eqn:dAexample1b2}) is linear in $a$, hence the left hand side is, as long as it is defined.
If $x\in R$ is a unit, choose $c,d\in R^*$ such that $d^2-c^2\in R^*$.
This is possible since for given $c\in R^*$, $d$ only needs to avoid finitely many elements of $R/m$.
Setting $e=(d^2-c^2)x^{-1}$ shows that the function $R^* \to \coker(\gamma):a \mapsto x\{a\}$ is linear in $a$.
It follows that for any two units $x,y\in R$, the expression 
$(x+y)\ \{a\} = x\ \{a\} + y\ \{a\}$ is linear in $a$.
Since every element of $R$ is a sum of two units, we are done.
 \end{proof}

We will extend the map in Lemma \ref{lem:LinearAsLongAsUnits} to a $\Z$-bilinear map defined on all of $R\times R$ using the following.

\begin{lemma}
\label{lem:ExtendLinearly}
Let $(R,m)$ be a local ring with infinite residue field, and let $A$ be an abelian group.
Then any function $f:R^* \to A$ satisfying $f(a)+f(b)=f(a+b)$ for all $a,b,a+b\in R^*$ extends to a unique $\Z$-linear function $R \to A$.
\end{lemma}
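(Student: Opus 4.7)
The plan is to construct the extension by decomposing each $x\in R$ as a sum of two units, exploiting the infinitude of the residue field to show the construction is well-defined and additive.

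First I would observe the key genericity fact: since $R$ is local with infinite residue field $k=R/m$, for any finite list of elements $x_1,\ldots,x_r\in R$ we can find $u\in R^*$ whose reduction $\bar{u}$ avoids $\bar{x}_1,\ldots,\bar{x}_r$; in particular, every $x\in R$ admits a decomposition $x=a+b$ with $a,b\in R^*$ (pick $a\in R^*$ with $\bar{a}\neq \bar{x}$). Uniqueness of the extension is then automatic: any $\Z$-linear $g:R\to A$ with $g|_{R^*}=f$ satisfies $g(x)=g(a)+g(b)=f(a)+f(b)$ on every such decomposition.

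Next I would define $\tilde{f}(x):=f(a)+f(b)$ for any $x=a+b$ with $a,b\in R^*$, and prove well-definedness. Given two decompositions $x=a+b=a'+b'$, pick (using infiniteness of $k$) a unit $c\in R^*$ such that $x-c$, $a-c$ and $a'-c$ are all units, i.e.\ $\bar{c}\notin\{0,\bar{x},\bar{a},\bar{a}'\}$. Then the hypothesis on $f$ applied to $a=c+(a-c)$ and $b=(x-c)-(a-c)$ (rewritten as $(x-c)=b+(a-c)$) gives $f(a)+f(b)=f(c)+f(x-c)$; the same argument with $a',b'$ gives $f(a')+f(b')=f(c)+f(x-c)$. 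The two agree, so $\tilde{f}$ is well-defined. That $\tilde{f}$ extends $f$ is immediate: if $x\in R^*$, choose any decomposition $x=a+b$ with $a,b\in R^*$ and apply the hypothesis directly.

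Finally I would establish additivity in two steps. \emph{Step 1 (addition by a unit):} for $x\in R$ and $u\in R^*$, write $x=a+b$ with $a,b\in R^*$ such that also $b+u\in R^*$ (arrange $\bar{b}\notin\{0,\bar{x},-\bar{u}\}$). Then $x+u=a+(b+u)$ is a valid decomposition, so
\[
\tilde{f}(x+u)=f(a)+f(b+u)=f(a)+f(b)+f(u)=\tilde{f}(x)+f(u),
\]
where the middle equality uses $b,u,b+u\in R^*$. \emph{Step 2 (general):} for $x,y\in R$, write $y=u+v$ with $u,v\in R^*$. Applying Step 1 twice,
\[
\tilde{f}(x+y)=\tilde{f}((x+u)+v)=\tilde{f}(x+u)+f(v)=\tilde{f}(x)+f(u)+f(v)=\tilde{f}(x)+\tilde{f}(y).
\]
The only real subtlety in the whole argument is the well-definedness step, and there the only obstacle is finding the auxiliary unit $c$, which the infinite residue field hypothesis supplies with room to spare.
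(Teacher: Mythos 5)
Your proof is correct. You decompose every $x\in R$ as a sum of two units and define $\tilde{f}(x)=f(a)+f(b)$ outright, whereas the paper only needs to define the extension on $m$ (via $f(x):=f(x+a)-f(a)$ for $a\in R^*$) and then argues by cases on whether $x,y$ lie in $m$ or $R^*$. Both proofs rest on the same mechanism — insert an auxiliary unit $c$, chosen generically via the infinite residue field, to bridge two representations — so the two well-definedness checks are essentially equivalent in content. Your uniform definition avoids the case split at the cost of first establishing that every element is a sum of two units, and your two-step additivity argument (add a unit, then add a general element written as a sum of two units) is a clean reorganization of the paper's case analysis. The one point worth double-checking in your write-up — that the constraint set $\{0,\bar{x},\bar{a},\bar{a}'\}$ (resp.\ $\{0,\bar{x},-\bar{u}\}$) is finite so an auxiliary unit $c$ (resp.\ $b$) exists — is indeed exactly what the infinite-residue-field hypothesis supplies, as you note.
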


\begin{proof}
If $x\in m$ and $a,b\in R^*$ then
$f(x+a)-f(a) = f(x+b)-f(b)$.
For we can choose $c \in R^*$ such that $a+c, b+c, a+b+c \in R^*$ as $c$ only needs to avoid a finite number of elements of $R/m$.
Then $f(x+a) + f(b) + f(c)  = f(x+a+c) + f(b) = f(x+a+b+c) = f(x+b) + f(a+c) = f(x+b) + f(a)+ f(c)$.
Thus, we can set $f(x)=f(a+x)-f(a)$ for $a \in R^*$, and this expression is independent of $a\in R^*$.
This defines a function $f: R \to A$ which we need to check is $\Z$-linear.
So, let $x, y\in R$.
If $x,y\in m$ then choose $a,b\in R^*$ such that $a+b\in R^*$.
Then $f(x+y)= f(x+y+a+b)-f(a+b) = f(x+a) + f(y+b) - f(a)-f(b) = f(x)+f(y)$.
If $x\in m$ and $y\in R^*$ then $f(x+y) = f(x)+f(y)$, by definition of $f(x)$.
The rest is clear.
\end{proof}

\begin{definition}
\label{dfn:BilinMap}
We define the map $R \times R \to \coker(\gamma): (x,y) \mapsto \langle x, y\rangle$  by
$$\langle x, y\rangle = \left\{
\renewcommand\arraystretch{1.5}
\begin{array}{cl}x\{y\} & y \in R^*,\\ x\ \{y + a\} - x\ \{a\} & y\in m, a\in R^*.\end{array}\right.$$
By Lemmas \ref{lem:LinearAsLongAsUnits} and \ref{lem:ExtendLinearly}, this is well-defined (that is, independent of the choice of $a\in R^*$), and the map is $\Z$-linear in both variables $x$ and $y\in R$. 
\end{definition}

\begin{lemma}
\label{lem:IndepExp}
For $c,e\in R^*$, the expression 
$$
a^2e^{-1}\ \left\{\begin{smallmatrix}a & a \\ & c\end{smallmatrix}\right\}\ + \
a^2c^{-1}\ \left\{\begin{smallmatrix}a & a \\ & e\end{smallmatrix}\right\}
$$
in $\coker(\gamma)$ is independent of $a\in R^*$.
\end{lemma}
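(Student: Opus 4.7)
The plan is to derive the claim as a direct specialisation of Lemma~\ref{lem:dAexample1aRewrite}. That lemma is a seven-term identity depending on five unit parameters $a,b,c,d,e\in R^*$, and its leading term carries the coefficient $\frac{a^2}{b^2}\frac{d^2-c^2}{e}$. The natural way to isolate an identity comparing the expression at $a$ with the expression at $b$ is to kill this leading term by choosing $d=c$.

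Specialising Lemma~\ref{lem:dAexample1aRewrite} at $d=c$ three things happen simultaneously: the first term vanishes because $d^2-c^2=0$; the factor $d^2/c^2$ appearing in the coefficients of $\left\{\begin{smallmatrix} a & a \\ & c\end{smallmatrix}\right\}$ and $\left\{\begin{smallmatrix} b & b \\ & c\end{smallmatrix}\right\}$ becomes $1$; and the last two summands $b\left\{\begin{smallmatrix} d & d \\ & e\end{smallmatrix}\right\} - b\left\{\begin{smallmatrix} c & c \\ & e\end{smallmatrix}\right\}$ cancel. What remains is the identity
$$\frac{a^2}{e}\left\{\begin{smallmatrix} a & a \\ & c\end{smallmatrix}\right\} + \frac{a^2}{c}\left\{\begin{smallmatrix} a & a \\ & e\end{smallmatrix}\right\} \equiv \frac{b^2}{e}\left\{\begin{smallmatrix} b & b \\ & c\end{smallmatrix}\right\} + \frac{b^2}{c}\left\{\begin{smallmatrix} b & b \\ & e\end{smallmatrix}\right\}$$
in $\coker(\gamma)$, holding for all $a,b,c,e\in R^*$. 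Rewriting $\frac{a^2}{e}$ as $a^2e^{-1}$ and $\frac{a^2}{c}$ as $a^2c^{-1}$, this says precisely that the expression in the statement takes the same value at $a\in R^*$ as it does at any other $b\in R^*$, which is the claimed independence.

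There is essentially no obstacle: the substitution $d=c$ is admissible because Lemma~\ref{lem:dAexample1aRewrite} is stated for arbitrary $a,b,c,d,e\in R^*$, and the computation reduces to bookkeeping. In retrospect, the content of the present lemma is that the seven-term relation of Lemma~\ref{lem:dAexample1aRewrite} hides a symmetry in the parameter $a$, and this symmetry becomes visible under the specialisation $d=c$.
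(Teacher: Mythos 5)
Your proof is correct and is exactly the paper's own argument: the paper's proof of Lemma~\ref{lem:IndepExp} says only ``The claim follows by setting $c=d$ in Lemma~\ref{lem:dAexample1aRewrite},'' and your account spells out precisely why this specialisation kills the first term, normalises the two $d^2/c^2$ coefficients, and cancels the last two summands, leaving the symmetry in $a\leftrightarrow b$.
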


\begin{proof}
The claim follows by setting $c=d$ in Lemma \ref{lem:dAexample1aRewrite}
\end{proof}

\begin{lemma}
\label{lem:aacTof}
For all $c,d, f\in R^*$ we have 
$$
f\,  \left\{\begin{smallmatrix} c&c\\   &d\end{smallmatrix}\right\} \ \equiv\
d\, \{f\}    + f\, \{d\}  -  c^2d^{-1}\{f\} .$$
\end{lemma}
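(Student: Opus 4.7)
My plan is to derive this identity as an essentially immediate specialization of Lemma \ref{lem:dAexample1b}. The crucial observation, which makes the reduction work, is that the notation $\left\{\begin{smallmatrix} d & d \\ & d\end{smallmatrix}\right\}$ coincides with $\{d\}$: unwinding the definition, the scalar $(d^{-1} - d^{-1} + d^{-1})^{-2}$ equals $d^2$, so $\left\{\begin{smallmatrix} d & d \\ & d\end{smallmatrix}\right\} = d^2\left[\begin{smallmatrix} d^{-1} & d^{-1} \\ & d^{-1}\end{smallmatrix}\right] = d^2[d^{-1}] = \{d\}$. This tells me that in Lemma \ref{lem:dAexample1b}, choosing $e$ equal to $d$ collapses the second $\{(\cdot,\cdot,e)\}$ term into a much simpler expression.

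I will therefore set $e = d$ in Lemma \ref{lem:dAexample1b}, which is permissible since $d \in R^*$ by hypothesis. The right-hand side then becomes $a\bigl(\left\{\begin{smallmatrix} c & c \\ & d\end{smallmatrix}\right\} - \{d\}\bigr)$, and the identity reads
$$\frac{d^2 - c^2}{d}\{a\} \equiv a\left\{\begin{smallmatrix} c & c \\ & d\end{smallmatrix}\right\} - a\{d\}.$$
I will then isolate $a\left\{\begin{smallmatrix} c & c \\ & d\end{smallmatrix}\right\}$ on one side and split the scalar factor $(d^2 - c^2)/d = d - c^2 d^{-1}$ across $\{a\}$ using the $R$-linearity of $x\{a\} = xa^2[a^{-1}]$ in its first argument. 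This last step is an exact identity in the ambient module $R[\Skew_3^+(R)]$, not merely a relation modulo $\gamma$, so no subtleties intervene. I arrive at
$$a\left\{\begin{smallmatrix} c & c \\ & d\end{smallmatrix}\right\} \equiv a\{d\} + d\{a\} - c^2 d^{-1}\{a\},$$
and specializing $a = f$ yields the claim.

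Because the argument is a one-step specialization of a lemma already proved, there is no genuine obstacle. The only point requiring care is distinguishing the exact identity $\frac{d^2 - c^2}{d}\{a\} = d\{a\} - c^2 d^{-1}\{a\}$ in $R[\Skew_3^+(R)]$ from the stronger bilinearity modulo $\gamma$ of Lemma \ref{lem:LinearAsLongAsUnits}; only the exact identity is needed here, so no appeal to the $\Z$-bilinear extension $\langle\cdot,\cdot\rangle$ of Definition \ref{dfn:BilinMap} is required.
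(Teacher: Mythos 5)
Your proof is correct and follows exactly the paper's intended route: set $e=d$ in Lemma \ref{lem:dAexample1b}, note that $\left\{\begin{smallmatrix} d&d\\ &d\end{smallmatrix}\right\} = \{d\}$ (which is in fact the definition of the $\{d\}$ notation), rearrange, split the scalar by the exact $R$-linearity of $x \mapsto x\{a\}$, and rename $a$ to $f$. The paper's proof is a one-line citation of the same specialization; you have simply filled in the details.
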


\begin{proof}
This is Lemma \ref{lem:dAexample1b} with $e=d$ replacing $a$ with $f$.
\end{proof}

\begin{lemma}
\label{lem:UnitAntiSym}
For all $a,c,e\in R^*$ we have in $\coker(\gamma)$
$$ 0\ \equiv \  \left\langle a^2c^{-1}- c,\ a^2{e^{-1}} - e\right\rangle
 +  \left\langle a^2{e^{-1}} - e,\, a^2c^{-1} - c\right\rangle.$$
\end{lemma}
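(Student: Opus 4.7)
The plan is to evaluate the symmetric pairing $T(x,y) := \langle x,y\rangle + \langle y,x\rangle$ on the pair $y = a^2c^{-1}-c$, $z = a^2e^{-1}-e$ by exploiting Lemma \ref{lem:IndepExp} in two different ways. The pairing $T$ inherits $\Z$-bilinearity and symmetry on $R\times R$ from Definition \ref{dfn:BilinMap}, so arbitrary elements of $R$ may be substituted in either slot.

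First I would set $G(a) := a^2e^{-1}\left\{\begin{smallmatrix}a&a\\&c\end{smallmatrix}\right\} + a^2c^{-1}\left\{\begin{smallmatrix}a&a\\&e\end{smallmatrix}\right\}$ and use Lemma \ref{lem:IndepExp} to get $G(a) \equiv G(c)$ in $\coker(\gamma)$. Since $\left\{\begin{smallmatrix}c&c\\&c\end{smallmatrix}\right\} = \{c\}$ we have $G(c) = c^2e^{-1}\{c\} + c\left\{\begin{smallmatrix}c&c\\&e\end{smallmatrix}\right\}$, and applying Lemma \ref{lem:aacTof} to the second summand (with the lemma's $(c,d,f) = (c,e,c)$) collapses $G(c)$ to $c\{e\}+e\{c\} = T(c,e)$. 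Independently, I would expand $G(a)$ for arbitrary $a\in R^*$ by applying Lemma \ref{lem:aacTof} to each of the two summands separately, with $(c,d,f)=(a,c,a^2e^{-1})$ for the first and $(c,d,f)=(a,e,a^2c^{-1})$ for the second. Grouping the six resulting monomials in pairs yields
$$G(a) \equiv T(c,a^2e^{-1}) + T(e,a^2c^{-1}) - T(a^2c^{-1},a^2e^{-1}),$$
and comparing with $G(a)\equiv T(c,e)$ produces the identity
$$T(a^2c^{-1},a^2e^{-1}) \equiv T(c,a^2e^{-1}) + T(e,a^2c^{-1}) - T(c,e).$$

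To conclude, I write $a^2c^{-1} = y+c$ and $a^2e^{-1} = z+e$ and expand each of the four pairings in this identity via $\Z$-bilinearity and symmetry of $T$. After expansion, the mixed terms $T(c,z)$, $T(y,e)$ and $T(c,e)$ appear on both sides and cancel, leaving precisely $T(y,z) \equiv 0$, which is the assertion of the lemma. The only genuinely non-routine step is noticing that Lemma \ref{lem:aacTof}, applied to the two summands of $G(a)$, repackages naturally into three symmetric pairings $T$; once this regrouping is spotted, both the explicit expression for $G(a)$ and the cancellation of the final three paragraphs worth of terms are forced by $\Z$-bilinearity alone.
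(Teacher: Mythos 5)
Your proof is correct and follows essentially the same route as the paper's: both expand the $a$-independent expression of Lemma~\ref{lem:IndepExp} via Lemma~\ref{lem:aacTof}, compare with a special evaluation (you at $a=c$, the paper at $a=e$, both giving $e\{c\}+c\{e\}$), and then reorganize the resulting identity with $\Z$-bilinearity into the assertion of the lemma. The only departure is cosmetic: you group the six monomials of the general expansion into three symmetric pairings $T(\cdot,\cdot)$ and then substitute $a^2c^{-1}=y+c$, $a^2e^{-1}=z+e$ to reveal $T(y,z)\equiv 0$ after cancellation, whereas the paper subtracts the $a=e$ evaluation directly and factors the difference as $\left(e-\frac{a^2}{e}\right)\bigl(\{\frac{a^2}{c}\}-\{c\}\bigr)+\left(c-\frac{a^2}{c}\right)\bigl(\{\frac{a^2}{e}\}-\{e\}\bigr)$, which is read off as $-\langle z,y\rangle-\langle y,z\rangle$ by bilinearity. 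Both yield the lemma; yours makes the symmetry visibly structural from the outset, the paper's is marginally shorter.
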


\begin{proof}
We rewrite the expression in Lemma \ref{lem:IndepExp} using Lemma \ref{lem:aacTof}:
\begin{equation}
\label{eqn:dAexample1bbCont}
\renewcommand\arraystretch{2}
\begin{array}{rl}
&
 \frac{a^2}{e}\, \left\{\begin{smallmatrix} a&a\\   &c\end{smallmatrix}\right\} + \frac{a^2}{c}\, \left\{\begin{smallmatrix} a&a\\   &e\end{smallmatrix}\right\}  \\
\equiv &\frac{a^2}{e}\,  \{c\} + c\, \{\frac{a^2}{e}\} - \frac{a^2}{c} \, \{\frac{a^2}{e}\}+ 
\frac{a^2}{c}\, \{e\} + e\, \{\frac{a^2}{c}\} - \frac{a^2}{e}\, \{\frac{a^2}{c}\}\\
=&\frac{a^2}{e}\, \{c\} +\frac{a^2}{c}\, \{e\} + \left(e-\frac{a^2}{e}\right)\, \{\frac{a^2}{c}\} +  \left(c-\frac{a^2}{c}\right)\, \{\frac{a^2}{e}\}.
\end{array}
\end{equation}
By Lemma \ref{lem:IndepExp}, these expressions are independent of $a$.
Setting $a=e$, the last expression equals
$$e\, \{c\} + c\, \{e\} - \frac{e^2}{c}\, \{e\} + 
\frac{e^2}{c}\, \{e\} + e\, \{\frac{e^2}{c}\} - e\, \{\frac{e^2}{c}\} =  e\, \{c\} + c\, \{e\}.$$
Taking the difference with (\ref{eqn:dAexample1bbCont}) gives
\begin{equation}
\label{eqn:dAexample1bbCont2}
\renewcommand\arraystretch{2}
\begin{array}{rcl}
 0 & \equiv & - \left(e-\frac{a^2}{e}\right)\, \{c\} -\left(c-\frac{a^2}{c}\right)\,  \{e\}
 +\left(e-\frac{a^2}{e}\right)\,  \{\frac{a^2}{c}\} + \left(c-\frac{a^2}{c}\right)\,  \{\frac{a^2}{e}\}\\
 &=& \left(e-\frac{a^2}{e}\right) \, \left( \{\frac{a^2}{c}\} - \{c\}\right)
+  \left(c-\frac{a^2}{c}\right) \, \left( \{\frac{a^2}{e}\} - \{e\}\right).
\end{array}
\end{equation}
Rewriting the last expression in terms of $\langle\phantom{a},\phantom{b}\rangle$ using Definition \ref{dfn:BilinMap} and bilinearity yields the lemma.
\end{proof}

\begin{lemma}
\label{lem:InversesGenerate}
Let $R$ be a local ring with infinite residue field.
Then the set of elements of the form $\frac{1}{u_1}+\frac{1}{u_2}+\frac{1}{u_3}$
generates $R$ as an abelian group where $u_i\in R^*$ such that $u_1+u_2+u_3=0$.
\end{lemma}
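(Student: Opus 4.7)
The plan is to show that every unit of $R$ already appears as a single element of the form $\frac{1}{u_1}+\frac{1}{u_2}+\frac{1}{u_3}$ with $u_1+u_2+u_3=0$. Since $R$ has an infinite residue field, every element of $R$ is a sum of two units (if $x\in m$ then $x=1+(x-1)$ with $x-1\equiv -1\pmod m$; if $x\in R^*$ choose $y\in R^*$ with $x-y\in R^*$, possible as long as $|k|\geq 3$), so the abelian subgroup generated by such units is forced to be all of $R$.

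To produce any given unit $r$ in this form, parametrise solutions of $u_1+u_2+u_3=0$ by writing $u_2=t u_1$ and $u_3=-(1+t)u_1$, for $t$ a unit with $1+t$ a unit. A direct calculation gives
\begin{equation*}
\frac{1}{u_1}+\frac{1}{u_2}+\frac{1}{u_3}\;=\;\frac{1}{u_1}\left(1+\frac{1}{t}-\frac{1}{1+t}\right)\;=\;\frac{1+t+t^2}{u_1\,t(1+t)}.
\end{equation*}
Because the residue field $k$ is infinite, we may choose $t\in R$ whose reduction $\bar t\in k$ avoids the finitely many forbidden classes $0$, $-1$, and the (at most two) roots of $T^2+T+1\in k[T]$; this ensures that $t$, $1+t$ and $1+t+t^2$ are all units of $R$. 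Setting
\begin{equation*}
u_1 \;=\; \frac{1+t+t^2}{r\,t(1+t)}\;\in\;R^*,
\end{equation*}
(a unit because numerator and denominator are) then yields $\tfrac{1}{u_1}+\tfrac{1}{u_2}+\tfrac{1}{u_3}=r$, as required.

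There is no real obstacle beyond the elementary identity above; the only thing to check is the existence of an admissible parameter $t$, which is a one-line argument using that $T^2+T+1$ has at most two roots in the infinite field $k$. The argument works in all characteristics of the residue field, including characteristic $2$ (where $-1=1$ but the constraint $\bar t\neq 0$ and $\bar t^2+\bar t+1\neq 0$ still cuts out only finitely many classes).
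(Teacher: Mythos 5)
Your proof is correct and follows essentially the same strategy as the paper's: parametrise triples with $u_1+u_2+u_3=0$, observe that the sum of reciprocals is governed by the degree-two polynomial $1+t+t^2$ (the paper's $u_1^2+u_1u_2+u_2^2$ in disguise), use the infinite residue field to make everything a unit, and then scale to hit all of $R^*$, whence all of $R$ since every element is a sum of two units. The only cosmetic difference is that you solve directly for $u_1$ to realise an arbitrary target unit $r$, whereas the paper fixes one good triple giving a unit $w$ and then multiplies by $t\in R^*$; both yield $R^*\subset A$ and hence $A=R$.
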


\begin{proof}
Let $A \subset R$ be the abelian subgroup generated by elements of the form $\frac{1}{u_1}+\frac{1}{u_2}+\frac{1}{u_3}$ as in the lemma.
We need to show $A=R$.
We will show that there is a triple $(u_1,u_2,u_3)$ such that 
$u_1+u_2+u_3=0$ and $w=\frac{1}{u_1}+\frac{1}{u_2}+\frac{1}{u_3} \in R^*$ is a unit.
Then $tw \in A$ for all $t\in R^*$, in particular, $R^*\subset A$ which implies $R \subset A$.
We need to find $u_1,u_2\in R^*$ such that $u_1+u_2, \frac{1}{u_1}+\frac{1}{u_2}-\frac{1}{u_1+u_2}\in R^*$ that is, such that $u_1,u_2,u_1+u_2, u_1^2+u_1u_2+u_2^2 \in R^*$.
This only needs to be checked for $F=R/m$.
For any given $0\neq u_1 \in F$ there are only finitely many $u_2$ which do not satisfy this requirement. Since $F$ is infinite, there is $u_2 \in F$ such that 
$u_2,u_1+u_2, u_1^2+u_1u_2+u_2^2 \neq 0$.
\end{proof}

\begin{lemma}
\label{lem:a2overcminusc}
Let $R$ be a local ring with infinite residue field.
Then for  $a\in R^*$ 
the following map of abelian groups (defined on generators) is surjective
$$\Z[R^*] \to R: c \mapsto \frac{a^2}{c}-c.$$
\end{lemma}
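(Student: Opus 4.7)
The plan is to combine the preceding lemma (Lemma \ref{lem:InversesGenerate}) with the obvious observation that $a^{2}$ is a unit, so multiplication by $a^{2}$ is an automorphism of the abelian group $R$.

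More concretely, my first step would be to notice the telescoping trick: if $c_{1},c_{2},c_{3} \in R^{*}$ satisfy $c_{1}+c_{2}+c_{3}=0$, then
\[
\sum_{i=1}^{3}\Bigl(\tfrac{a^{2}}{c_{i}}-c_{i}\Bigr) \;=\; a^{2}\Bigl(\tfrac{1}{c_{1}}+\tfrac{1}{c_{2}}+\tfrac{1}{c_{3}}\Bigr) \;-\;(c_{1}+c_{2}+c_{3}) \;=\; a^{2}\Bigl(\tfrac{1}{c_{1}}+\tfrac{1}{c_{2}}+\tfrac{1}{c_{3}}\Bigr).
\]
Thus every element of the form $a^{2}\bigl(\tfrac{1}{u_{1}}+\tfrac{1}{u_{2}}+\tfrac{1}{u_{3}}\bigr)$ with $u_{1}+u_{2}+u_{3}=0$ and $u_{i}\in R^{*}$ lies in the image of $\Z[R^{*}] \to R: c \mapsto \tfrac{a^{2}}{c}-c$.

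Next I would invoke Lemma \ref{lem:InversesGenerate}, which asserts exactly that the elements $\tfrac{1}{u_{1}}+\tfrac{1}{u_{2}}+\tfrac{1}{u_{3}}$ (with $u_{1}+u_{2}+u_{3}=0$, $u_{i}\in R^{*}$) generate $R$ as an abelian group. Multiplying this generating set through by $a^{2}$, we see that $a^{2}\cdot R$ is contained in the image. Since $a\in R^{*}$, we have $a^{2}\cdot R = R$, and surjectivity follows.

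There is no substantive obstacle here; the only thing to be careful about is that the image is meant as the image of the \emph{abelian group} homomorphism (so $\Z$-linear combinations are allowed), which is what makes the reduction to Lemma \ref{lem:InversesGenerate} legitimate. The infinite residue field hypothesis enters only through that prior lemma.
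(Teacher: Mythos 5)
Your proof is correct and matches the paper's argument almost verbatim: both observe that $\sum_{i}(\tfrac{a^2}{c_i}-c_i) = a^2\sum_i \tfrac{1}{c_i}$ when $\sum_i c_i = 0$, and then invoke Lemma \ref{lem:InversesGenerate} to conclude. Your added remark that multiplying by the unit $a^2$ preserves the generated subgroup is a small but welcome clarification that the paper leaves implicit.
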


\begin{proof}
The image contains the set of elements $a^2(\frac{1}{u_1}+\frac{1}{u_2}+\frac{1}{u_3})$ 
where $u_i\in R^*$ and $u_1+u_2+u_3=0$.
By lemma \ref{lem:InversesGenerate}, this set generates $R$ as an abelian group.
\end{proof}

\begin{lemma}
\label{lem:SymbolSkewSymmetric}
Let $R$ be a local ring with infinite residue field.
Then the  $\Z$-bilinear map $R\times R \to \coker(\gamma): (x,y) \mapsto \langle x, y \rangle$ of Definition \ref{dfn:BilinMap} is anti-symmetric, that is, for all $x,y\in R$ we have
$$\langle x,y\rangle  + \langle y, x \rangle \equiv 0.$$
In particular, for all $a,b\in R^*$ we have 
$$a\, \{b\} + b\, \{a\}\equiv 0.$$
\end{lemma}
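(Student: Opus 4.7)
The plan is to combine the symmetric-in-$a,c,e$ identity from Lemma \ref{lem:UnitAntiSym} with the surjectivity statement of Lemma \ref{lem:a2overcminusc} and the bilinearity provided by Definition \ref{dfn:BilinMap}.

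First I would fix an arbitrary unit $a \in R^*$. By Lemma \ref{lem:a2overcminusc}, every element $x \in R$ can be written as a $\Z$-linear combination
$$x = \sum_{i} n_i \left(\tfrac{a^2}{c_i} - c_i\right), \qquad n_i \in \Z,\ c_i \in R^*,$$
and similarly for any $y \in R$. Since the pairing $\langle \cdot , \cdot \rangle \colon R \times R \to \coker(\gamma)$ is $\Z$-bilinear (Definition \ref{dfn:BilinMap}), the expression $\langle x,y\rangle + \langle y,x\rangle$ expands as a $\Z$-linear combination of the terms
$$\left\langle \tfrac{a^2}{c_i} - c_i,\ \tfrac{a^2}{e_j} - e_j \right\rangle + \left\langle \tfrac{a^2}{e_j} - e_j,\ \tfrac{a^2}{c_i} - c_i \right\rangle,$$
each of which is $\equiv 0$ by Lemma \ref{lem:UnitAntiSym}. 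Hence $\langle x,y\rangle + \langle y,x\rangle \equiv 0$ for all $x,y \in R$.

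For the ``in particular'' clause, I observe that for $a,b \in R^*$ the definition of the pairing gives $\langle a,b\rangle = a\{b\}$ and $\langle b,a\rangle = b\{a\}$ directly, so the general antisymmetry just established specialises to $a\{b\} + b\{a\} \equiv 0$.

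The only step requiring any care is verifying that the reduction to pairs of the form $(a^2/c-c,\ a^2/e-e)$ is legitimate: this is exactly where the bilinearity of $\langle\cdot,\cdot\rangle$ (which in turn rests on Lemma \ref{lem:LinearAsLongAsUnits} and the extension Lemma \ref{lem:ExtendLinearly}) is essential, together with the fact that the same fixed $a$ can be used to generate both factors, so that Lemma \ref{lem:UnitAntiSym} applies uniformly. No further obstacle arises; the result is essentially a formal consequence of the two preceding lemmas.
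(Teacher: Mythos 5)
Your proof is correct and is essentially the paper's argument fleshed out: the paper's proof is the one-line statement that the lemma follows from Lemmas \ref{lem:a2overcminusc} and \ref{lem:UnitAntiSym} together with bilinearity, and you have supplied exactly the intended expansion, including the key observation that a single fixed $a$ serves to decompose both arguments so that Lemma \ref{lem:UnitAntiSym} applies.
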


\begin{proof}
This follows from Lemmas \ref{lem:a2overcminusc} and \ref{lem:UnitAntiSym}, and the bilinearity of $\langle \phantom{x},\phantom{y}\rangle$.
\end{proof}

\begin{lemma}
\label{lem:aacTof2}
For all $c,d,f\in R^*$ we have
$$
f\,  \left\{\begin{smallmatrix} c&c\\   &d\end{smallmatrix}\right\} \ \equiv\
 -  c^2d^{-1}\{f\}  \equiv f\, \{c^2d^{-1}\}.$$
\end{lemma}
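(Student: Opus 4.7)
The plan is to read off both equivalences directly from the two immediately preceding results, Lemma \ref{lem:aacTof} and Lemma \ref{lem:SymbolSkewSymmetric}, combined with the $\Z$-bilinearity of the symbol $\langle\phantom{x},\phantom{y}\rangle$ from Definition \ref{dfn:BilinMap}.

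First, I would invoke Lemma \ref{lem:aacTof} to get
\[
f\,  \left\{\begin{smallmatrix} c&c\\   &d\end{smallmatrix}\right\} \ \equiv\ d\,\{f\} + f\,\{d\} - c^2 d^{-1}\,\{f\}.
\]
Since $d,f\in R^*$, Lemma \ref{lem:SymbolSkewSymmetric} gives $d\,\{f\} + f\,\{d\}\equiv 0$, so the first two summands cancel and we obtain the first equivalence
\[
f\,  \left\{\begin{smallmatrix} c&c\\   &d\end{smallmatrix}\right\} \ \equiv\ -c^2 d^{-1}\,\{f\}.
\]

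For the second equivalence, I would rewrite $-c^2 d^{-1}\{f\} = -\langle c^2d^{-1},f\rangle$ using the fact that $\langle x,y\rangle = x\{y\}$ whenever $y\in R^*$, which applies here since $f\in R^*$. Then anti-symmetry of $\langle\phantom{x},\phantom{y}\rangle$ from Lemma \ref{lem:SymbolSkewSymmetric} yields $-\langle c^2d^{-1},f\rangle \equiv \langle f, c^2d^{-1}\rangle$, and since $c^2d^{-1}\in R^*$ this is equal to $f\,\{c^2d^{-1}\}$, finishing the proof.

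There is no serious obstacle: the whole content of the lemma is the cancellation $d\{f\}+f\{d\}\equiv 0$, together with a clean application of skew-symmetry, both of which have just been established. The only minor point to keep in mind is that the symbol $\langle x,y\rangle$ is defined for arbitrary $x\in R$ but only coincides literally with $x\,\{y\}$ when $y$ is a unit; since $f$ and $c^2d^{-1}$ are both in $R^*$, this technicality does not interfere with the argument.
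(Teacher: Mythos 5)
Your proof is correct and is exactly the paper's approach: the paper's proof simply says ``Combine Lemmas \ref{lem:aacTof} and \ref{lem:SymbolSkewSymmetric},'' and you have spelled out that combination --- using Lemma \ref{lem:aacTof} to expand the left-hand side and then Lemma \ref{lem:SymbolSkewSymmetric} twice, once to cancel $d\{f\}+f\{d\}$ and once to pass between $-c^2d^{-1}\{f\}$ and $f\{c^2d^{-1}\}$.
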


\begin{proof}
Combine Lemmas \ref{lem:aacTof} and \ref{lem:SymbolSkewSymmetric}.
\end{proof}

\begin{lemma}
\label{lem:AlmostThere}
For all $a,b,c\in R^*$ we have
\begin{equation}
\begin{array}{rcl}
0 
&\equiv & a^2\left(\frac{c^2}{b^3} + \frac{1}{c}\right)\, \{\frac{a^2}{b}\}
- b^2\left(\frac{c^2}{b^3}+\frac{1}{c}\right) \{b\}.
\end{array}
\end{equation}
\end{lemma}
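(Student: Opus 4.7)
The plan is to specialize the seven-term identity of Lemma~\ref{lem:dAexample1aRewrite} to the choice $d = b$, $e = b$, and then to simplify the resulting expression using the accumulated machinery. First I would apply Lemma~\ref{lem:aacTof2} to rewrite each bracket symbol $\left\{\begin{smallmatrix}x & x\\ & y\end{smallmatrix}\right\}$ in the form $\{x^2/y\}$, and then use the skew-symmetry of Lemma~\ref{lem:SymbolSkewSymmetric} to convert the stray term $-b\{c^2/b\}$ into $(c^2/b)\{b\}$. After these reductions the coefficient of $\{b\}$ collects to $\tfrac{b^2}{c} + \tfrac{c^2}{b} = b^2(\tfrac{c^2}{b^3} + \tfrac{1}{c})$, which already matches the target, while the coefficient of $\{a^2/b\}$ becomes $\tfrac{a^2}{b} - a^2(\tfrac{c^2}{b^3} + \tfrac{1}{c})$, differing from the target coefficient only by the extra $\tfrac{a^2}{b}\{a^2/b\}$. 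Isolating the target combination displays the lemma as the relation
\[
a^2\bigl(\tfrac{c^2}{b^3}+\tfrac{1}{c}\bigr)\{a^2/b\} - b^2\bigl(\tfrac{c^2}{b^3}+\tfrac{1}{c}\bigr)\{b\} \equiv \tfrac{a^2}{b}\{a^2/b\} - \tfrac{a^2 b}{c^2}\{a^2/c\} + \tfrac{b^3}{c^2}\{b^2/c\} + b\{b\},
\]
so it remains to show the right-hand side is $\equiv 0$ in $\coker(\gamma)$.

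For this I would invoke the auxiliary vanishing $F(a;c,e) := \tfrac{a^2}{e}\{a^2/c\} + \tfrac{a^2}{c}\{a^2/e\} \equiv 0$, which follows from Lemma~\ref{lem:IndepExp} together with the direct computation $F(c;c,e) \equiv 0$ (writing the second bracket via Lemma~\ref{lem:aacTof2} as $c\{c^2/e\}$ and using skew-symmetry to cancel it against $\tfrac{c^2}{e}\{c\}$). The instances $F(a;c,b) \equiv 0$ and $F(b;c,b) \equiv 0$ then yield $\{a^2/c\} \equiv -\tfrac{b}{c}\{a^2/b\}$ and $\{b^2/c\} \equiv -\tfrac{b}{c}\{b\}$; substituting these into the right-hand side above collapses it to the two-term expression $\tfrac{a^2(c^3+b^3)}{bc^3}\{a^2/b\} + \tfrac{b(c^3-b^3)}{c^3}\{b\}$.

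The main obstacle is then to check that this two-term residual vanishes. I would combine it with the 2-torsion identities $2\tfrac{a^2}{b}\{a^2/b\} \equiv 0$ (a direct consequence of $F(a;b,b) \equiv 0$) and $2b\{b\} \equiv 0$ (from Lemma~\ref{lem:SymbolSkewSymmetric} applied with equal arguments), and, if needed, a second specialization of Lemma~\ref{lem:dAexample1aRewrite} with complementary parameters (for instance $d = c$, $e = c$) to produce an independent relation between $\{a^2/b\}$ and $\{b\}$ that closes the loop. The delicate bookkeeping of signs and of the rational coefficients in $a$, $b$, $c$ in this last matching is where I expect the argument to require the most care.
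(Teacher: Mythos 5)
Your plan diverges from the paper's, and there is a genuine gap. The opening move is fine: setting $d=e=b$ in Lemma~\ref{lem:dAexample1aRewrite}, converting brackets via Lemma~\ref{lem:aacTof2}, and applying skew-symmetry to $-b\{c^2/b\}$ does yield the displayed identity. The derivation of the auxiliary vanishing $F(a;c,e):=\tfrac{a^2}{e}\{a^2/c\}+\tfrac{a^2}{c}\{a^2/e\}\equiv 0$ from Lemma~\ref{lem:IndepExp} together with the $a=c$ evaluation and skew-symmetry is also correct. The gap is the step ``$F(a;c,b)\equiv 0$ yields $\{a^2/c\}\equiv -\tfrac{b}{c}\{a^2/b\}$''. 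From $F(a;c,b)\equiv 0$ you only know $\tfrac{a^2}{b}\{a^2/c\}+\tfrac{a^2}{c}\{a^2/b\}\equiv 0$; to pass to your stated conclusion you would have to multiply this congruence by $\tfrac{b}{a^2}$ inside $\coker(\gamma)$. But $\coker(\gamma)$ is $R[\Skew_3^+]$ modulo the \emph{additive} image of the $\Z$-linear map $\gamma$, and the pairing $\langle\,\cdot\,,\,\cdot\,\rangle$ of Definition~\ref{dfn:BilinMap} is only $\Z$-bilinear. Nothing in the paper gives $\im(\gamma)$ the structure of an $R$-submodule, so scaling a congruence $\equiv 0$ by a unit is not a licensed operation --- indeed Lemma~\ref{lem:TrivSquareAction}, which merely establishes the very special rule $\langle a^2x,y\rangle\equiv\langle x,a^{-2}y\rangle$, has to be proved by hand precisely because there is no ambient $R$-action. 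The same objection blocks $\{b^2/c\}\equiv -\tfrac{b}{c}\{b\}$, so the ``two-term residual'' of your last paragraph is not actually derived; and that paragraph, as you acknowledge, does not close the argument either.

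The paper circumvents this entirely by keeping $d,e$ free: after Lemma~\ref{lem:aacTof2} and skew-symmetry the common factor $\tfrac{d^2-c^2}{e}$ appears in every coefficient, and since that factor sweeps out $R^*$ while the map $x\mapsto \tfrac{xa^2}{b^2}\{a^2/b\}-\tfrac{xa^2}{c^2}\{a^2/c\}+\tfrac{xb^2}{c^2}\{b^2/c\}-x\{b\}$ is additive in $x$, the bracketed expression may then be multiplied by any $x\in R$, using only $\Z$-linearity. Specialising to $x=c^2/b$ (not $d=e=b$) and applying skew-symmetry twice more gives the lemma. That intermediate ``vanishing for all $x$'' is what your version is missing: taking $x=b$ in it shows your right-hand side is $2b\{b\}\equiv 0$, but you cannot get there once the parameters $d,e$ have been fixed at the outset.
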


\begin{proof}
We simplify expression (\ref{eqn:dAexample1aRewrite}) using Lemma \ref{lem:aacTof2}, and we obtain

\begin{equation}
\label{eqn:dAexample1aRewrite2}
\renewcommand\arraystretch{2}
\begin{array}{rcl}
0 & \equiv &  \frac{a^2}{b^2} \frac{(d^2-c^2)}{e} \, \{\frac{a^2}{b}\}
-  \frac{a^2d^2}{c^2e}\, \{\frac{a^2}{c}\}
-  \frac{a^2}{c}\, \{\frac{a^2}{e}\}
+ \frac{b^2d^2}{c^2e}\, \{\frac{b^2}{c}\} \\
&&
+  \frac{b^2}{c}\, \{\frac{b^2}{e}\}
+ b\, \{\frac{d^2}{e}\}  
- b\, \{\frac{c^2}{e}\} .
\end{array}
\end{equation}
Using anti-symmetry of the bilinear form $\langle \phantom{x},\phantom{y}\rangle$ (Lemma \ref{lem:SymbolSkewSymmetric}) on the 3rd, 5th, 6th and 7th summand yields
\begin{equation}
\label{eqn:dAexample1aRewrite3}
\renewcommand\arraystretch{2}
\begin{array}{rcl}
0 & \equiv & \frac{a^2}{b^2} \frac{(d^2-c^2)}{e} \, \{\frac{a^2}{b}\} 
- \frac{a^2d^2}{c^2e}\, \{\frac{a^2}{c}\} 
+  \frac{a^2}{e}\, \{\frac{a^2}{c}\}
+ \frac{b^2d^2}{c^2e}\, \{\frac{b^2}{c}\} \\
&&
-  \frac{b^2}{e}\, \{\frac{b^2}{c}\}
- \frac{d^2}{e}\,  \{b\} 
+ \frac{c^2}{e}\, \{b\} \\
&=&
\frac{d^2-c^2}{e}\, \left( \frac{a^2}{b^2} \, \{\frac{a^2}{b}\}
-  \frac{a^2}{c^2}\, \{\frac{a^2}{c}\}
+ \frac{b^2}{c^2}\, \{\frac{b^2}{c}\} 
- \{b\}  \right) .
\end{array}
\end{equation}
For any $c\in R^*$ we can find $d\in R^*$ such that $d^2-c^2\in R^*$.
Since $e\in R^*$ is an arbitrary unit we conclude that 
\begin{equation}
\label{eqn:dAexample1aRewrite4}
\renewcommand\arraystretch{2}
\begin{array}{rcl}
0 & \equiv & x\, \left( \frac{a^2}{b^2} \, \{\frac{a^2}{b}\}
-  \frac{a^2}{c^2}\, \{\frac{a^2}{c}\}
+ \frac{b^2}{c^2}\, \{\frac{b^2}{c}\} 
- \{b\}  \right).
\end{array}
\end{equation}
for all $a,b,c\in R^*$ and $x\in R$.
In particular for $x=c^2/b$ we obtain
\begin{equation}
\label{eqn:dAexample1aRewrite5}
\renewcommand\arraystretch{2}
\begin{array}{rcl}
0 & \equiv & 
\frac{a^2c^2}{b^3} \, \{\frac{a^2}{b}\} 
- \frac{a^2}{b}\,  \{\frac{a^2}{c}\}
+ b\, \{\frac{b^2}{c}\} 
-  \frac{c^2}{b} \, \{b\}
\end{array}
\end{equation}
Using anti-symmetry on the 2nd and 3rd term (Lemma \ref{lem:SymbolSkewSymmetric}) we obtain
\begin{equation}
\label{eqn:dAexample1aRewrite6}
\renewcommand\arraystretch{2}
\begin{array}{rcl}
0 & \equiv & 
 \frac{a^2c^2}{b^3} \, \{\frac{a^2}{b}\}
+ \frac{a^2}{c}\,  \{\frac{a^2}{b}\}
-  \frac{b^2}{c}\, \{b\}
-  \frac{c^2}{b}\, \{b\} \\
&=& a^2\left(\frac{c^2}{b^3} + \frac{1}{c}\right)\, \{\frac{a^2}{b}\}
-  b^2\left(\frac{c^2}{b^3}+\frac{1}{c}\right) \, \{b\}.
\end{array}
\end{equation}
\end{proof}

\begin{lemma}
\label{lem:RgenerationX}
Let $R$ be a local ring with infinite residue field and $b\in R^*$ a unit.
Then the following map of abelian groups (defined on generators) is surjective
$$\Z[R^*] \twoheadrightarrow R: c \mapsto \frac{c^2}{b^3}+\frac{1}{c}.$$
\end{lemma}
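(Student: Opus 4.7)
The plan is to recycle the strategy from the proof of Lemma \ref{lem:a2overcminusc}: realise elements of the form $w = 1/u_1 + 1/u_2 + 1/u_3$ (with $u_1+u_2+u_3=0$, $u_i\in R^*$) inside the image of the map, and then invoke Lemma \ref{lem:InversesGenerate}. Let $S\subseteq R$ denote the abelian subgroup generated by the image of $c\mapsto c^2/b^3 + 1/c$.

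First I would observe the trivial identity
\[
\Bigl(\tfrac{c^2}{b^3} + \tfrac{1}{c}\Bigr) - \Bigl(\tfrac{(-c)^2}{b^3} + \tfrac{1}{-c}\Bigr) = \tfrac{2}{c},
\]
so $2/c\in S$ for every $c\in R^*$. Since the residue field of $R$ is infinite, every element of $R$ is a sum of two units (as already used in the proof of Lemma \ref{lem:LinearAsLongAsUnits}), so $R$ is generated as an abelian group by $\{1/c : c\in R^*\} = R^*$; therefore $2R\subseteq S$.

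Next, for any $u_1,u_2,u_3\in R^*$ with $u_1+u_2+u_3=0$, write $w = 1/u_1+1/u_2+1/u_3$. The identities
\[
u_1^2+u_2^2+u_3^2 = -2(u_1u_2+u_1u_3+u_2u_3), \qquad u_1u_2+u_1u_3+u_2u_3 = w\cdot u_1u_2u_3
\]
yield, by direct substitution,
\[
\sum_{i=1}^3\Bigl(\frac{u_i^2}{b^3}+\frac{1}{u_i}\Bigr) \;=\; w\Bigl(1 - \frac{2u_1u_2u_3}{b^3}\Bigr).
\]
The left-hand side lies in $S$, while $2u_1u_2u_3w/b^3\in 2R\subseteq S$; taking the sum gives $w\in S$ for every such triple.

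Finally, Lemma \ref{lem:InversesGenerate} says these $w$'s generate $R$ as an abelian group, so $S=R$, which is the desired surjectivity. The only nontrivial input is the algebraic identity in the penultimate paragraph, which is a routine expansion; I do not foresee any real obstacle.
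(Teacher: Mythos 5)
Your proof is correct, but it takes a genuinely different route from the paper. The paper's argument chooses units $u_1,u_2,u_3$ with all non-empty partial sums $u_I$ units and with $w = \sum_I(-1)^{|I|+1}u_I^{-1}$ a unit, exploits the polarisation identity $\sum_I(-1)^{|I|+1}u_I^2 = 0$, and scales by $t\in R^*$ to land $t^{-1}w$, hence all of $R^*$, in the image; the proof is self-contained and does not appeal to Lemma~\ref{lem:InversesGenerate}. You instead (i) get $2R$ into the image cheaply via the involution $c\mapsto -c$, (ii) use the relation $u_1+u_2+u_3=0$ to turn $\sum u_i^2$ into $-2u_1u_2u_3\cdot w$, absorb that error into $2R$, and conclude $w$ is in the image, and then (iii) quote Lemma~\ref{lem:InversesGenerate} to generate $R$. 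Your approach is more modular (it recycles the earlier lemma rather than re-deriving the existence of suitable units from the infinite residue field), uses the simpler three-term identity in place of the seven-term one, and makes transparent why the $b^3$-term is harmless; the paper's version avoids the preliminary $2R\subseteq S$ step by arranging an exact cancellation. Both are valid, and the algebra in your display checks out.
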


\begin{proof}
Chose units $u_1,u_2,u_3$ such that all non-empty partial sums are units as well as
$$ w=\frac{1}{u_1+u_2+u_3} - \frac{1}{u_1+u_2} - \frac{1}{u_1+u_3} - \frac{1}{u_2+u_3} + \frac{1}{u_1} + \frac{1}{u_2}+ \frac{1}{u_3} \in R^*.$$ 
This is possible since we only need to find such $u_1,u_2,u_3 \in R/m$ and for any $u_1,u_2 \in R/m$ such that $u_1,u_2,u_1+ u_2\neq 0 \in R/m$ there are only finitely many $u_3 \in R/m$ such that $u_1+u_3$ or $u_2+u_3$, or $u_1+u_2+u_3$ or $w$ is zero in $R/m$ (for $w$ to be zero in $R/m$, $u_3$ has to be a solution of a non-zero polynomial in $R/m$).

Now, let $u_1,u_2,u_3 \in R$ as above. 
The element
$$s=(u_1+u_2+u_3)^2 - (u_1+u_2)^2 - (u_1+u_3)^2 - (u_2+u_3)^2 + u_1^2 + u_2^2+ u_3^2$$
of $R$ is zero.
Hence, $t^2sb^{-3}+t^{-1}w = t^{-1}w$ is in the image of the map for any $t\in R^*$.
Hence $R^*$ is in the image and so is $R$.
\end{proof}

\begin{lemma}
\label{lem:TrivSquareAction}
For all $a\in R^*$ and $ x,y \in R$.
$$\langle a^2 x, y\rangle \equiv \langle x, a^{-2}y\rangle.$$
\end{lemma}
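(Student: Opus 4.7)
The plan is to derive a strengthened version of Lemma \ref{lem:AlmostThere} in which the coefficient is allowed to be arbitrary in $R$, then deduce the claim by two substitutions followed by antisymmetry.

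First I would rewrite Lemma \ref{lem:AlmostThere} in bilinear-form notation. For $a,b\in R^*$ both $a^2/b$ and $b$ are units, so for every $c\in R^*$ the identity reads
$$\Bigl\langle a^2\bigl(\tfrac{c^2}{b^3}+\tfrac{1}{c}\bigr),\, a^2/b\Bigr\rangle \equiv \Bigl\langle b^2\bigl(\tfrac{c^2}{b^3}+\tfrac{1}{c}\bigr),\, b\Bigr\rangle.$$
By Lemma \ref{lem:RgenerationX} the elements $\tfrac{c^2}{b^3}+\tfrac{1}{c}$, as $c$ ranges over $R^*$, generate $R$ as an abelian group, so $\Z$-linearity of $\langle-,-\rangle$ in its first argument upgrades this to the relation $\langle a^2 s,\, a^2/b\rangle \equiv \langle b^2 s,\, b\rangle$ for every $s\in R$. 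Since multiplication by $a^2$ is a bijection of $R$, substituting $s = r/a^2$ yields the key identity
$$\langle r,\, a^2/b\rangle \equiv \langle b^2 r/a^2,\, b\rangle \qquad \text{for all } a,b\in R^*,\ r\in R. \qquad (\ast)$$

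Next I would apply $(\ast)$ twice. Setting $(r,b) = (y,\, 1/x)$ with $x\in R^*$ and $y\in R$ gives $\langle y,\, a^2 x\rangle \equiv \langle y/(a^2 x^2),\, 1/x\rangle$, while setting $(r,a,b) = (y/a^2,\, 1,\, 1/x)$ in $(\ast)$ gives $\langle y/a^2,\, x\rangle \equiv \langle y/(a^2 x^2),\, 1/x\rangle$. The right-hand sides agree, so subtracting produces
$$\langle y,\, a^2 x\rangle \equiv \langle y/a^2,\, x\rangle \qquad \text{for all } a,x\in R^*,\ y\in R.$$
Applying the antisymmetry of Lemma \ref{lem:SymbolSkewSymmetric} to both sides then yields $\langle a^2 x,\, y\rangle \equiv \langle x,\, a^{-2} y\rangle$ under the same restrictions.

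To remove the restriction $x\in R^*$, I would appeal to the $\Z$-linearity in $x$ of both sides together with the elementary fact that in a local ring every element is a sum of two units (e.g.\ $x = 1 + (x-1)$ works whenever $x\in m$), so $R^*$ generates $R$ as an abelian group. The main obstacle is purely bookkeeping: arranging the two applications of $(\ast)$ so that their right-hand sides coincide. Once this is set up, the argument is essentially a one-liner.
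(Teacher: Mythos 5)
Your proof is correct and in substance follows the paper's: both derive the relation $\langle a^2 s,\ a^2/b\rangle \equiv \langle b^2 s,\ b\rangle$ for all $s\in R$ (the paper's equation~(\ref{eqn:dAexample1aRewrite7}), your $(\ast)$ after clearing $a^2$ from the first slot), both compare it with its $a=1$ specialization (the paper observes the right-hand side is $a$-independent; you apply $(\ast)$ twice with matching right-hand sides and subtract — these are the same computation under the reparametrisation $r=a^2s$), and both then extend by bilinearity. The only cosmetic difference is that you take an extra detour through the antisymmetry of Lemma~\ref{lem:SymbolSkewSymmetric} and extend by $\Z$-linearity in the first argument (with the standard ``every element of a local ring is a sum of two units'' fact), whereas the paper proves the identity directly with no restriction on $x$, for $y$ a unit, and extends by linearity in the second argument; since Lemma~\ref{lem:SymbolSkewSymmetric} is already established at this point the detour is harmless, just slightly longer.
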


\begin{proof}
Using Lemma \ref{lem:RgenerationX}, Lemma \ref{lem:AlmostThere} implies
\begin{equation}
\label{eqn:dAexample1aRewrite7}
\renewcommand\arraystretch{2}
\begin{array}{rcl}
 a^2x \, \{\frac{a^2}{b}\}
& \equiv &b^2x\,  \{b\} 
\end{array}
\end{equation}
for all $a,b\in R^*$ and $x\in R$ in view of Lemma \ref{lem:RgenerationX}.
In particular, the left hand side is independent of $a$ (as the right hand side is) and thus equals its value for $a=1$. 
In other words (replacing $b$ with $b^{-1}$), we have
\begin{equation}
\label{eqn:dAexample1aRewrite8}
\renewcommand\arraystretch{2}
\begin{array}{rcl}
a^2x\, \{a^2b\} 
& \equiv & x\, \{b\}.
\end{array}
\end{equation}
Replacing $b$ with $a^{-2}b$ yields
$a^2 x\, \{b\} 
 \equiv x\,  \{a^{-2}b\}
$
for all $a,b\in R^*$ and $x \in R$.
That is, the Lemma holds for $y=b$ a unit. 
Since $\langle\phantom{x},\phantom{y}\rangle$ is linear in the second variable, we are done. 
\end{proof}

\begin{lemma}
\label{lem:x[a]iszero}
For all $x\in R$ and $a\in R^*$, we have $x\, [a]\equiv 0$.
\end{lemma}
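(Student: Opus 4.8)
\emph{Unwinding the statement.} Recall from the Notation that $x\,\{a\}=a^{2}x\,[a^{-1}]$ as elements of the free $R$-module $R\,[\Skew_3^+(R)]$, so that
$$x\,[a]=(a^{2}x)\,\{a^{-1}\}=\langle a^{2}x,\;a^{-1}\rangle$$
in $\coker(\gamma)$, by Definition \ref{dfn:BilinMap}. Hence the lemma is \emph{equivalent} to $\langle z,b\rangle\equiv 0$ for all $z\in R$ and $b\in R^{*}$; and since $\langle\cdot,\cdot\rangle$ is $\Z$-bilinear (Definition \ref{dfn:BilinMap}) and every element of a local ring is a sum of two units, this is in turn equivalent to
$$u\,\{b\}\equiv 0\qquad\text{for all }u,b\in R^{*},$$
i.e.\ to the vanishing of $\langle\cdot,\cdot\rangle$ on $R\times R$. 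This is the form I would aim at.

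\emph{Producing a new relation.} To prove $u\,\{b\}\equiv 0$ I would feed a well chosen $6\times 6$ skew-symmetric matrix $A\in\Skew_6^+(R)$, depending on several free unit parameters, into the formula of Proposition \ref{prop:dAFormula} with $n=2$. It must \emph{not} be of the ``constant along each row'' shape used in Lemma \ref{lem:dAexample1aRewrite}, since after the reductions available in the text that shape only reproduces consequences of antisymmetry (Lemma \ref{lem:SymbolSkewSymmetric}) and of the square-action identity (Lemma \ref{lem:TrivSquareAction}). Computing $\Pf(A)$, the Pfaffians $\Pf(A^{\wedge}_{ij})$, and identifying the $3\times 3$ submatrices $A_{\widehat{ijk}}$, one obtains an identity $0\equiv\gamma(A)$ among symbols $[\cdot]$ in $\coker(\gamma)$.

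\emph{Collapsing it.} Each term $\left[\begin{smallmatrix}u&u\\ &v\end{smallmatrix}\right]$ occurring in $\gamma(A)$ is then rewritten in the $\{\cdot\}$-form via Lemma \ref{lem:aacTof2}, and the resulting combination of $\{\cdot\}$-symbols is simplified using bilinearity of $\langle\cdot,\cdot\rangle$, antisymmetry (Lemma \ref{lem:SymbolSkewSymmetric}) and the square-action identity (Lemma \ref{lem:TrivSquareAction}). Finally, letting the free parameters sweep out all of $R$ by the generation lemmas (Lemma \ref{lem:InversesGenerate} and its corollaries \ref{lem:a2overcminusc}, \ref{lem:RgenerationX}, or an analogous elementary generation argument), the identity should reduce to $u\,\{b\}\equiv 0$, proving the lemma.

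\emph{Main obstacle.} The real difficulty is the choice of $A$ together with the Pfaffian bookkeeping. The alternating sign $(-1)^{i+j+k}$ in Proposition \ref{prop:dAFormula} kills the contribution of any matrix with too much symmetry — the constant and near-constant families all give $\gamma(A)\equiv 0$ — so $A$ has to be asymmetric enough that $\gamma(A)$ contributes a relation that is genuinely new (not a formal consequence of antisymmetry and the square-action identity already in hand), while still lying in $\Skew_6^+(R)$ and producing only $3\times 3$ submatrices whose contributions one can track by hand.
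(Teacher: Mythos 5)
Your opening reduction is correct: from $x\{a\} = a^{2}x\,[a^{-1}]$ one gets $x\,[a] = (a^{2}x)\{a^{-1}\} = \langle a^{2}x,\; a^{-1}\rangle$, so the claim is indeed equivalent to the identical vanishing of the pairing $\langle\cdot,\cdot\rangle$ on $R\times R$. But your assessment of what is needed to prove that vanishing is wrong, and this is where the proposal has a genuine gap. You assert that antisymmetry (Lemma \ref{lem:SymbolSkewSymmetric}) and the square-action identity (Lemma \ref{lem:TrivSquareAction}) are not enough, and that one must return to Proposition \ref{prop:dAFormula} and produce a new, ``sufficiently asymmetric'' $6\times 6$ matrix --- and you then leave that matrix, the claimed identity it produces, and the entire Pfaffian bookkeeping unspecified. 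That is not a proof; it is a plan whose hardest step is skipped.

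More to the point, the plan is unnecessary, because the square-action identity together with $\Z$-bilinearity already closes the argument. The key missing observation is the purely algebraic polarization identity
\begin{equation*}
s \;=\; (u_1+u_2+u_3)^2 - (u_1+u_2)^2 - (u_1+u_3)^2 - (u_2+u_3)^2 + u_1^2 + u_2^2 + u_3^2 \;=\; 0
\end{equation*}
valid in any commutative ring. Since $s=0$, bilinearity gives $\langle sx,y\rangle \equiv 0$ for all $x,y\in R$. On the other hand $s$ is a $\Z$-linear combination of squares, so applying Lemma \ref{lem:TrivSquareAction} term by term moves each square $c^2$ out of the first slot and in as $c^{-2}$ in the second slot, yielding $\langle sx,y\rangle \equiv \langle x, wy\rangle$ with
\begin{equation*}
w \;=\; \tfrac{1}{(u_1+u_2+u_3)^2} - \tfrac{1}{(u_1+u_2)^2} - \tfrac{1}{(u_1+u_3)^2} - \tfrac{1}{(u_2+u_3)^2} + \tfrac{1}{u_1^2} + \tfrac{1}{u_2^2} + \tfrac{1}{u_3^2}.
\end{equation*}
Because the residue field is infinite one may choose units $u_1,u_2,u_3$ with all relevant partial sums units and $w\in R^{*}$. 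Then $\langle x, wy\rangle\equiv 0$ for all $x,y$ with $w$ a unit forces $\langle x, z\rangle\equiv 0$ for all $x,z\in R$, which is the lemma. No new $6\times 6$ matrix, no new Pfaffian computation, and no new generation lemma are required; the ``main obstacle'' you describe does not exist. You should revisit Lemma \ref{lem:TrivSquareAction} and ask what linear combinations of squares equal zero in $R$ before reaching for another matrix.
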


\begin{proof}
As in the proof of Lemma \ref{lem:RgenerationX}, choose units $u_1,u_2,u_3$ such that all non-empty partial sums are units as well as
$$ w=\frac{1}{(u_1+u_2+u_3)^2} - \frac{1}{(u_1+u_2)^2} - \frac{1}{(u_1+u_3)^2} - \frac{1}{(u_2+u_3)^2} + \frac{1}{u_1^2} + \frac{1}{u_2^2}+ \frac{1}{u_3^2} \in R^*.$$
The element
$$s=(u_1+u_2+u_3)^2 - (u_1+u_2)^2 - (u_1+u_3)^2 - (u_2+u_3)^2 + u_1^2 + u_2^2+ u_3^2 $$
of $R$ is zero.
Therefore, 
$$0 \equiv \langle sx,y \rangle \equiv \langle x, wy\rangle$$
by bilinearity of $\langle \phantom{x},\phantom{y}\rangle$ and Lemma \ref{lem:TrivSquareAction}.
Since $y\in R$ is arbitrary and $w\in R^*$, we have $\langle x, y \rangle=0$ for all $x,y\in R$ which translates into the statement of the Lemma.
\end{proof}

\begin{lemma}
\label{lem:x[aab]iszero}
For all $x\in R$ and $a,b\in R^*$, we have $x\, \left[\begin{smallmatrix}a &a \\ &b\end{smallmatrix}\right]\equiv 0$.
\end{lemma}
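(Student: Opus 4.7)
The strategy is to convert $x\left[\begin{smallmatrix}a&a\\ &b\end{smallmatrix}\right]$ into curly-bracket form, collapse it to a single-parameter symbol $\{c\}$ via Lemma~\ref{lem:aacTof2}, and then kill it with Lemma~\ref{lem:x[a]iszero}.

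First, I would unwind the defining formula $f\left\{\begin{smallmatrix}c&c\\ &d\end{smallmatrix}\right\} = fd^{2}\left[\begin{smallmatrix}c^{-1}&c^{-1}\\ &d^{-1}\end{smallmatrix}\right]$. Substituting $c = a^{-1}$, $d = b^{-1}$, and $f = xb^{2}$ yields the \emph{equality} (not merely a congruence)
$$x\left[\begin{smallmatrix}a&a\\ &b\end{smallmatrix}\right] \;=\; xb^{2}\left\{\begin{smallmatrix}a^{-1}&a^{-1}\\ &b^{-1}\end{smallmatrix}\right\}.$$
Note that $\left\{\begin{smallmatrix}a^{-1}&a^{-1}\\ &b^{-1}\end{smallmatrix}\right\}$ is well-defined since $a-a+b = b \in R^{*}$.

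Next, assume first that $x \in R^{*}$, so that $xb^{2} \in R^{*}$. Lemma~\ref{lem:aacTof2}, applied with $f = xb^{2}$, $c = a^{-1}$, $d = b^{-1}$, then gives
$$xb^{2}\left\{\begin{smallmatrix}a^{-1}&a^{-1}\\ &b^{-1}\end{smallmatrix}\right\} \;\equiv\; xb^{2}\{a^{-2}b\} \;=\; xb^{2}\cdot(a^{-2}b)^{2}\,[a^{2}b^{-1}] \;=\; xa^{-4}b^{4}\,[a^{2}b^{-1}],$$
where the middle equality unfolds $\{y\} = y^{2}[y^{-1}]$. Since $a^{2}b^{-1} \in R^{*}$, Lemma~\ref{lem:x[a]iszero} applies and the right-hand side is $\equiv 0$ in $\coker(\gamma)$.

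Finally, to drop the hypothesis $x \in R^{*}$, I observe that $x \mapsto x\left[\begin{smallmatrix}a&a\\ &b\end{smallmatrix}\right]$ is a $\Z$-linear map $R \to \coker(\gamma)$, and that every $x \in R$ is a $\Z$-linear combination of units (for $x \in m$, write $x = (x+1) - 1$, using that $R$ has infinite residue field). The identity $x\left[\begin{smallmatrix}a&a\\ &b\end{smallmatrix}\right] \equiv 0$ therefore extends from $x \in R^{*}$ to all $x \in R$. There is no genuine obstacle in this proof: it is a short routine composition of Lemmas~\ref{lem:aacTof2} and~\ref{lem:x[a]iszero} together with the standard unit-to-arbitrary extension trick used throughout the section.
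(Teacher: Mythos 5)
Your proof is correct and takes essentially the same route as the paper, whose own proof is a one-liner citing exactly Lemmas \ref{lem:aacTof2} and \ref{lem:x[a]iszero}. You also explicitly fill in the small point that Lemma \ref{lem:aacTof2} is stated only for $f\in R^*$, extending to all $x\in R$ by $\Z$-linearity in $x$, which the paper leaves implicit.
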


\begin{proof}
This is Lemma \ref{lem:x[a]iszero} together with Lemma \ref{lem:aacTof2}.
\end{proof}

\begin{lemma}
\label{lem:x[abc]iszeroConditionally}
For all $x\in R$ and $a,b,c\in R^*$ such that $a-b+c \in R^*$, we have $x\, \left[\begin{smallmatrix}a &b \\ &c\end{smallmatrix}\right]\equiv 0$.
\end{lemma}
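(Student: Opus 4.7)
The plan is to exhibit a single matrix $A\in\Skew_6^+(R)$ such that Proposition \ref{prop:dAFormula} isolates $\left[\begin{smallmatrix}a & b \\ & c\end{smallmatrix}\right]$ as the only summand in $\gamma(A)$ not already killed by Lemma \ref{lem:x[aab]iszero}. Concretely, for arbitrary units $a,b,c,d,e\in R^*$ I would take
$$A \;=\; \left(\begin{smallmatrix}
0 & a & b & b & b & b \\
 & 0 & c & c & c & c \\
 &  & 0 & c & c & c \\
 &  &   & 0 & d & d \\
 &  &   &  & 0 & e \\
 &  &   &  &  & 0
\end{smallmatrix}\right).$$

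I would first verify $A\in\Skew_6^+(R)$: thanks to the repeated-entry pattern, every $4\times 4$ principal Pfaffian telescopes to a product of two of $a,b,c,d,e$ (for instance $\Pf(A^{\wedge}_{56})=\Pf(A^{\wedge}_{46})=\Pf(A^{\wedge}_{45})=ac$, $\Pf(A^{\wedge}_{34})=ae$, $\Pf(A^{\wedge}_{35})=\Pf(A^{\wedge}_{36})=ad$), and a direct expansion along the last row gives $\Pf(A)=ace$; all are units.

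Next, an inspection of the twenty $3\times 3$ submatrices $A_{\widehat{ijk}}$ shows that exactly four of them, those indexed by $\{i,j,k\}\in\{\{4,5,6\},\{3,5,6\},\{3,4,6\},\{3,4,5\}\}$, equal $\left[\begin{smallmatrix}a & b \\ & c\end{smallmatrix}\right]$, while the remaining sixteen are all of the form $\left[\begin{smallmatrix}p & p \\ & q\end{smallmatrix}\right]$ with $p,q\in R^*$ and so vanish in $\coker(\gamma)$ by Lemma \ref{lem:x[aab]iszero}. Substituting the above Pfaffian values into the formula of Proposition \ref{prop:dAFormula}, the four surviving signed coefficients are $+\tfrac{1}{a^2 d}$, $-\tfrac{1}{a^2 d}$, $+\tfrac{e}{a^2 d^2}$, $-\tfrac{e}{a^2 c^2}$, whose sum is $\tfrac{e(c^{2}-d^{2})}{(acd)^{2}}$. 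Since $\gamma(A)$ lies in the image of $\gamma$, this produces the relation
$$\frac{e(c^{2}-d^{2})}{(acd)^{2}}\, \left[\begin{smallmatrix}a & b \\ & c\end{smallmatrix}\right]\;\equiv\;0.$$

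Finally, because the residue field is infinite I can pick $d\in R^*$ with $c^{2}-d^{2}\in R^*$; then the prefactor is a unit. Letting $e$ range over $R^*$ yields $u\,\left[\begin{smallmatrix}a & b \\ & c\end{smallmatrix}\right]\equiv 0$ for every $u\in R^*$, and writing each $x\in R$ as a sum of two units (again using that $R/m$ is infinite) upgrades this to $x\,\left[\begin{smallmatrix}a & b \\ & c\end{smallmatrix}\right]\equiv 0$ for every $x\in R$.

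The main obstacle is the bookkeeping that confirms sixteen of the twenty $3\times 3$ submatrices really do repeat their first entry, and the evaluation of the relevant $4\times 4$ Pfaffians; the entry pattern in $A$ is engineered precisely so that these Pfaffians telescope cleanly. The hypothesis $a-b+c\in R^*$ is not used in this argument; it is presumably imposed in the statement for compatibility with the $\left\{\begin{smallmatrix}a & b \\ & c\end{smallmatrix}\right\}$-notation employed later.
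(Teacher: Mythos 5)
Your proof is correct, and it takes a genuinely different route from the paper's. The paper proves this lemma with the matrix whose upper triangular part is
$\left(\begin{smallmatrix}0 & d&  d& d& d& d \\
   & 0 & d  & d  & d  & d\\
    &  & 0 & d & d& d\\
     & &    & 0 & a & b\\
     & &    &    & 0 & c\\
     &&&&& 0\end{smallmatrix}\right)$, which has Pfaffian $(a-b+c)d^2$; this is why the hypothesis $a-b+c\in R^*$ is needed, and the resulting single surviving coefficient is $\tfrac{1}{(a-b+c)^2d}$. Your matrix has Pfaffian $ace$, its membership in $\Skew_6^+(R)$ requires only that $a,b,c,d,e$ be units (I verified all fifteen $4\times4$ Pfaffians telescope to products of two units as you claim), and your coefficient computation for the four copies of $\left[\begin{smallmatrix}a & b \\ & c\end{smallmatrix}\right]$ checks out: $(3,4,5)\mapsto \tfrac{1}{a^2d}$, $(3,4,6)\mapsto -\tfrac{1}{a^2d}$, $(3,5,6)\mapsto \tfrac{e}{a^2d^2}$, $(4,5,6)\mapsto -\tfrac{e}{a^2c^2}$, summing to $\tfrac{e(c^2-d^2)}{(acd)^2}$. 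The one thing worth emphasizing is that you have actually proved more than asked: since you never use $a-b+c\in R^*$, your argument establishes Lemma \ref{lem:x[abc]iszero} directly for arbitrary units $a,b,c$, which would render both Lemma \ref{lem:x[abc]iszeroConditionally} and the paper's subsequent proof of Lemma \ref{lem:x[abc]iszero} (which goes through yet another auxiliary matrix and invokes the conditional lemma) unnecessary. This is a real simplification of the paper's Section 6; you rely only on Lemmas \ref{lem:x[a]iszero} and \ref{lem:x[aab]iszero}, Proposition \ref{prop:dAFormula}, and the infiniteness of the residue field, all of which are in place before this point.
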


\begin{proof}
The skew symmetric matrix
$$A = \left( \begin{array}{ccc|ccc}
0 & d&  d& d& d& d \\
   & 0 & d  & d  & d  & d\\
    &  & 0 & d & d& d\\
    \hline
     & &    & 0 & a & b\\
     & &    &    & 0 & c\\
     &&&&& 0
     \end{array}\right)$$
has Pfaffian $\Pf(A)=d^2(a-b+c)$ and is in $\Skew_6^+(R)$ if and only if $a,b,c,d,(a-b+c) \in R^*$.
Modulo terms that are $\equiv 0$ by Lemmas \ref{lem:x[aab]iszero} and \ref{lem:x[a]iszero}, we have
$$\begin{array}{ccc}\gamma(A) & \equiv & \frac{1}{(a-b+c)^2d}\, \left[\begin{smallmatrix} a&b\\   &c\end{smallmatrix}\right];\end{array}
$$
see Appendix \ref{App:lem:x[abc]iszeroConditionally}. 
Since $d$ is an arbitrary unit in $R$, we are done.
\end{proof}

\begin{lemma}
\label{lem:x[abc]iszero}
For all $x\in R$ and $a,b,c\in R^*$, we have $x\, \left[\begin{smallmatrix}a &b \\ &c\end{smallmatrix}\right]\equiv 0$.
\end{lemma}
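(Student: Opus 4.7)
The plan is to remove the hypothesis $a-b+c\in R^*$ from Lemma \ref{lem:x[abc]iszeroConditionally} by applying $\gamma$ to a new two-parameter family of $6\times 6$ skew-symmetric matrices whose coefficient of $\left[\begin{smallmatrix}a&b\\ &c\end{smallmatrix}\right]$ does not involve $a-b+c$ in its denominator. Specifically, for parameters $d,e\in R^*$ I would consider
\[ A(d,e)=\left(\begin{array}{ccc|ccc}
0 & d & d & e & d & d \\
 & 0 & d & d & e & d \\
 & & 0 & d & d & e \\
\hline
 & & & 0 & a & b \\
 & & & & 0 & c \\
 & & & & & 0
\end{array}\right), \]
whose upper-right block $C=eI_3+d(J_3-I_3)$ is rotationally symmetric with one diagonal perturbation. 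A direct calculation gives $\Pf(A^{\wedge}_{12})=d(c-b)+ea$, $\Pf(A^{\wedge}_{13})=d(a+c)-eb$, and $\Pf(A^{\wedge}_{23})=d(a-b)+ec$; crucially, none is a multiple of $a-b+c$.

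Enumerating the nineteen remaining $3\times 3$ principal submatrices $A_{\widehat{ijk}}$, one finds that each takes one of the forms $[d]$, $\left[\begin{smallmatrix}d&d\\ &\ast\end{smallmatrix}\right]$, $\left[\begin{smallmatrix}d&e\\ &\ast\end{smallmatrix}\right]$, $\left[\begin{smallmatrix}e&d\\ &\ast\end{smallmatrix}\right]$, or $\left[\begin{smallmatrix}d&e\\ &d\end{smallmatrix}\right]$ with $\ast\in\{a,b,c,e\}$. The first two types vanish in $\coker(\gamma)$ by Lemmas \ref{lem:x[a]iszero} and \ref{lem:x[aab]iszero}, and the remainder vanish by Lemma \ref{lem:x[abc]iszeroConditionally} provided $d-e+\ast,\ e-d+\ast$ and $2d-e$ are units in $R$. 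Together with the unit conditions ensuring $A(d,e)\in\Skew_6^+(R)$, these amount to finitely many inequalities modulo $m$, all satisfiable because $R/m$ is infinite. Proposition \ref{prop:dAFormula} then yields $\gamma(A(d,e))\equiv u(d,e)\left[\begin{smallmatrix}a&b\\ &c\end{smallmatrix}\right]$ with unit coefficient $u(d,e)\in R^*$.

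Specialising to $e=\lambda d$ for a suitable $\lambda\in R^*$, a short calculation shows that $u(d,\lambda)$ has the shape $c_1(\lambda)/d+c_2(\lambda)$ with $c_1(\lambda)\in R^*$. For every pair of admissible $d,d'\in R^*$ we thus obtain $c_1(\lambda)(1/d-1/d')\left[\begin{smallmatrix}a&b\\ &c\end{smallmatrix}\right]\equiv 0$. As $d,d'$ range over admissible units, $1/d-1/d'$ covers $R^*-R^*=R$ (using that the residue field is infinite); multiplying by the unit $c_1(\lambda)$ yields $r\left[\begin{smallmatrix}a&b\\ &c\end{smallmatrix}\right]\equiv 0$ for every $r\in R$, as required.

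The main technical obstacle is choosing the family so that $c_1(\lambda)\in R^*$ for some admissible $\lambda$: this is immediate when $\car(R)\neq 2$ or $a+b+c\in R^*$, but in characteristic $2$ with $a+b+c\in m$ a more asymmetric family, for instance with $C=\diag(e_1,e_2,e_3)+d(J_3-I_3)$ of three independent parameters, must be used to produce a unit coefficient. The remaining enumeration of submatrices and Pfaffian computations is routine but laborious.
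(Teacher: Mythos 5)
Your overall strategy matches the paper's at the structural level: produce a skew-symmetric matrix $A\in\Skew_6^+(R)$ whose Pfaffians avoid the factor $a-b+c$, enumerate the twenty $3\times 3$ principal submatrices, kill nineteen of them with Lemmas \ref{lem:x[a]iszero}, \ref{lem:x[aab]iszero}, \ref{lem:x[abc]iszeroConditionally}, and then vary a parameter so that the surviving coefficient of $\left[\begin{smallmatrix}a&b\\ &c\end{smallmatrix}\right]$ generates $R$. Your choice of matrix, however, is genuinely different from the paper's: you keep diagonal blocks $[d,d,d]$ and $[a,b,c]$ and perturb the off-diagonal block of $dJ_3$ along its diagonal by $e$; the paper instead takes both diagonal blocks equal to $\left[\begin{smallmatrix}a&b\\ &b\end{smallmatrix}\right]$ with a rank-one off-diagonal block whose rows are $(d,d,d),(e,e,e),(f,f,f)$. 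The paper's Pfaffian then factors as $a(bd-be+af)$, so the coefficient of the surviving term $\left[\begin{smallmatrix}b&d\\ &f\end{smallmatrix}\right]$ is $-a/b^4$ with $a$ a free unit and $b^4$ a fixed unit; varying $a$ over units generates $R$ at once, in any residue characteristic.

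With your matrix, a computation gives $\Pf(A)=-(d-e)^2(2d+e)+de(a+b+c)-2bd^2$, so with $e=\lambda d$ the surviving coefficient has the claimed shape $c_2(\lambda)+c_1(\lambda)/d$ with $c_1(\lambda)=\bigl(2b-\lambda(a+b+c)\bigr)/P(\lambda)$. This is a unit for generic $\lambda$ whenever $2b\in R^*$ or $a+b+c\in R^*$, but in residue characteristic $2$ with $a+b+c\in m$ it is never a unit. That is precisely the only case not already covered by Lemma \ref{lem:x[abc]iszeroConditionally}, since $a-b+c=a+b+c$ in characteristic $2$. You flag this correctly, and your proposed $3$-parameter family $C=\diag(e_1,e_2,e_3)+d(J_3-I_3)$ does plausibly repair it (one finds $c_1=\bigl(a\lambda_3+b\lambda_2+c\lambda_1\bigr)/Q(\lambda)$ after setting $e_i=\lambda_i d$, which is a unit for generic independent $\lambda_i$), but you do not carry out the enumeration of submatrices, the Pfaffian computation, or the unit count for that family. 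As written, the proposal therefore leaves a genuine gap in the characteristic-$2$ case, which is the only case in which the unconditional Lemma says anything beyond the conditional one. The paper sidesteps this entirely: its Pfaffian and surviving coefficient contain no factor of $2$ or of $a\pm b\pm c$, so a single matrix family works uniformly.

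Two minor points: the phrase ``$1/d-1/d'$ covers $R^*-R^*=R$'' should be ``generates $R$ as an abelian group'' after passing to admissible $d,d'$ (which is still true, but for the reason that the admissibility conditions only exclude finitely many residues modulo $m$); and you assert but do not verify that all $4\times 4$ principal Pfaffians of $A(d,e)$ are units, though these are again only finitely many conditions modulo $m$.
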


\begin{proof}
The $6\times 6$ skew symmetric matrix
$$A = \left( \begin{array}{ccc|ccc}
0 & a&  b& d& d& d \\
   & 0 & b  & e  & e  & e\\
    &  & 0 & f & f& f\\
    \hline
     & &    & 0 & a & b\\
     & &    &    & 0 & b\\
     &&&&& 0
     \end{array}\right)$$
has Pfaffian $\Pf(A)=(af-be+bd)a$ and is in $\Skew_6^+(R)$ if and only if $\Pf(A)$, $a,b,d,e,f$ are units.
Given $a,b,d,f\in R^*$ arbitrary, choose $e$ such that $a-d+e, b-e+f, af-be+bd \in R^*$.
This is possible since $R/m$ is infinite.
Modulo terms that are $\equiv 0$ by Lemmas \ref{lem:x[aab]iszero}, \ref{lem:x[a]iszero} and \ref{lem:x[abc]iszeroConditionally}, we have
$$
\begin{array}{rcccl}
0 &\equiv & \gamma(A) &\equiv&
 -\frac{a}{b^4}\, \left[\begin{smallmatrix} b&d\\   &f\end{smallmatrix}\right];\end{array}
$$
see Appendix \ref{App:lem:x[abc]iszero}. Since $a\in R^*$ is an arbitrary unit, we are done.
\end{proof}

\begin{proposition}
\label{prop:d2Surjects}
Let $R$ be a local ring with infinite residue field.
Then the map $\gamma: \Z[\Skew_6^+(R)] \to R \ [\Skew_3^+(R)]$ of Proposition \ref{prop:dAFormula} is surjective for $n=2$.
\end{proposition}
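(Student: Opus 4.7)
The plan is to observe that this proposition is essentially an immediate consequence of Lemma \ref{lem:x[abc]iszero}. Recall that by definition the notation $\xi \equiv 0$ means $\xi = 0$ in $\coker(\gamma)$, so to show $\gamma$ is surjective it suffices to prove that every generator of $R\,[\Skew_3^+(R)]$ is $\equiv 0$.

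First I would identify the generating set of the target. A matrix $A = \left[\begin{smallmatrix} a & b \\ & c \end{smallmatrix}\right] \in \Skew_3(R)$ lies in $\Skew_3^+(R)$ if and only if all three of its $2\times 2$ principal sub-Pfaffians are units, i.e.\ $a,b,c \in R^*$. Hence the abelian group $R\,[\Skew_3^+(R)]$ is generated by the elements $x\,\left[\begin{smallmatrix} a & b \\ & c \end{smallmatrix}\right]$ with $x \in R$ and $a,b,c \in R^*$.

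Now I would simply quote Lemma \ref{lem:x[abc]iszero}, which asserts precisely that $x\,\left[\begin{smallmatrix} a & b \\ & c \end{smallmatrix}\right] \equiv 0$ for all such $x,a,b,c$. Since a generating set of $R\,[\Skew_3^+(R)]$ lies in the image of $\gamma$, we conclude $\coker(\gamma) = 0$, i.e.\ $\gamma$ is surjective.

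There is no real obstacle at this final step, because the whole chain of lemmas \ref{lem:dAexample1aRewrite}--\ref{lem:x[abc]iszero} has already been organized precisely to set up this reduction. The genuine work occurred earlier in the section, where the Pfaffian identity from Proposition \ref{prop:dAFormula} was carefully leveraged, in combination with the bilinearity and antisymmetry of the auxiliary pairing $\langle\phantom{x},\phantom{y}\rangle$, to kill successively the generators $x\,[a]$, $x\,\left[\begin{smallmatrix} a & a \\ & b \end{smallmatrix}\right]$, the conditional case $a-b+c \in R^*$, and finally the general case. Once Lemma \ref{lem:x[abc]iszero} is in hand, the proposition follows by one sentence.
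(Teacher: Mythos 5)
Your proposal is correct and follows exactly the paper's own (one-line) proof, which simply observes that Proposition \ref{prop:d2Surjects} is a reformulation of Lemma \ref{lem:x[abc]iszero}. Your identification of $\Skew_3^+(R)$ as the set of skew-symmetric $3\times3$ matrices with all three off-diagonal entries units, and hence of the generators of $R\,[\Skew_3^+(R)]$, is accurate, so no gap remains.
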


\begin{proof}
This is a reformulation of Lemma \ref{lem:x[abc]iszero}.
\end{proof}

\section{Localising homology groups}

The goal in this section is to show that if $\Sp_{2n+1}(R) \to \Sp_{2n+2}(R)$ is a surjection (injection, isomorphism) in homology then so is $\Sp_{2n}(R) \to \Sp_{2n+2}(R)$; see Proposition \ref{prop:HSpLocalization}.

Recall that 
$\Sp_{2n+1}(R)$ is the subgroup of $\Sp_{2n+2}(R)$  of matrices 
\begin{equation}
\label{eqn:Spodd2}
\left(\begin{smallmatrix}1& c & {^t\!u}\psi M  \\ 0 & 1& 0 \\  0 & u & M \end{smallmatrix}\right)
\end{equation}
where $\psi = \psi_{2n}$, $M\in \Sp_{2n}(R)$, $u\in R^{2n}$, $c\in R$.
We let the group $R^*$ of units of $R$ act from the left on $\Sp_{2n+2}(R)$ by conjugation with the matrix $T_b\in \Sp_{2n+2}(R)$ for $b\in R^*$ where 
$$T_b = \left(\begin{smallmatrix}b & 0 & 0 \\ 0 & b^{-1}  & 0 \\ 0 & 0 & 1_n\end{smallmatrix} \right)$$
and $1_n$ denotes the $n\times n$ identity matrix.
Note that $T_b\cdot A\cdot T_b^{-1}=A$ for $A\in \Sp_{2n}(R)$ and $T_b\cdot A\cdot T_b^{-1} \in \Sp_{2n+1}(R)$ for $A \in \Sp_{2n+1}(R)$ since
$$ T_b \cdot \left(\begin{smallmatrix}1& c & {^t\!u}\psi M  \\ 0 & 1& 0 \\  0 & u & M \end{smallmatrix}\right) \cdot T_b^{-1} = \left(\begin{smallmatrix}1& b^2c & b{^t\!u}\psi M  \\ 0 & 1& 0 \\  0 & bu & M \end{smallmatrix}\right).
$$
By functoriality, this defines an $R^*$-action (hence a left $\Z[R^*]$-module structure) on the homology groups $H_t(\Sp_q(R))$ for $q=2n, 2n+1, 2n+2$.
The action is trivial for $\Sp_{2n}$ and $\Sp_{2n+2}$, the latter because $T_b\in \Sp_{2n+2}(R)$, but that action is non-trivial for $\Sp_{2n+1}$, in general.
Let $m$ be an integer such that $m>2t$.
We choose units 
$a_1,...,a_m \in R^*$ such that for every non-empty subset $I \subset \{1,...,m\}$ the partial sum $a_I=\sum_{i\in I}a_i$ is a unit in $R$. 
This is possible since $R$ has infinite residue field. 
Let $s_m \in \Z[R^*]$ be the element 
$$s_m = - \sum_{\emptyset \neq I \subset \{1,...,m\}} (-1)^{|I|}\langle a_I\rangle \in \Z[R^*]$$
first considered in \cite[\S 2]{myEuler}
where $\langle u\rangle \in \Z[R^*]$ denotes the element of the group ring corresponding to $u\in R^*$.
Since $R^*$ acts trivially on $H_t(\Sp_{2n}(R))$ and $H_t(\Sp_{2n+2}(R))$, multiplication by $s_m$ on those groups is the identity map \cite[p. 7]{myEuler} in view of the equality
$$1 = - \sum_{\emptyset \neq I \subset \{1,...,m\}} (-1)^{|I|}.$$
In particular, we have
$$H_t(\Sp_{2n}(R)) = s_m^{-1}H_t(\Sp_{2n}(R)),\hspace{3ex}H_t(\Sp_{2n+2}(R)) = s_m^{-1}H_t(\Sp_{2n+2}(R)).$$

\begin{proposition}
\label{prop:HSpLocalization}
Let $m,n,t\geq 0$ be integers with $m>2t$, and let $R$ be a local ring with infinite residue field.
Then localisation of the maps $H_t(\Sp_{2n}(R)) \to H_t(\Sp_{2n+1}(R)) \to H_t(\Sp_{2n+2}(R))$ at $s_m\in \Z[R^*]$ induces a commutative diagram of abelian groups
$$\xymatrix{
H_t(\Sp_{2n}(R)) \ar[r] \ar[d]^{\cong} &  H_t(\Sp_{2n+1}(R)) \ar[r] \ar[d] & H_t(\Sp_{2n+2}(R)) \ar[d]^{\cong}\\
s_m^{-1}H_t(\Sp_{2n}(R)) \ar[r]_{\cong}  & s_m^{-1} H_t(\Sp_{2n+1}(R)) \ar[r]  & s_m^{-1}H_t(\Sp_{2n+2}(R))
}$$
in which the outer two vertical arrows and the lower left horizontal arrow are isomorphisms.
In particular, if  the map $H_t(\Sp_{2n+1}(R)) \to H_t(\Sp_{2n+2}(R))$ is a surjection (injection, isomorphism) then so is  $H_t(\Sp_{2n}(R)) \to H_t(\Sp_{2n+2}(R))$.
\end{proposition}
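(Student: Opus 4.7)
My plan is to study the $R^*$-equivariant extension
$$1 \longrightarrow H \longrightarrow \Sp_{2n+1}(R) \longrightarrow \Sp_{2n}(R) \longrightarrow 1,$$
where $H$ is the kernel of the map sending (\ref{eqn:Spodd2}) to $M$, that is, the subgroup of matrices $\left(\begin{smallmatrix}1 & c & {}^t\! u \psi \\ 0 & 1 & 0 \\ 0 & u & 1_{2n}\end{smallmatrix}\right)$. A short computation gives $T_b \cdot (c,u) \cdot T_b^{-1} = (b^2c, bu)$, so the conjugation action of $R^*$ on $H$ has weight $2$ on its centre $R$ and weight $1$ on the quotient $H/R \cong R^{2n}$, while the induced action on $\Sp_{2n}(R)$ is trivial. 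The outer columns of the diagram are immediate: $T_b \in \Sp_{2n+2}(R)$ and $T_b$ commutes pointwise with $\Sp_{2n}(R) \subset \Sp_{2n+2}(R)$, so property (\ref{eqn:HinducedEqual}) yields trivial $R^*$-action on $H_t(\Sp_{2n}(R))$ and $H_t(\Sp_{2n+2}(R))$. On both, $s_m$ acts as the scalar $-\sum_{\emptyset\neq I}(-1)^{|I|}=1$, so localisation at $s_m$ is the identity map there.

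The heart of the argument is the bottom-left arrow, which I would obtain from the $R^*$-equivariant Lyndon-Hochschild-Serre spectral sequence
$$E^2_{p,q} = H_p\!\left(\Sp_{2n}(R);\, H_q(H;\Z)\right) \Longrightarrow H_{p+q}(\Sp_{2n+1}(R)).$$
Since localisation at $s_m$ is exact, it suffices to show $s_m \cdot H_q(H;\Z) = 0$ for $1\leq q \leq t$. Using a secondary $R^*$-equivariant Lyndon-Hochschild-Serre spectral sequence for the central extension $0 \to R \to H \to R^{2n} \to 0$, one obtains a finite $R^*$-equivariant filtration of $H_q(H;\Z)$ whose graded pieces carry a pure weight-$k$ action with $q \leq k \leq 2q$. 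The classical finite-difference identity
$$\sum_{I\subseteq\{1,\ldots,m\}} (-1)^{|I|}\,a_I^k = 0 \qquad (0<k<m)$$
then shows that $s_m$ acts as zero on each such pure weight piece when $0<k<m$. The hypothesis $m>2t \geq 2q \geq k$ thus gives $s_m\cdot H_q(H;\Z)=0$ and, by functoriality, $s_m\cdot E^2_{p,q}=0$ for all $p$ and $1 \leq q \leq t$. The localised spectral sequence therefore collapses in total degree $\leq t$ onto its edge row $s_m^{-1}E^2_{p,0}=H_p(\Sp_{2n}(R))$, producing the isomorphism $H_t(\Sp_{2n}(R)) \stackrel{\cong}{\to} s_m^{-1} H_t(\Sp_{2n+1}(R))$. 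The final assertion about surjectivity, injectivity, or bijectivity of $H_t(\Sp_{2n}(R)) \to H_t(\Sp_{2n+2}(R))$ is then a direct diagram chase.

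The main technical obstacle is the integral weight analysis: over $\Z$ one does not have an eigenspace decomposition available, and one must verify that the filtration of $H_q(H;\Z)$ produced by the secondary spectral sequence genuinely has graded pieces on which $R^*$ acts through a single character of weight $\leq 2q$. This is exactly the localisation machinery set up in \cite[\S2]{myEuler} and \cite[Appendix D]{myForm1}, which I would invoke directly rather than redevelop.
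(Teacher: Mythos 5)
Your proposal is correct and follows essentially the same route as the paper: both arguments use the retraction $\rho:\Sp_{2n+1}\to\Sp_{2n}$ with kernel $G$, the $R^*$-equivariant Hochschild--Serre spectral sequence localised at $s_m$, the central extension $0\to R\to G\to R^{2n}\to 0$ with weights $2$ and $1$, and the vanishing $s_m^{-1}H_q(G)=0$ for $0<2q<m$, for which the paper cites \cite[Prop.~D.4]{myForm1} exactly as you propose to do.
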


\begin{proof}
We have already seen that the two outer vertical maps are isomorphisms.
Also, a localisation of a surjection (injection, isomorphism) is a surjection (injection, isomorphism).
So, all we have to prove is that the lower left horizontal map is an isomorphism.
The inclusion of groups $\eps: \Sp_{2n}(R) \to \Sp_{2n+1}(R)$
has a retraction 
\begin{equation}
\label{eqn:retractRho}
\rho: \Sp_{2n+1}(R) \to \Sp_{2n}(R): 
\left(\begin{smallmatrix}1& c & {^t\!u}\psi M  \\ 0 & 1& 0 \\  0 & u & M \end{smallmatrix}\right)
\mapsto M.
\end{equation}
This defines an exact sequence of groups
$$1 \to G \longrightarrow \Sp_{2n+1}(R) \stackrel{\rho}{\longrightarrow} \Sp_{2n}(R) \to 1.$$
Since the map $\rho$ is $R^*$-equivariant, the group $R^*$ acts on the exact sequence and hence on the associated Hochschild-Serre spectral sequence
$$E^2_{p,q} = H_p(\Sp_{2n}(R), H_q(G)) \Rightarrow H_{p+q}(\Sp_{2n+1}(R)).$$ 
We localise that spectral sequence at $s_m\in \Z[R^*]$ to obtain the spectral sequence
\begin{equation}
\label{eqn:HochSerreLocal}
s_m^{-1} E^2_{p,q} =  H_p(\Sp_{2n}(R), s_m^{-1}H_q(G)) \Rightarrow s_m^{-1}H_{p+q}(\Sp_{2n+1}(R)).
\end{equation}
Here we used that $s_m^{-1}H_p(\Sp_{2n}(R), H_q(G))=  H_p(\Sp_{2n}(R), s_m^{-1}H_q(G))$
since $R^*$ acts trivially on $\Sp_{2n}(R)$.
There is a central extension of groups 
$$1 \to (R,+) \to G \to (R^{2n},+) \to 1$$
where the second map sends 
$$\left(\begin{smallmatrix}1& c & {^t\!u}\psi   \\ 0 & 1& 0 \\  0 & u & 1 \end{smallmatrix}\right) \in G$$
to $u\in R^{2n}$.
This central extension is $R^*$-equivariant where $b\in R^*$ acts on $(R,+)$ via multiplication by $b^2$ and on $(R^{2n},+)$ via multiplication by $b$.
We have $s_m^{-1}H_0(G)=\Z$ since $R^*$ acts trivially on $H_0(G)$.
By Proposition \cite[D.4]{myForm1}, we have $s_m^{-1}H_q(G)=0$ for $m>2q>0$.
In particular, the localised Hochschild-Serre spectral sequence (\ref{eqn:HochSerreLocal}) degenerates at $E^2$ for $m>2t \geq 2q$ to yield the isomorphism
$$\rho: s_m^{-1}H_t(\Sp_{2n+1}R)  \stackrel{\cong}{\longrightarrow} H_t(\Sp_{2n}R)$$
for $t<m/2$. Since $\rho$ is a retract of $\eps$, we are done.
\end{proof}

\section{Homology stability}

As always, $R$ will be a local ring with infinite residue field.
In this section we will use the spectral sequence (\ref{SpecSeq})
with differentials $d^r$ of bidegree $(r-1,-r)$ to deduce our homology stability results from the Introduction.
So far, we have proved the following.
\vspace{2ex}

\noindent{\bf Properties of the spectral sequence (\ref{SpecSeq})}.
\label{prop:SpecSeqProps}
\begin{enumerate}
\item
\label{spseq:item:1}
The abutment satisfies $H_{p+q}(\Sp_{2n},\Z [U_*]_{|\, *\leq 2n+1})=0$ for $p+q \leq 2n$; see Lemma \ref{lem:HSpC}. In particular, $E^{\infty}_{p,q}(2n)=0$ for $p+q\leq 2n$.
\item
\label{spseq:item:2}
$$E^1_{p,q}(2n)\cong H_p(\Sp_{2n-q})\otimes \Z[\Skew^+_{q}]$$
for $0 \leq q \leq 2n+1$ and any $p\in \Z$; see (\ref{eqn:E1ShapiroIdentn}).
\item
\label{spseq:item:3}
Under the isomorphism of (\ref{spseq:item:2}), the differential in $E^1(2n)$ is
$$d^1_{p,q} = \eps \otimes d: H_p(\Sp_{2n-q})\otimes \Z[\Skew^+_{q}] \to H_p(\Sp_{2n-q+1})\otimes \Z[\Skew^+_{q-1}]$$
for $1\leq q \leq 2n+1$ where $\eps$ is the map induced by inclusion of groups and $d$ is the differential of the complex $\Z[\Skew^+_*]$; see Lemma \ref{lem:commdiag}.
\item
\label{spseq:item:4}
The complex $(\Z[\Skew^+_*],d)$ is acyclic; see Lemma \ref{lem:SkewExact}.
In particular, $E^2_{0,q}(2n)=0$ for $0\leq q \leq 2n$.
\item
\label{spseq:item:5}
For $r\geq 2$ and even $q<2n$, the differentials in $E^r(2n)$ satisfy $d_{p,q}^r=0$; see Corollary \ref{cor:d1SurjIso}.
\item
\label{spseq:item:6}
For $n=2$, the composition 
$$E^2_{0,5}(4) \stackrel{d^2_{0,5}}{\longrightarrow} E^2_{1,3}(4) \subset E^1_{1,3}(4)$$
is surjective; see Proposition \ref{prop:d2Surjects}.
In particular, $d^2_{0,5}$ is surjective and $E^2_{1,3}(4) = E^1_{1,3}(4)$, that is, $d^1_{1,3}=0$ in the spectral sequence $E(4)$.
\end{enumerate}
\vspace{1ex}

\noindent{\bf $H_0$ stability}.
As is true for any group, we have 
$$H_0(\Sp_n(R))=\Z,\hspace{3ex} n\geq -1.$$ 
\vspace{1ex}

\noindent{\bf $H_1$ stability}.
It is known that the groups $\Sp_{2n}(R)$ are perfect ($R$ has infinite residue field), in particular, $H_1(\Sp_{2n}(R)) =0$ for $n\geq 0$.
The following theorem reproves this fact and extends it to the groups $\Sp_{2n+1}(R)$.

\begin{theorem}
\label{them:H1stability}
Let $R$ be a local ring with infinite residue field.
Then 
$$H_1\Sp_q(R) = \left\{\begin{array}{rl} R, & q=1\\ 0, & q\geq {-1}, q\neq 1.\end{array}\right.$$
\end{theorem}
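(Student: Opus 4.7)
The cases $q=-1,0$ are trivial since $\Sp_q(R)$ is the trivial group, and $q=1$ follows from Example \ref{ex:H1G} applied to the abelian group $\Sp_1(R)\cong(R,+)$. For $q\geq 2$, my plan is to argue by induction on $q$, alternating between the even and odd cases.

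For the even case $q=2n$ with $n\geq 1$, I will apply the spectral sequence \eqref{SpecSeq} at bidegree $(1,0)$. By Property \ref{spseq:item:1} the abutment $E^\infty_{1,0}(2n)$ vanishes, and the outgoing differentials vanish for bidegree reasons. The only non-trivial incoming differentials are $d^1_{1,1}=\eps_*:H_1\Sp_{2n-1}(R)\to H_1\Sp_{2n}(R)$ and $d^2_{0,2}$. Now $d^1_{0,2}=0$ by Corollary \ref{cor:dp20}, and $d_3:\Z[\Skew^+_3]\to\Z[R^*]$ is surjective by the acyclicity of $\Z[\Skew^+_*]$ (Lemma \ref{lem:SkewExact}) together with $d_2=0$, so $E^2_{0,2}(2n)=0$ and hence $d^2_{0,2}=0$. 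The spectral sequence then yields $H_1\Sp_{2n}(R)=\eps_*\bigl(H_1\Sp_{2n-1}(R)\bigr)$. For $n\geq 2$, the inductive hypothesis gives $H_1\Sp_{2n-1}(R)=0$, so $H_1\Sp_{2n}(R)=0$. For the base $n=1$, $\eps_*$ sends $r\in R=H_1\Sp_1(R)$ to the class of $\left(\begin{smallmatrix}1 & r\\ 0 & 1\end{smallmatrix}\right)$ in $H_1\SL_2(R)$, and the identity $[\diag(t,t^{-1}), E_{12}(s)]=E_{12}((t^2-1)s)$ combined with the existence of $t\in R^*$ with $t^2-1\in R^*$ (guaranteed by the infinite residue field) shows every such generator is a commutator; hence $\eps_*(R)=0$ and $H_1\Sp_2(R)=0$.

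For the odd case $q=2n+1$ with $n\geq 1$, I will use the split short exact sequence $1\to G\to\Sp_{2n+1}(R)\to\Sp_{2n}(R)\to 1$, where $G$ is the Heisenberg-like subgroup of matrices in \eqref{eqn:Spodd2} with $M=I$, split by the inclusion $\eps$. The Hochschild-Serre spectral sequence, combined with the splitting, produces a split short exact sequence
$$0\to (G^{ab})_{\Sp_{2n}(R)}\to H_1\Sp_{2n+1}(R)\to H_1\Sp_{2n}(R)\to 0.$$
Direct multiplication in $G$ gives $(c,u)(c',u')=(c+c'+\langle u,u'\rangle, u+u')$, from which the commutator formula $[(c,u),(c',u')]=(2\langle u,u'\rangle,0)$ shows $[G,G]=2R\cdot\{0\}$ and $G^{ab}$ fits in the short exact sequence $0\to R/2R\to G^{ab}\to R^{2n}\to 0$ of $\Sp_{2n}(R)$-modules (trivial action on $R/2R$, standard on $R^{2n}$). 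A hyperbolic element $\diag(t,t^{-1})$ acting in the $(e_1,e_2)$-pair of $R^{2n}$, chosen so that $1-t^{-1}\in R^*$, gives $(0, a e_2)=0$ in $(G^{ab})_{\Sp_{2n}(R)}$ for every $a\in R$; plugging this into the coinvariant relation $(-\langle u,Mu\rangle, u-Mu)=0$ for $u=e_1$ and $M$ the transvection $e_1\mapsto e_1+a e_2$ yields $(a,0)=0$ for every $a\in R$. Therefore $(G^{ab})_{\Sp_{2n}(R)}=0$, and the inductive hypothesis $H_1\Sp_{2n}(R)=0$ forces $H_1\Sp_{2n+1}(R)=0$.

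The main obstacle will be the vanishing of $(G^{ab})_{\Sp_{2n}(R)}$: at face value its $R/2R$-summand could survive in characteristic two, but the Hochschild-Serre coinvariant relations entangle the two components of $G^{ab}$ and kill the $R/2R$-part once one knows $(R^{2n})_{\Sp_{2n}(R)}=0$.
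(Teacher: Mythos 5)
Your proposal is correct, but it takes a genuinely different route from the paper's in two places. For the even case, both of you use the spectral sequence (\ref{SpecSeq}) to extract the surjection $\eps_*: H_1\Sp_{2n-1}\twoheadrightarrow H_1\Sp_{2n}$; but the paper then invokes Proposition \ref{prop:HSpLocalization} to convert this into a surjection $H_1\Sp_{2n-2}\twoheadrightarrow H_1\Sp_{2n}$ and chains down from $H_1\Sp_0 = 0$, whereas you instead induct and furnish the base case $H_1\Sp_2(R)=0$ by an explicit commutator identity showing $\SL_2(R)$ is perfect. For the odd case, the paper again runs the same spectral sequence — establishing $E^2_{0,3}(2n)=0$ and hence $H_1\Sp_{2n-1}=E^2_{1,1}(2n)=E^\infty_{1,1}(2n)=0$ for $n\geq 2$ — while you replace this by the Hochschild--Serre spectral sequence for the split extension $1\to G\to\Sp_{2n+1}\to\Sp_{2n}\to 1$ and compute the coinvariants $(G^{\mathrm{ab}})_{\Sp_{2n}}$ of the Heisenberg quotient directly. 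Your computation of $G^{\mathrm{ab}}$, the action, the commutator formula $[(c,u),(c',u')]=(2\langle u,u'\rangle,0)$, and the two relations (from $\diag(t,t^{-1})$ and from the transvection $e_1\mapsto e_1+ae_2$) are all correct; the remaining routine step $(R^{2n})_{\Sp_{2n}}=0$, which you assert, follows by the same hyperbolic-element trick applied to each $e_i$. The upshot is that your argument is entirely self-contained and avoids Proposition \ref{prop:HSpLocalization} (and the localization-of-homology machinery behind it), at the cost of explicit group-theoretic calculations; the paper's argument is shorter at the point of use but relies on machinery that it needs anyway for the higher-degree stability theorems, where your hands-on Heisenberg computation would not generalize so easily.
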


\begin{proof}
The statement is clear for $q=-1,0,1$ since $\Sp_{-1}(R)=\Sp_0(R)=1$ and $\Sp_1(R)=R$.

For $q=2n\geq 2$, the group $E^1_{1,0}(2n)=H_1(\Sp_{2n})$ has no outgoing differential and only one incoming differential $d^1_{1,1}:H_1(\Sp_{2n-1}) \to H_1(\Sp_{2n})$ since $d^2_{0,2}=0$, by the Properties of the spectral sequence (\ref{SpecSeq}) item (\ref{spseq:item:5}) above. Since $E^{\infty}_{1,0}(2n)=0$, the map $d^1_{1,1}:H_1(\Sp_{2n-1}) \to H_1(\Sp_{2n})$ is surjective.
By Proposition \ref{prop:HSpLocalization}, $H_1(\Sp_{2n-2})\to H_1(\Sp_{2n})$ is then also surjective for $n\geq 1$. 
Since $0=H_1(\Sp_0)$, we find that $H_1(\Sp_{2n})=0$ for $n\geq 0$.

For $q=2n\geq 4$, that is, $n\geq 2$, we have $E^1_{1,1}(2n) = E^2_{1,1}(2n)$ since $E^1_{1,0}(2n) = H_1(\Sp_{2n})=0$.
The $d^1$-sequence $E^1_{0,4}(2n) \to E^1_{0,3}(2n) \to E^1_{0,2}(2n)$ is exact, by item (\ref{spseq:item:4}) above. 
Therefore, $E^2_{0,3}(2n)=0$ and $d^2_{0,3}=0$. 
Hence, $H_1(\Sp_{2n-1}) = E^1_{1,1}(2n) = E^2_{1,1}(2n) = E^{\infty}_{1,1}=0$, by item (\ref{spseq:item:1}).
\end{proof}

\noindent{\bf $H_2$ stability}.
Extending results of Matsumoto \cite{matsumoto}, van der Kallen \cite{vdK:K2} shows isomorphisms
$$ H_2(\Sp_2(R)) \stackrel{\cong}{\longrightarrow} H_2(\Sp_4(R)) \stackrel{\cong}{\longrightarrow} H_2(\Sp_6(R)) \stackrel{\cong}{\longrightarrow} H_2(\Sp_8(R)) \stackrel{\cong}{\longrightarrow} \cdots$$
for any local ring with infinite residue field $R$.
The following theorem reproves this fact and extends it to the groups $\Sp_{2n+1}(R)$.

\begin{theorem}
\label{them:H2stability}
Let $R$ be a local ring with infinite residue field.
Then for $q\geq 2$ inclusion of groups induces an isomorphism $H_2(\Sp_q(R)) \cong H_2(\Sp_{q+1}(R))$:
$$H_2(\Sp_2(R)) \stackrel{\cong}{\longrightarrow} H_2(\Sp_3(R)) \stackrel{\cong}{\longrightarrow} H_2(\Sp_4(R)) \stackrel{\cong}{\longrightarrow} H_2(\Sp_5(R)) \stackrel{\cong}{\longrightarrow} \cdots$$
\end{theorem}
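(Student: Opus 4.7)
The plan is to establish that every map $a_q := \eps_*\colon H_2(\Sp_q(R)) \to H_2(\Sp_{q+1}(R))$ is an isomorphism for $q\geq 2$. I will first show that $a_{2n+1}$ is an isomorphism for all $n \geq 1$; the statement for $a_{2n}$ (which covers the $q=2$ case) will then follow formally. Indeed, Proposition \ref{prop:HSpLocalization} applied to the isomorphism $a_{2n+1}$ shows that the composition $a_{2n+1}\circ a_{2n}\colon H_2(\Sp_{2n}) \to H_2(\Sp_{2n+2})$ is an isomorphism; since $a_{2n+1}$ is already an iso, $a_{2n}$ must be as well.

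For the surjectivity of $a_{2n+1}$ with $n \geq 1$, I will analyse the bidegree $(2,0)$ of the spectral sequence $E(2n+2)$. Under the identification of (\ref{spseq:item:2}), $E^1_{2,0}(2n+2) = H_2(\Sp_{2n+2})$, and by (\ref{spseq:item:3}) the differential $d^1_{2,1}$ equals $a_{2n+1}$. The next incoming differential $d^2_{1,2}$ vanishes by (\ref{spseq:item:5}) since $q=2$ is even and $2<2(n+1)$ for $n \geq 1$. The differential $d^3_{0,3}$ originates in a subquotient of $E^2_{0,3}(2n+2)$, which vanishes by the acyclicity of $(\Z[\Skew^+_*],d)$ (item (\ref{spseq:item:4})) in the relevant range $3 \leq 2(n+1)$. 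All higher incoming differentials to $(2,0)$ start from columns of negative index and thus vanish. Since $E^{\infty}_{2,0}(2n+2) = 0$ by (\ref{spseq:item:1}), the map $a_{2n+1}$ must be surjective.

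For injectivity I will track the bidegree $(2,1)$ in $E(2n+2)$. The only incoming $d^1$ here, namely $d^1_{2,2}$, is zero because $d\colon \Z[\Skew^+_2] \to \Z[\Skew^+_1]$ is the zero map (removing either row/column of a $2\times 2$ skew-symmetric matrix produces the unique $1\times 1$ zero matrix, and the two contributions cancel in the alternating sum). Hence $E^2_{2,1}(2n+2) = \ker(a_{2n+1})$. The only further incoming differentials are $d^2_{1,3}$ and $d^3_{0,4}$; the latter vanishes since $E^2_{0,4}(2n+2) = 0$ by (\ref{spseq:item:4}). For $d^2_{1,3}$, when $n \geq 2$ the entry $E^1_{1,3}(2n+2) = H_1(\Sp_{2n-1})\otimes \Z[\Skew^+_3]$ is already zero by Theorem \ref{them:H1stability} (since $2n-1 \geq 3$). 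In the borderline case $n=1$, item (\ref{spseq:item:6}) gives that $d^2_{0,5}(4)$ surjects onto $E^2_{1,3}(4)$, and the relation $d^2\circ d^2 = 0$ then forces $d^2_{1,3}(4) = 0$ on all of $E^2_{1,3}(4)$. Since $E^{\infty}_{2,1}(2n+2)=0$ by (\ref{spseq:item:1}), we conclude $\ker(a_{2n+1})=0$, and hence $a_{2n+1}$ is an isomorphism for every $n \geq 1$.

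The main obstacle is the borderline case $n=1$ for $a_3$: because $H_1(\Sp_1) = R$ is non-zero, the vanishing of $d^2_{1,3}(4)$ does not come for free from $H_1$-stability, and we must import Proposition \ref{prop:d2Surjects}, which packages the substantial computational content of Sections 4--6. All other steps are routine bookkeeping with the spectral sequence (\ref{SpecSeq}) and its established formal properties.
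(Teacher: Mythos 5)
Your proposal is correct and follows essentially the same route as the paper's own proof: analyse bidegrees $(2,0)$ and $(2,1)$ in the spectral sequence $E(2n+2)$ (the paper indexes this as $E(2n)$ with $n\geq 2$), use item (\ref{spseq:item:6}) to kill $d^2_{1,3}$ in the borderline case $2n+2=4$, and then transfer the isomorphism from the odd-to-even step down to the even-to-odd step via Proposition \ref{prop:HSpLocalization}. Your explicit derivation of $d^2_{1,3}(4)=0$ from the surjectivity of $d^2_{0,5}$ and $d^2\circ d^2=0$ makes a step more transparent than the paper, which only cites item (\ref{spseq:item:6}) without spelling it out.
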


\begin{proof}
For $2n\geq 4$, that is, $n\geq 2$, the term $E^1_{2,0}(2n)=H_2\Sp_{2n}$ has no outgoing differential and only one incoming differential $d^1:E^1_{2,1}(2n) \to E^1_{2,0}(2n)$ since $d^2_{1,2}=0$ (see Properties of the spectral sequence (\ref{SpecSeq}) item (\ref{spseq:item:5})), and $d^3_{0,3}=0$ (item (\ref{spseq:item:4})).
The term $E^1_{2,1}(2n)=H_2\Sp_{2n-1}$ has one outgoing differential $d^1:E^1_{2,1}(2n) \to E^1_{2,0}(2n)$ and no incoming differential because $d^1_{2,2}=0$ (item (\ref{spseq:item:5})), $d^2_{1,3}=0$ 
($E^1_{1,3}=H_1(\Sp_{2n-3})\otimes \Z[\Skew^+_3]=0$ for $n\geq 3$ and item (\ref{spseq:item:6}) for $n=2$)
and $d^3_{0,4}=0$ (because $E^2_{0,4}=0$, by item (\ref{spseq:item:4})).
Since $E^{\infty}_{2,0}(2n)=E^{\infty}_{2,1}(2n)=0$ the differential $d^1:E^1_{2,1}(2n) \to E^1_{2,0}(2n)$ is an isomorphism, that is, $H_2(\Sp_{2n-1}) \to H_2(\Sp_{2n})$ is an isomorphism.
By Proposition \ref{prop:HSpLocalization}, $H_2(\Sp_{2n-2})\to H_2(\Sp_{2n})$ is then also an isomorphism for $n\geq 2$. 
\end{proof}

\noindent{\bf $H_3$ stability}.
The following theorem improves results of Essert \cite[Theorem 3.9]{Essert},  Sprehn-Wahl \cite[Theorem A]{SprehnWahl} and answers a question of Hutchinson-Wendt \cite[Remark 9.6]{HutchinsonWendt}.

\begin{theorem}
\label{them:H3stability}
Let $R$ be a local ring with infinite residue field.
Then for $q\geq 4$ inclusion of groups induces an isomorphism $H_3(\Sp_q(R)) \cong H_3(\Sp_{q+1}(R))$ and surjections $H_3(\Sp_2(R)) \twoheadrightarrow H_3(\Sp_4(R))$ and $H_3(\Sp_3(R)) \twoheadrightarrow H_3(\Sp_4(R))$:
$$\xymatrix{
H_3(\Sp_2)\ \  \ar@{>->}[r] \ar@/_1pc/@{->>}[rr] &  H_3(\Sp_3)\ar@{->>}[r]  & H_3(\Sp_4) \ar[r]^{\cong} & H_3(\Sp_5) \ar[r]^{\cong} & H_3(\Sp_6) \ar[r]^{\cong} & \cdots
}$$
\end{theorem}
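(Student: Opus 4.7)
The plan is to analyse positions $(3,0)$ and $(3,1)$ of the spectral sequence $E(2n)$ from (\ref{SpecSeq}), in the same style as the proofs of Theorems \ref{them:H1stability} and \ref{them:H2stability}, and to combine the outcome with the localisation of Proposition \ref{prop:HSpLocalization} and the retraction $\rho$ of (\ref{eqn:retractRho}). Under the identifications in items (\ref{spseq:item:2}) and (\ref{spseq:item:3}) of the listed properties of (\ref{SpecSeq}), the differential $d^1_{3,1}:E^1_{3,1}(2n)\to E^1_{3,0}(2n)$ is precisely the inclusion-induced map $H_3(\Sp_{2n-1})\to H_3(\Sp_{2n})$, because $\Skew^+_1$ is a singleton and the induced differential $\Z[\Skew^+_1]\to\Z[\Skew^+_0]$ is an isomorphism.

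First I will prove the surjection $H_3(\Sp_{2n-1})\twoheadrightarrow H_3(\Sp_{2n})$ for every $n\geq 2$. Since $E^\infty_{3,0}(2n)=0$ by item (\ref{spseq:item:1}), it suffices to kill the three other incoming differentials $d^2_{2,2}$, $d^3_{1,3}$ and $d^4_{0,4}$ into position $(3,0)$. The first vanishes by item (\ref{spseq:item:5}) and the third because $E^2_{0,4}(2n)=0$ by item (\ref{spseq:item:4}). For $d^3_{1,3}$, when $n\geq 3$ the term $E^1_{1,3}(2n)=H_1(\Sp_{2n-3})\otimes\Z[\Skew^+_3]$ already vanishes by Theorem \ref{them:H1stability}; when $n=2$, however, $H_1(\Sp_1)=R\neq 0$, and this is precisely the place where I invoke item (\ref{spseq:item:6}), i.e.\ Proposition \ref{prop:d2Surjects}, to force $E^3_{1,3}(4)=0$. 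Applying Proposition \ref{prop:HSpLocalization} (with $t=3$, $m\geq 7$) then promotes these to surjections $H_3(\Sp_{2n-2})\twoheadrightarrow H_3(\Sp_{2n})$ for all $n\geq 2$; in particular one obtains $H_3(\Sp_2)\twoheadrightarrow H_3(\Sp_4)$.

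To upgrade $d^1_{3,1}$ to an isomorphism when $n\geq 3$ I will next show it is injective, via position $(3,1)$. All outgoing differentials from $(3,1)$ other than $d^1_{3,1}$ vanish for degree reasons, and $E^\infty_{3,1}(2n)=0$, so it suffices to kill each incoming differential $d^r_{4-r,1+r}$, $r=1,2,3,4$. Corollary \ref{cor:dp20} covers $r=1$. For $r=2$, the group $E^2_{2,3}(2n)$ is the middle cohomology of the $d^1$-complex $H_2(\Sp_{2n-4})\otimes\Z[\Skew^+_4]\to H_2(\Sp_{2n-3})\otimes\Z[\Skew^+_3]\to H_2(\Sp_{2n-2})\otimes\Z[\Skew^+_2]$ with differentials $\eps\otimes d$; for $n\geq 3$ the stabilisation maps $\eps$ are isomorphisms by Theorem \ref{them:H2stability}, so Lemma \ref{lem:SkewExact} makes the complex acyclic and $E^2_{2,3}(2n)=0$. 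For $r=3$, $E^1_{1,4}(2n)=H_1(\Sp_{2n-4})\otimes\Z[\Skew^+_4]=0$ by Theorem \ref{them:H1stability} since $2n-4\geq 2$. For $r=4$, $E^2_{0,5}(2n)=0$ by item (\ref{spseq:item:4}) since $5\leq 2n$. Therefore $d^1_{3,1}$ is injective, hence an isomorphism, for every $n\geq 3$.

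To assemble the statement: for $n\geq 3$, Proposition \ref{prop:HSpLocalization} lifts the just-proved isomorphism to $H_3(\Sp_{2n-2})\cong H_3(\Sp_{2n})$, and the factorisation $H_3(\Sp_{2n-2})\to H_3(\Sp_{2n-1})\to H_3(\Sp_{2n})$ with the right map an isomorphism forces the left map $H_3(\Sp_{2n-2})\to H_3(\Sp_{2n-1})$ to be an isomorphism as well; this yields the full chain $H_3(\Sp_q)\cong H_3(\Sp_{q+1})$ for $q\geq 4$. The remaining split injection $H_3(\Sp_2)\hookrightarrow H_3(\Sp_3)$ is supplied by the retraction $\rho:\Sp_3\to\Sp_2$ of (\ref{eqn:retractRho}), which immediately implies that $H_3$ of the inclusion is split injective. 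The only genuine obstacle is the $n=2$ case of the surjection: without Proposition \ref{prop:d2Surjects}, the differential $d^3_{1,3}$ in $E(4)$ cannot be killed, and indeed the multi-section calculation of \S4--6 is designed for exactly this purpose.
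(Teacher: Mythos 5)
Your proof is correct and follows essentially the same route as the paper's: analyse positions $(3,0)$ and $(3,1)$ of the spectral sequence (\ref{SpecSeq}), kill the incoming differentials $d^2_{2,2}$, $d^3_{1,3}$, $d^4_{0,4}$ (invoking Proposition \ref{prop:d2Surjects} only in the $n=2$ case), likewise for $(3,1)$ when $n\geq 3$, and then transfer from odd to even rank via Proposition \ref{prop:HSpLocalization}. You are slightly more explicit than the paper in naming Corollary \ref{cor:dp20} for $d^1_{3,2}=0$ and in noting the split injection $H_3(\Sp_2)\rightarrowtail H_3(\Sp_3)$ coming from the retraction $\rho$, but the substance is identical.
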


\begin{proof}
For $2n\geq 4$, the term $E^1_{3,0}=H_3(\Sp_{2n})$ has no outgoing differential and only one incoming differential $d^1_{3,1}:H_3(\Sp_{2n-1}) \to H_3(\Sp_{2n})$ since $d^2_{2,2}=0$ (Properties of the spectral sequence (\ref{SpecSeq}) item  (\ref{spseq:item:5})), $d^3_{1,3} = 0$ ($E^{3}_{1,3}(4)=0$, by item  (\ref{spseq:item:6}) for $2n=4$, and Theorem \ref{them:H1stability} for $2n > 4$), and $d^4_{0,4}=0$ (item  (\ref{spseq:item:4})).
Since $E^{\infty}_{3,0}=0$ (item  (\ref{spseq:item:1})), the stabilisation map $d^1_{3,1}:H_3(\Sp_{2n-1}) \to H_3(\Sp_{2n})$  is surjective.
By Proposition \ref{prop:HSpLocalization}, $H_3(\Sp_{2n-2}) \to H_3(\Sp_{2n})$ is then also surjective.

For $2n\geq 6$, that is, $n\geq 3$, the term $E^1_{3,1}=H_3(\Sp_{2n-1})$ has no outgoing differential other than $d^1_{3,1}$ and no incoming differential.
Indeed, $d^2_{2,3}=0$ as $E^2_{2,3}=0$ in view of Theorem \ref{them:H2stability} and items  (\ref{spseq:item:3}) and (\ref{spseq:item:4}).
The differential $d^3_{1,4}=0$ because $E^1_{1,4}=H_1(\Sp_{2n-4})\otimes \Z[\Skew^+_4]=0$, by Theorem \ref{them:H1stability}.
The differential $d^4_{0,5}=0$ because $E^2_{0,5}=0$, by item (\ref{spseq:item:4}).
Since $E^{\infty}_{3,1}=0$ (item  (\ref{spseq:item:1})), the stabilisation map $d^1_{3,1}:H_3(\Sp_{2n-1}) \to H_3(\Sp_{2n})$  is also injective, hence an isomorphism.
By Proposition \ref{prop:HSpLocalization}, $H_3(\Sp_{2n-2}) \to H_3(\Sp_{2n})$ is then also an isomorphism.
\end{proof}

\begin{remark}
\label{rmk:NotInjective}
The homology stability range in Theorem \ref{them:H3stability} is optimal, in general. 
Indeed, let $k$ be an infinite perfect field of characteristic not $2$ which is finitely generated over its prime field, then neither of the two surjective maps
\begin{equation}
\label{eqn:NotInjective}
H_3(\Sp_2(k)) \twoheadrightarrow H_3(\Sp_4(k)),\hspace{2ex}\text{and }\hspace{2ex} 
H_3(\Sp_3(k)) \twoheadrightarrow H_3(\Sp_4(k))
\end{equation}
is injective.
For the first map, this follows from \cite[Theorem 7.4]{HutchinsonWendt} since that map factors through $H_3(B\Sp_2(k[\Delta^{\bullet}]))$ in view of the isomorphisms
$$H_3(B\Sp_4(k)) \cong H_3(B\Sp(k)) \cong H_3(B\Sp(k[\Delta^{\bullet}]))$$
of Theorem \ref{them:H3stability} and homotopy invariance of symplectic $K$-theory for regular rings containing $1/2$.
Then the second map cannot be injective either, by Proposition \ref{prop:HSpLocalization}.
\end{remark}

An important consequence of Theorem \ref{them:H3stability}
is the following relative homology stability result.

\begin{theorem}\label{lem:apl}
	Let $R$ be a local ring with infinite residue field. Then inclusion of groups induces an isomorphism of relative integral homology groups for $i\leq 3$
	$$H_i(\SL_3(R),\Sp_2(R)) \stackrel{\cong}{\longrightarrow} H_i(\SL(R), \Sp(R)) \cong 
	\left\{
	\renewcommand\arraystretch{1.5}
	\begin{array}{cl} 0 & i \leq 2\\ K_3^{MW}(R) & i=3. \end{array}\right.$$
\end{theorem}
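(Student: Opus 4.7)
The plan is a five-lemma comparison of the long exact sequences of the pairs $(\SL_3, \Sp_2)$ and $(\SL_n, \Sp_{2m})$ for increasing $n, m$, using Theorem \ref{them:H3stability} (the new symplectic stability) and the classical Suslin--Nesterenko stability for $\SL$ as the main inputs; the final identification with $K_3^{MW}(R)$ in degree three will be imported from \cite{myEuler}. The stable part of the comparison is clean: for $k \geq 2$ and $i \leq 3$, Suslin--Nesterenko gives $H_i(\SL_{2k}) \cong H_i(\SL_{2k+2})$ (since $2k \geq i+1$) and Theorem \ref{them:H3stability} gives $H_i(\Sp_{2k}) \cong H_i(\Sp_{2k+2})$, so the five-lemma yields $H_i(\SL_{2k}, \Sp_{2k}) \cong H_i(\SL_{2k+2}, \Sp_{2k+2})$; taking colimits gives $H_i(\SL_4, \Sp_4) \cong H_i(\SL, \Sp)$ for $i \leq 3$. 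Separately, the long exact sequence of $(\SL_3, \Sp_2)$ together with Theorem \ref{them:H1stability} (trivial $H_1$ on both sides) and surjectivity of $H_2(\SL_2) \to H_2(\SL_3)$ (Matsumoto--van~der~Kallen) shows $H_i(\SL_3, \Sp_2) = 0$ for $i \leq 2$, and hence by the stable comparison also $H_i(\SL, \Sp)=0$ in this range.

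The delicate part is the initial step $(\SL_3, \Sp_2) \to (\SL_4, \Sp_4)$ in degree three, where the naive five-lemma only yields surjectivity, because both $H_3(\SL_3) \to H_3(\SL_4)$ and $H_3(\Sp_2) \to H_3(\Sp_4)$ are only surjective. My plan is to factor through the intermediate pair $(\SL_4, \Sp_2)$. The second factor $(\SL_4, \Sp_2) \to (\SL_4, \Sp_4)$ induces an isomorphism on $H_{\leq 3}$, because its triple long exact sequence reduces the claim to $H_i(\Sp_4, \Sp_2) = 0$ for $i \leq 3$, which itself follows from the long exact sequence of $(\Sp_4, \Sp_2)$ combined with Theorems \ref{them:H2stability} and \ref{them:H3stability} (the surjection $H_3(\Sp_2) \twoheadrightarrow H_3(\Sp_4)$ kills the cokernel, the iso $H_2(\Sp_2) \cong H_2(\Sp_4)$ kills the kernel of the connecting map). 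The first factor $(\SL_3, \Sp_2) \to (\SL_4, \Sp_2)$, via the triple long exact sequence together with $H_i(\SL_4, \SL_3) = 0$ for $i \leq 3$ (Suslin--Nesterenko), reduces the whole problem to showing that the boundary map $H_4(\SL_4, \SL_3) \to H_3(\SL_3, \Sp_2)$ vanishes.

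The hard part will be precisely this last vanishing, which is equivalent to the claim that the kernel of $H_3(\SL_3) \to H_3(\SL_4)$ lies inside the image of $H_3(\Sp_2) \to H_3(\SL_3)$. My plan to establish it is to combine the identification $H_3(\SL_3, \Sp_2) \cong K_3^{MW}(R)$ from \cite{myEuler} with a direct comparison between the spectral sequence (\ref{SpecSeq}) developed in this paper and the analogous spectral sequence for $\SL$ built on the Suslin--Nesterenko complex of unimodular sequences used in \cite{SuslinNesterenko} and \cite{HutchinsonTao}, \cite{myEuler}; the explicit differentials and Pfaffian computations in Sections 4 and 5 above should render the boundary map visibly zero on generators. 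Once that is done, the chain of isomorphisms $H_3(\SL, \Sp) \cong H_3(\SL_4, \Sp_4) \cong H_3(\SL_3, \Sp_2) \cong K_3^{MW}(R)$ completes the proof.
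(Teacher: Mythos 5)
The core of your plan --- a five-lemma comparison of the long exact sequences of $(\SL_3,\Sp_2)$ and $(\SL_4,\Sp_4)$, stabilization for $k\geq 2$, and importing the identification with $K_3^{MW}(R)$ from \cite{myEuler} --- is exactly the paper's approach for the stable range and the final identification. However, you go wrong at the initial step by assuming that $H_3(\SL_3(R))\to H_3(\SL_4(R))$ is only surjective. That would be the case if one only had Suslin--Nesterenko $\GL$-stability, but the relevant input here is the sharper $\SL$-stability of \cite[Theorem~5.37]{myEuler}, which for a local ring with infinite residue field gives that $H_i(\SL_3(R))\to H_i(\SL_4(R))$ is an isomorphism for all $i\leq 3$ (the same reference also gives $H_{\leq 2}(\SL_3,\SL_2)=0$ and $H_3(\SL_3,\SL_2)\cong K_3^{MW}(R)$, which you do invoke at the end). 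With this isomorphism in hand, the standard asymmetric Five Lemma (epi, iso, $\cdot$, iso, mono) applies directly to the ladder relating $(\SL_3,\Sp_2)$ and $(\SL_4,\Sp_4)$: $H_3(\Sp_2)\twoheadrightarrow H_3(\Sp_4)$ by Theorem~\ref{them:H3stability}, $H_3(\SL_3)\cong H_3(\SL_4)$ and $H_2(\SL_3)\cong H_2(\SL_4)$ by \cite[Theorem~5.37]{myEuler}, and $H_2(\Sp_2)\cong H_2(\Sp_4)$ by Theorem~\ref{them:H2stability}. No factoring through intermediate pairs and no separate vanishing of a boundary map is needed.

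Your proposed detour through $(\SL_4,\Sp_2)$ is in itself sound bookkeeping, and the reduction to $H_i(\Sp_4,\Sp_2)=0$ for $i\leq 3$ and $H_i(\SL_4,\SL_3)=0$ for $i\leq 3$ is correct as far as it goes. But the step you describe as ``the hard part'' --- showing that the connecting map $H_4(\SL_4,\SL_3)\to H_3(\SL_3,\Sp_2)$ vanishes --- is where the proposal as written is genuinely incomplete: you offer only a plan (``should render the boundary map visibly zero'') with no argument, and the Pfaffian computations in Sections~4--5 of this paper concern the symplectic unimodular complex and the symplectic spectral sequence, so they give no control over an $\SL$-boundary map. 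In fact, once one knows $H_3(\SL_3)\to H_3(\SL_4)$ is injective (again \cite[Theorem~5.37]{myEuler}), the composite $H_4(\SL_4,\SL_3)\to H_3(\SL_3)\to H_3(\SL_3,\Sp_2)$ is automatically zero because the first map already vanishes; so the ``hard part'' dissolves entirely as soon as you use the correct $\SL$-stability range. In short: your structure is right, but you are missing the key input, namely that $\SL$-stability for local rings with infinite residue field is one better than the Suslin--Nesterenko $\GL$-range, and the way you propose to fill the resulting gap would not work.
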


\begin{proof}
For $i\leq 3$ we have the following commutative diagram
	\[
	\xymatrix
	{
		H_i(\Sp_2) \ar@{->}[r]\ar@{->>}[d]^{\xi_0} & H_i(\SL_3) \ar@{->}[d]^{\xi_1}_{\cong}
		\ar@{->}[r]	& H_i(\SL_3,\Sp_2)	\ar@{->}[d]^{\xi_2}
		\ar@{->}[r]	& H_{i-1}(\Sp_2)  \ar@{->}[d]^{\xi_3}_{\cong} \ar@{->}[r]
		&H_{i-1}(\SL_3)\ar[d]^{\xi_4}_{\cong}		\\
		H_i(\Sp_4) \ar@{->}[r] & H_i(SL_4) 
		\ar@{->}[r]	& H_i(\SL_4,\Sp_4)	
		\ar@{->}[r]	& H_{i-1}(\Sp_4)   \ar@{->}[r]
		&H_{i-1}(\SL_4)	
	}
	\]
	where the rows are exact, the maps $\xi_1$ and $\xi_4$ are isomorphisms, by \cite[Theorem 5.37]{myEuler}, the map $\xi_0$ is surjective, and the map $\xi_3$ is an isomorphism, by Theorems \ref{them:H1stability}, \ref{them:H2stability} and \ref{them:H3stability}.
By the Five Lemma, $\xi_2$ is an isomorphism.
Similarly we have isomorphisms
$$H_i(\SL_4,\Sp_4)  \stackrel{\cong}{\longrightarrow} H_i(\SL_6,\Sp_6) \stackrel{\cong}{\longrightarrow} \cdots  \stackrel{\cong}{\longrightarrow}  H_i(\SL,\Sp)$$
	for $i\leq 3$. 
The identification with Milnor-Witt $K$-theory or $0$ follows from \cite[Theorem 5.37]{myEuler} using the identity $\Sp_2(R)=\SL_2(R)$.
\end{proof}

\noindent{\bf $H_n$ stability}.
The following result generalises Theorem \ref{them:H3stability}.
It is probably not optimal for $k\geq 1$ but it improves on \cite[Theorem 3.9]{Essert}, \cite[Theorem A]{SprehnWahl} .
In order to improve stability ranges further with the methods of this paper one would need to study various non-zero differentials in more detail which seems to be quite challenging.

\begin{theorem}
\label{them:Hkstability}
Let $R$ be a local ring with infinite residue field, and let $k\geq 0$ be an integer.
Then for $q\geq 2k+4$ inclusion of groups induces an isomorphism $H_{k+3}(\Sp_q(R)) \cong H_{k+3}(\Sp_{q+1}(R))$ and surjections\\
 $H_{k+3}(\Sp_{2k+2}(R)) \twoheadrightarrow H_{k+3}(\Sp_{2k+4}(R))$ and $H_{k+3}(\Sp_{2k+3}(R)) \twoheadrightarrow H_{k+3}(\Sp_{2k+4}(R))$:
{\tiny
$$\xymatrix{
H_{k+3}(\Sp_{2k+2})  \ar[d] \ar@{->>}[dr] & &&&\\
 H_{k+3}(\Sp_{2k+3})\ar@{->>}[r]  & H_{k+3}(\Sp_{2k+4}) \ar[r]^{\cong} & H_{k+3}(\Sp_{2k+5}) \ar[r]^{\cong} & H_{k+3}(\Sp_{2k+6}) \ar[r]^{\hspace{3ex}\cong} & \cdots
}$$
}
\end{theorem}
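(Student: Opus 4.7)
The plan is to proceed by induction on $k\geq 0$, with base case $k=0$ supplied by Theorem \ref{them:H3stability}. For the inductive step $k\geq 1$, by Theorems \ref{them:H1stability}, \ref{them:H2stability} and the inductive hypothesis, the stabilisation $\eps: H_p(\Sp_m) \to H_p(\Sp_{m+1})$ is an isomorphism for every $0 \leq p \leq k+2$ whenever $m \geq \max(2,2p-2)$. Following the strategy of the proofs of Theorems \ref{them:H2stability} and \ref{them:H3stability}, I would analyse the spectral sequence (\ref{SpecSeq}) for $\Sp_{2n}$ with $2n \geq 2k+4$, aiming to show that $d^1_{k+3,1}:H_{k+3}(\Sp_{2n-1}) \to H_{k+3}(\Sp_{2n})$ is surjective when $2n \geq 2k+4$ and bijective when $2n\geq 2k+6$. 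The conclusions for even-indexed groups and the initial surjection from $H_{k+3}(\Sp_{2k+2})$ then follow from Proposition \ref{prop:HSpLocalization}, combined with the elementary fact that in a factorisation $A\to B\to C$ where both the composite and the second map are isomorphisms, so is the first.

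For the surjectivity statement, abutment vanishing and the absence of outgoing differentials from $E^r_{k+3,0}$ reduce the question to showing every incoming differential $d^r_{k+4-r,r}$ for $r\geq 2$ vanishes. For $r$ even this holds by Property (\ref{spseq:item:5}) since $r \leq k+4 < 2n$; for $r = k+4$ (where $p=0$) it holds by Property (\ref{spseq:item:4}). For odd $r$ with $3 \leq r \leq k+3$ and $p=k+4-r\geq 1$, I would establish $E^2_{p,r}=0$ by inspecting the three-term slice of the $d^1$-complex at row $p$,
\[
H_p(\Sp_{2n-r-1})\otimes \Z[\Skew^+_{r+1}] \xrightarrow{\eps\otimes d} H_p(\Sp_{2n-r})\otimes \Z[\Skew^+_r] \xrightarrow{\eps\otimes d} H_p(\Sp_{2n-r+1})\otimes \Z[\Skew^+_{r-1}],
\]
and noting that the inductive hypothesis makes both $\eps$'s isomorphisms as soon as $2n+r\geq 2k+7$, which for $r\geq 3$ is implied by $2n\geq 2k+4$. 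Lemma \ref{lem:SkewExact} then gives exactness at the middle term. Applied at $2n=2k+4$, this yields the surjection $H_{k+3}(\Sp_{2k+3})\twoheadrightarrow H_{k+3}(\Sp_{2k+4})$, and Proposition \ref{prop:HSpLocalization} promotes it to $H_{k+3}(\Sp_{2k+2})\twoheadrightarrow H_{k+3}(\Sp_{2k+4})$.

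For the injectivity statement under $2n\geq 2k+6$, first observe that $d^1_{k+3,2}=0$, since the differential $\Z[\Skew^+_2]\to\Z[\Skew^+_1]$ vanishes on generators, so $E^2_{k+3,1}=\ker(d^1_{k+3,1})$. The outgoing differentials from $E^r_{k+3,1}$ land in negative $q$ and hence vanish, so the vanishing of $E^{\infty}_{k+3,1}$ reduces the claim to showing that every incoming differential $d^r: E^r_{k+4-r,r+1}\to E^r_{k+3,1}$ for $r\geq 2$ is zero. For odd $r\geq 3$, the source has even $q=r+1<2n$, killed by Property (\ref{spseq:item:5}); for $r=k+4$ the source $E^2_{0,k+5}$ vanishes by Property (\ref{spseq:item:4}) since $k+5\leq 2n$; and for even $r\geq 2$ with $r\leq k+3$ the same three-term exactness argument as before, now at position $q=r+1$, requires $2n+r\geq 2k+8$, which for $r\geq 2$ becomes $2n\geq 2k+6$. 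This finishes the bijectivity of $d^1_{k+3,1}$, and Proposition \ref{prop:HSpLocalization} together with the composition-isomorphism trick propagates it across even-odd gaps to give $H_{k+3}(\Sp_q)\cong H_{k+3}(\Sp_{q+1})$ for all $q\geq 2k+4$.

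The main obstacle is the case-by-case bookkeeping required to eliminate every incoming differential at the bidegrees $(k+3,0)$ and $(k+3,1)$: Property (\ref{spseq:item:5}) only handles even source-$q$, Property (\ref{spseq:item:4}) only handles $p=0$, and the remaining positions must be killed by the inductive hypothesis on $H_p$ stability applied with a tight enough stable range so that the arithmetic $2n+r\geq 2k+7$ for surjectivity, and $2n+r\geq 2k+8$ for injectivity, works out. Unlike the $k=0$ case of Theorem \ref{them:H3stability}, no analogue of Property (\ref{spseq:item:6}) is available here, which is precisely why the stabilisation range $2n\geq 2k+4$ is likely not optimal for $k\geq 1$.
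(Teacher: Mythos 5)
Your proposal is correct and follows essentially the same route as the paper's proof: induction on $k$, then for a fixed spectral sequence $E(\Sp_{\text{large}})$, eliminate all incoming $d^r$-differentials at the positions $(k+3,0)$ (for surjectivity) and $(k+3,1)$ (for injectivity) by showing the source $E^2$-terms vanish, and finally apply Proposition \ref{prop:HSpLocalization} together with the two-out-of-three observation to pass from odd to even rank. The one organizational difference is that the paper runs a single three-term-complex argument for every $r\geq 2$, implicitly relying on the fact (not spelled out there) that for $r=2$ the leftmost stabilisation $H_{n+2}(\Sp_{2n+1})\to H_{n+2}(\Sp_{2n+2})$ is only a \emph{surjection}, which nevertheless suffices for exactness at the middle because $\Z[\Skew_*^+]$ is an acyclic complex of free abelian groups; you instead dispose of the even-$r$ (resp.\ odd-$r$) sources via Property (\ref{spseq:item:5}), which sidesteps this boundary case entirely and makes the stable range bookkeeping ($2n+r\geq 2k+7$, resp.\ $2n+r\geq 2k+8$) apply cleanly only to $r\geq 3$ (resp.\ $r\geq 2$) where the inductive hypothesis does give genuine isomorphisms. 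Both treatments are correct; yours is marginally more explicit about why the three-term exactness holds in every remaining case.
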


\begin{proof}
We prove the theorem by induction on $k$, the case $k=0$ is Theorem \ref{them:H3stability}.
Let $n\geq 1$ be an integer and assume the theorem is true for $k$ satisfying $0\leq k <n$.
We consider the Spectral sequence $E^r_{p,q}(2n+4)$ and show that all incoming differentials at $E^r_{n+3,0}(2n+4)$ are zero for $r\geq 2$.
Indeed, the incoming differential $d^r_{n+4-r,r}: E^r_{n+4-r,r}(2n+4) \to E^r_{n+3,0}(2n+4)$ is zero for $r\geq 2$ as $E^2_{n+4-r,r}(2n+4)=0$.
This is because it is the homology of the complex
$$\renewcommand\arraystretch{2}\begin{array}{rl}
H_{n+4-r}(\Sp_{2n+3-r})\otimes \Z[\Skew^+_{r+1}] & \stackrel{\eps\otimes d}{\longrightarrow}H_{n+4-r}(\Sp_{2n+4-r})\otimes \Z[\Skew^+_r]\\
 & \stackrel{\eps\otimes d}{\longrightarrow} H_{n+4-r}(\Sp_{2n+5-r})\otimes \Z[\Skew^+_{r-1}]
 \end{array}$$
which is exact, by induction hypothesis and the Properties of the spectral sequence (\ref{SpecSeq}) item (\ref{spseq:item:4}).
Since $E^{\infty}_{n+3,0}(2n+4)=0$, by item (\ref{spseq:item:1}), the stabilisation map
$d^1_{n+3,1}: H_{n+3}(\Sp_{2n+3}) \to H_{n+3}(\Sp_{2n+4})$ must be surjective.
By Proposition \ref{prop:HSpLocalization}, the stabilisation map 
$H_{n+3}(\Sp_{2n+2}) \to H_{n+3}(\Sp_{2n+4})$ is then also surjective.

For $m>n$ we consider the spectral sequence 
$E^r_{p,q}(2m+4)$ and show that all incoming differentials at $E^r_{n+3,0}(2m+4)$ and $E^r_{n+3,1}(2m+4)$ are zero for $r\geq 2$ (their outgoing differentials are zero anyway).
The incoming differentials $d^r_{n+4-r,r}: E^r_{n+4-r,r}(2m+4) \to E^r_{n+3,0}(2m+4)$ and 
$d^r_{n+4-r,r+1}: E^r_{n+4-r,r+1}(2m+4) \to E^r_{n+3,1}(2m+4)$
are zero for $r\geq 2$ as $E^2_{n+4-r,r}(2m+4)=0$ and $E^2_{n+4-r,r+1}(2m+4)=0$, 
by induction hypothesis and item (\ref{spseq:item:4}).
Since $E^{\infty}_{n+3,0}(2m+4)=E^{\infty}_{n+3,1}(2m+4)=0$, by item (\ref{spseq:item:1}), the stabilisation map
$d^1_{n+3,1}: H_{n+3}(\Sp_{2m+3}) \to H_{n+3}(\Sp_{2m+4})$ must be an isomorphism.
By Proposition \ref{prop:HSpLocalization}, the stabilisation map 
$H_{n+3}(\Sp_{2m+2}) \to H_{n+3}(\Sp_{2m+4})$ is then also an isomorphism.
\end{proof}

\section{The Hurewicz map}

Let $R$ be a local ring with infinite residue field. 
We denote by $K^{MW}_n(R)$ the $n$-th Milnor-Witt $K$-group of $R$ \cite[Definition 4.10]{myEuler}.
This is the evident generalisation to local rings of the definition for fields given in \cite{Morel:book}.
We recall the definition of symplectic $K$-theory $K_n\Sp(R) = \pi_nB\Sp(R)^+$ for $n\geq 1$ where $\Sp(R)=\bigcup_{n\geq 0}\Sp_{2n}(R)$ is the infinite symplectic group over $R$ and "+" is Quillen's plus construction with respect to the perfect (sub-) group $\Sp(R)$.

\begin{theorem}
\label{thm:HurewiczGW22}
Let $R$ be a local ring with infinite residue field.
Then the Hurewicz map induces an isomorphism
$$K_2\Sp(R) \stackrel{\cong}{\longrightarrow} H_2(\Sp(R)) \stackrel{\cong}{\longleftarrow}H_2(\Sp_2(R)) \cong K_2^{MW}(R).$$
\end{theorem}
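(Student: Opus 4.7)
The plan is to establish each of the three maps in the chain separately; every ingredient is already available from earlier results in the paper or from the standard toolbox.

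For the middle map $H_2(\Sp_2(R)) \to H_2(\Sp(R))$, I would simply invoke the $H_2$-stability result (Theorem \ref{them:H2stability}), which asserts that every stabilisation map $H_2(\Sp_{2k}(R)) \to H_2(\Sp_{2k+2}(R))$ is an isomorphism for $k \geq 1$. Since $\Sp(R) = \bigcup_k \Sp_{2k}(R)$ as a filtered union of subgroups and group homology commutes with filtered colimits, one obtains $H_2(\Sp(R)) \cong \colim_k H_2(\Sp_{2k}(R)) \cong H_2(\Sp_2(R))$.

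For the right-hand identification $H_2(\Sp_2(R)) \cong K_2^{MW}(R)$, I would use the equality $\Sp_2(R) = \SL_2(R)$ together with the Milnor--Witt presentation of $H_2(\SL_2(R))$ for local rings with infinite residue field. Concretely, this is the $n=2$ case of \cite[Theorem 5.37]{myEuler}: the Milnor--Witt K-group $K_2^{MW}(R)$ is identified there with the relative homology group $H_2(\SL_2(R),\Sp_0(R))$, and since $\Sp_0(R) = 1$ this relative group is just $H_2(\SL_2(R)) = H_2(\Sp_2(R))$.

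For the left-hand Hurewicz isomorphism $K_2\Sp(R) \xrightarrow{\cong} H_2(\Sp(R))$, I would apply the classical Hurewicz theorem to Quillen's plus construction. By Theorem \ref{them:H1stability}, $H_1(\Sp_{2n}(R)) = 0$ for every $n \geq 0$, hence $\Sp(R)$ is a perfect group. Consequently $B\Sp(R)^+$ is simply connected and has the same integral homology as $B\Sp(R)$, so the Hurewicz map yields $K_2\Sp(R) = \pi_2 B\Sp(R)^+ \stackrel{\cong}{\longrightarrow} H_2(B\Sp(R)^+) = H_2(\Sp(R))$.

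No step presents a genuine obstacle: the middle isomorphism is a direct consequence of Theorem \ref{them:H2stability}, the right-hand isomorphism is a cited instance of Schlichting's Milnor--Witt presentation in \cite{myEuler}, and the Hurewicz step relies only on $\Sp(R)$ being perfect, which is supplied by Theorem \ref{them:H1stability}. If anything requires care, it is checking that the composite of the three identifications is indeed the map induced by the Hurewicz homomorphism on $\pi_2$ composed with the stabilisation isomorphism, but this is automatic from the naturality of the plus construction, of the Hurewicz map, and of the identifications involved.
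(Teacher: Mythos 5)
Your proposal is correct and follows essentially the same route as the paper: stability (Theorem \ref{them:H2stability}) for the middle isomorphism, the Hurewicz theorem applied to $B\Sp(R)^+$ after noting $H_1\Sp(R)=0$ for the left-hand map, and $\Sp_2(R)=\SL_2(R)$ together with the Milnor--Witt identification from \cite{myEuler} for the right-hand isomorphism. The only cosmetic difference is that the paper cites the direct absolute statement \cite[Theorem 5.27]{myEuler} for $H_2(\SL_2(R))\cong K_2^{MW}(R)$, whereas you deduce it from the relative version \cite[Theorem 5.37]{myEuler} (with $\SL_1=\Sp_0=1$); both lead to the same conclusion.
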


\begin{proof}
The first map is the Hurewicz isomorphism since $\pi_1B\Sp(R)^+ = H_1\Sp(R) = 0$.
The second isomorphism follows from homology stability (Theorem \ref{them:H2stability}).
The last isomorphism is \cite[Theorem 5.27]{myEuler} since $\Sp_2(R)=\SL_2(R)$.
\end{proof}

We denote by $GW^{[3]}_3(R)$ the $2$-nd homotopy group of the homotopy fibre of the forgetful map
$K\Sp(R) \to K(R)$ induced by the inclusion of groups $\Sp(R) \subset \SL(R)$.
Thus, $GW^{[3]}_3(R)$ sits in an exact sequence
$$K_3\Sp(R) \to K_3(R) \to GW^{[3]}_3(R) \to K_2\Sp(R) \to K_2(R) \to \cdots$$
The definition given here agrees with that in \cite{myJPAA} when $\frac{1}{2}\in R$ in view of \cite[Theorem 6.1]{myJPAA}.

\begin{theorem}
\label{thm:HurewiczGW33}
	Let $R$ be a local ring with infinite residue field. 
	Then the Hurewicz map induces an isomorphism
	$$GW_3^{[3]}(R) \stackrel{\cong}{\longrightarrow} H_3(SL(R), \Sp(R)) \stackrel{\cong}{\longrightarrow} K_3^{MW}(R).$$
\end{theorem}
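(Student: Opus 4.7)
The second isomorphism $H_3(\SL(R), \Sp(R)) \cong K_3^{MW}(R)$ is already part of Theorem \ref{lem:apl}, so the substance is to show that the Hurewicz map
$$GW_3^{[3]}(R) \longrightarrow H_3(\SL(R), \Sp(R))$$
is an isomorphism. My plan is to work with the pair $(B\SL(R)^+, B\Sp(R)^+)$---where the plus construction is taken with respect to the perfect subgroups $\Sp(R) \subset \SL(R)$ (perfection following from Theorem \ref{them:H1stability})---and to apply the relative Hurewicz theorem twice, bootstrapping on the low-degree vanishing $H_i(\SL, \Sp) = 0$ for $i \leq 2$ already contained in Theorem \ref{lem:apl}.

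Both $B\Sp(R)^+$ and $B\SL(R)^+$ are simply connected, and the plus construction is a homology isomorphism, so $H_*(B\SL^+, B\Sp^+) \cong H_*(\SL, \Sp)$. From the long exact sequence of the pair, $\pi_1(B\SL^+, B\Sp^+) = 0$ automatically, so the pair is $1$-connected. The relative Hurewicz theorem (whose hypothesis on simple connectivity of the subspace is satisfied by $B\Sp^+$) then gives in degree $2$
$$\pi_2(B\SL^+, B\Sp^+) \cong H_2(B\SL^+, B\Sp^+) \cong H_2(\SL, \Sp) = 0.$$
Hence the pair is in fact $2$-connected, and a second application of relative Hurewicz in degree $3$ yields
$$\pi_3(B\SL^+, B\Sp^+) \stackrel{\cong}{\longrightarrow} H_3(B\SL^+, B\Sp^+) \cong K_3^{MW}(R).$$

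To finish, I identify $\pi_3(B\SL^+, B\Sp^+) \cong GW_3^{[3]}(R)$. The homotopy fibre $F$ of $B\Sp(R)^+ \to B\SL(R)^+$ satisfies $\pi_n F \cong \pi_{n+1}(B\SL^+, B\Sp^+)$ via the standard comparison between the fibre long exact sequence and the LES of the pair, so $\pi_2 F \cong \pi_3(B\SL^+, B\Sp^+)$. Since $B\SL(R)^+ \hookrightarrow B\GL(R)^+$ is an isomorphism on $\pi_n$ for $n \geq 2$ (the extra $K_1(R)$ being concentrated in $\pi_1$), the homotopy fibre of the forgetful map $K\Sp(R) \to K(R)$ has the same $\pi_2$ as $F$, which is $GW_3^{[3]}(R)$ by definition.

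The main technical input (Theorem \ref{lem:apl}) is already in hand, so the remaining argument is essentially formal. The only step requiring care is the comparison between the plus-constructed homotopy fibre $F$ and the $K$-theory spectrum fibre used to define $GW_3^{[3]}(R)$, but the discrepancy lives only in $\pi_0$ and $\pi_1$ of the fibre and does not affect $\pi_2$.
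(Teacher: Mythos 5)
Your proof is correct and follows essentially the same route as the paper: the paper's own proof is a one-liner citing the relative Hurewicz theorem and Theorem \ref{lem:apl}, with the identification $GW_3^{[3]}(R) \cong \pi_3(B\SL^+, B\Sp^+)$ stated as immediate "by definition." You spell out two points the paper elides: the inductive step that $H_2(\SL,\Sp)=0$ makes the pair $2$-connected (so the hypothesis of the relative Hurewicz theorem in degree $3$ is actually met), and the five-lemma comparison of the homotopy fibres of $B\Sp^+ \to B\SL^+$ and $K\Sp(R)\to K(R)$, needed because the paper's definition of $GW^{[3]}_3$ uses $K(R)$ rather than $B\SL^+$. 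Both elaborations are accurate and genuinely useful for a reader checking the argument.
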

\begin{proof}
	By definition, we have $GW_3^{[3]}(R)\cong \pi_3(BSL^+(R),B\Sp^+(R))$. 
	It follows that
     $\pi_3(BSL^+,B\Sp^+)\cong H_3(SL, \Sp) \cong K^{MW}_3(R)$, by the relative Hurewicz theorem \cite[III Corollary 3.12]{GJ} and Theorem \ref{lem:apl},      	\end{proof}

\section{The $KO$-degree map}
\label{sec:KOdeg}

We write $\Z'$ for the ring $\Z[1/2]$.
In this section, all rings are commutative with trivial involution and have $2$ as a unit. 
Let $R$ be such a ring.
In \cite[Definition 9.1]{myJPAA} we have defined abelian groups $GW^{[n]}_i(R)$ as the homotopy groups $GW^{[n]}_i(R) =\pi_iGW^{[n]}(R)$, $i,n\in \Z$, $i\geq 0$, of a pointed topological space $GW^{[n]}(R)$ associated with the category of bounded chain complexes of finitely generated projective $R$-modules equipped with the duality $P \mapsto \Hom(P,R[n])$. 
For $n=0$, the space $GW^{[0]}(R)$ is the $K$-theory space of non-degenerate symmetric bilinear forms over $R$, and for $n=2$, the space $GW^{[2]}(R)$ is the $K$-theory space of non-degenerate symplectic forms over $R$.
In particular, the groups $GW^{[0]}_0(R)$ and $GW^{[2]}_0(R)$ are the Grothendieck groups of non-degenerate symmetric and symplectic forms over $R$.
Using a different definition, the groups $GW^{[r]}_m(R)$ have been introduced by Karoubi \cite{Karoubi:batelle} under the name Hermitian $K$-theory where $GW^{[1]}(R)$ and $GW^{[-1]}(R)$ are equivalent to Karoubi's spaces $_{-1}U(R)$ and $_1U(R)$.

There are natural, associative and unital cup product maps \cite[\S 9.2]{myJPAA}
$$\cup: GW^{[r]}_m(R) \otimes GW^{[s]}_n(R) \to GW^{[r+s]}_{m+n}(R)$$
making $\bigoplus_{s,n}GW^{[s]}_n(S)$ into a bi-graded $\bigoplus_{r,m}GW^{[r]}_m(R)$-algebra for any $R$-algebra $S$.
Moreover, there is a natural Bott long exact sequence  \cite[Theorem 6.1]{myJPAA}
$$
\cdots \to GW^{[r]}_m(R)\stackrel{f}{\longrightarrow} K_m(R) \stackrel{h}{\longrightarrow} GW^{[r+1]}_m(R) \stackrel{\eta}{\longrightarrow} GW^{r}_{m-1}(R) \stackrel{f}{\longrightarrow} K_{m-1}(R) \to \cdots$$
where $f$ and $h$ are forgetful and hyperbolic maps and $\eta$ is cup product with $\eta \in GW^{[-1]}_{-1}(\Z') \cong W(\Z')$ corresponding to the identity element in the Witt ring $W(\Z')$.
\vspace{2ex}

The Bott sequence \cite[Theorem 6.1]{myJPAA} in low degrees gives an exact sequence
\begin{equation}
\label{eqn:BottLowDg}
GW^{[0]}_1(R)\stackrel{f}{\longrightarrow} K_1(R) \stackrel{h}{\longrightarrow} GW^{[1]}_1(R) \stackrel{\eta}{\longrightarrow} GW^{[0]}_0(R) \stackrel{f}{\longrightarrow} K_0(R).
\end{equation}
Assume that $SK_1(R)=0$, that is, the determinant map $\det:K_1(R) \to R^*$ is an isomorphism.
Then the composition $K_1(R) \stackrel{h}{\to} GW^{[1]}_1(R) \stackrel{f}{\to} K_1(R)$ is multiplication by $2$, and we have a map of exact sequences
\begin{equation}
\label{eqn:MapOfexSeq1}
\xymatrix{
K_1(R) \ar[r]^h \ar[d]_1 & GW^{[1]}_1(R) \ar@{->>}[r]^{\eta} \ar[d]^f & I(R) \ar[d] \\
K_1(R) \ar[r]_2 & K_1(R) \ar@{->>}[r] & k_1(R)
}
\end{equation}
where $k_1(R) = K_1(R)/2$, $I(R)=\ker(f: GW^{[0]}_0(R) \to K_0(R))$ and the right vertical map is the map on cokernels of the two left horizontal maps.

\begin{lemma}
\label{lem:GW11toFibProd}
Assume that $SK_1(R) =0$.
Then the following map is an isomorphism:
\begin{equation}
\label{eqn:GW11toFibProd}
(f,\eta): GW^{[1]}_1(R) \stackrel{\cong}{\longrightarrow} K_1(R)\times_{k_1(R)}I(R).
\end{equation}
\end{lemma}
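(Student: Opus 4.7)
\emph{Proof plan.} The approach is a direct diagram chase on (\ref{eqn:MapOfexSeq1}), using the Bott exact sequence (\ref{eqn:BottLowDg}). The commutativity of (\ref{eqn:MapOfexSeq1}) already shows that $(f,\eta)$ takes values in the fibre product, so what remains is to verify injectivity and surjectivity separately.

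For surjectivity, I would take $(a,b) \in K_1(R)\times_{k_1(R)}I(R)$ and use that $\eta\colon GW^{[1]}_1(R)\twoheadrightarrow I(R)$ is surjective (which follows from exactness of (\ref{eqn:BottLowDg}), since $\operatorname{image}(\eta) = \ker(f\colon GW^{[0]}_0(R)\to K_0(R)) = I(R)$) to pick a lift $x_0 \in GW^{[1]}_1(R)$ with $\eta(x_0)=b$. Commutativity of (\ref{eqn:MapOfexSeq1}) forces $f(x_0)$ and $a$ to have the same class in $k_1(R)$, so $a - f(x_0) = 2c$ for some $c \in K_1(R)$. Setting $x = x_0 + h(c)$ and using $\eta h = 0$ together with $fh = 2$ yields $\eta(x)=b$ and $f(x)=a$.

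For injectivity, suppose $x \in GW^{[1]}_1(R)$ satisfies $f(x)=0$ and $\eta(x)=0$. By exactness of (\ref{eqn:BottLowDg}) at $GW^{[1]}_1(R)$, one writes $x = h(y)$ for some $y \in K_1(R)$; then $0 = f(x) = fh(y) = 2y$, so $y$ is $2$-torsion. The main obstacle is to conclude $h(y)=0$, equivalently, to show that every $2$-torsion element of $K_1(R)$ lies in the image of $f\colon GW^{[0]}_1(R)\to K_1(R)$. I would resolve this by using $SK_1(R)=0$ to identify $K_1(R)\cong R^*$ via the determinant, which presents $y$ as a unit $u\in R^*$ with $u^2=1$. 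The automorphism $z \mapsto uz$ of the rank-one symmetric bilinear space $(R, \langle 1\rangle)$ is then an isometry, since $(uz_1)(uz_2)=u^2 z_1 z_2 = z_1z_2$, and has underlying determinant $u$; it therefore represents an element of $GW^{[0]}_1(R)$ whose image under $f$ is $y$. Exactness of (\ref{eqn:BottLowDg}) at $K_1(R)$ then gives $h(y) = 0$, hence $x = 0$.
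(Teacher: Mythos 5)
Your proposal is correct and takes essentially the same route as the paper: surjectivity by a diagram chase on the commutative ladder (\ref{eqn:MapOfexSeq1}), and injectivity by writing a kernel element as $h(y)$, using $SK_1(R)=0$ to identify $y$ with a unit $u$ satisfying $u^2=1$, and then exhibiting $u$ as the image under $f$ of the isometry of $\langle 1\rangle$ given by multiplication by $u$. The only cosmetic difference is that you spell out the surjectivity chase explicitly, whereas the paper simply cites the diagram.
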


\begin{proof}
The map is surjective in view of the commutative diagram with exact rows (\ref{eqn:MapOfexSeq1}).
For injectivity, let $\xi\in GW^{[1]}_1(R)$ such that $\eta(\xi) =0$ and $f(\xi)=0$.
Then there is $a\in R^*=K_1(R)$ such that $h(a)=\xi$, by the exact sequence (\ref{eqn:BottLowDg}).
Then $a^2=fh(a)=f(\xi)=1 \in R^*$, and $a\in K_1(R)$ is in the image of the forgetful map $GW^{[0]}_1 (R) \to K_1(R)$, namely, it is the image of the automorphism the unit bilinear space $\langle 1 \rangle$ given by multiplication with $a$.
By exactness of (\ref{eqn:BottLowDg}), $\xi=h(a)=0$.
This proves the injectivity claim.
\end{proof}

Let $R$ be a commutative $\Z'$-algebra, and $R^*$ its group of units.
The integral group ring $\Z[R^*]$ has $\Z$-basis the elements $\langle a\rangle$, $a\in R^*$, and multiplication $\langle a\rangle \cdot \langle b\rangle = \langle ab\rangle$.
If we also denote by $\langle a \rangle$ the rank $1$ bilinear space $R$ equipped with the form $(x,y) \mapsto axy$, then the abelian group homomorphism
$$\Z[R^*] \to GW_0^0(R): \langle a \rangle \mapsto \langle a \rangle$$
is a ring homomorphism.
This makes all groups $GW^{[r]}_n(R)$ into $R^*$-modules.
 
The augmentation ideal $I[R^*]$ is the kernel of the ring homomorphism $\Z[R^*] \to \Z: \langle a\rangle \mapsto 1$.
We write $\langle\langle a \rangle\rangle$ for the element $1-\langle a \rangle$ considered in $\Z[R^*]$, and we write $[a]$ for the same element $1-\langle a \rangle$ considered in $I[R^*]$.
Thus, the inclusion $I[R^*] \to \Z[R^*]$ sends $[a]$ to 
$\langle\langle a\rangle\rangle$\footnote{In \cite{myEuler}, we used the convention $[a]=\langle a\rangle -1$.}.
The augmentation ideal has $\Z$-basis $[a]$, $a\in R^*$, $a\neq 1$.

The Bass Fundamental Theorem for Grothendieck-Witt groups \cite[Theorem 9.13]{myJPAA} together with homotopy invariance \cite[Theorem 9.8]{myJPAA} for regular rings gives a split short exact sequence
$$0 \to GW^{[1]}_1(\Z') \longrightarrow GW^{[1]}_1(\Z'[T,T^{-1}]) \stackrel{\delta}{\longrightarrow} GW^{[0]}_0(\Z') \longrightarrow 0.$$
The left map is split by the map $\eps$ evaluating at $T=1$.
As in \cite[Paragraph before Lemma 6.9]{KSW}, we denote by $[T] \in GW^{[1]}_1(\Z'[T,T^{-1}])$ the unique element such that $\delta([T]) = 1 \in GW^{[0]}_0(\Z')$ and $\eps([T])=0 \in GW^{[1]}_1(\Z')$.
For $a\in R^*$, denote by $[a]\in GW^{[1]}_1(R)$ the image of $[T]$ under the map 
$GW^{[1]}_1(\Z'[T,T^{-1}]) \to GW^{[1]}_1(R)$ induced by $\Z'[T,T^{-1}]\to R: T \mapsto a$.
Note that $[1]$ is $0$ in $GW^{[1]}_1(R)$ since $\eps([T])=1$.
We define the abelian group homomorphism
$$\ffi: I[R^*] \longrightarrow GW^{[1]}_1(R): [a] \mapsto [a].$$

\begin{lemma}
For any commutative $\Z'$-algebra, the following diagram commutes and is $R^*$-equivariant
$$
\xymatrix{
I[R^*] \ar[rr]^{[a] \mapsto [a]} \ar@{^(->}[d] && GW^{[1]}_1(R) \ar[d]^{\eta}\\
\Z[R^*] \ar[rr]_{\langle a \rangle \mapsto \langle a \rangle} && GW^{[0]}_0(R)
}$$
\end{lemma}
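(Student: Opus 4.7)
The plan is to prove $\eta([a]) = 1 - \langle a\rangle$ in $GW^{[0]}_0(R)$ for every $a \in R^*$, which is precisely the commutativity of the square applied to the basis element $[a] \in I[R^*]$. First I would reduce to the universal case. By construction, the element $[a]\in GW^{[1]}_1(R)$ is the image of $[T]\in GW^{[1]}_1(\Z'[T,T^{-1}])$ under the map induced by $T\mapsto a$, and both sides of the proposed equation are natural in the ring $R$. Hence it suffices to prove the single universal identity $\eta([T]) = 1 - \langle T\rangle$ inside $GW^{[0]}_0(\Z'[T,T^{-1}])$.

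Next I would exploit the split Bass Fundamental Theorem
$$0 \to GW^{[0]}_0(\Z') \to GW^{[0]}_0(\Z'[T,T^{-1}]) \xrightarrow{\delta} GW^{[-1]}_{-1}(\Z') \to 0,$$
split by evaluation $\eps$ at $T=1$: an element of the middle group is determined by its values under $(\eps,\delta)$. Naturality of $\eta$ with respect to $\eps$ and to $\delta$ (both of which are induced by exact functors at the chain level) gives $\eps(\eta[T]) = \eta(\eps[T]) = 0$ and $\delta(\eta[T]) = \eta(\delta[T]) = \eta\cdot 1 = \eta \in W(\Z')\cong GW^{[-1]}_{-1}(\Z')$, while on the other side $\eps(1-\langle T\rangle)=1-\langle 1\rangle=0$ trivially. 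Everything is thus reduced to the Bass-residue identity $\delta(\langle T\rangle) = -\eta$ in $W(\Z')$.

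I expect this last identity to be the main obstacle, although the difficulty is largely bookkeeping rather than substance. The cleanest route to it combines multiplicativity of the Bass sequence with the universal relation $\langle T\rangle - 1 = \eta\cdot[T]$ that comes from the definition of rank-one forms in terms of Milnor-Witt symbols, with signs matched to the convention $[a] = 1-\langle a\rangle$ chosen here (which is the opposite of the convention in \cite{myEuler}); applying $\delta$ to both sides yields $\delta(\langle T\rangle) = \eta\cdot\delta([T]) = \eta$. Alternatively, one can read off $\delta(\langle T\rangle)$ directly from the semi-orthogonal decomposition used in \cite{myJPAA} to construct the Bass sequence, by writing $\langle T\rangle$ as a form on the torsion module $\Z'[T]/(T)\cdot T$ and identifying its image with the unit form in $W(\Z')$ up to the shift built into the duality.

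The $R^*$-equivariance requires no separate argument. Every map in the square is $\Z[R^*]$-linear: on the left column the action is by multiplication inside $\Z[R^*]$, while on the right column it is cup product with $\langle u\rangle\in GW^{[0]}_0(R)$. Since $\eta$ is itself cup product with the central class $\eta\in GW^{[-1]}_{-1}(\Z')$, it commutes with every $\langle u\rangle \cup -$ by associativity and (graded-)commutativity of the cup product.
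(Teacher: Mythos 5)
Your plan for the commutativity part genuinely differs from the paper: the paper simply cites \cite[Lemma 6.9]{KSW} for the universal identity over $\Z'[T,T^{-1}]$ and then spreads out by naturality, whereas you try to reprove that identity from the split Bass fundamental sequence. Your reduction to the universal ring and the computation $\delta(\eta[T]) = \eta\cdot\delta([T]) = \eta$ are fine. But the remaining half, $\delta(\langle T\rangle) = -\eta$, is not actually proven. Your first suggested route -- ``the universal relation $\langle T\rangle - 1 = \eta\cdot[T]$ that comes from the definition of rank-one forms in terms of Milnor-Witt symbols'' -- is circular: since $\eta: GW^{[1]}_1 \to GW^{[0]}_0$ \emph{is} cup product with $\eta$, that ``universal relation'' is precisely the identity $\eta([T]) = \langle T\rangle - 1$ you set out to prove, and invoking the Milnor-Witt presentation here presupposes that the symbol map into Grothendieck-Witt theory is a ring map, which is exactly what this lemma is needed to establish. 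Your second route (reading off $\delta(\langle T\rangle)$ from the torsion-module description of the residue map in \cite{myJPAA}) is a legitimate way to close the gap, but it is stated only as a plan, not carried out; this is precisely the content of \cite[Lemma 6.9]{KSW}, so if you take this route you should either cite it or actually perform the computation.

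The equivariance part has a real gap. You assert that the top horizontal map $I[R^*] \to GW^{[1]}_1(R)$, $[a]\mapsto[a]$, is $\Z[R^*]$-linear and that ``no separate argument'' is needed, but this is the one map in the square whose equivariance is not formal. In $I[R^*]$ one has $\langle b\rangle\cdot[a] = [ab]-[b]$, so linearity amounts to the nontrivial identity $[ab]-[b] = \langle b\rangle\cup[a]$ in $GW^{[1]}_1(R)$, i.e. a Milnor-Witt-type relation inside Grothendieck-Witt theory. The other three maps are linear for essentially trivial reasons (inclusion of an ideal, a ring homomorphism, and cup product with the central class $\eta$), but the top map is defined only additively on the generators $[a]$ and there is nothing in its construction that makes the $\langle b\rangle$-action come along for free. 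The paper handles this by reducing to rings with $SK_1 = 0$, passing to the fibre-product description of Lemma~\ref{lem:GW11toFibProd}, and checking the two components $\ffi_1,\ffi_2$ explicitly. You would need to supply an argument of comparable substance here.
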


\begin{proof}
For $R=\Z'[T,T^{-1}]$ we have $\eta([T]) = \langle T \rangle$, by \cite[Lemma 6.9]{KSW}.
For general $R$ and $a\in R^*$, let $\eps_a:\Z'[T,T^{-1}] \to R$ be the $\Z'$-algebra homomorphism sending $T$ to $a$.
Then $\eta([a]) = \eta([\eps_a(T)]) = \eta(\eps_a([T])) = \eps_a \eta([T]) = \eps_a (\langle T \rangle) = \langle a\rangle$, and the diagram commutes.

The map $\eta$ is a $GW^{[0]}_0(R)$-module map, in particular, it is $R^*$-equivariant.
The lower horizontal and the left vertical maps are $R^*$-equivariant by definition.
For the top horizontal map we only need to consider products $\langle b\rangle \cdot [a]$ with $a,b\in R^*$.
In other words, it suffices to show equivariance for $\Z'[S,T,S^{-1},T^{-1}]$, then use functoriality under the ring homomorphism $\Z'[S,T,S^{-1},T^{-1}] \to R: S \mapsto a,\ T \mapsto b$.
Recall that $SK_1(\Z'[S,T,S^{-1},T^{-1}])=0$.
In particular, we only need to check equivariance of the top map for rings with trivial $SK_1$.
In this case, we can use the $R^*$-equivariant isomorphism (\ref{eqn:GW11toFibProd}) and have to show that the composition
$$(\ffi_1,\ffi_2): I[R^*] \longrightarrow GW^{[1]}_1(R) \stackrel{(f,\eta)}{\longrightarrow} K_1(R)\times_{k_1(R)}I(R)$$
is equivariant.
Now $\ffi_1([a]) = a \in K_1(R)$ (as $f([T]) = T \in K_1(\Z'[T,T^{-1}])$) and $\ffi_2([a]) = 1-\langle a\rangle$ (since the diagram commutes).
In particular, $\ffi_2$ is $R^*$-eqiuvariant.
Since the $R^*$-action on $K_n(R)$ is via the forgetful map $GW^0_0(R) \to K_0(R)$, that action is trivial, and we have $\ffi_1(\langle b\rangle \cdot [a]) = \ffi_1([ba] - [b]) = bab^{-1}=a = \langle b\rangle \cdot \ffi_1([a])\in K_1(R)$.
\end{proof}

For a connected ring $R$, let $SK_0(R)$ be the kernel of the rank homomorphism $K_0(R) \to \Z$.

\begin{lemma}
\label{lem:GW11Square}
Let $R$ be a connected commutative ring with $SK_i(R)=0$ for $i=0,1$.
Then the right square of (\ref{eqn:MapOfexSeq1}) is cartesian:
\begin{equation}
\label{eqn:GW1Square}
\xymatrix{
 GW^{[1]}_1(R) \ar@{->>}[r]^{\eta} \ar@{->>}[d]_f & I(R) \ar@{->>}[d] \\
 K_1(R) \ar@{->>}[r] & k_1(R).
}
\end{equation}
The right vertical map $I(R) \to k_1(R)$ sends $[a]$ to $\left\{ a\right\}$.
If $R$ is local then the vertical maps are surjections and the map
$I[R^*] \to GW^{[1]}_1(R):[a] \mapsto [a]$ is also surjective.
\end{lemma}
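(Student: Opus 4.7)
The plan is to extract everything from the isomorphism of Lemma~\ref{lem:GW11toFibProd} together with a short cokernel argument. First I would note that the cartesian square is already contained in Lemma~\ref{lem:GW11toFibProd}: the isomorphism $(f,\eta):GW^{[1]}_1(R)\xrightarrow{\cong}K_1(R)\times_{k_1(R)}I(R)$ is precisely the statement that (\ref{eqn:GW1Square}) is a pullback. The two horizontal surjections come for free: the bottom one is the canonical projection $K_1(R)\twoheadrightarrow K_1(R)/2K_1(R)=k_1(R)$, while the top one is surjective because by exactness of the Bott sequence (\ref{eqn:BottLowDg}) the image of $\eta:GW^{[1]}_1(R)\to GW^{[0]}_0(R)$ equals $\ker(f:GW^{[0]}_0(R)\to K_0(R))=I(R)$.

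For the explicit formula for the right vertical map I would trace through its definition as the induced map on cokernels in (\ref{eqn:MapOfexSeq1}). A generator $[a]=1-\langle a\rangle\in I(R)$ lifts to $[a]\in GW^{[1]}_1(R)$ since the preceding lemma gives $\eta([a])=1-\langle a\rangle$; applying $f$ to this lift yields $a\in K_1(R)$, whose class in $k_1(R)$ is $\{a\}$.

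Now assume $R$ is local. Every symmetric bilinear form over $R$ diagonalises, so $\Z[R^*]\twoheadrightarrow GW^{[0]}_0(R)$ is surjective and hence so is $I[R^*]\twoheadrightarrow I(R)$. The forgetful map $f:GW^{[1]}_1(R)\to K_1(R)=R^*$ hits every unit via $f([a])=a$, so $f$ is surjective; the induced map on cokernels $I(R)\to k_1(R)$ is then automatically surjective as well. For the final assertion I would take $\xi\in GW^{[1]}_1(R)$, choose $\alpha\in I[R^*]$ with $\eta(\ffi(\alpha))=\eta(\xi)$, so that $\xi-\ffi(\alpha)\in\ker(\eta)=\im(h)$ equals $h(b)$ for some $b\in R^*$, and then argue $h(b)=\ffi([b^2])$; granting this, $\xi=\ffi(\alpha+[b^2])$ lies in the image of $\ffi$.

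The crux, and the step I expect to be the main obstacle, is that final identity $h(b)=\ffi([b^2])$. It holds because $\eta(\ffi([b^2]))=1-\langle b^2\rangle=0$ (the isometry $x\mapsto bx$ gives $\langle 1\rangle\cong\langle b^2\rangle$) and $f(\ffi([b^2]))=b^2=f(h(b))$ (since $f\circ h$ is doubling in $K_1$, i.e.\ squaring in $R^*$), so the cartesian property established in step one forces the two elements to agree. Apart from this slick equality, everything is a straightforward consequence of the preceding two lemmas.
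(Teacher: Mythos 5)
Your proof is correct and follows essentially the same strategy as the paper: everything is reduced to the fiber-product isomorphism of Lemma \ref{lem:GW11toFibProd}, and the remaining assertions are verified by computing with $(f,\eta)$. The only variations are in the surjectivity arguments. For surjectivity of $f$ you exhibit explicit preimages via $f(\ffi([a]))=a$, whereas the paper invokes exactness of the Bott sequence together with $GW^{[2]}_1(R)=K_1\Sp(R)\cong H_1(\Sp(R))=0$; both are valid, and yours is slightly more elementary. For surjectivity of $\ffi$, your element-wise argument hinging on the identity $h(b)=\ffi([b^2])$ (correctly deduced from injectivity of $(f,\eta)$ and the two computations $\eta([b^2])=1-\langle b^2\rangle=0$, $f([b^2])=b^2=fh(b)$) is the same content, in explicit form, as the paper's surjectivity of the left vertical arrow $I[R^{2*}]\twoheadrightarrow 2K_1(R)$ in its map of short exact sequences onto $0\to 2K_1(R)\to K_1(R)\times_{k_1(R)}I(R)\to I(R)\to 0$. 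So the two proofs coincide in substance; the paper packages the diagram chase once and for all, while you spell it out on elements.
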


\begin{proof}
The diagram is cartesian, by Lemma \ref{lem:GW11toFibProd}.
The composition 
$$I[R^*] \to GW^{[1]}_1(R) \stackrel{f}{\to} K_1(R) \to k_1(R)$$
 sends $[a]$ to $ a \in k_1(R)$ since $f([T]) = T \in K_1(\Z'[T,T^{-1}])$.
Therefore,  the composition $I[R^*] \to I(R)  \to k_1(R)$ is $[a] \mapsto [a] \mapsto a$.

If $R$ is local then the map $f$ is surjective because $GW^{[1]}_1(R) \stackrel{f}{\to} K_1(R) \stackrel{h}{\to} GW^{[2]}_1(R)$ is exact and $GW^{[2]}_1(R)=K_1\Sp(R) \cong H_1(\Sp(R))=0$ for every local ring. 
Then the right vertical map is also surjective.
Finally, the map $I[R^*] \to GW^{[1]}_1(R)$ is surjective since the composition with the isomorphism 
(\ref{eqn:GW11toFibProd}) is surjective.
This follows from the commutative diagram with exact 2nd row and outer vertical maps surjective:
$$\xymatrix{
0 \ar[r] & I[R^{2*}] \ar@{->>}[d] \ar[r] & I[R^*] \ar[d] \ar@{->>}[r] & I[R^*/R^{2*}] \ar@{->>}[d]  \ar[r] & 0\\
0 \ar[r] & 2K_1(R) \ar[r] & K_1(R)\times_{k_1(R)}I(R) \ar[r] & I(R) \ar[r] & 0}
$$
\end{proof}

\begin{lemma}[Steinberg Relation]
\label{lem:SteinbergRln}
For any commutative ring $R$ over $\Z'$ and any $a\in R^*$ with $1-a\in R^*$, we have 
$$[a]\cup [1-a]=0\in GW^{[2]}_2(R).$$
\end{lemma}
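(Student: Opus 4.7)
The plan is to reduce to a universal case by naturality, and then to exhibit a concrete null-cobordism there.

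By construction, the class $[a]\in GW^{[1]}_1(R)$ is natural in the pair $(R,a)$: it is the image of the universal element $[T]\in GW^{[1]}_1(\Z'[T,T^{-1}])$ under the $\Z'$-algebra homomorphism $T\mapsto a$. Combined with the naturality of the cup product, this shows that $[a]\cup[1-a]$ is the image of $[T]\cup[1-T]\in GW^{[2]}_2(R_0)$ under the $\Z'$-algebra map $R_0\to R$ sending $T$ to $a$, where $R_0:=\Z'[T,T^{-1},(1-T)^{-1}]$. It therefore suffices to prove $[T]\cup[1-T]=0$ in $GW^{[2]}_2(R_0)$.

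In this universal case, since $T+(1-T)=1$ is a unit, the pair $(T,1-T)$ generates the unit ideal of $R_0$, and the Koszul complex $K_\bullet(T,1-T)$ is a bounded acyclic, in fact contractible, complex of finitely generated free $R_0$-modules. This complex carries a canonical non-degenerate symmetric form shifted by $2$. The plan is to identify its class in $GW^{[2]}_2(R_0)$ with $[T]\cup[1-T]$ via the description of cup products as tensor products of hermitian complexes in \cite[\S 9.2]{myJPAA}. Once this identification is in place, the acyclicity of the Koszul complex supplies an evident Lagrangian, and the class vanishes.

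The main obstacle is the identification of the hermitian Koszul complex with the cup product $[T]\cup[1-T]$. This requires unpacking the description of $[T]$ via the boundary map in the Bass fundamental theorem together with the definition of the cup product on hermitian complexes, and a careful bookkeeping of signs under the shift and duality conventions. Once this identification is established, the conclusion reduces to the standard principle that acyclic bounded complexes of finitely generated projectives represent the zero class in hermitian $K$-theory.
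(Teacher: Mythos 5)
Your reduction to the universal case $R_0=\Z'[T,T^{-1},(1-T)^{-1}]$ is fine and mirrors a step that is implicit in the paper as well. The problem is the central step you defer: identifying $[T]\cup[1-T]\in GW^{[2]}_2(R_0)$ with a ``hermitian Koszul complex.'' As written this is not even well-typed. The Koszul complex $K_\bullet(T,1-T)$ equipped with its shifted symmetric form is an object of the category of symmetric complexes, so the invariant it naturally produces lives in $\pi_0GW^{[2]}(R_0)=GW^{[2]}_0(R_0)$. But $[T]\in GW^{[1]}_1=\pi_1$ and $[T]\cup[1-T]\in GW^{[2]}_2=\pi_2$: a $\pi_2$ class is not represented by a symmetric complex, and exhibiting a Lagrangian in a metabolic complex only kills a $\pi_0$ class (indeed the acyclic Koszul complex is already trivially zero in $GW^{[2]}_0$, which says nothing new). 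To represent $[T]$ at the $\pi_1$ level one has to unwind the Bass boundary $\delta:GW^{[1]}_1(\Z'[T,T^{-1}])\to GW^{[0]}_0(\Z')$, and then construct a genuine two-parameter homotopy datum for the cup product. That is precisely the ``main obstacle'' you name and do not resolve, and it is where the entire content of the lemma sits.

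The paper (following Asok--Fasel) takes a different and, crucially, complete route that sidesteps any explicit representatives in higher $\pi_*$. One passes to $S=R[T]/(T^2-a)$, where $a=T^2$ is a square, so that $[a]=[T^2]=\langle1,-1\rangle\cdot[T]=h(f([T]))$. Applying $h$ to the classical Steinberg relation $\{T\}\cup\{1-T\}=0$ in $K_2(S)$ then gives $[a]\cup[1-T]=0$ in $GW^{[2]}_2(S)$, and one pushes this down to $R$ by the transfer $i_*:S\to R$ (with $i(1)=1$, $i(T)=0$), using the projection formula and the computation $i_*[1-T]=[1-a]$. This argument buys you the result by importing the $K$-theoretic Steinberg relation through the hyperbolic map and a transfer, rather than attempting a direct null-cobordism in Hermitian $K$-theory. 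If you want to pursue the Koszul idea you would, at minimum, need to (i) give an explicit model for $[T]$ as a loop in $GW^{[1]}$, (ii) show that the cup product of two such loops is represented by your hermitian Koszul datum, and (iii) produce a $\pi_2$-level nullity, not a Lagrangian. None of (i)--(iii) is supplied, so as it stands the proposal is a sketch with the essential step missing.
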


\begin{proof}
The proof given in \cite[Proposition 4.1.10]{AsokFasel:KODegree} goes through.
In detail, consider the ring $S=R[T]/(T^2-a)$.
Then $T$ and $1-T$ are units in $S$, the latter because the matrix in  the $R$-basis $1,T$ of $S$ has determinant $1-a \in R^*$.
Consider the $R$-linear map $i: S \to R$ sending $1$ to $1$ and $T$ to $0$ (up to a factor of $2$, this is the trace used in \cite{AsokFasel:KODegree}).
It sends a symmetric bilinear form $(P,b)$ over $S$ to the symmetric bilinear form $i_*(P,b) =(P, i \circ b)$ over $R$ and preserves non-degeneracy since the unit form $\langle 1 \rangle$ is sent to the diagonal form $\langle 1\rangle + \langle a \rangle$.
Therefore, $i_*$ defines a map of Grothendieck-Witt groups $GW_n^{[r]} (S) \to GW_n^{[r]}(R)$ which is $GW^{[*]}_*(R)$-linear.
The Gram matrix of $i_*\langle 1-T\rangle$ with respect to the $R$-basis $1,T$ is $\left(\begin{smallmatrix} 1 & -a \\ -a &a \end{smallmatrix}\right)$ which has determinant $a - a^2$ and contains the non-degenerate subspace $\langle a \rangle $ with basis $T$.
Therefore, the orthogonal subspace is $\langle 1-a\rangle $ and we have $i_*\langle 1-T\rangle = \langle a \rangle + \langle 1-a\rangle\in GW_{[0]}^0(R)$.
It follows that $i_* [1-T] = [1-a] \in GW^{[1]}_1(R)$, first by considering $R=\Z'[t,t^{-1},(1-t)^{-1}]$ using Lemma \ref{lem:GW11Square}, then using functoriality and the map $\Z'[t,t^{-1},(1-t)^{-1}] \to R: t \mapsto a$.
By the Steinberg relation in $K$-theory, we have $0 = \{ T\} \cup \{1-T\} = f([T]\cup [1-T] ) \in K_2(S)$ using that the forgetful map $f:GW^{[*]}_*(S) \to K_*(S)$ is a map of graded rings.
Then 
$$0=hf([T]\cup [1-T] ) = \langle 1,-1\rangle \cdot [T]  \cup [1-T] = [T^2] \cup [1-T] = [a] \cup [1-T] \in GW^{[2]}_2(S)$$
since $ \langle 1,-1\rangle \cdot [T]  = [T^2] =[a] \in GW^{[1]}_1(S)$ (it suffices to check the equation $\langle 1, -1 \rangle \cdot [T] =[T^2]$ in $GW^{[1]}_1(\Z'[T,T^{-1}])$ using Lemma \ref{lem:GW11toFibProd}).
Applying the transfer map $i_*$ and noting that it is $GW^*_*(R)$-linear, we obtain
$$0 = i_*( [a] \cup [1-T] ) = [a] \cup i_*[1-T] = [a]\cup [1-a] \in GW^{[2]}_2(R).$$
\end{proof}

For a commutative $\Z'$-algebra denote by $\widehat{K}_*^{MW}(R)$ and $K_*^{MW}(R)$ the graded $\Z[R^*]$ and $GW_0^0(R)$-algebras 
{\small
$$\widehat{K}_*^{MW}(R) = \Tens_{\Z[R^*]} I[R^*] / [a][1-a],\hspace{3ex}K_*^{MW}(R) = \Tens_{GW^0_0(R)}GW^{1}_1(R) /[a][1-a]$$
}
where the relations run over all $a, 1-a\in R^*$.
By Lemma \ref{lem:SteinbergRln}, the maps $\Z[R^*] \to GW_0^0(R)$ and $I[R^*] \to GW^{[1]}_1(R)$ extend uniquely to homomorphisms of graded rings, called {\em symbol map} or {\em $KO$-degree map},
\begin{equation}
\label{eqn:KOmap}
\widehat{K}^{MW}_*(R) \longrightarrow K^{MW}_*(R)  \longrightarrow \bigoplus_{n\geq 0} GW^{[n]}_n(R).
\end{equation}

When $R$ is local with infinite residue field, the first map is an isomorphism in degrees $*\geq 2$ \cite[Theorem 4.18]{myEuler}.

In the following we will use the main result from \cite{GilleEtAl} which states that if $R$ is a local ring containing an infinite field of characteristic not two, then the following square is cartesian
\begin{equation}
\label{eqnLGilleEtAl}
\xymatrix{
K^{MW}_n(R) \ar@{->>}[r]^{\eta^n}  \ar@{->>}[d]^f &  I^n(R)  \ar[d] \\
K^M_n(R) \ar[r] & k^M_2(R)}
\end{equation}
where the right vertical map sends $[a_1] \cup \cdots \cup [a_n]$ to $\{a_1\} \cup \cdots \{a_n\}$ and induces an isomorphism $I^n(R)/I^{n+1}(R) \cong k_2^M(R)$.
In other words, the map
\begin{equation}
\label{eqn:LGilleEtAl_ISO}
\lambda: K^{MW}_n(R) \stackrel{\cong}{\longrightarrow} K^M_n(R) \times_{k_n}I^n(R): [a_1,...,a_n]\mapsto \left\{a_1,...,a_n\right\}, [a_1]\cdots [a_n]
\end{equation}
is an isomorphism.
In addition to the forgetful map 
$$f:K^{MW}_n(R) \to K^M_n(R):[a_1] \cup \cdots \cup [a_n] \mapsto \{a_1\} \cup \cdots \{a_n\}$$ we also have the hyperbolic map 
$$h:K^M_n(R) \to K^{MW}_n(R):\{a_1\} \cup \cdots \{a_n\} \mapsto \langle 1,-1\rangle [a_1] \cup \cdots \cup [a_n]$$ and the map 
$$\eta:K^{MW}_{n}(R) \to K^{MW}_{n-1}(R): [a_1] \cup[a_2] \cup \cdots \cup [a_n] \mapsto (1-\langle a_1\rangle) \cdot [a_2] \cup \cdots \cup [a_n].$$
Under the isomorphism (\ref{eqn:LGilleEtAl_ISO}), forgetful, hyperbolic map and $\eta$ are 
$K_n(R)\times_{k_n}I^n(R) \to K_n(R):(x,y) \mapsto x$, $K_n(R) \to K_n(R)\times_{k_n}I^n(R): x \mapsto (2x,0)$ and $K_n(R)\times_{k_n}I^n(R) \to K_{n-1}(R)\times_{k_{n-1}}I^{n-1}(R):(x,y) \mapsto (0,y)$.
In particular, for all integers $n\geq 1$, the following sequence is exact
$$K_n^M(R) \stackrel{h}{\longrightarrow}  K_n^{MW}(R) \stackrel{\eta}{\longrightarrow} K_{n-1}^{MW}(R) \stackrel{f}{\longrightarrow} K_{n-1}^M(R) \to 0$$
as it corresponds under the isomorphism (\ref{eqn:LGilleEtAl_ISO}) to the exact sequence
$$K_n^M \stackrel{(2,0)}{\longrightarrow}  K_n^{M}\times_{k_n}I^n \stackrel{(0\times 1)}{\longrightarrow} K_{n-1}^{M}\times_{k_{n-1}}I^{n-1} \stackrel{(1,0)}{\longrightarrow} K_{n-1}^M \to 0.$$
For the next theorem recall that $GW^{[2]}_2(R) = \pi_2B\Sp(R)^+$.

\begin{theorem}
\label{thm:GW22}
Let $R$ be a local commutative ring containing an infinite field of characteristic $\car(k) \neq 2$.
Then the $KO$-degree map is an isomorphism in degree $2$:
$$K_2^{MW}(R) \stackrel{\cong}{\longrightarrow} GW^{[2]}_2(R).$$
\end{theorem}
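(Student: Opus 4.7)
The plan is to combine the cartesian square $K_2^{MW}(R) \cong K_2^M(R) \times_{k_2^M(R)} I^2(R)$ from \cite{GilleEtAl} (equation~\eqref{eqn:LGilleEtAl_ISO}) with an analogous cartesian description of $GW^{[2]}_2(R)$ derived from the Bott long exact sequence of Grothendieck-Witt theory together with Theorem~\ref{thm:HurewiczGW33}. The KO-degree map $\beta$ will then fit into a morphism of cartesian squares whose corners are isomorphisms, yielding the result by the Five Lemma.

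\textbf{Step 1.} First I would establish, in parallel with Lemma~\ref{lem:GW11toFibProd} for $GW^{[1]}_1$, an isomorphism
\[
(f,\eta^2)\colon GW^{[2]}_2(R) \;\xrightarrow{\;\cong\;}\; K_2(R) \times_{k_2(R)} I^2(R).
\]
Using $GW^{[2]}_1(R) = H_1\Sp(R) = 0$ (Theorem~\ref{them:H1stability}), the Bott sequence yields
\[
K_3(R) \xrightarrow{\,h\,} GW^{[3]}_3(R) \xrightarrow{\,\eta\,} GW^{[2]}_2(R) \xrightarrow{\,f\,} K_2(R) \xrightarrow{\,h\,} GW^{[3]}_2(R) \to 0.
\]
The identification $GW^{[3]}_3(R) \cong K_3^{MW}(R)$ from Theorem~\ref{thm:HurewiczGW33}, combined with the analogous exact sequence in Milnor-Witt $K$-theory (which splits off the Milnor part by the image of the hyperbolic map), lets me identify $\ker(f)$ with the image of $\eta\colon K_3^{MW}(R) \to GW^{[2]}_2(R)$, which matches the $I^2(R)$-summand in the fiber product on the right.

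\textbf{Step 2.} Next I would check that the KO-degree map fits into a commutative diagram between the two cartesian squares, with corners given by the identity on $I^2(R)$, the natural map $K_2^M(R) \to K_2(R)$ from Milnor to Quillen $K$-theory, and the induced map $k_2^M(R) \to k_2(R)$. Commutativity follows from multiplicativity of $\beta$ together with the computations $f\beta([a][b]) = \{a,b\}$ and $\eta^2\beta([a][b]) = [a]\cdot[b] \in I^2(R)$, both flowing from the definition of the cup product and Lemma~\ref{lem:GW11Square}. Since $K_2^M(R) \to K_2(R)$ is known to be an isomorphism for local rings with infinite residue field of characteristic not $2$, the Five Lemma applied to the two cartesian squares (viewed as four-term exact sequences) gives that $\beta$ is an iso.

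\textbf{Main obstacle.} The hardest point will be Step~1, specifically showing that the Hurewicz isomorphism of Theorem~\ref{thm:HurewiczGW33} is compatible with the $\eta$-multiplication, i.e., that under $GW^{[3]}_3(R) \cong K_3^{MW}(R)$ the map $\eta\colon GW^{[3]}_3(R) \to GW^{[2]}_2(R)$ corresponds to the composition $K_3^{MW}(R) \xrightarrow{\,\eta\,} K_2^{MW}(R) \xrightarrow{\,\beta\,} GW^{[2]}_2(R)$. This naturality is not automatic, since the Hurewicz iso is constructed from group homology of $(\SL,\Sp)$ whereas $\eta$ is cup-product with a class coming from the Witt ring of $\mathbb{Z}'$. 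I expect this can be verified by tracing through the Matsumoto-type presentation of $H_2(\Sp_2(R))$ used in \cite[Theorem 5.27]{myEuler} and matching symbols, or alternatively by reducing via the residue map $R \to R/\mathfrak{m}$ to the known field case of \cite{AsokFasel:KODegree}.
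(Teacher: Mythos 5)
Your approach takes a genuinely different and more complicated route than the paper's, and the key step is a real gap. The paper proves injectivity of $K_2^{MW}(R) \to GW^{[2]}_2(R)$ by composing with $(f,\eta)\colon GW^{[2]}_2(R) \to K_2(R) \times GW^{[1]}_1(R)$ (a plain product, not a fibre product) and identifying the resulting map $K_2^M(R)\times_{k_2}I^2(R) \to K_2(R) \times K_1(R)\times_{k_1}I(R)$ with the evidently injective $(x,y) \mapsto (x,0,y)$; this needs no description of $GW^{[2]}_2(R)$ at all. Surjectivity then follows from the Four Lemma applied to the map from the Milnor--Witt Bott-type sequence $K_2^M \to K_2^{MW} \xrightarrow{\eta} K_1^{MW} \xrightarrow{f} K_1^M$ to the Grothendieck--Witt Bott sequence $K_2 \to GW^{[2]}_2 \to GW^{[1]}_1 \to K_1$, using that the outer three vertical maps are already known to be isomorphisms.

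Your Step 1, in contrast, asks for a cartesian-square description $GW^{[2]}_2(R) \cong K_2(R)\times_{k_2}I^2(R)$; note this is essentially equivalent to the theorem itself (once the theorem holds, $GW^{[2]}_2 \cong K_2^{MW} \cong K_2^M\times_{k_2}I^2 \cong K_2\times_{k_2}I^2$). To establish it independently you would, as you yourself flag, need to know that the isomorphism $GW^{[3]}_3(R) \cong K_3^{MW}(R)$ of Theorem~\ref{thm:HurewiczGW33} is compatible with $\eta$-multiplication. That compatibility is \emph{not} addressed anywhere before Theorem~\ref{thm:GW22} in the paper; it is precisely the content of the big commutative diagram~(\ref{eqn:refinedHurewicz}) and Lemma~\ref{KMtoKMWisH}, which the paper develops only afterwards for the degree-$3$ statement, Theorem~\ref{thm:GW33}, where no shortcut is available. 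In degree~$2$ you are doing the degree-$3$ work unnecessarily. Moreover, your suggested ways to close the gap are dubious: reduction to the residue field via $R \to R/\mathfrak{m}$ does not give a useful comparison of Grothendieck--Witt groups of $R$ with those of $R/\mathfrak{m}$, and ``matching symbols'' in a Matsumoto-type presentation is not clearly easier than the diagram chase the paper actually performs. Also a small precision: the kernel of the projection $K_2\times_{k_2}I^2 \to K_2$ is $I^3(R)$, not an ``$I^2$-summand,'' so what you would need to match against the image of $\eta$ is $I^3$, via (\ref{eqn:LGilleEtAl_ISO}).
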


\begin{proof}
The map in the theorem is injective because the following composition is injective
$$K_2^{MW}(R) \to GW^{[2]}_2(R) \stackrel{(f,\eta)}{\longrightarrow} K_2(R) \times GW_1^{[1]}(R)$$
as that composition can be identified with the injective map 
$$\left(\begin{smallmatrix}1 & 0\\ 0 & 0 \\ 0 & i\end{smallmatrix}\right): K_2^M(R)\times_{k_2}I^2(R) \to K_2(R) \times K_1(R)\times_{k_1}I(R)$$
where $i:I^2(R) \subset I(R)$ is the inclusion.
The map in the theorem is surjective since it is part of the following map of exact sequences
$$
\xymatrix{
K_2^M(R) \ar[d]^{\cong} \ar[r]^h & K_2^{MW}(R) \ar[d] \ar[r]^{\eta} & K_1^{MW}(R) \ar[d]^{\cong} \ar[r]^f & K_1^M(R) \ar[d]^{\cong}\\
K_2(R) \ar[r]^h & GW_2^{[2]}(R)  \ar[r]^{\eta} & GW_1^{[1]}(R) \ar[r]^f & K_1(R).
}$$
\end{proof}

Consider the diagram (suppressing the local ring $R$ in the notation)
\begin{equation}
\label{eqn:refinedHurewicz}
\xymatrix{
K^M_3 \ar[r] \ar[d]_h & K_3 \ar[r] \ar[d]_h \ar@{}[dr]|-{(\ast)}  & H_3(SL) \ar[d] & H_3(SL_3) \ar[d] \ar[l]_{\cong} \ar[r] & K^{MW}_3 \ar[d]^1\\
K^{MW}_3 \ar[r] \ar[d]_{\eta} & GW_3^{[3]}  \ar@{}[dr]|-{(\ast)} \ar[d]_{\eta} \ar[r]^{\cong} & H_3(\SL,\Sp) \ar[d] & H_3(SL_3,SL_2) \ar@{=}[r] \ar[d] \ar[l]_{\cong} \ar@{}[dr]|-{(\ast\ast)}& K_3^{MW} \ar[d]^{\eta}\\
K^{MW}_2 \ar[r]_{\cong} \ar@{->>}[d]_f & GW^{[2]}_2 \ar[d]_f \ar@{}[dr]|-{(\ast)} \ar[r]_{\cong} & H_2(\Sp) \ar[d] & H_2(SL_2)  \ar@{=}[r] \ar@{->>}[d]\ar[l]^{\cong} \ar@{=} & K_2^{MW} \ar@{->>}[d]\\
K^M_2 \ar[r]_{\cong} & K_2 \ar[r]_{\cong} & H_2(SL) & H_2(SL_3) \ar[l]^{\cong} \ar[r]_{\cong} & K^M_2}
\end{equation}
where the horizontal arrows in the first column are the degree maps, in the second column they are the Hurewicz maps (with second and third row isomorphisms, by Theorems \ref{thm:HurewiczGW22} and  \ref{thm:HurewiczGW33}).
In the third column, the arrows in the wrong direction are the group homology stability isomorphisms of Theorems \ref{lem:apl} and \ref{them:H2stability} for the middle two and \cite[Theorem 5.37]{myEuler} for the top and bottom map.

\begin{lemma}
Diagram (\ref{eqn:refinedHurewicz}) commutes.
\end{lemma}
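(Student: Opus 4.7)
The plan is to check commutativity of each of the twelve small squares in diagram \ref{eqn:refinedHurewicz} separately; they split into four families according to which pair of adjacent columns they lie in. For the squares between columns one and two, the $KO$-degree map is by construction a graded ring homomorphism compatible with $h$ and $f$, which on both sides are induced by the spectrum-level forgetful/hyperbolic maps between $K$-theory and Grothendieck--Witt theory. Compatibility with $\eta$ is equally built-in, since on both sides $\eta$ is multiplication by the distinguished element of degree $-1$ and the $KO$-degree map was defined so as to match these elements.

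Between columns two and three sit the three Hurewicz naturality squares marked $(\ast)$. The Hurewicz transformation is natural, so it intertwines any map induced by a map of pointed spaces. The hyperbolic map $h\colon K_3(R)\to GW_3^{[3]}(R)$, the forgetful map $f\colon GW_2^{[2]}(R)\to K_2(R)$ and the Bott map $\eta\colon GW_3^{[3]}(R)\to GW_2^{[2]}(R)$ are all realised by maps of infinite loop spaces built from $BSL(R)^+$, $B\Sp(R)^+$ and their homotopy fibre; under the identification of the group homologies $H_*(SL(R))$ and $H_*(\Sp(R))$ with the homologies of the corresponding plus-constructions, the induced homology maps are the edge and boundary maps in the long exact sequence of the pair $(SL,\Sp)$. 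Hence each of the three $(\ast)$ squares commutes by naturality of the Hurewicz map applied to a single map of spaces.

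Between columns three and four, every horizontal arrow is a homology stability isomorphism of Theorems \ref{them:H2stability}, \ref{them:H3stability} or \ref{lem:apl}, induced by group inclusions; functoriality of $H_*$ and of the associated long exact sequences of pairs makes the three squares commute. Between columns four and five, the horizontal arrows are the Milnor(-Witt) $K$-theory identifications $H_*(SL_n,SL_{n-1})\cong K^{M/MW}_*$ of \cite[Theorem 5.37]{myEuler}; the outer two rows are tautological, and the two middle squares, including $(\ast\ast)$, encode the compatibility of that identification with the connecting map of the pair $(SL_3,SL_2)$ and with $\eta\colon K^{MW}_3\to K^{MW}_2$, which is again part of the content of \cite[Theorem 5.37]{myEuler}.

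The only step with genuine content is the middle $(\ast)$ square: the assertion that the Hurewicz isomorphism $GW_3^{[3]}(R)\cong H_3(SL(R),\Sp(R))$ of Theorem \ref{thm:HurewiczGW33} carries $\eta$ to the connecting map $H_3(SL,\Sp)\to H_2(\Sp)$. This amounts to identifying $\eta$ at the spectrum level with the boundary of the fibre sequence $B\Sp(R)^+\to BSL(R)^+$, which is precisely the content of the Bott sequence \cite[Theorem 6.1]{myJPAA}. Everything else in the diagram is either a definition, functoriality of $H_*$, or an appeal to \cite[Theorem 5.37]{myEuler}.
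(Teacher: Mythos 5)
Your proof takes essentially the same route as the paper: the $(\ast)$ squares are handled by compatibility of the Hurewicz transformation with the long exact sequences of pairs (the paper cites \cite[Section III.3]{GJ}), the $(\ast\ast)$ square by a result from \cite{myEuler}, and the remaining squares by functoriality of $H_*$ and the definition of the $KO$-degree map. One small correction: for the $(\ast\ast)$ square the paper cites \cite[Lemma 5.40(2)]{myEuler} rather than \cite[Theorem 5.37]{myEuler}; Theorem~5.37 supplies the identification $H_3(SL_3,SL_2)\cong K_3^{MW}$ but it is Lemma~5.40(2) that records its compatibility with the connecting map and $\eta$, which is what the square asserts.
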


\begin{proof}
Diagrams $(\ast)$ commute in view of the compatibility of the Hurewicz map with the long exact sequence of homotopy and homology groups for pairs (which, for simplicial sets, is just functoriality of the long exact sequence of homotopy groups for pairs applied to the natural transformation $X \to \tilde{\Z}[X]=\Z[X]/\Z x_0:x\mapsto x$; see \cite[Section III.3]{GJ}).
The square $(\ast\ast)$ commutes, by \cite[Lemma 5.40(2)]{myEuler}.
The rest follows by functoriality and the definition of the $KO$-degree map.
\end{proof}

\begin{lemma}
\label{KMtoKMWisH}
The composition $\ffi: K^M_3 \to K^{MW}_3$ in the string of maps
$$K^M_3 \longrightarrow  K_3  \longrightarrow H_3(SL) \stackrel{\cong}{\longleftarrow}   H_3(SL_3) \longrightarrow  K^{MW}_3$$
that is the top row of diagram (\ref{eqn:refinedHurewicz}) is the hyperbolic map $h:K_3^M \to K_3^{MW}$.
\end{lemma}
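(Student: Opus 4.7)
The plan is to trace $\varphi$ through the commutative diagram (\ref{eqn:refinedHurewicz}), rewrite it as a composite involving the $KO$-degree map, and then match it with $h$ via the cartesian decomposition of $K^{MW}_3$ from \cite{GilleEtAl}.

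First, using the middle $(\ast)$-square of (\ref{eqn:refinedHurewicz}) together with the Hurewicz isomorphism $GW^{[3]}_3 \cong H_3(\SL,\Sp)$ of Theorem \ref{thm:HurewiczGW33}, the stability isomorphism $H_3(\SL_3,\SL_2) \cong H_3(\SL,\Sp)$ of Theorem \ref{lem:apl}, and the identification of the top-right arrow as the composite $H_3(\SL_3) \to H_3(\SL_3,\SL_2) = K^{MW}_3$ (which follows from commutativity with the identity vertical arrow in the right-hand column), I rewrite the top row as
\[
\varphi \;=\; K^M_3 \to K_3 \xrightarrow{h_{\mathrm{top}}} GW^{[3]}_3 \xrightarrow{\cong_{\mathrm{Hur}}} K^{MW}_3,
\]
where $h_{\mathrm{top}}$ is the topological hyperbolic map in the Bott sequence. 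The commutativity of the top-left $(\ast)$-square (naturality of the hyperbolic map with respect to the $KO$-degree map) then gives $\varphi = \mathrm{Hur} \circ \mathrm{KO} \circ h$.

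Second, to conclude $\varphi = h$, I appeal to the cartesian square $K^{MW}_3 \cong K^M_3 \times_{k^M_3} I^3$ of \cite{GilleEtAl}. Since $h(\{a_1,a_2,a_3\}) = \langle 1,-1\rangle [a_1][a_2][a_3]$, one has $f \circ h = 2 \cdot \mathrm{id}_{K^M_3}$ and the $I^3$-projection of $h$ vanishes (as $\langle 1,-1\rangle$ vanishes in the Witt ring). It suffices to check the same for $\varphi$. The $I^3$-component vanishes because the algebraic $\eta$ on the $K^{MW}$-side corresponds under $\mathrm{KO}$ (a ring homomorphism respecting $\eta$) to $\eta_{\mathrm{top}}$ on the $GW$-side, and $\eta_{\mathrm{top}} \circ h_{\mathrm{top}} = 0$ by exactness of the Bott sequence. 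For the forgetful component, $f_{\mathrm{top}} \circ h_{\mathrm{top}} = 2 \cdot \mathrm{id}$ in the Bott sequence, and the analogous compatibility between $f: K^{MW}_3 \to K^M_3$ and $f_{\mathrm{top}}: GW^{[3]}_3 \to K_3$ (via $\mathrm{KO}$ and the Milnor-to-Quillen map) yields $f \circ \varphi = 2 \cdot \mathrm{id}_{K^M_3}$.

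The main obstacle is the compatibility of $\eta$ and $f$ on both sides under the Hurewicz identifications of Theorems \ref{thm:HurewiczGW33} and \ref{thm:HurewiczGW22}. Since the $KO$-degree map is a graded ring homomorphism and both $\eta$-operations arise from cup product with fixed elements (in $K^{MW}_{-1}$ and in the Witt ring), the compatibility should follow from naturality, but must be carefully tracked through the identifications $GW^{[3]}_3 \cong H_3(\SL,\Sp) \cong H_3(\SL_3,\SL_2) = K^{MW}_3$. An alternative is to bypass this and directly compute $\varphi(\{a_1,a_2,a_3\})$ on Steinberg symbols, lifting the Steinberg symbol in $K_3$ to an explicit cycle in $H_3(\SL_3)$ (via the Loday product of commutators of elementary matrices) and tracking it to $H_3(\SL_3,\SL_2) = K^{MW}_3$.
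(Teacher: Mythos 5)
Your reduction of $\ffi$ to $\mathrm{Hur}\circ\mathrm{KO}\circ h$ and your use of the cartesian square $K^{MW}_3\cong K^M_3\times_{k_3}I^3$ match the paper's strategy, and your treatment of the $I^3$-component is essentially the paper's argument: $\eta\circ\ffi$ factors through $\eta_{\mathrm{top}}\circ h_{\mathrm{top}}=0$ via the commuting subdiagram of (\ref{eqn:refinedHurewicz}), and the projection $K^{MW}_3\to I^3\hookrightarrow I^2$ is injective, forcing the $I^3$-component to vanish. That part is sound, since the $\eta$-compatibility under the Hurewicz identification is supplied by the columns of the paper's diagram and the preceding lemma asserting it commutes.

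The gap is in the $K^M_3$-component, i.e.\ the claim $f\circ\ffi=2\cdot\mathrm{id}$. You write $f\circ\ffi=f\circ\mathrm{Hur}\circ\mathrm{KO}\circ h$ and want to deduce it from $f_{\mathrm{top}}\circ h_{\mathrm{top}}=2$ plus ``compatibility of $f$ with $f_{\mathrm{top}}$''. But the only compatibility square available from naturality of the degree maps is $f_{\mathrm{top}}\circ\mathrm{KO}=\delta\circ f$ (with $\delta\colon K^M_3\to K_3$ the Milnor-to-Quillen map), which goes \emph{from} the $K^{MW}$ side \emph{to} the $GW$ side. To peel off the intervening $\mathrm{Hur}$ you would need either that $\mathrm{KO}\circ\mathrm{Hur}=\mathrm{id}_{GW^{[3]}_3}$ — but establishing that the $KO$-degree map is an isomorphism with inverse $\mathrm{Hur}$ is precisely what Theorem \ref{thm:GW33} proves, \emph{using} this lemma, so the argument is circular — or a commuting square $f\circ\mathrm{Hur}=\sigma\circ f_{\mathrm{top}}$ for some $\sigma\colon K_3\to K^M_3$. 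No such $\sigma$ appears in diagram (\ref{eqn:refinedHurewicz}); the only natural candidate is the Nesterenko--Suslin map, and plugging it in does not obviously yield the factor $2$ (a naive count combining $f_{\mathrm{top}}h_{\mathrm{top}}=2$ with $\sigma\delta=2$ gives $4$, which signals that $f\circ\mathrm{Hur}$ is \emph{not} $\sigma\circ f_{\mathrm{top}}$). The paper sidesteps all of this: it identifies $f\circ\ffi$ directly with the composite $K^M_3\to K_3\to H_3(\SL)\stackrel{\cong}{\leftarrow}H_3(\SL_3)\to K^M_3$ and cites \cite[Theorem 4.1(a)]{SuslinNesterenko}, which computes this to be multiplication by $2$. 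That is a genuine, non-formal input that cannot be replaced by Bott-sequence exactness and naturality alone. Your proposed fallback of computing on Steinberg symbols via Loday products would amount to re-proving the Nesterenko--Suslin result from scratch.
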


\begin{proof}
Under the isomorphism (\ref{eqn:LGilleEtAl_ISO}),
the map $h:K^M_n(R) \to K^{MW}_n(R)$ becomes $(2,0): K^M_n(R) \times_{k_n}I^n$, and the map $\eta:K^{MW}_n(R) \to K^{MW}_{n-1}(R)$ becomes $(0,i)$ where $i:I^n \subset I^{n-1}$ is the inclusion.
By \cite[Theorem 4.1(a)]{SuslinNesterenko}, the first factor $K^M_3(R) \to K^M_3(R)$ of $\lambda\ffi$ is multiplication by $2$.
The second factor $K^M_3(R) \to I^3$ of $\lambda\ffi$ is zero since the following subdiagram of (\ref{eqn:refinedHurewicz}) commutes
$$\xymatrix{
 K_3 \ar[r] \ar[d]^h  & H_3(SL) \ar[d] & H_3(SL_3) \ar[d] \ar[l]_{\cong} \ar[r] & K^{MW}_3 \ar[d]^1 \ar[r] & I^3 \ar[d]^1\\
GW^3_3   \ar[d]_{\eta} \ar[r]^{\cong} & H_3(\SL,\Sp) \ar[d] & H_3(SL_3,SL_2) \ar@{=}[r] \ar[d] \ar[l]_{\cong}& K_3^{MW} \ar[d]^{\eta}\ar[r] & I^3 \ar@{^(->}[d]^i\\
GW^2_2 \ar[r]_{\cong} & H_2(\Sp) & H_2(SL_2)  \ar@{=}[r] \ar[l]^{\cong}& K_2^{MW} \ar[r] & I^2,
}
$$
the composition of the left vertical maps is zero whereas the composition of the right vertical maps is injective.
\end{proof}

\begin{theorem}
\label{thm:GW33}
Let  $R$ be a local commutative ring containing an infinite field of characteristic $\car(k) \neq 2$.
Then the $KO$-degree map is an isomorphism in degree $3$:
$$K_3^{MW}(R) \stackrel{\cong}{\longrightarrow} GW^{[3]}_3(R).$$
\end{theorem}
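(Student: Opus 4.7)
The plan is to deduce the degree~$3$ isomorphism from the degree~$2$ case (Theorem~\ref{thm:GW22}) and the Hurewicz theorem (Theorem~\ref{thm:HurewiczGW33}) by a four-term diagram chase inside the big commutative diagram~(\ref{eqn:refinedHurewicz}).

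First I would form the composite along the second row of~(\ref{eqn:refinedHurewicz}),
\[
\Phi \colon K_3^{MW}(R) \xrightarrow{KO} GW_3^{[3]}(R) \xrightarrow{\cong} H_3(\SL(R),\Sp(R)) \xleftarrow{\cong} H_3(\SL_3(R),\SL_2(R)) = K_3^{MW}(R),
\]
whose three rightmost factors are isomorphisms by Theorems~\ref{thm:HurewiczGW33} and~\ref{lem:apl}; it therefore suffices to show that $\Phi$ is an isomorphism. Let $\Phi_2 \colon K_2^{MW}(R) \to K_2^{MW}(R)$ denote the analogous composite along the third row of~(\ref{eqn:refinedHurewicz}); each of its factors is an isomorphism by Theorems~\ref{thm:GW22} and~\ref{thm:HurewiczGW22}, so $\Phi_2$ is an isomorphism.

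By the commutativity of~(\ref{eqn:refinedHurewicz}), the identity labels in its rightmost column, and Lemma~\ref{KMtoKMWisH}, the endomorphism $\Phi$ fits into a commutative ladder
\[
\xymatrix{
K_3^M(R) \ar[r]^{h} \ar[d]_{=} & K_3^{MW}(R) \ar[r]^{\eta} \ar[d]_{\Phi} & K_2^{MW}(R) \ar[r]^{f} \ar[d]^{\cong}_{\Phi_2} & K_2^M(R) \ar[r] \ar[d]_{=} & 0 \\
K_3^M(R) \ar[r]_{h} & K_3^{MW}(R) \ar[r]_{\eta} & K_2^{MW}(R) \ar[r]_{f} & K_2^M(R) \ar[r] & 0
}
\]
whose rows are the exact sequence extracted from the cartesian square~(\ref{eqnLGilleEtAl}) of \cite{GilleEtAl}. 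The essential point here is the commutativity of the leftmost square, i.e.~the identity $\Phi \circ h = h$; this is precisely Lemma~\ref{KMtoKMWisH}, read through the rightmost identity column of~(\ref{eqn:refinedHurewicz}).

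A routine four-term diagram chase then shows $\Phi$ to be an isomorphism. For injectivity, if $\Phi(x)=0$, then $\Phi_2 \eta(x) = \eta \Phi(x) = 0$ forces $\eta(x)=0$, so $x = h(y)$ for some $y$; but then $\Phi \circ h = h$ gives $x = h(y) = \Phi(x) = 0$. For surjectivity, given $z$ in the target, one uses $\Phi_2$ to lift $\eta(z)$ to some $w \in K_2^{MW}(R)$, then the identity $f \circ \Phi_2 = f$ places $w$ in $\ker f = \im\eta$, so one chooses $u$ with $\eta(u)=w$, and finally $z - \Phi(u) \in \ker\eta = \im h$ is corrected away using $\Phi \circ h = h$. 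I expect the main subtlety to be purely bookkeeping: namely verifying that the ``identity'' labels in the rightmost column of~(\ref{eqn:refinedHurewicz}) are genuinely the identifications coming from \cite[Theorem~5.37]{myEuler}, and consequently that Lemma~\ref{KMtoKMWisH} really does deliver $\Phi \circ h = h$ on the nose rather than up to some non-trivial automorphism of $K_3^{MW}(R)$.
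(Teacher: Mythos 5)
Your proposal is correct and takes essentially the same approach as the paper: both reduce to showing that the endomorphism $\Phi$ of $K_3^{MW}(R)$ from the second row of diagram~(\ref{eqn:refinedHurewicz}) is an isomorphism, then run a Five-Lemma chase on the ladder with exact rows $K_3^M\to K_3^{MW}\to K_2^{MW}\to K_2^M$, using Lemma~\ref{KMtoKMWisH} for $\Phi\circ h=h$ and the degree-$2$ results for the third column. The only cosmetic difference is that the paper extends the ladder on the left by $\ker h$ and invokes the Five Lemma, whereas you extend on the right by $0$ and spell out the chase; note also that the bookkeeping worry you raise about the fourth column is harmless, since the chase only needs that vertical map to be an isomorphism, not the identity.
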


\begin{proof}
We only need to show that the composition $\phi: K^{MW}_3 \to K^{MW}_3$ of the second top row in  diagram (\ref{eqn:refinedHurewicz}) is an isomorphism.
By Lemma \ref{KMtoKMWisH}, that map is part of the commutative diagram with exact rows
$$\xymatrix{
K^M_3 \ar[r]^h \ar[d]^1 & K^{MW}_3 \ar[r]^{\eta} \ar[d]^{\phi} & K^{MW}_2 \ar@{->>}[r]^f \ar[d]^{\cong} & K^M_2 \ar[d]^{\cong}\\
K^M_3 \ar[r]^h  & K^{MW}_3 \ar[r]^{\eta} & K^{MW}_2 \ar@{->>}[r]^f & K^M_2.
}$$
The map on the left horizontal kernels is the identity map.
Hence, by the Five Lemma, $\phi$ is an isomorphism.
\end{proof}

\begin{lemma}
\label{lem:gwvanish}
	Let $R$ be a local ring with infinite residue field.
	Then $GW_2^{[3]}(R)=0.$
\end{lemma}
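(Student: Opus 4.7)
The plan is to extract $GW^{[3]}_2(R)$ from a short piece of the Bott long exact sequence. Taking $r=2$, $m=2$ in the Bott sequence from the beginning of Section~\ref{sec:KOdeg} gives the exact piece
\begin{equation*}
GW^{[2]}_2(R) \stackrel{f}{\longrightarrow} K_2(R) \stackrel{h}{\longrightarrow} GW^{[3]}_2(R) \stackrel{\eta}{\longrightarrow} GW^{[2]}_1(R) \stackrel{f}{\longrightarrow} K_1(R).
\end{equation*}
Thus the proof reduces to two vanishings: (i) $GW^{[2]}_1(R)=0$, which will make $h$ surjective, and (ii) the forgetful map $f:GW^{[2]}_2(R)\to K_2(R)$ is surjective, which by exactness will force $h=0$.

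For (i), I would use the identification $GW^{[2]}_1(R)=K_1\Sp(R)=\pi_1 B\Sp(R)^+$. Since $\Sp(R)$ is perfect, the plus construction abelianises $\pi_1$, so $K_1\Sp(R)=H_1(\Sp(R))$. Then Theorem~\ref{them:H1stability} (together with the colimit $\Sp(R)=\bigcup_n\Sp_{2n}(R)$) immediately gives $H_1(\Sp(R))=0$.

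For (ii), I would combine Theorem~\ref{thm:GW22} (the isomorphism $K_2^{MW}(R)\stackrel{\cong}{\to}GW^{[2]}_2(R)$) with the bottom-left commutative square of diagram~(\ref{eqn:refinedHurewicz}), which identifies $f:GW^{[2]}_2(R)\to K_2(R)$ with $f:K_2^{MW}(R)\to K_2^M(R)$ (using Matsumoto--van der Kallen's identification $K_2^M(R)\cong K_2(R)$). The latter forgetful map is surjective: under the isomorphism $(f,\eta^2):K_2^{MW}(R)\stackrel{\cong}{\to}K_2^M(R)\times_{k_2}I^2(R)$ from (\ref{eqn:LGilleEtAl_ISO}), it is simply the first projection, which is manifestly onto. (Equivalently, $K_2^M(R)$ is generated by Steinberg symbols $\{a,b\}=f([a]\cup[b])$.)

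There is essentially no obstacle here: all the necessary ingredients, the Bott sequence, the homology stability result for $H_1$, and the degree~$2$ case of the $KO$-degree isomorphism, have already been established in the preceding sections, so the argument is a short diagram chase. The only point worth verifying carefully is that the $f$ appearing in the Bott sequence and the $f$ appearing in the degree-$2$ corner of (\ref{eqn:refinedHurewicz}) coincide, which is clear from the functoriality built into the construction of the $KO$-degree map in Section~\ref{sec:KOdeg}.
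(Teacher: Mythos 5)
Your argument is sound but takes a genuinely different route from the paper, and it has one subtle hypothesis mismatch worth flagging. The paper proves the lemma directly and without the Bott sequence: by definition $GW^{[3]}_2(R) = \pi_2(B\SL^+(R),B\Sp^+(R))$, the relative Hurewicz theorem identifies this with $H_2(\SL(R),\Sp(R))$, and Theorem~\ref{lem:apl} reduces it to $H_2(\SL_3(R),\SL_2(R))$, which vanishes by \cite[Theorem 5.37]{myEuler}. That argument uses only the relative homology stability already established and needs nothing from Section~\ref{sec:KOdeg}. Your Bott-sequence computation is a valid alternative, and step~(i) is exactly the observation the paper itself makes in the proof of Lemma~\ref{lem:GW11Square}. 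However, the main route you give for~(ii) — invoking Theorem~\ref{thm:GW22} and the isomorphism~(\ref{eqn:LGilleEtAl_ISO}) from \cite{GilleEtAl} — requires $R$ to contain an infinite field of characteristic $\neq 2$, which is strictly stronger than the lemma's hypothesis that $R$ is local with infinite residue field (and $2\in R^*$, the standing assumption of Section~\ref{sec:KOdeg}); mixed-characteristic local rings with infinite residue field satisfy the latter but not the former. Your parenthetical alternative for~(ii) saves the day: since $f\colon GW^{[*]}_*(R)\to K_*(R)$ is a ring map with $f([a])=a$, we have $f([a]\cup[b])=\{a,b\}$, and van der Kallen's theorem gives $K_2(R)=K_2^M(R)$ generated by Steinberg symbols for any local ring with infinite residue field, so $f\colon GW^{[2]}_2(R)\to K_2(R)$ is surjective in the full generality of the lemma. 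So state that argument as the primary one, not as an aside. The paper's route is shorter and avoids the hypothesis subtlety entirely; yours illuminates how the vanishing sits inside the Bott exact sequence.
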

\begin{proof}
By definition, we have $GW_2^3(R)=\pi_2(BSL^+(R),B\Sp^+(R))$. 
By the relative Hurewicz Theorem \cite[III Corollary 3.12]{GJ} that group is
$H_2(SL(R),\Sp(R))$.
 By Theorem \ref{lem:apl}, this is
 $H_2(SL_3(R), SL_2(R))$ which is
 zero, by \cite[Theorem 5.37]{myEuler}.
\end{proof}

We recall that $K_3^{ind}(R)=\coker(K_3^M(R) \rightarrow K_3(R))$.

\begin{corollary}
\label{cor:Kind}
	Let $R$ be a local ring containing an infinite field of characteristic not $2$. 
Then
$K_3^{ind}(R) \cong KO_3(R)$.
\end{corollary}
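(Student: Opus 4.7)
The plan is to read off the corollary from the Bott long exact sequence together with Theorem \ref{thm:GW33} and Lemma \ref{lem:gwvanish}. Specifically, applying the Bott sequence with $r=3$, $m=3$ yields the exact sequence
$$GW^{[3]}_3(R) \xrightarrow{f} K_3(R) \xrightarrow{h} GW^{[4]}_3(R) \xrightarrow{\eta} GW^{[3]}_2(R).$$
Since $KO_3(R) = GW^{[4]}_3(R)$ by definition, and since the rightmost group vanishes by Lemma \ref{lem:gwvanish}, the hyperbolic map $h$ identifies $KO_3(R)$ with the cokernel of $f\colon GW^{[3]}_3(R) \to K_3(R)$.

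Next I would use Theorem \ref{thm:GW33} to replace $GW^{[3]}_3(R)$ by $K_3^{MW}(R)$; by naturality of the $KO$-degree map with respect to the forgetful maps, this isomorphism intertwines $f\colon GW^{[3]}_3(R) \to K_3(R)$ with the composite $K_3^{MW}(R) \to K_3^M(R) \to K_3(R)$, where the first arrow is the forgetful map of Milnor–Witt $K$-theory and the second is the canonical map to Quillen $K$-theory. So $KO_3(R)$ is the cokernel of this composite.

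Finally, I would invoke the surjectivity of the forgetful map $f\colon K_3^{MW}(R) \twoheadrightarrow K_3^M(R)$, which is visible from the isomorphism (\ref{eqn:LGilleEtAl_ISO}) of Gille--Scully--Zhong under the hypothesis that $R$ contains an infinite field of characteristic not $2$ (the map corresponds to the first projection $K_3^M(R)\times_{k_3}I^3(R) \to K_3^M(R)$, which is manifestly surjective). Consequently, the image of the composite $K_3^{MW}(R) \to K_3(R)$ equals the image of $K_3^M(R) \to K_3(R)$, so that
$$KO_3(R) \;\cong\; \coker\!\bigl(K_3^M(R) \to K_3(R)\bigr) \;=\; K_3^{ind}(R),$$
which is the desired conclusion. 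There is no real obstacle here: the statement is essentially a formal consequence of Theorem \ref{thm:GW33}, Lemma \ref{lem:gwvanish}, and the Bott sequence, with the mild additional input that $K_3^{MW} \to K_3^M$ is surjective.
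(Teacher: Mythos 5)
Your argument is correct and follows essentially the same route as the paper: Bott exactness plus Lemma \ref{lem:gwvanish} identifies $KO_3(R)$ with $\coker(f\colon GW^{[3]}_3(R)\to K_3(R))$, Theorem \ref{thm:GW33} and naturality of the degree maps replace this by $\coker(K_3^{MW}(R)\to K_3(R))$, and surjectivity of $K_3^{MW}\twoheadrightarrow K_3^M$ then gives $\coker(K_3^M\to K_3)=K_3^{ind}(R)$. The paper packages exactly these steps in a single commutative diagram, so there is no substantive difference.
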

\begin{proof}
Consider the commutative diagram
$$
	\xymatrix{
		K_3^{MW}(R) \ar@{->>}[r] \ar@{->}[d]_{\cong}
		& K_3^M(R) \ar@{->}[d] &&\\
		GW_3^{[3]}(R)   \ar@{->}[r]& K_3(R)  \ar[r] & GW^{[4]}_3(R) \ar[r] & GW^{[3]}_2(R) =0
	}
$$
in which the horizontal arrows in the square are the forgetful maps (in particular the top map is surjective), the vertical maps are the $KO$ and $K$-theory degree maps, and the bottom row is the Bott exact sequence. 
The group $GW^{[3]}_2(R)$ is zero, by Lemma \ref{lem:gwvanish}, and the left vertical map is an isomorphism, by Theorem \ref{thm:GW33}.
Since $GW_3^{[4]}(R)=KO_3(R)$, the result follows.
\end{proof}

\appendix

\section{The Pfaffian of some matrices}

For space reasons, we will write $[a,\ b,\ c]$ for the skew-symmetric matrix
$$[a,\ b,\ c]= \left(
\begin{matrix}
0 &  a &  b  \\
-a  &  0 & c \\
-b & -c &   0
\end{matrix}
\right).$$

\newpage

\subsection{The skew-symmetric matrix of Lemma \ref{lem:dAexample1aRewrite}}
\label{App:lem:dAexample1aRewrite}
The $6\times 6$ skew-symmetric matrix 
$$A = \left(
\begin{matrix}
0 &  a &  a & a & a & a \\
-a  &  0 & b & b & b & b\\
-a & -b &   0 & c & c & c\\
-a & -b & -c &  0 & d & d\\
-a &-b &-c &-d & 0 & e\\
-a &-b &-c &-d &-e & 0
\end{matrix}
\right)$$
has Pfaffian 
$$\Pf(A) = ace,$$
and for $1\leq i<j<k\leq 6$, the values of $(-1)^{i+j+k}\frac{\Pf(A)}{\Pf(A^{\wedge}_{ij})\Pf(A^{\wedge}_{ik})
\Pf(A^{\wedge}_{jk})}$ and $\left[A_{\widehat{ijk}}\right]$ are as follows.
$$
\renewcommand\arraystretch{1.5}
\begin{array}{c|c|c}
(i,j,k) & \displaystyle{(-1)^{i+j+k}\frac{\Pf(A)}{\Pf(A^{\wedge}_{ij})\Pf(A^{\wedge}_{ik})
\Pf(A^{\wedge}_{jk})}} &  \left[A_{\widehat{ijk}}\right]\\
&\\
\hline
(1, 2, 3) &1/(be^2)& [d, d, e]\\
(1, 2, 4) &(-1)/(be^2)& [c, c, e]\\
(1, 3, 4) &c/(b^2e^2) &[b, b, e]\\
(2, 3, 4) &(-c)/(a^2e^2)& [a, a, e]\\
(1, 2, 5) &1/(bd^2) &[c, c, d]\\
(1, 3, 5) &(-c)/(b^2d^2) &[b, b, d]\\
(2, 3, 5) &c/(a^2d^2) &[a, a, d]\\
(1, 4, 5) &1/(b^2d) &[b, b, c]\\
(2, 4, 5) &(-1)/(a^2d) &[a, a, c]\\
(3, 4, 5) &1/(a^2d) &[a, a, b]\\
(1, 2, 6) &(-1)/(bd^2)& [c, c, d]\\
(1, 3, 6) &c/(b^2d^2) &[b, b, d]\\
(2, 3, 6) &(-c)/(a^2d^2)& [a, a, d]\\
(1, 4, 6) &(-1)/(b^2d) &[b, b, c]\\
(2, 4, 6) &1/(a^2d) &[a, a, c]\\
(3, 4, 6) &(-1)/(a^2d) & [a, a, b]\\
(1, 5, 6) &e/(b^2d^2) &[b, b, c]\\
(2, 5, 6) &(-e)/(a^2d^2)& [a, a, c]\\
(3, 5, 6) &e/(a^2d^2) &[a, a, b]\\
(4, 5, 6) &(-e)/(a^2c^2)& [a, a, b]
\end{array}
$$


\subsection{The skew-symmetric matrix of Lemma \ref{lem:x[abc]iszeroConditionally}}
\label{App:lem:x[abc]iszeroConditionally}
The $6\times 6$ skew-symmetric matrix 
$$A = \left(
\begin{matrix}
 0 & d  &d  &d  &d  &d\\
-d  &0  &d  &d  &d  &d\\
-d &-d  &0  &d  &d  &d\\
-d &-d &-d  &0  &a  &b\\
-d &-d &-d &-a  &0  &c\\
-d &-d &-d &-b &-c  &0
\end{matrix}
\right)$$
has Pfaffian 
$$\Pf(A) = (a - b + c)d^2,$$
and for $1\leq i<j<k\leq 6$, the values of $(-1)^{i+j+k}\frac{\Pf(A)}{\Pf(A^{\wedge}_{ij})\Pf(A^{\wedge}_{ik})
\Pf(A^{\wedge}_{jk})}$ and $\left[A_{\widehat{ijk}}\right]$ are as follows.
$$
\renewcommand\arraystretch{1.5}
\begin{array}{c|c|c}
(i,j,k) & \displaystyle{(-1)^{i+j+k}\frac{\Pf(A)}{\Pf(A^{\wedge}_{ij})\Pf(A^{\wedge}_{ik})
\Pf(A^{\wedge}_{jk})}} &  \left[A_{\widehat{ijk}}\right]\\
&\\
\hline
(1, 2, 3) & 1/d(a - b + c)^2& [a, b, c]\\
(1, 2, 4) &(-1)/(c^2d) &[d, d, c]\\
(1, 3, 4)& 1/(c^2d) &[d, d, c]\\
(2, 3, 4) &(-1)/(c^2d)& [d, d, c]\\
(1, 2, 5) &1/(b^2d) &[d, d, b]\\
(1, 3, 5) &(-1)/(b^2d)& [d, d, b]\\
(2, 3, 5) &1/(b^2d)& [d, d, b]\\
(1, 4, 5) &(a - b + c)/(bcd^2) &[d, d, d]\\
(2, 4, 5) &(-a + b - c)/(bcd^2) &[d, d, d]\\
(3, 4, 5) &(a - b + c)/(bcd^2) &[d, d, d]\\
(1, 2, 6) &(-1)/(a^2d) &[d, d, a]\\
(1, 3, 6) &1/(a^2d) &[d, d, a]\\
(2, 3, 6) &(-1)/(a^2d)& [d, d, a]\\
(1, 4, 6) &(-a + b - c)/(acd^2) &[d, d, d]\\
(2, 4, 6) &(a - b + c)/(acd^2) &[d, d, d]\\
(3, 4, 6) &(-a + b - c)/(acd^2) &[d, d, d]\\
(1, 5, 6) &(a - b + c)/(abd^2) &[d, d, d]\\
(2, 5, 6) &(-a + b - c)/(abd^2) &[d, d, d]\\
(3, 5, 6) &(a - b + c)/(abd^2) &[d, d, d]\\
(4, 5, 6) &(-a + b - c)/d^4& [d, d, d]
\end{array}
$$


\subsection{The skew-symmetric matrix of Lemma \ref{lem:x[abc]iszero}}
\label{App:lem:x[abc]iszero}
The $6\times 6$ skew-symmetric matrix 
$$A = \left(
\begin{matrix}
 0 & a  &b & d & d & d\\
-a  &0  &b & e & e & e\\
-b &-b  &0 & f & f & f\\
-d &-e &-f & 0 & a & b\\
-d &-e &-f &-a & 0 & b\\
-d &-e &-f &-b &-b & 0
\end{matrix}
\right)$$
has Pfaffian 
$$\Pf(A) = a(bd - be + af),$$
and for $1\leq i<j<k\leq 6$, the values of $(-1)^{i+j+k}\frac{\Pf(A)}{\Pf(A^{\wedge}_{ij})\Pf(A^{\wedge}_{ik})
\Pf(A^{\wedge}_{jk})}$ and $\left[A_{\widehat{ijk}}\right]$ are as follows.
$$
\renewcommand\arraystretch{1.5}
\begin{array}{c|c|c}
(i,j,k) & \displaystyle{(-1)^{i+j+k}\frac{\Pf(A)}{\Pf(A^{\wedge}_{ij})\Pf(A^{\wedge}_{ik})
\Pf(A^{\wedge}_{jk})}} &  \left[A_{\widehat{ijk}}\right]\\
&\\
\hline
(1, 2, 3)& (bd - be + af)/(a^2def) &[a, b, b]\\
(1, 2, 4) &(-bd + be - af)/(b^4f) &[f, f, b]\\
(1, 3, 4) &(bd - be + af)/(ab^3e)& [e, e, b]\\
(2, 3, 4) &(-bd + be - af)/(ab^3d) &[d, d, b]\\
(1, 2, 5) &(bd - be + af)/(b^4f) &[f, f, b]\\
(1, 3, 5) &(-bd + be - af)/(ab^3e) &[e, e, b]\\
(2, 3, 5) &(bd - be + af)/(ab^3d) &[d, d, b]\\
(1, 4, 5) &a/b^4 &[b, e, f]\\
(2, 4, 5) &(-a)/b^4& [b, d, f]\\
(3, 4, 5) &1/(ab^2)& [a, d, e]\\
(1, 2, 6) &(-bd + be - af)/(a^2b^2f) &[f, f, a]\\
(1, 3, 6) &(bd - be + af)/(a^3be) &[e, e, a]\\
(2, 3, 6) &(-bd + be - af)/(a^3bd) &[d, d, a]\\
(1, 4, 6) &(-1)/b^3 &[b, e, f]\\
(2, 4, 6) &1/b^3 &[b, d, f]\\
(3, 4, 6) &(-1)/(a^2b) &[a, d, e]\\
(1, 5, 6) &1/b^3 &[b, e, f]\\
(2, 5, 6) &(-1)/b^3& [b, d, f]\\
(3, 5, 6) &1/(a^2b)& [a, d, e]\\
(4, 5, 6) & (-a)/(bd  - be + af)^2 &[a, b, b]\\
&
\end{array}
$$
\bibliographystyle{plain}

\end{document}